\newtheorem*{lemma}{Lemma}
\newtheorem*{prop}{Proposition}
\newtheorem*{thm}{Theorem}
\newcommand{\iso}{\overset{\sim}{\rightarrow}}
\newcommand{\ad}{\operatorname{ad}}
\newcommand{\im}{\operatorname{Im}}
\newtheorem*{cor}{Corollary}
\newcommand{\nc}{\newcommand}
\nc{\Spec}{\operatorname{Spec}}
\nc{\Index}{\operatorname{index}}
\nc{\GKdim}{\operatorname{GKdim}}
\nc{\gr}{\operatorname{gr}}
\nc{\aut}{\operatorname {Aut}}
\nc{\Ker}{\operatorname {Ker}}
\nc{\rk}{\operatorname {rk}}
\nc{\tr}{\operatorname {tr}}
\nc{\supp}{\operatorname{supp}}
\nc{\Aut}{\operatorname{Aut}}
\nc{\Id}{\operatorname{Id}}
\begin{document}

\title[Adapted Pairs]{The Integrality of an Adapted Pair}
\author[Anthony Joseph]{Anthony Joseph}\footnote {Work supported in part by Israel Science Foundation Grant, no. 710724.}

\date{\today}
\maketitle

\vspace{-.9cm}\begin{center}
Donald Frey Professional Chair\\
Department of Mathematics\\
The Weizmann Institute of Science\\
Rehovot, 76100, Israel\\
anthony.joseph@weizmann.ac.il
\end{center}\

Key Words: Weierstrass sections, invariants.

 AMS Classification: 17B35

 \

\textbf{Abstract}

Let $\mathfrak a$ be an algebraic Lie algebra.  An adapted pair for $\mathfrak a$ is pair $(h,\eta)$ consisting of an ad-semisimple element of $h \in \mathfrak a$ and a regular element of $\eta \in \mathfrak a^*$ satisfying $(\ad h)\eta=-\eta$.  In general such pairs are not easy to find and even more difficult to classify.  A natural question is whether $\ad h$ has integer eigenvalues on $\mathfrak a$, a property called the integrality of the adapted pair.  In  general this fails even for a Frobenius subalgebra of $\mathfrak {sl}(4)$ and rather seriously in the sense that any rational number may serve as an eigenvalue.  Nevertheless integrality is shown to hold for any Frobenius Lie algebra which is a biparabolic subalgebra of a semisimple Lie algebra.

Call $\mathfrak a$ regular if there are no proper semi-invariant polynomial functions on $\mathfrak a^*$ and if the subalgebra of invariant functions is polynomial. In this case there are no known counter-examples to integrality.  It is shown that if $\mathfrak a$ is the canonical truncation of a biparabolic subalgebra of a simple Lie algebra $\mathfrak g$ which is regular and admits an adpated pair $(h,\eta)$,  then the eigenvalues of $\ad h$ on $\mathfrak a$ lie in $\frac{1}{m}\mathbb Z$, where $m$ is a coefficient of a simple root in the highest root of $\mathfrak g$.

Let $\mathfrak a$ be a regular Lie algebra admitting an adapted pair $(h,\eta)$.  Let $\mathfrak a_\mathbb Z$ be the subalgebra spanned by the eigensubspaces of $\ad h$ with integer eigenvalue. It is shown that the canonical truncation of $\mathfrak a_\mathbb Z$ is regular.  Sufficient knowledge of the relation between the generators for the invariant polynomial functions on  $\mathfrak a^*$ and on $\mathfrak a^*_\mathbb Z$ can then lead to establishing that $\mathfrak a=\mathfrak a_\mathbb Z$.  A particular interesting case is when $\mathfrak a$ is the canonical truncation of a biparabolic subalgebra of a simple Lie algebra $\mathfrak g$. If $\mathfrak g$ is of type $A$, then integrality already holds by the paragraph above. If $\mathfrak g$ is of type $C$ and $\mathfrak a$ is a truncated \textit{parabolic} subalgebra then a rather refined analysis shows that $\mathfrak a= \mathfrak a_\mathbb Z$.   In principle this method can also be applied to biparabolic subalgebras in type $C$ but there are some difficult combinatorial questions involving meanders to be resolved.  Outside types $A$ and $C$ further technical complications arise out of an insufficient knowledge of the subalgebra of invariant polynomial functions on the dual of a biparabolic.


\section{Introduction}

The base field $\textbf{k}$ is assumed algebraically closed and of characteristic zero throughout.

\subsection{Adapted pairs}\label{1.1}

Let $\mathfrak a$ be an algebraic Lie algebra and denote its algebraic adjoint group by a corresponding boldface Roman letter that is by $\bf{A}$.   Let $S(\mathfrak a)$ be the symmetric algebra of $\mathfrak a$ and set $Y(\mathfrak a)=S(\mathfrak a)^{\bf {A}}$.  Let $Y(\mathfrak a)_+$ denote the augmentation of $Y(\mathfrak a)$ and let $\mathscr N(\mathfrak a)$ denote the zero locus of $S(\mathfrak a)Y(\mathfrak a)_+$.

Given $a \in \mathfrak a$, define $\ad a$ to be the linear endomorphism of $\mathfrak a$ mapping $b$ to $[a,b]$, for all $b \in \mathfrak b$.  This defines the adjoint action of $\mathfrak a$ on itself. We also use $\ad a$ to denote the coadjoint action of $a$ on the dual space $\mathfrak a^*$ obtained by transport of structure.  Then $Y(\mathfrak a)$ identifies with the algebra of $\textbf{A}$ invariant polynomial functions on $\mathfrak a^*$.

Set $\mathfrak a^\xi: =\{a \in \mathfrak a|(\ad a)\xi=0\}$.  An element $\xi \in \mathfrak a^*$ is called regular if $\dim \mathfrak a^\xi$ takes its minimal possible value denoted $\ell(\mathfrak a)$ and called the index of $\mathfrak a$.
Let $\mathfrak a^*_{reg}$ denote the set of regular elements of $\mathfrak a^*$.
Let $I(\mathfrak a)$ (or simply, $I$) be the set $\{1,2,\ldots, \ell(\mathfrak a)\}$.

An adapted pair for $\mathfrak a$ is a pair $(h,\eta) \in \mathfrak a \times \mathfrak a^*_{reg}$ such that $(\ad h)\eta= -\eta$.  Since $\mathfrak a$ is algebraic we can and do assume that $\ad_\mathfrak ah$ is a semisimple endomorphism.   We refer to $h$ as the ad-semisimple element of the adapted pair $(h,\eta)$.

The main interest of an adapted pair is to construct a Weierstrass section (\ref {2.1}) for the $\textbf{A}$ orbits in $\mathfrak a^*$.  This provides a canonical form for ``most" orbits.  As was pointed out by Popov \cite [Example 2.2.2] {P} it extends Weierstrass canonical form for elliptic curves.  This is worked out in detail in [Sect. 2]{J9}.

For example if $\mathfrak g$ is simple and $(x,h,y)$ is a principal s-triple for $\mathfrak g$, then identifying $\mathfrak g^*$ with $\mathfrak g$ through the Killing form makes the subset $\{h,y\}$ into an adapted pair.  Notice here we do not take the usual normalisation; but rather the one for which $[h,y]=-y$.  The latter is more convenient because the principal nilpotent orbit is ``even" with the consequence that the eigenvalues of $\ad h$ on $\mathfrak g$ are still integer-valued.  Moreover the eigenvalues of $\ad h$ on $\mathfrak g^x$ which is a complement of $(\ad \mathfrak g)y$ are just the ``exponents" $\{m_i\}_{i\in I} $ of $\mathfrak g$.  Here we recall that $Y(\mathfrak g)$ is polynomial and its homogeneous generators have degrees $m_i+1:i\in I$. After Kostant \cite {K1}, the linear subvariety $y+\mathfrak g^x$ is Weierstrass section for $\textbf{G}$ orbits in $\mathfrak g^*$.  Rather exceptionally $\mathscr N(\mathfrak g)$ is irreducible.  As a consequence $\textbf{G}(y+\mathfrak g^x) = \mathfrak g^*_{reg}$, which is also rather exceptional.

\subsection{Semi-invariants}\label{1.2}

Retain the above notation.  Let $F(\mathfrak a)$ denote the fraction field of $S(\mathfrak a)$ and set $C(\mathfrak a)=F(\mathfrak a)^{\textbf {A}}$. A general result of Chevalley-Rosenlicht asserts that the transcendence degree of $C(\mathfrak a)$ equals $\ell(\mathfrak a)$.

A vector spanning a one dimensional module for $S(\mathfrak a)$ under adjoint action is called a semi-invariant. The set of semi-invariants of $S(\mathfrak a)$ is stable under taking products and factors.  Consequently the space $Sy(\mathfrak a)$ spanned by the set of all semi-invariants of $S(\mathfrak a)$ is a subalgebra of $S(\mathfrak a)$.  It is an easy consequence of \cite [3.5]{J1} that $Sy(\mathfrak a)$ is a unique factorisation domain (for further details and references - see \cite [1.3]{J1}).

An argument of Chevalley-Dixmier shows that if one writes $\xi \in C(\mathfrak a)$ in the form $\xi=a^{-1}b$ with $a,b \in S(\mathfrak a)$ coprime, then both $a$ and $b$ are semi-invariants.  Thus $Sy(\mathfrak a)$ is in general quite big, its Gelfand-Kirillov dimension, or growth rate, being at least $\ell(\mathfrak a)$.

On the other hand $Y(\mathfrak a)$ may be rather small and can even be reduced to scalars.

It is sometimes convenient to rectify the above situation using an observation of Borho-Chevalley.  A semi-invariant $a \in S(\mathfrak a)$  comes with a character $\lambda$ of $\mathfrak a$, defined through the relation $(\ad x)a=\lambda(x)a, \forall x \in \mathfrak a$.  Let $\Lambda$ be the set of all characters of $\mathfrak a$ which appear in the decomposition of $Sy(\mathfrak a)$.  Then  $\mathfrak a_\Lambda: =\cap_{\lambda \in \Lambda} \Ker \lambda $ is called the canonical truncation of $\mathfrak a$.  It is an algebraic Lie algebra and an ideal of $\mathfrak a$.  After Borho-Chevalley one has
$$Sy(\mathfrak a)=Sy(\mathfrak a_\Lambda)=Y(\mathfrak a_\Lambda).$$

Moreover one readily obtains the identity
$$\dim \mathfrak a +\ell(\mathfrak a)= \dim \mathfrak a_\Lambda +\ell(\mathfrak a_\Lambda). \eqno {(*)}$$

For all this and the modifications needed in the not necessarily algebraic case see \cite {OV}. One may remark that $(*)$ also holds with $\mathfrak a_\Lambda$ on the right hand side replaced by any subalgebra of $\mathfrak a$ containing $\mathfrak a_\Lambda$.

A disadvantage of passing to the canonical truncation is that adapted pairs become more rare and may easily fail to exist.

Define the reduced index $r\ell(\mathfrak a)$ of $\mathfrak a$ to be the index of its canonical truncation $\mathfrak a_\Lambda$.  By the above it is the Gelfand-Kirillov dimension (that is growth rate) of $Sy(\mathfrak a)$.  In particular if $Sy(\mathfrak a)$ is polynomial then $r\ell(\mathfrak a)$ is just the number of algebraically independent generators of $Sy(\mathfrak a)$.

\subsection{Truncated Biparabolics and Centralizers}\label{1.3}

Fix a root system $\Delta$ and $\pi$ a choice of simple roots. Let $\mathfrak g_\pi$ (or simply $\mathfrak g$) be the corresponding semisimple Lie algebra.  Unless otherwise stated we shall assume $\pi$ indecomposable, so then $\mathfrak g_\pi$ is simple and we can speak of its type. For all $\alpha \in \Delta$, let $x_\alpha$ denote the element of a Chevalley basis for $\mathfrak g$ of weight $\alpha$ and let $\kappa$ denote the corresponding Chevalley involution for $\mathfrak g$.  Let $s_\alpha$ be the reflection corresponding to the root $\alpha \in \Delta$ and $\varpi_\alpha$ the fundamental weight corresponding to the root $\alpha \in \pi$.

 By definition a Borel subalgebra of $\mathfrak g$ is a maximal solvable subalgebra. The set of all Borel subalgebras of $\mathfrak g$ is a single $\bf{G}$ orbit. Let $\mathfrak b_\pi$ be the Borel subalgebra of $\mathfrak g_\pi$ with roots in $\Delta^+:=\Delta \cap \mathbb N \pi$.

 By definition a parabolic subalgebra $\mathfrak p$ contains a Borel subalgebra.  Up to conjugation by $\textbf{G}$ we may assume the latter to be $\mathfrak b_\pi$ and then $\mathfrak p$ is determined by a subset $\pi_1$ of $\pi$, being the simple roots of its Levi factor. In this case we write $\mathfrak p =\mathfrak p_{\pi_1}$. Set $\mathfrak p^-_{\pi_1}:=\kappa(\mathfrak p_{\pi_1})$.

 By definition a biparabolic subalgebra $\mathfrak q$ is the intersection of two parabolic subalgebras whose sum is $\mathfrak g$. A biparabolic subalgebra can be conjugated by $\textbf{G}$ into one of the form $\mathfrak q_{\pi_1,\pi_2} :=\mathfrak p_{\pi_1} \cap \mathfrak p_{\pi_2}^-$.  Here may assume $\pi_1\cup \pi_2 = \pi$ without loss generality. This together with our assumption that $\pi$ is indecomposable and that $\pi_1\cap \pi_2 \varsubsetneq \pi$ (which excludes $\mathfrak g_\pi$ itself), will be called our standing hypothesis. They imply that $\mathfrak q_{\pi_1,\pi_2}$ admits no semisimple ideal.

 One checks that the set of roots of $\mathfrak q_{\pi_1,\pi_2}$ is $\mathbb N\pi_2 \cup -\mathbb N\pi_1$.

For a biparabolic $\mathfrak q$ subalgebra admitting no semisimple ideal, $Y(\mathfrak q)$ is reduced to scalars \cite [proof of Lemma 7.9]{J2}. On the other hand $Sy(\mathfrak q)$ is often polynomial, for example if $\pi$ is of type $A$ or type $C$.  Thus it is of interest to consider its canonical truncation.  If $\pi$ is of type $A$, then the canonical truncation of $\mathfrak q$ admits an adapted pair \cite {J4}. Outside type $A$ adapted pairs sometimes but not always exist.

Again one may consider a centralizer $\mathfrak g^x$ in a simple Lie algebra $\mathfrak g$.  One may reduce to the case when $\ad x$ is a nilpotent derivation.  Then $Y(\mathfrak g^x)$ is often polynomial \cite {PPY}, for example if $\pi$ is of type $A$ or type $C$. Here it is less appropriate to consider the canonical truncation of $\mathfrak g^x$ for the reasons discussed in \cite {JS}.

\subsection{Integrality}\label{1.4}

Let $\mathfrak a$ be an algebraic Lie algebra and $h \in \mathfrak a$, which is ad-semisimple.

For all $i \in \textbf{k}$, set $\mathfrak a_i= \{a \in \mathfrak a| (\ad h)a=ia\}$, $\mathbb I := \{i \in \textbf{k}| \mathfrak a_i\neq 0\}$ and $\mathfrak a_\mathbb Z:=\oplus_{i \in \mathbb Z}\mathfrak a_i$.

Now let $(h,\eta)$ be an adapted pair for $\mathfrak a$ such that $\ad h$ is semisimple.

On a recent visit to the Weizmann Institute, Elashvili asked if one always has $\mathbb I \subset \mathbb Z$, equivalently if $\mathfrak a = \mathfrak a_\mathbb Z$. In this case we say that the adapted pair $(h,\eta)$ has the integrality property.  Though I had considered this question before and indeed it was the reason that in \cite [2.7]{J5} I had chosen the normalization $(\ad h)\eta=-\eta$, I previously had not take this question too seriously.  Already in \cite [2.7]{J5} it did not even seem obvious that the eigenvalues on the stabilizer $\mathfrak a^\eta$ are integer.

The integrality property can fail (see \ref {1.5}) but one can ask if it holds for regular Lie algebras (see \ref {2.1}) which includes most truncated biparabolics.  One advantage of this restriction is that the eigenvalues of $\ad h$ are integer on the stabilizer $\mathfrak a^\eta$ \cite [Cor. 2.3]{JS}.

If $\mathfrak q$ is a truncated biparabolic subalgebra of a simple Lie algebra of type $A$, then the semi-simple element $h$ of an adapted pair satisfies the integrality property \cite [9.10]{FJ2}.  Already generalizing this to truncated parabolics in the symplectic case is very difficult, let alone for all truncated biparabolics and indeed all regular Lie algebras. Here we develop some general techniques to handle this question and apply it to proving that the integrality property holds for adapted pairs in all truncated parabolic subalgebras of $\mathfrak {sp}(2n)$.   Even this rather special case occupies the whole of the rather long Section \ref {8}.  The difficulty is inherent in the fact that we have very little idea of how to classify adapted pairs.

One may remark that $\mathfrak g^x$ admits an adapted pair \cite {J6} if $\pi$ is of type $A$ and this pair satisfies integrality by \cite [3.3,3.4]{JS} combined with \cite [Cor. 4.15]{J6}, or by direct computation.

\subsection{The Frobenius Case}\label{1.5}

Recall that an algebraic Lie algebra $\mathfrak a$ is called Frobenius if $\ell(\mathfrak a)=0$.  If $\mathfrak a$ is Frobenius, it admits an adapted pair $(h,\eta)$ which in addition is unique up to conjugation by $\textbf{A}$.  In this case $\mathfrak a^\eta=0$. Yet the integrality property generally fails. Indeed \cite [9.11]{FJ2} for every rational number $c$, there is a four dimensional algebraic subalgebra of the Borel subalgebra $\mathfrak b$ of $\mathfrak {sl}(3)$ of index $0$ for which the semisimple element of the adapted pair admits $c$ as an eigenvalue.  In the language of \cite {J8} these algebras are the possible ablations of $\mathfrak b$ which is itself an example of an almost-Frobenius biparabolic.  It is general phenomenon that almost Frobenius biparabolics admit a family of ablations which are Frobenius having a unique up to equivalence adapted pair which generally does not satisfy integrality.

On the other hand the structure of Frobenius biparabolic subalgebras is more rigid, and for them the integrality property holds (Theorem \ref {5.10}).  Outside type $A$ this is not entirely trivial.

\subsection{}\label{1.6}

Let $\mathfrak a$ be an algebraic Lie algebra.   Here we recall that $\mathfrak a$ is algebraic if and only if we can write $\mathfrak a = \mathfrak r \oplus \mathfrak m$, with $\mathfrak m$ the nilpotent radical and $\mathfrak r$ reductive with the property that any Cartan subalgebra $\mathfrak h$ of $\mathfrak r$ admits a basis $\{h_k\}_{k \in K}$ (with $K$ an appropriate index set) such that $\ad h_k$ has integer eigenvalues on $\mathfrak a$ for all $k \in K$.  We call $\mathfrak r$ the reductive part of $\mathfrak a$.  Its isomorphism class is canonically determined as $\mathfrak a/\mathfrak m$.

 In addition we recall that $\mathfrak r$ is reductive if and only if it admits a non-degenerate $\ad \mathfrak r$ invariant pairing $\varphi :\mathfrak r \times \mathfrak r \rightarrow \textbf{k}$.

Let $(h,\eta)$ an adapted pair for $\mathfrak a$.  Define $\mathfrak a_\mathbb Z$ as in \ref {1.4}.  Obviously the integrality of the pair is equivalent to the assertion that $\mathfrak a = \mathfrak a_\mathbb Z$.  Thus it is useful to note the following

\begin {lemma}  $\mathfrak a_\mathbb Z $ is an algebraic Lie algebra.
\end {lemma}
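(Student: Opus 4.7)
The plan is to verify the characterization of algebraicity recalled in \ref{1.6} by exhibiting for $\mathfrak{a}_\mathbb{Z}$ an explicit decomposition $\mathfrak{a}_\mathbb{Z}=\mathfrak{r}_\mathbb{Z}\oplus \mathfrak{m}_\mathbb{Z}$ inherited from $\mathfrak{a}=\mathfrak{r}\oplus \mathfrak{m}$. First note that $\mathfrak{a}_\mathbb{Z}$ is a Lie subalgebra of $\mathfrak{a}$, since $[\mathfrak{a}_i,\mathfrak{a}_j]\subset \mathfrak{a}_{i+j}$ and $\mathbb{Z}$ is closed under addition.

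Next I would reduce to the case in which $h$ lies in a chosen Cartan $\mathfrak{h}\subset \mathfrak{r}$ equipped with an integer-eigenvalue basis $\{h_k\}_{k\in K}$ provided by \ref{1.6}. This reduction combines the abstract Jordan decomposition, which forces any $\ad$-semisimple element of $\mathfrak{a}$ to be $\mathbf{A}$-conjugate into $\mathfrak{r}$, with the $\mathbf{R}$-conjugacy of Cartan subalgebras of $\mathfrak{r}$. Replacing $(h,\eta)$ by an $\mathbf{A}$-conjugate replaces $\mathfrak{a}_\mathbb{Z}$ by an isomorphic subalgebra, so algebraicity is unaffected.

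Once $h\in \mathfrak{h}$, every $\mathfrak{h}$-weight space $\mathfrak{a}^\mu$ lies inside the $\ad h$-eigenspace $\mathfrak{a}_{\mu(h)}$, so
$$\mathfrak{a}_\mathbb{Z}=\bigoplus_{\mu(h)\in \mathbb{Z}}\mathfrak{a}^\mu.$$
In particular $\mathfrak{a}_\mathbb{Z}$ is $\mathfrak{h}$-stable and the splitting $\mathfrak{a}=\mathfrak{r}\oplus \mathfrak{m}$ restricts to $\mathfrak{a}_\mathbb{Z}=\mathfrak{r}_\mathbb{Z}\oplus \mathfrak{m}_\mathbb{Z}$, where $\mathfrak{r}_\mathbb{Z}:=\mathfrak{r}\cap \mathfrak{a}_\mathbb{Z}$ and $\mathfrak{m}_\mathbb{Z}:=\mathfrak{m}\cap \mathfrak{a}_\mathbb{Z}$. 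Here $\mathfrak{m}_\mathbb{Z}$ is visibly a nilpotent ideal of $\mathfrak{a}_\mathbb{Z}$, while
$$\mathfrak{r}_\mathbb{Z}=\mathfrak{h}\oplus \bigoplus_{\alpha\in \Delta_\mathbb{Z}}\mathfrak{r}_\alpha, \qquad \Delta_\mathbb{Z}:=\{\alpha\in \Delta(\mathfrak{r}) : \alpha(h)\in \mathbb{Z}\},$$
and $\Delta_\mathbb{Z}$ is symmetric and additively closed inside $\Delta(\mathfrak{r})$.

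To finish I would show $\mathfrak{r}_\mathbb{Z}$ is reductive by restricting a non-degenerate $\ad \mathfrak{r}$-invariant form on $\mathfrak{r}$ to $\mathfrak{r}_\mathbb{Z}$: the pairing of $\mathfrak{r}_\alpha$ with $\mathfrak{r}_{-\alpha}$ stays inside $\mathfrak{r}_\mathbb{Z}$ by symmetry of $\Delta_\mathbb{Z}$, so the restriction remains non-degenerate, and the last criterion in \ref{1.6} gives reductivity. Since $\mathfrak{h}$ is self-centralizing and toral in $\mathfrak{r}_\mathbb{Z}$ it is still a Cartan there, and its basis $\{h_k\}_{k\in K}$ has integer $\ad$-eigenvalues on $\mathfrak{a}\supset \mathfrak{a}_\mathbb{Z}$. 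Thus the criterion of \ref{1.6} applies to $\mathfrak{a}_\mathbb{Z}$ with reductive part $\mathfrak{r}_\mathbb{Z}$ and nilpotent radical $\mathfrak{m}_\mathbb{Z}$. The one step requiring genuine care is the initial reduction putting $h$ into $\mathfrak{h}$; after that the argument reduces to an elementary calculation on simultaneous weight space decompositions and closed root subsystems.
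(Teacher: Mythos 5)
Your proposal is correct and follows essentially the same route as the paper: assume (after conjugation) that $h$ lies in the Cartan subalgebra $\mathfrak h$ of $\mathfrak r$, split $\mathfrak a_\mathbb Z=\mathfrak r_\mathbb Z\oplus\mathfrak m_\mathbb Z$, get reductivity of $\mathfrak r_\mathbb Z$ from the restriction of the invariant non-degenerate form pairing opposite eigenspaces (the paper pairs $\mathfrak r_i$ with $\mathfrak r_{-i}$, you refine to $\mathfrak h$-weight spaces), and observe that $\mathfrak m_\mathbb Z\subset\mathfrak m$ is the nilpotent radical, so the criterion of \ref{1.6} applies. The only difference is that you spell out the reduction to $h\in\mathfrak h$, which the paper simply asserts.
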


\begin {proof}  This is rather obvious but we give the details anyway.

Let $(h,\eta)$ be an adapted pair for $\mathfrak a$.  Recall notation of the first part of \ref {2.1}.  We can assume that $h \in \mathfrak h$.  Then $\varphi$ restricts to a non-degenerate pairing $\mathfrak r_i \times \mathfrak r_{-i} \mapsto \textbf{k}$.  Thus  $\mathfrak r_\mathbb Z$ is reductive with Cartan subalgebra $\mathfrak h$. On the other hand $\mathfrak m_\mathbb Z \subset \mathfrak m$ so is nilpotent and an ideal in $\mathfrak a_\mathbb Z$, hence must be its nilpotent radical.  We conclude that $\mathfrak a_\mathbb Z$ is algebraic.
\end {proof}
\section{The Regular Case}\label{2}

\subsection{Regular Lie algebras and Weierstrass Sections}\label{2.1}

\

Let $\mathfrak a$ be an algebraic Lie algebra.  Throughout this section fix a decomposition $\mathfrak a = \mathfrak r \oplus \mathfrak m$ as in \ref {1.6} let $\mathfrak h$ be a Cartan subalgebra of $\mathfrak r$. Define $\mathfrak a_\mathbb Z$ as in \ref {1.4} and observe that $\mathfrak h$ is also a Cartan subalgebra for $\mathfrak r_\mathbb Z=\mathfrak a_\mathbb Z \cap \mathfrak r$.

Take $\eta \in \mathfrak a^*$ (not necessarily regular) and $V \in \mathfrak a^*$ a subspace. One calls $\eta+V$ a linear subvariety of $\mathfrak a^*$.

  There are two different senses for a linear subvariety $\eta+V$ to be a Weierstrass section for the action of $\bf{A}$ on $\mathfrak a$.

 The algebraic sense given by Popov [2.2]{P}.  This means that restriction of functions induces an isomorphism of $Y(\mathfrak a)$ onto the algebra $\textbf{k}[\eta +V]$ of regular functions on $\eta +V$.

 The geometric sense given in \cite [7.3]{J7}.  This means that $\textbf{A}(\eta+V)$ is dense in $\mathfrak a^*$ and that every orbit in $\textbf{A}(\eta +V)$ cuts $\eta +V$ at exactly one point and furthermore transversally.

 (In \cite [7.3]{J7}, a Weierstrass section was called a slice; but we have now adopted the terminology of the Russian school (cf \cite[7.1]{J7}, \cite [2.2]{P}).)

 Suppose that $Sy(\mathfrak a)=Y(\mathfrak a)$. Then the algebraic sense implies the geometric sense \cite [12.11.2]{FJ2}.  The converse holds if $(\eta+V)\setminus (\eta+V)_{reg}$ has codimension $\geq 2$.

 We say $\mathfrak a$ admits no proper semi-invariants if $Sy(\mathfrak a)=Y(\mathfrak a)$.  We say that $\mathfrak a$ is regular if it admits no proper semi-invariants and $Y(\mathfrak a)$ is polynomial.  As noted above many truncated biparabolics and centralizers are regular.  Together these give a hugely varied number of examples.

\

 \textbf{In the remainder of this section we assume that $\mathfrak a$ is regular and admits an adapted pair $(h,\eta)$.}

Under the above assumption we may choose $V$ as an $h$-stable complement to $(\ad \mathfrak a)\eta$ in $\mathfrak a^*$.  Indeed by \cite [Cor. 2.3]{JS} $\eta+V$ is a Weierstrass section in the algebraic sense and furthermore through \cite [Prop. 7.8]{J7} all its elements are regular.

\subsection{}\label{2.2}

Let  $V$ be an $\ad h$ stable complement to $(\ad \mathfrak a)\eta$ in $\mathfrak a^*$.  The eigenvalues of $\ad h$ on $V$ are called the exponents $m_i:i\in I(\mathfrak a)$, of the adapted pair.

Let $d_i: i \in I(\mathfrak a)$ be the degrees of the homogeneous generators of $Y(\mathfrak a)$.

By \cite [Cor. 2.3]{JS} one has $m_i=d_i-1, \forall i\in I(\mathfrak a)$ and moreover $\eta+V$ is a Weierstrass section for the action $\textbf{A}$ on $\mathfrak a^*$.

Through the non-degenerate pairing $V\times\mathfrak a^\eta \rightarrow \textbf{k}$ (cf \cite [2.1]{JS}) it follows that eigenvalues of $\ad h$ on $\mathfrak a^\eta$ are the $-m_i:i \in I(\mathfrak a)$.  In particular $\mathfrak a^\eta \subset \mathfrak a_\mathbb Z$.

\begin {lemma}  The zero $\ad h$ eigensubspace $\mathfrak a^\eta_0$ of $\mathfrak a^\eta$ coincides with the centre $\mathfrak z$ of $\mathfrak a$.
\end {lemma}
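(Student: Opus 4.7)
The plan is to prove the two inclusions $\mathfrak{z} \subset \mathfrak{a}^\eta_0$ and $\dim \mathfrak{a}^\eta_0 \leq \dim \mathfrak{z}$, which together force equality since the central containment is automatic.

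First I would observe that $\mathfrak{z} \subset \mathfrak{a}^\eta$ is immediate: any $z$ in the centre satisfies $(\ad z)\xi = 0$ for every $\xi \in \mathfrak{a}^*$, so in particular $(\ad z)\eta = 0$. Moreover, since $h \in \mathfrak{a}$ we have $[h,z]=0$ for all $z \in \mathfrak{z}$, hence $\mathfrak{z} \subset \mathfrak{a}_0$. Combining these two facts, $\mathfrak{z} \subset \mathfrak{a}^\eta_0$.

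The heart of the argument is the reverse dimension inequality, which uses the regularity hypothesis and the material recalled in \ref{2.2}. By assumption $Y(\mathfrak{a})$ is polynomial on homogeneous generators of degrees $d_i$, $i \in I(\mathfrak{a})$, and by \cite[Cor.\ 2.3]{JS} the eigenvalues of $\ad h$ on $\mathfrak{a}^\eta$ are the integers $-m_i = -(d_i - 1)$. Hence
$$
\dim \mathfrak{a}^\eta_0 \;=\; \#\{\, i \in I(\mathfrak{a}) \mid d_i = 1\,\}.
$$
Now the degree-one homogeneous component of $Y(\mathfrak{a})$ is, by definition, the space of $\mathbf{A}$-invariant linear forms on $\mathfrak{a}^*$, which under the canonical identification $\mathfrak{a} \cong (\mathfrak{a}^*)^*$ is exactly $\mathfrak{a}^{\mathbf{A}} = \mathfrak{z}$. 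Since $Y(\mathfrak{a})$ is polynomial, the degree-one generators must form a basis of this component, so the number of $i$ with $d_i = 1$ equals $\dim \mathfrak{z}$.

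Combining, $\dim \mathfrak{a}^\eta_0 = \dim \mathfrak{z}$ together with $\mathfrak{z} \subset \mathfrak{a}^\eta_0$ yields the claimed equality. There is no real obstacle here beyond unwinding the definitions; the statement is essentially a counting consequence of the polynomiality of $Y(\mathfrak{a})$ combined with the identification $m_i = d_i - 1$ already recorded in \ref{2.2}, and the only point worth flagging is that regularity (rather than just the existence of an adapted pair) is what guarantees the degree-one part of $Y(\mathfrak{a})$ is spanned by generators and hence has dimension exactly equal to $\#\{i : d_i = 1\}$.
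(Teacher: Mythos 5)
Your proof is correct and follows essentially the same route as the paper: the inclusion $\mathfrak z \subset \mathfrak a^\eta_0$ is noted as obvious, and the reverse is obtained by counting, via $m_i=d_i-1$, the zero eigenvalues of $\ad h$ on $\mathfrak a^\eta$ against the degree-one (linear) invariants in $Y(\mathfrak a)$, which identify with $\mathfrak z$. You have merely spelled out the details that the paper compresses into one sentence.
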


\begin {proof} Obviously $\mathfrak z \subset \mathfrak a^\eta_0$.  On the other hand by the above $\dim \mathfrak a^\eta_0$ is just the dimension of the space of linear invariants in $Y(\mathfrak a)$ which itself is just $\mathfrak z$.
\end {proof}

\subsection{}\label{2.3}

  Following the convention in \ref {1.1} we let $\textbf{A}_\mathbb Z$ denotes the adjoint group of $\mathfrak a_\mathbb Z$.  Since the eigenvalues of $\ad h$ on $\eta$ and on $V$ are integer, we can identify $\eta+V$ with a linear subvariety of $\mathfrak a_\mathbb Z^*$.  Recall that $\eta+V$ is a Weierstrass section for the action $\textbf{A}$ on $\mathfrak a^*$ and that $\text {index} \ \mathfrak a = \dim V$.

\begin {prop}

\

(i)  $\eta +V$ is a Weierstrass section in the geometric sense for the action of $\textbf{A}_\mathbb Z$ on $\mathfrak a^*_\mathbb Z$.

\

(ii) $\text {index} \ \mathfrak a_\mathbb Z = \dim V$.

\

(iii)  $\eta+V \subset (\mathfrak a_\mathbb Z^*)_{reg}$.

\end {prop}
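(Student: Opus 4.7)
The plan is to use the $\ad h$-grading to push the existing Weierstrass structure on $(\mathfrak a^*,\textbf{A})$ down to $(\mathfrak a_\mathbb Z^*,\textbf{A}_\mathbb Z)$. Since the exponents $m_i$ lie in $\mathbb Z$ (Sect.~2.2), $V\subset\mathfrak a_\mathbb Z^*$, and in fact $\mathfrak a_\mathbb Z^*$ identifies with the integer $\ad h$-eigenvalue part of $\mathfrak a^*$. The decomposition $(\ad\mathfrak a)\eta\oplus V=\mathfrak a^*$ is $\ad h$-stable, since $\eta$ has $\ad h$-weight $-1$ and so $(\ad\mathfrak a_j)\eta\subset\mathfrak a^*_{j-1}$. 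Extracting the integer-weight parts on both sides yields the key identity
$$(\ad\mathfrak a_\mathbb Z)\eta\oplus V=\mathfrak a_\mathbb Z^*,$$
which drives everything else.

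This identity gives $\dim\mathfrak a_\mathbb Z^\eta=\dim V$, hence $\ell(\mathfrak a_\mathbb Z)\le\dim V$. For the reverse inequality needed for (ii), I would note that restriction of functions $f\mapsto f|_{\mathfrak a_\mathbb Z^*}$ carries $Y(\mathfrak a)$ into $Y(\mathfrak a_\mathbb Z)$ (since $\textbf{A}_\mathbb Z\subset\textbf{A}$), and because $\eta+V\subset\mathfrak a_\mathbb Z^*$, further restriction to $\eta+V$ makes the composition
$$Y(\mathfrak a)\longrightarrow Y(\mathfrak a_\mathbb Z)\longrightarrow\textbf{k}[\eta+V]$$
equal to the algebraic Weierstrass isomorphism recalled in Sect.~2.1. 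Hence the first arrow is injective. By regularity of $\mathfrak a$, $Y(\mathfrak a)$ is polynomial of transcendence degree $\ell(\mathfrak a)=\dim V$, so Chevalley-Rosenlicht (Sect.~1.2) gives $\ell(\mathfrak a_\mathbb Z)\ge\text{trdeg}\,Y(\mathfrak a_\mathbb Z)\ge\dim V$, proving (ii).

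For (iii), fix $\xi\in\eta+V$. The inclusion $\mathfrak a_\mathbb Z^*\hookrightarrow\mathfrak a^*$ gives $\mathfrak a_\mathbb Z^\xi=\mathfrak a_\mathbb Z\cap\mathfrak a^\xi$, and since $\eta+V\subset\mathfrak a^*_{reg}$ (Sect.~2.1) one has $\dim\mathfrak a^\xi=\ell(\mathfrak a)=\dim V=\ell(\mathfrak a_\mathbb Z)$ by (ii). Thus $\dim\mathfrak a_\mathbb Z^\xi\le\ell(\mathfrak a_\mathbb Z)$, forcing equality, so $\xi$ is regular in $\mathfrak a_\mathbb Z^*$. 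Part (i) then follows quickly. Transversality at each $\xi\in\eta+V$ is a dimension count: $(\ad\mathfrak a_\mathbb Z)\xi+V\subset\mathfrak a_\mathbb Z^*$ has dimension $(\dim\mathfrak a_\mathbb Z-\dim V)+\dim V=\dim\mathfrak a_\mathbb Z^*$ by (iii), so the sum is direct and fills $\mathfrak a_\mathbb Z^*$. Density of $\textbf{A}_\mathbb Z(\eta+V)$ follows from surjectivity at $(e,\eta)$ of the differential of the action map $\textbf{A}_\mathbb Z\times(\eta+V)\to\mathfrak a_\mathbb Z^*$. Uniqueness of intersection transfers from the $\textbf{A}$-side, since $\eta+V$ is a geometric Weierstrass section for $\textbf{A}$ on $\mathfrak a^*$ (Sect.~2.1, regular case) and $\textbf{A}_\mathbb Z\subset\textbf{A}$.

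The main obstacle is the lower bound $\ell(\mathfrak a_\mathbb Z)\ge\dim V$; every other ingredient reduces either to the graded identity displayed above or to transfer from the already established $\textbf{A}$-case. What makes the bound go through is that restriction to $\mathfrak a_\mathbb Z^*$ is compatible with the algebraic Weierstrass factorisation, precisely because $\eta+V$ already lies inside $\mathfrak a_\mathbb Z^*$.
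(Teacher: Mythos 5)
Your route is genuinely different from the paper's and, where complete, sound. The graded identity $(\ad\mathfrak a_\mathbb Z)\eta\oplus V=\mathfrak a_\mathbb Z^*$ is a clean strengthening of the paper's opening step, which only establishes $\mathfrak a^\eta=\mathfrak a_\mathbb Z^\eta$ (hence $\dim\mathfrak a_\mathbb Z^\eta=\dim V$) by the integer-weight vanishing argument. For (ii) the paper argues geometrically: it bounds $\dim\mathfrak a_\mathbb Z^\xi\leq\dim\mathfrak a_\mathbb Z^\eta$ for all $\xi\in\eta+V$ by a deformation argument (using that the eigenvalues of $\ad h$ on $V$ are non-negative), deduces density of $\textbf{A}_\mathbb Z(\eta+V)$ from the fibre-dimension theorem of Shafarevich, and then reads off the index from the generic fibre; you instead obtain the lower bound $\ell(\mathfrak a_\mathbb Z)\geq\dim V$ algebraically, from the injectivity of $Y(\mathfrak a)\rightarrow Y(\mathfrak a_\mathbb Z)$ (a fact the paper only records later, in Lemma 2.6, but whose proof does not depend on the Proposition) together with Chevalley--Rosenlicht. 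Your density argument, via dominance of the action map $\textbf{A}_\mathbb Z\times(\eta+V)\rightarrow\mathfrak a_\mathbb Z^*$ at $(e,\eta)$ using the key identity, is a legitimate simplification of the paper's fibration argument. Your (iii) also differs: the paper gets it from its deformation bound plus (ii), you get it from regularity of $\xi$ in $\mathfrak a^*$ plus (ii).

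Two steps need repair. First, the equality $\mathfrak a_\mathbb Z^\xi=\mathfrak a_\mathbb Z\cap\mathfrak a^\xi$ for $\xi\in\eta+V$ is not a formal consequence of the inclusion $\mathfrak a_\mathbb Z^*\hookrightarrow\mathfrak a^*$: you must check that if $x\in\mathfrak a_\mathbb Z$ and $(\ad x)\xi$ vanishes on $\mathfrak a_\mathbb Z$, then it vanishes on all of $\mathfrak a$. This holds because $\xi$ has integer $\ad h$-weights, so $(\ad x)\xi$ lies in the integer-weight part of $\mathfrak a^*$ and hence kills $\mathfrak c$ --- precisely the argument the paper runs for $\eta$; without it your bound $\dim\mathfrak a_\mathbb Z^\xi\leq\dim\mathfrak a^\xi$ in (iii) is unsupported. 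Second, transversality in (i) cannot be extracted from your dimension count: asserting that $(\ad\mathfrak a_\mathbb Z)\xi+V$ has dimension equal to the sum of the two dimensions is equivalent to the directness you are trying to prove. The correct (and immediate) argument is the paper's: $T_{\xi,\textbf{A}_\mathbb Z\xi}=(\ad\mathfrak a_\mathbb Z)\xi\subseteq(\ad\mathfrak a)\xi=T_{\xi,\textbf{A}\xi}$, and the latter meets $V$ trivially because $\eta+V$ is a Weierstrass section in the geometric sense for $\textbf{A}$ (the fact you already invoke for uniqueness of the intersection point). With these two patches your proof goes through.
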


\begin {proof} Since $\mathfrak a^\eta \subset \mathfrak a_\mathbb Z$ it follows that $\mathfrak a^\eta \subset \mathfrak a_\mathbb Z^\eta$.  Conversely $\ad h$ has integer eigenvalues on $(\ad \mathfrak a_\mathbb Z) \eta$ and so the latter must vanish on any $\ad h$ eigenvector of $\mathfrak a$ which does not have an integer eigenvalue.  Thus if $x \in \mathfrak a_\mathbb Z$ satisfies $(\ad x)\eta=0$ on $\im(\mathfrak a^* \rightarrow \mathfrak a^*_\mathbb Z)$, then $(\ad x)\eta=0$ on $\mathfrak a^*$.  Hence the opposite inclusion.   In particular $\dim \mathfrak a^\eta_\mathbb Z=\dim \mathfrak a^\eta=\dim V$.

On the other hand the eigenvalue of $\ad h$ on $\eta$ equals $-1$ whilst those on $V$ are the $m_i$ and in particular non-negative.  Then by a standard deformation argument (cf \cite [7.8]{J7}), it follows that $\dim \mathfrak a_\mathbb Z^\xi \leq \dim \mathfrak a_\mathbb Z^\eta$, for all $\xi \in \eta+V$.

The hypothesis of transversality means that $T_{\xi,\textbf{A}\xi} \cap T_{\xi, \eta+V}=0$ for all $\xi \in \eta+V$.  Since $T_{\xi,\textbf{A}_\mathbb Z\xi}\subset T_{\xi,\textbf{A}\xi}$, it follows that $\textbf{A}_\mathbb Z \xi$ cuts $\eta +V$ transversally at each point $\xi \in \eta+V$.  Again $\textbf{A}_\mathbb Z \subset \textbf{A}$ and so every $\textbf{A}_\mathbb Z$ orbit through $\eta+V$ meets the latter at exactly one point.

In particular we have a fibration $\textbf{A}_\mathbb Z(\eta+V) \rightarrow\eta+V$ with base $\eta+V$ of dimension $\dim V$ in which the fibre over $\xi:=\eta +v:v \in V$ is just the orbit $\textbf{A}_\mathbb Z\xi$ and has dimension $\dim \mathfrak a^*_\mathbb Z - \dim \mathfrak a^\xi_\mathbb Z\geq \dim \mathfrak a^*_\mathbb Z - \dim V$. Then by \cite [Thm. 7, Sect. 3, Chap. I]{Sh} one obtains $\dim \textbf{A}_\mathbb Z(\eta+V) \geq \dim \mathfrak a^*_\mathbb Z$. Thus equality holds and $\textbf{A}_\mathbb Z(\eta+V)$ is dense in $\mathfrak a_\mathbb Z^*$.


This proves (i).

Since $(\mathfrak a_\mathbb Z^*)_{reg}$ is dense in $\mathfrak a_\mathbb Z^*$, the generic fibre of the above fibration has dimension $\dim \mathfrak a^*_\mathbb Z - \text{index} \ \mathfrak a_\mathbb Z$, forcing $\text{index} \ \mathfrak a_\mathbb Z = \dim V$. Hence (ii).

  Finally $\dim \mathfrak a_\mathbb Z^\xi \leq \dim \mathfrak a_\mathbb Z^\eta =\dim V = \text{index} \ \mathfrak a_\mathbb Z \leq \dim \mathfrak a_\mathbb Z^\xi$. Hence (iii).

 \end {proof}

 \subsection{}\label{2.4}

Retain the above hypotheses.  Let $\psi:S(\mathfrak a)\rightarrow \textbf{k}[\eta+V]$ be restriction of functions.

\begin{cor}

\

(i)  $\textbf{A}_\mathbb Z(\eta+V)$ is open dense in $\mathfrak a^*_\mathbb Z$.

\

(ii) Let $q \in Sy(\mathfrak a_\mathbb Z)$ be a non-zero semi-invariant.   Then $\psi(q)\neq 0$.

\

(iii) $\psi$ restricts to an embedding of $Y(\mathfrak a_\mathbb Z)$ into $\textbf{k}[\eta+V]$.

\end{cor}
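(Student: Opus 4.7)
The plan is to derive all three parts from the Proposition in \ref{2.3}, treating (i) as the main step and then deducing (ii) and (iii) formally from it.

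For (i), the Proposition already yields density of $\textbf{A}_\mathbb{Z}(\eta+V)$ in $\mathfrak{a}_\mathbb{Z}^*$, so only openness remains. Fix $\xi \in \eta + V$. Combining the transversality assertion in Proposition (i), the regularity $\xi \in (\mathfrak{a}_\mathbb{Z}^*)_{reg}$ from Proposition (iii), and the index identity $\operatorname{index}\mathfrak{a}_\mathbb{Z} = \dim V$ from Proposition (ii), a dimension count shows that $T_\xi(\textbf{A}_\mathbb{Z}\xi) = (\operatorname{ad}\mathfrak{a}_\mathbb{Z})\xi$ has dimension $\dim \mathfrak{a}_\mathbb{Z} - \dim V$ while $T_\xi(\eta+V) = V$ has dimension $\dim V$, and the two intersect trivially; hence their sum fills $T_\xi\mathfrak{a}_\mathbb{Z}^*$. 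Consequently the action map $\alpha \colon \textbf{A}_\mathbb{Z} \times (\eta+V) \to \mathfrak{a}_\mathbb{Z}^*$ has surjective differential at each $(e,\xi)$ with $\xi \in \eta + V$, and by equivariance under left multiplication in $\textbf{A}_\mathbb{Z}$, also at every $(g,\xi) \in \textbf{A}_\mathbb{Z} \times (\eta+V)$. Thus $\alpha$ is smooth on its entire source, and its image $\textbf{A}_\mathbb{Z}(\eta+V)$ is open in $\mathfrak{a}_\mathbb{Z}^*$.

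For (ii), let $q \in Sy(\mathfrak{a}_\mathbb{Z})$ be a nonzero semi-invariant of some weight $\lambda$. For every $g \in \textbf{A}_\mathbb{Z}$, $g \cdot q$ is a scalar multiple of $q$, so the zero locus $Z(q) \subset \mathfrak{a}_\mathbb{Z}^*$ is $\textbf{A}_\mathbb{Z}$-stable. If $\psi(q) = 0$, then $\eta + V \subset Z(q)$ and hence $\textbf{A}_\mathbb{Z}(\eta+V) \subset Z(q)$; by (i) this subset is dense in $\mathfrak{a}_\mathbb{Z}^*$, which forces $Z(q) = \mathfrak{a}_\mathbb{Z}^*$ and hence $q = 0$, a contradiction. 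Part (iii) is then immediate: $Y(\mathfrak{a}_\mathbb{Z}) \subset Sy(\mathfrak{a}_\mathbb{Z})$, so the algebra homomorphism $\psi$ restricted to $Y(\mathfrak{a}_\mathbb{Z})$ is injective by (ii), hence an embedding.

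The only delicate step is the openness assertion in (i): it requires carrying all three conclusions of the Proposition through simultaneously in order to make the dimension count close up, but once that is arranged the passage from surjective differential to open image is a standard algebro-geometric fact.
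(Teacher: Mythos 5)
Your proof is correct, and its overall skeleton is the same as the paper's: everything is extracted from the Proposition of \ref{2.3}, with (ii) deduced from (i) and (iii) from (ii). The one place you diverge is the openness statement in (i): the paper disposes of it by identifying a geometric Weierstrass section with a slice and citing the third paragraph of [J7, 7.3], whereas you prove it directly, showing that the action map $\textbf{A}_\mathbb Z\times(\eta+V)\to\mathfrak a_\mathbb Z^*$ has surjective differential along $\{e\}\times(\eta+V)$ by combining the trivial intersection $T_{\xi,\textbf{A}_\mathbb Z\xi}\cap T_{\xi,\eta+V}=0$ with the dimension count coming from $\xi\in(\mathfrak a_\mathbb Z^*)_{reg}$ and $\operatorname{index}\mathfrak a_\mathbb Z=\dim V$, and then invoking openness of smooth (submersive) morphisms in characteristic zero. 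This is exactly the standard argument behind the cited fact, so what you gain is a self-contained proof at the cost of a little length; note also that for (ii) density alone already suffices, since the zero locus of a semi-invariant is closed and $\textbf{A}_\mathbb Z$-stable, so the openness part of (i) is needed only because it is asserted in the statement, not for the deductions that follow.
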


\begin {proof} Recall that a Weierstrass section in the geometric sense is just a slice in the language of \cite [7.3]{J7}.  Then (i) follows from \ref {2.3}(ii) and the third paragraph of \cite [7.3]{J7}. (ii) follows from (i) and (iii) from (ii).
\end {proof}

\textbf{Remarks}. In general it is false that $\psi(q)\neq 0$, for all $q \in Sy(\mathfrak a)$.  Actually by \cite [Prop. 12.3]{J7} restriction of functions induces an isomorphism $\textbf{k}[\textbf{A}_\mathbb Z(\eta +V)]^{\textbf{A}_\mathbb Z} \iso \textbf{k}[\eta+V]$; but this will not be needed.  Again we shall not need to know (the deeper fact) that $\mathfrak a^*_\mathbb Z \setminus \textbf{A}_\mathbb Z(\eta+V)$ is a \textit{closed} subvariety.

 \subsection{}\label{2.5}

 Let $\mathfrak c$ be the subspace of $\mathfrak a$ spanned by the $\ad h$ eigenvectors having eigenvalues in $\textbf{k}\setminus \mathbb Z$.

 \begin {lemma}  $\varphi(x,y):=\eta[x,y], \forall x,y \in \mathfrak c$ defines a non-degenerate anti-symmetric bilinear form on $\mathfrak c$.
 \end {lemma}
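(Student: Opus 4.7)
The plan is straightforward: bilinearity is immediate and antisymmetry follows from $[x,y]=-[y,x]$, so the entire content is non-degeneracy of $\varphi$ on $\mathfrak c$.

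My first move is to exploit the eigenvalue information of $\eta$. Since $(\ad h)\eta=-\eta$ and the coadjoint action is defined so that the natural pairing is $\ad h$-invariant, for any $\ad h$ eigenvector $z\in\mathfrak a_j$ one gets $(j-1)\eta(z)=0$. Hence $\eta$ vanishes on every eigensubspace $\mathfrak a_j$ with $j\neq 1$; in particular $\eta$ is supported (in the dual sense) on $\mathfrak a_1\subset\mathfrak a_\mathbb Z$.

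Next I would decompose $\mathfrak c$ into $\ad h$-eigenspaces and, to show non-degeneracy, reduce to testing an eigenvector $x\in\mathfrak a_\lambda\cap\mathfrak c$ with $\lambda\in\mathbf k\setminus\mathbb Z$. For $y\in\mathfrak a_\mu$ we have $[x,y]\in\mathfrak a_{\lambda+\mu}$, so by the previous paragraph $\varphi(x,y)=\eta([x,y])$ can only be non-zero when $\lambda+\mu=1$, i.e.\ $\mu=1-\lambda\notin\mathbb Z$, forcing $y\in\mathfrak c$. Consequently the condition $\varphi(x,\mathfrak c)=0$ is equivalent to $\eta([x,\mathfrak a])=0$, which says exactly that $x\in\mathfrak a^\eta$.

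Finally I would invoke the inclusion $\mathfrak a^\eta\subset\mathfrak a_\mathbb Z$ established in \ref{2.2}: since $x\in\mathfrak c$ has all its $\ad h$-eigencomponents with non-integer eigenvalues while $\mathfrak a^\eta\subset\mathfrak a_\mathbb Z$, we conclude $\mathfrak c\cap\mathfrak a^\eta=0$, so $x=0$. The only step that is not utterly mechanical is the weight computation showing that the restriction of $\varphi$ to $\mathfrak c$ already detects the full condition $x\in\mathfrak a^\eta$; once that is in hand, the inclusion from \ref{2.2} closes the argument immediately.
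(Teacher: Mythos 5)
Your proof is correct and follows essentially the same route as the paper: the weight computation showing $\eta$ is supported on the eigenvalue-one subspace forces $\eta[x,y]=0$ for $x\in\mathfrak c$, $y\in\mathfrak a_\mathbb Z$, so a kernel vector of $\varphi|_{\mathfrak c}$ lies in $\mathfrak a^\eta\subset\mathfrak a_\mathbb Z$, contradicting $\mathfrak c\cap\mathfrak a_\mathbb Z=0$. The paper merely states the vanishing $\eta[\mathfrak c,\mathfrak a_\mathbb Z]=0$ as "clear" and works with an arbitrary $x\in\mathfrak c$ (so no reduction to eigenvectors is needed), while you make the same weight argument explicit; the content is identical.
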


 \begin {proof}  Clearly $\eta[x,y]=0, \forall x \in \mathfrak c, y \in \mathfrak a_\mathbb Z$.  Thus if $x \in \Ker \varphi$, then $(\ad x)\eta =0$.  This contradicts the fact that $\mathfrak a^\eta \subset \mathfrak a_\mathbb Z$.
 \end {proof}

 \subsection{}\label{2.6}

  By the PBW theorem we can write $S(\mathfrak a)=S(\mathfrak a_\mathbb Z )\oplus \mathfrak c S(\mathfrak a)$.  Let $\mathscr P$ denote the projection onto the first factor.  It is an algebra map and may be viewed as restriction of functions to $\mathfrak a^*_\mathbb Z$.

Choose an $\ad h$ stable complement $V'$ to $\textbf{k}\eta$ in $(\ad \mathfrak a_\mathbb Z)\eta$ and choose $y\in S(\mathfrak a_\mathbb Z)$ vanishing on $V+V'$ and taking the value $1$ on $\eta$. One may remark that $(\ad h)y =y$.

 \begin {lemma} $\mathscr P:a \mapsto \mathscr P(a)$ is an $\mathfrak a_\mathbb Z$ module map and an algebra isomorphism of  $Y(\mathfrak a)$ onto $Y(\mathfrak a_\mathbb Z)$. Moreover $\psi$ induces an algebra isomorphism of $Y(\mathfrak a_\mathbb Z)$ onto $\textbf{k}[\eta +V]$.
 \end {lemma}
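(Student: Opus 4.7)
The plan is a short diagram chase, preceded by a module-map verification.

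First, I would check that the PBW decomposition $S(\mathfrak a) = S(\mathfrak a_\mathbb Z) \oplus \mathfrak c S(\mathfrak a)$ is $\mathfrak a_\mathbb Z$-stable. The key input is $[\mathfrak a_\mathbb Z, \mathfrak c] \subset \mathfrak c$: since $\ad h$ is a semisimple derivation whose eigenvalue on a bracket equals the sum of eigenvalues of the factors, bracketing an integer-eigenvector with a non-integer-eigenvector produces a non-integer-eigenvector. Extending by the Leibniz rule then shows each summand is $\mathfrak a_\mathbb Z$-stable, so $\mathscr P$ is an $\mathfrak a_\mathbb Z$-module map. Because $Y(\mathfrak a) \subset S(\mathfrak a)^{\mathfrak a} \subset S(\mathfrak a)^{\mathfrak a_\mathbb Z}$, it follows that $\mathscr P(Y(\mathfrak a)) \subset Y(\mathfrak a_\mathbb Z)$, and since $\mathscr P$ is an algebra map this exhibits $\mathscr P|_{Y(\mathfrak a)}$ as an algebra homomorphism into $Y(\mathfrak a_\mathbb Z)$.

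Next comes the key geometric observation: $\eta + V \subset \mathfrak a_\mathbb Z^*$. Indeed $\eta$ and every element of $V$ are $\ad h$-eigenvectors of integer eigenvalue (as already noted in \ref{2.3}), so they annihilate $\mathfrak c$; equivalently, $\mathfrak c S(\mathfrak a)$ vanishes on $\eta + V$. Writing $\psi_0: S(\mathfrak a_\mathbb Z) \to \textbf{k}[\eta+V]$ for the restriction map coming from $\eta + V \subset \mathfrak a_\mathbb Z^*$, this gives the factorization $\psi = \psi_0 \circ \mathscr P$.

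It then remains to combine two earlier facts. By \ref{2.2}, the linear subvariety $\eta + V$ is a Weierstrass section in the algebraic sense for $\textbf{A}$ on $\mathfrak a^*$, whence $\psi|_{Y(\mathfrak a)}: Y(\mathfrak a) \liso \textbf{k}[\eta+V]$. By Corollary \ref{2.4}(iii), $\psi_0|_{Y(\mathfrak a_\mathbb Z)}$ is injective. From $\psi|_{Y(\mathfrak a)} = \psi_0|_{Y(\mathfrak a_\mathbb Z)} \circ \mathscr P|_{Y(\mathfrak a)}$ and the surjectivity of the composite onto $\textbf{k}[\eta+V]$, $\psi_0|_{Y(\mathfrak a_\mathbb Z)}$ must also be surjective, hence an algebra isomorphism; this is the ``moreover'' clause. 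Inverting yields $\mathscr P|_{Y(\mathfrak a)} = (\psi_0|_{Y(\mathfrak a_\mathbb Z)})^{-1} \circ \psi|_{Y(\mathfrak a)}$, so $\mathscr P$ also restricts to an algebra isomorphism $Y(\mathfrak a) \liso Y(\mathfrak a_\mathbb Z)$. There is no real obstacle here; the only care needed is the bookkeeping that places $\eta + V$ inside $\mathfrak a_\mathbb Z^*$, which is what allows the whole diagram to close.
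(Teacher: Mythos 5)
Your proposal is correct and follows essentially the same route as the paper: establish $[\mathfrak a_\mathbb Z,\mathfrak c]\subset\mathfrak c$ so that $\mathscr P$ is an $\mathfrak a_\mathbb Z$-module map, observe that $\mathfrak cS(\mathfrak a)$ vanishes on $\eta+V$ so that $\psi$ factors as $\psi_0\circ\mathscr P$ and the composite is the restriction map, then use the Weierstrass section property of \ref{2.1} for surjectivity and Corollary \ref{2.4}(iii) for injectivity of $\psi_0$ on $Y(\mathfrak a_\mathbb Z)$. Your explicit verification that $\eta+V\subset\mathfrak a_\mathbb Z^*$ and the eigenvalue argument for the factorization only spell out what the paper leaves implicit.
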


 \begin {proof}  It is clear that $[\mathfrak a_\mathbb Z,\mathfrak c] \subset \mathfrak c$. Thus $\mathscr P$ is an $\mathfrak a_\mathbb Z$ module map.  Again $\mathfrak c \subset V'$.  Thus the composed map $Y(\mathfrak a) \rightarrow Y(\mathfrak a_\mathbb Z) \rightarrow \textbf{k}[\eta +V]$ is defined.  Moreover it is just the restriction map of invariant functions on $\mathfrak a^*$ to $\eta+V$.  Since by hypothesis $\eta+V$ is a Weierstrass section with respect to the algebra $Y(\mathfrak a)$ of invariant functions on $\mathfrak a^*$ - see \ref {2.1} - this composed map is an isomorphism.   In particular the first map $Y(\mathfrak a_\mathbb Z) \rightarrow \textbf{k}[\eta +V]$ is surjective, whilst it is injective by Corollary \ref {2.4}(iii).  Hence the assertions.
 \end {proof}

  \subsection{}\label{2.7}

  The aim of the remainder of Section \ref {2}, is to show that $Sy(\mathfrak a_\mathbb Z)$ is a polynomial algebra.  The main idea is to combine the polynomiality of $Y(\mathfrak a_\mathbb Z)$ implied by Lemma \ref {2.6} with polynomiality enforced by torus action as in the case of Frobenius Lie algebras.

  Recall that $Sy(\mathfrak a_\mathbb Z)$ is a unique factorization domain whose invertible elements are scalars.

  Now let $p \in  S(\mathfrak a_\mathbb Z)$ be a non-zero semi-invariant such that $\psi(p)$ is a scalar (necessarily non-zero by Corollary \ref {2.4}(ii)).  Then every irreducible factor of $p$ has this property.  Let $\{q_j\}_{j \in J}$ be the set of all the distinct irreducible factors so obtained, that is $\{q_j\}_{j \in J}$ is the set of all the irreducible non-scalar semi-invariants such that $\psi(q_j)$ is a non-zero scalar.  For all $j \in J$ let $\Lambda_j$ be the $\mathfrak h$ weight of $q_j$.

   Recall the Cartan subalgebra $\mathfrak h$ of the reductive part of $\mathfrak a$ and let $\mathfrak h_\mathbb Z$ be the $\mathbb Z$ linear span of the basis $\{h_k\}_{k \in K}$ defined in \ref {1.6}.  The matrix with entries $\{h_k(\Lambda_j\}_{j \in J, k \in K}$ has integer entries. The $\Lambda_j:j \in J$ are $\textbf{k}$-linearly independent if and only this matrix has rank $<|J|$ and in this case they are $\mathbb Z$ linearly dependent.

%

  \begin {lemma} The $\Lambda_j:j \in J$ are linearly independent.  In particular they freely generate a semigroup $\Lambda^+ \subset \mathfrak h^*$.   Moreover the $\ad h$ eigenvalue of $q_j$ equals $\deg q_j$ and so in particular lies in $\mathbb N^+$.
  \end {lemma}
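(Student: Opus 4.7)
The plan is to establish (3) first via a scaling argument, then deduce (1), from which (2) is immediate: any $\mathbb Z$-linear relation on the $\Lambda_j$ produces two products of $q_j$'s with disjoint irreducible supports that are forced to be proportional in the UFD $S(\mathfrak{a}_{\mathbb Z})$, contradicting unique factorisation. For (3), set $n_j := \Lambda_j(h)$ and consider $\phi_t := t^{-\ad h} \in \mathbf{A}_{\mathbb Z}$ (an algebraic torus element, since $\ad h$ has integer eigenvalues on $\mathfrak{a}_{\mathbb Z}$). By semi-invariance $\phi_t^* q_j = t^{n_j} q_j$, and $\phi_t$ stabilises $V$ while sending $\eta \mapsto t\eta$, so the hypothesis $\psi(q_j) \in \textbf{k}^*$ yields the identity
\[
q_j(t\eta + v) = t^{n_j}\,\psi(q_j), \qquad t \in \textbf{k}^*,\; v \in V.
\]
Decomposing $q_j = \sum_d q_j^{(d)}$ by polynomial degree and expanding $q_j^{(d)}(t\eta + v) = \sum_{k} t^k R_{d,k}(v)$ with $R_{d,k}$ homogeneous of degree $d-k$ in $v$, separate matching of $t$-coefficients and of $v$-degrees forces $R_{d,k} = 0$ except at $(d,k) = (n_j, n_j)$. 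Hence $q_j^{(d)}(\eta + v) \equiv 0$ for $d \neq n_j$, so Corollary \ref{2.4}(ii) applied to the semi-invariant $q_j^{(d)}$ gives $q_j^{(d)} = 0$, making $q_j$ homogeneous of degree $n_j$; positivity of $n_j$ follows since $q_j$ is non-scalar.

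For (1), suppose the $\Lambda_j$ are $\textbf{k}$-linearly dependent; by the matrix remark preceding the lemma there is a $\mathbb Z$-linear relation $\sum_j n_j \Lambda_j = 0$ with the $n_j$ not all zero. Split $J = J_+ \cup J_0 \cup J_-$ by sign and set $P := \prod_{J_+} q_j^{n_j}$, $Q := \prod_{J_-} q_j^{-n_j}$: nonzero semi-invariants of the same $\mathfrak{h}$-weight, both with $\psi$-image in $\textbf{k}^*$, whose irreducible factorisations are disjoint. The key step will be to upgrade equality of $\mathfrak{h}$-weights to equality of the full Lie-algebra characters $\chi_P = \chi_Q$: for any $\ad$-nilpotent $x \in \mathfrak{a}_{\mathbb Z}$, local nilpotence of $\ad x$ on $S(\mathfrak{a}_{\mathbb Z})$ combined with $(\ad x)^N p = \chi(x)^N p$ forces $\chi(x) = 0$ on every semi-invariant $p$. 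So $\chi$ annihilates the nilpotent radical $\mathfrak{m}_{\mathbb Z}$ and (automatically, as a Lie-algebra character) $[\mathfrak{r}_{\mathbb Z}, \mathfrak{r}_{\mathbb Z}]$, hence is determined by its restriction to the centre $\mathfrak{z}(\mathfrak{r}_{\mathbb Z}) \subset \mathfrak{h}$. Equal $\mathfrak{h}$-weights therefore give $\chi_P = \chi_Q$, so $P/Q \in C(\mathfrak{a}_{\mathbb Z})$.

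To close, $\psi(P/Q) = \psi(P)/\psi(Q) \in \textbf{k}^*$ is a scalar and $P/Q$ is $\mathbf{A}_{\mathbb Z}$-invariant, so $P/Q$ is constant on the open dense subset $\mathbf{A}_{\mathbb Z}(\eta + V) \subset \mathfrak{a}_{\mathbb Z}^*$ (Corollary \ref{2.4}(i)), hence a scalar. Then $P = cQ$ in $S(\mathfrak{a}_{\mathbb Z})$ contradicts the disjointness of the irreducible factorisations; this proves (1), and (2) is immediate since a linearly independent family freely generates its semigroup. The hardest part will be the character-equality upgrade --- showing that the Lie-algebra character of a semi-invariant of $\mathfrak{a}_{\mathbb Z}$ is determined by its $\mathfrak{h}$-weight --- without which the UFD contradiction cannot be triggered; the rest is bookkeeping with the Weierstrass-slice properties of $\eta + V$ for $\mathbf{A}_{\mathbb Z}$ already established in Proposition \ref{2.3} and Corollary \ref{2.4}.
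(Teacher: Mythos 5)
Your proposal is correct and follows essentially the same route as the paper's proof: the integrality of the matrix $(h_k(\Lambda_j))$ reduces $\textbf{k}$-linear dependence to a $\mathbb Z$-relation, the two products of the $q_j$ over $J_\pm$ have non-zero scalar images under $\psi$ and are forced to be proportional via Corollary 2.4, contradicting unique factorisation, while the eigenvalue--degree statement comes from the same scaling argument based on $(\ad h)\eta=-\eta$ and $\psi(q_j)\in \textbf{k}^*$. The differences are only presentational: you make explicit the upgrade from equal $\mathfrak h$-weights to equal $\mathfrak a_{\mathbb Z}$-characters (which the paper uses tacitly in treating $q_+-cq_-$ as a semi-invariant) and you conclude via Corollary 2.4(i) and density of $\textbf{A}_{\mathbb Z}(\eta+V)$ rather than applying Corollary 2.4(ii) to the difference $q_+-cq_-$.
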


  \begin {proof}  Recall the comments above.  For the first assertion it is enough to show that the $\Lambda_j:j \in J$ cannot be $\mathbb Z$ linearly dependent.  Suppose by way of contradiction that there exist $m_j\in \mathbb Z$ such that $\sum_{j \in J}m_j\Lambda_j=0$. Set $J_\pm=\{j \in J|m_j \in \pm\mathbb N\}$ and $q_\pm:= \prod_{j \in J_\pm} q_j^{\pm m_j}$.   The $\psi(q_\pm)$ are non-zero scalars and so there exists a non-zero scalar $c$ such that $\psi(q_+ -cq_-)=0$. Yet $q_\pm$ have the same weight, so $(q_+ -cq_-)$ is again a weight vector and then $q_+=cq_-$, by Corollary \ref {2.4}(ii).  This contradicts unique factorization, hence the first assertion.

  The last assertion follows from the fact that $\eta$ is an $\ad h$ eigenvector of eigenvalue $-1$, whilst $\eta(q_j)$ is a non-zero scalar.
  \end {proof}

   \subsection{}\label{2.8}

Choose a basis $\{x_i\}_{i\in I}$ of $\mathfrak a^\eta$ consisting of $\ad h$ eigenvectors.

Recall \ref {2.2} and fix $i \in I$. By Lemma \ref {2.6}, there exists a unique element $p_i \in Y(\mathfrak a_\mathbb Z)$ such that $\psi(p_i) = x_i$.
The unique up to scalars irreducible factors of $p_i$ are again semi-invariants and so their images under $\psi$ are all non-zero.  Thus there is a unique irreducible factor $\hat{p_i}$ of $p_i$ with image $x_i$, the remaining factors having non-zero scalar images.

\begin {lemma}   The $\{\hat{p_i},q_j\}_{i \in I, j\in J}$ are algebraically independent.
\end {lemma}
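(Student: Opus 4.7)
The plan is to combine the $\mathfrak h$-weight decomposition of $S(\mathfrak a_\mathbb Z)$ with the restriction map $\psi$ of Corollary \ref{2.4}. Since $\hat{p_i}$ and $q_j$ are all semi-invariants of $\mathfrak a_\mathbb Z$, they are $\mathfrak h$-weight vectors, and I can pin down the weight of $\hat{p_i}$ precisely: because $p_i \in Y(\mathfrak a_\mathbb Z)$ has weight $0$ and factors (up to a scalar) as $p_i = \hat{p_i} \prod_j q_j^{n_{ij}}$ with $n_{ij} \in \mathbb N$, the weight $\mu_i$ of $\hat{p_i}$ equals $-\sum_j n_{ij}\Lambda_j$, and in particular lies in the $\mathbb Z$-span of the $\Lambda_j$.

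Given a putative relation $F(\hat{p_i}, q_j) = 0$ with $F = \sum c_{\alpha,\beta} T^\alpha U^\beta \in \mathbf k[T_i, U_j]$, I would first decompose $F = \sum_\nu F_\nu$ by the weight that $T_i \mapsto \mu_i$, $U_j \mapsto \Lambda_j$ assigns to each monomial. Since $S(\mathfrak a_\mathbb Z)$ is the direct sum of its $\mathfrak h$-weight spaces, the relation splits into $F_\nu(\hat{p_i}, q_j) = 0$ for every $\nu$. Within a fixed weight $\nu$, the constraint on $(\alpha,\beta)$ rewrites as $\sum_j\bigl(\beta_j - \sum_i \alpha_i n_{ij}\bigr)\Lambda_j = \nu$, and by the linear independence of the $\Lambda_j$ established in Lemma \ref{2.7}, $\beta$ is uniquely determined by $\alpha$. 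Denote it $\beta(\alpha)$, so that $F_\nu = \sum_\alpha c_{\alpha,\beta(\alpha)} T^\alpha U^{\beta(\alpha)}$.

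Next I would apply $\psi$ to $F_\nu(\hat{p_i}, q_j) = 0$. By construction $\psi(\hat{p_i}) = x_i$, while $\psi(q_j) = c_j \in \mathbf k^\times$ by Corollary \ref{2.4}(ii), so the relation becomes
$$\sum_\alpha c_{\alpha,\beta(\alpha)}\Bigl(\prod_j c_j^{\beta(\alpha)_j}\Bigr)\, x^\alpha = 0 \qquad \text{in } \mathbf k[\eta + V].$$
Finally I would invoke the algebraic independence of the $x_i$ as functions on $\eta + V$: the non-degenerate pairing $V \times \mathfrak a^\eta \to \mathbf k$ recalled in \ref{2.2} identifies the basis $\{x_i\}_{i\in I}$ of $\mathfrak a^\eta$ with a system of linear coordinate functions on $V$, so the $x_i$ are algebraically independent in $\mathbf k[\eta+V]$. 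Each coefficient must vanish, and since $c_j \neq 0$ this forces every $c_{\alpha,\beta(\alpha)} = 0$; summing over $\nu$ yields $F = 0$.

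The main conceptual hurdle is recognising that Lemma \ref{2.7} is precisely what is needed to collapse the freedom in the $U$-exponent within a fixed weight, so that after applying $\psi$ the relation becomes a genuine polynomial identity in the algebraically independent coordinates $x_i$. Beyond this, the argument is bookkeeping and invokes nothing more than Lemma \ref{2.7} and Corollary \ref{2.4}.
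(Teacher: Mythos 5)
Your proof is correct and follows essentially the same route as the paper: decompose by $\mathfrak h$-weight, use the linear independence of the $\Lambda_j$ from Lemma \ref{2.7} to pin down the $q$-exponents, then apply $\psi$ and conclude from Corollary \ref{2.4}(ii) and the algebraic independence of the $x_i$ on $\eta+V$. The only difference is cosmetic ordering — the paper applies $\psi$ first and then invokes Lemma \ref{2.7} on the surviving $q$-monomials, while you fix the exponents before restricting — so there is nothing to add.
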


\begin {proof}

Since the $\{\hat{p_i},q_j\}_{i \in I, j\in J}$ are all weight vectors, to prove their algebraic independence it is enough to consider a sum of distinct monomials having all the same weight and to show that the vanishing of the sum implies the vanishing of each summand.  Applying $\psi$ to such a sum,  the monomials in the $q_j:j \in J$ become non-zero scalars, whilst the monomials in the $\hat{p_i}$ become  the corresponding monomials in the algebraically independent elements $x_i:i \in I$. Thus it suffices to consider just sums of monomials in the $q_j:j \in J$.  However these monomials are linearly independent since their weights are pairwise distinct through Lemma \ref {2.7}.

Hence the required assertion.

\end {proof}

\textbf{Remark}.  Since $p_i$ has weight zero, it follows that the weight of $\hat{p}_i$ lies in $-\Lambda^+$.  Again by the last part of Lemma \ref {2.7}, the $\ad h$ eigenvalue of $\hat{p}_i$ lies in $-\mathbb N$.

  \subsection{}\label{2.9}

  By Lemma \ref {2.7} the $\mathbb Z$ module $\Lambda$ generated by the weights of the semi-invariants in $S(\mathfrak a_\mathbb Z)$ is the free module with generators $\Lambda_j:j \in J$.
  Then in the notation of \ref {2.7}, there exists a subset $J'$ of $K$ of cardinality $|J|$ such that $\det h_{j'}(\Lambda_j)_{j' \in J', j \in J}$ is non-zero and hence a non-zero integer $n$.  Then one can find $h'_{j'}:j' \in J'$ lying in $\frac {1}{n}\mathfrak h_\mathbb Z$ such that for all $j' \in J', j \in J$, and in terms of the Kronecker delta one has $h'_{j'}(\Lambda_j)=\delta_{j',j}$.  Let $\mathfrak h_\Lambda$ denote their linear span. One can further find $|K\setminus J'|$ linearly independent elements in $\frac {1}{n}\mathfrak h_\mathbb Z$ vanishing on $\Lambda$ hence forming a basis for the kernel $\Ker \Lambda$ of $\lambda$ in $\mathfrak h$.  It is clear that $\mathfrak h_\Lambda$ is a complement to $\Ker \Lambda$ in $\mathfrak h$.  Again let $\mathfrak a_{\mathbb Z, \Lambda}$  be the common kernel of the $\Lambda_j:j \in J$ viewed as characters on $\mathfrak a$. Then $\mathfrak h_\Lambda$ is a complement to $\mathfrak a_{\mathbb Z,\Lambda}$ in $\mathfrak a_\mathbb Z$.

Let $\Lambda_{\mathbb Q}$ denote the $\mathbb Q$ module generated by the $\Lambda_j:j \in J$.  A weight of $S(\mathfrak a_\mathbb Z)$ lying in $(\Ker \Lambda)^\perp=\sum_{j \in J}\textbf{k}\Lambda_j$ must be integer-valued on the $h_k:k \in K$ and hence it must \emph{}lie in $\Lambda_{\mathbb Q}$.

%


Let $P$ be an irreducible polynomial in the $x_i:i \in I$.  Let $Q$ be the polynomial in the $p_i:i \in I$ obtained by replacing $x_i$ by $p_i$.  By construction $Q \in Y(\mathfrak a_\mathbb Z)$ and $\psi(Q)=P$.  As in \ref {2.8} there exists unique irreducible factor $\hat{Q} \in Sy(\mathfrak a_\mathbb Z)$ of $Q$ such that $\psi (\hat{Q})=P$, the remaining factors having non-zero scalar images, in particular the weight of $\hat{Q}$ lies in $-\Lambda^+ \subset \Lambda$.

By definition of the $\hat{p_i}:i \in I$, we can also write $Q$ as a sum of monomials in the $\hat{p_i}:i \in I$ with each coefficients being a semi-invariant whose image under $\psi$ are non-zero scalars.   In this it is clear that $Q/\hat{Q}$ is just the common divisor of these coefficients.

\begin {lemma}  Let $q_\lambda$ be a non-scalar irreducible semi-invariant in $S(\mathfrak a_\mathbb Z)$ with weight $\lambda \in \Lambda_{\mathbb Q}$.  Then either $q_\lambda \in \textbf{k} q_j:j \in J$ or $q_\lambda$ is some $\hat{Q}$.  In particular $\lambda \in \Lambda$.  Moreover either $\psi(q_\lambda)$ is a non-zero scalar or an irreducible polynomial in the $x_i:i \in I$.

\end {lemma}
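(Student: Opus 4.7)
The plan is to analyze $P:=\psi(q_\lambda)$, which is non-zero by Corollary \ref{2.4}(ii), splitting on whether $P$ is a scalar. The case $P\in\textbf{k}^*$ is immediate from the definition of $\{q_j\}_{j\in J}$ given in \ref{2.7}: $q_\lambda$ would then be an irreducible non-scalar semi-invariant with scalar $\psi$-image, hence $q_\lambda\in\textbf{k}q_j$ for some $j\in J$, whence $\lambda=\Lambda_j\in\Lambda$ and $\psi(q_\lambda)$ is a non-zero scalar, as required.

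The substance of the lemma is the remaining case $P\notin\textbf{k}$. Here I would factor $P=\prod_{k=1}^r P_k^{m_k}$ into pairwise distinct irreducibles in $\textbf{k}[\eta+V]=\textbf{k}[x_i]$, and apply the construction of \ref{2.9} to each $P_k$ to produce an irreducible semi-invariant $\hat{Q}_k\in Sy(\mathfrak a_\mathbb Z)$ with $\psi(\hat{Q}_k)=P_k$. Then $R:=\prod_k\hat{Q}_k^{m_k}$ satisfies $\psi(R)=P=\psi(q_\lambda)$. The goal is to leverage this coincidence of $\psi$-images to show that $q_\lambda\sim\hat{Q}_k$ for some $k$, and then to use the irreducibility of $q_\lambda$ to force $r=1$, $m_1=1$, so that $P$ is itself irreducible and $q_\lambda$ is (up to scalar) the associated $\hat{Q}$.

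The key step---and the main obstacle---is that $\psi$ is not weight-preserving, so $q_\lambda$ and $R$ will typically have different weights and one cannot directly subtract them. My remedy is to premultiply by monomials in the $q_j$ to align weights. Writing $\lambda-\mu=\sum_j c_j\Lambda_j$ with $c_j\in\mathbb Z$, where $\mu$ is the weight of $R$ (both weights lie in the free $\mathbb Z$-module $\Lambda$ by Lemma \ref{2.7}), the products $q_\lambda A$ with $A:=\prod_{c_j<0}q_j^{-c_j}$ and $RB$ with $B:=\prod_{c_j>0}q_j^{c_j}$ share a common weight, and their $\psi$-images $\psi(A)P$ and $\psi(B)P$ are both non-zero scalar multiples of $P$. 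Hence the semi-invariant $\psi(B)\cdot q_\lambda A-\psi(A)\cdot RB$ has vanishing $\psi$-image and, by Corollary \ref{2.4}(ii), is zero. In the resulting identity $\psi(B)\cdot q_\lambda A=\psi(A)\cdot RB$ of $Sy(\mathfrak a_\mathbb Z)$, unique factorization forces the irreducible $q_\lambda$ to be a scalar multiple of some $\hat{Q}_k$ or some $q_j$; the $q_j$ case is excluded because $\psi(q_\lambda)=P$ is non-scalar. So $q_\lambda\sim\hat{Q}_k$, and comparing $\psi$-images yields $P=cP_k$, which collapses the factorization of $P$ to $r=1$, $m_1=1$, so that $P$ itself is irreducible. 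The weight of $q_\lambda$ is then that of $\hat{Q}$, which lies in $-\Lambda^+\subset\Lambda$, settling the remaining assertions.
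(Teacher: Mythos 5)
Your argument breaks down at the weight-alignment step, and the failure is exactly at the point the lemma is designed to address. You write $\lambda-\mu=\sum_j c_j\Lambda_j$ with $c_j\in\mathbb Z$, justifying this by saying that both $\lambda$ and $\mu$ lie in $\Lambda$ ``by Lemma \ref{2.7}''. But the hypothesis only places $\lambda$ in $\Lambda_{\mathbb Q}$, the $\mathbb Q$-span of the $\Lambda_j$; the assertion $\lambda\in\Lambda$ is part of the \emph{conclusion} (``In particular $\lambda \in \Lambda$''), and Lemma \ref{2.7} gives only the linear independence of the $\Lambda_j$, not that every semi-invariant weight in $\Lambda_{\mathbb Q}$ lies in the lattice $\mathbb Z\{\Lambda_j\}$. (Integrality of a weight on the $h_k$ does not force it into $\mathbb Z\{\Lambda_j\}$; a priori something like $\frac{1}{2}\Lambda_j$ could occur.) Since the weight $\mu$ of $R$ does lie in $\Lambda$ (each $\hat Q_k$ has weight in $-\Lambda^+$), having $c_j\in\mathbb Z$ is equivalent to $\lambda\in\Lambda$, so your monomials $A=\prod_{c_j<0}q_j^{-c_j}$ and $B=\prod_{c_j>0}q_j^{c_j}$ are not defined without assuming what is to be proved.

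The paper's proof is otherwise the same cancellation argument as yours, but it first replaces $q_\lambda$ by $q:=q_\lambda^n$ with $n\in\mathbb N^+$ chosen so that $n\lambda\in\Lambda$. One then factors $\psi(q)$ into irreducibles $P_\ell$, forms the product $\hat Q'$ of the associated $\hat Q_\ell$, aligns weights with monomials $q',q''$ in the $q_j$ (now with honest integer exponents), and concludes $qq''=\hat Q'q'$ up to scalar via Corollary \ref{2.4}(ii). The extra twist needed because of the $n$-th power is the final step: by unique factorization every irreducible factor of $\hat Q'q'$ divides $q=q_\lambda^n$, hence is proportional to $q_\lambda$ since the latter is irreducible; as $\hat Q'q'$ is not scalar, $q_\lambda$ is proportional to some $q_j$ or some $\hat Q_\ell$, which yields all the assertions, including $\lambda\in\Lambda$. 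If you insert this passage to $q_\lambda^n$ (and the corresponding divisibility argument) into your scheme, your proof goes through; as written, it has a genuine gap.
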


\begin {proof}  Choose $n \in \mathbb N^+$ such that $n\lambda \in \Lambda$ and set $q=q^n_\lambda \in Sy(\mathfrak a_\mathbb Z)$.  For some finite index set $L$ we can write $\psi(q)$ as a product $P'$ of irreducible polynomials $P_\ell: \ell \in L$ in the $x_i:i \in I$. For all $\ell \in L$, let $\hat{Q}_\ell \in Sy(\mathfrak a_\mathbb Z)$ be the unique irreducible factor of $q$ such that $\psi(\hat{Q}_\ell)=P_\ell$ and let $\hat{Q}'\in Sy(\mathfrak a_\mathbb Z)$ denote their product. Then $\psi(q)=\psi(\hat {Q}')$.

By the hypothesis and the choice of $n$, the weight of semi-invariant $q$ can be written as $\sum_{j \in J} m_j\Lambda_j:m_j \in \mathbb Z$.  Set $q'=\prod_{j\in J}q_j^{m_j}$.  The weight of $\hat{Q}'$ admits a similar description and we let $q''$ denote the corresponding product.  Then by construction $qq''$ and $\hat{Q}'q'$ have the same weight and the same image under $\psi$.  Hence by Corollary \ref {1.4}(ii), $qq''=\hat{Q}'q'$, up to a non-zero scalar.  Clearing denominators and common factors in $q',q''$ we can assume that $q',q'' \in Sy(\mathfrak a_\mathbb Z)$ and have no common factors.  Then $\hat{Q}'q'$ cannot be scalar by the assumption that $q_\lambda$ is not scalar.

By unique factorization in $Sy(\mathfrak a_\mathbb Z)$, every irreducible factor in $\hat{Q}'q'$ must divide $q$ and hence must divide $q_\lambda$. Since $q_\lambda$ was assumed irreducible, it is proportional to some $q_j$ or to some $\hat{Q}_\ell$.  The assertions of the lemma result.
\end {proof}

\subsection{}\label{2.10}

We would like to show that $Sy(\mathfrak a_\mathbb Z)$ is generated by the irreducible elements $\{\hat{p_i},q_j\}_{i \in I, j\in J}$ described in Lemma \ref {2.8}.  However this might be false, the trouble being that $\Lambda$ may not exhaust the set of weights of $Sy(\mathfrak a_\mathbb Z)$.

To proceed further recall the notation of \ref {2.9}
The argument of Borho-Chevalley shows that $Y(\mathfrak a_{\mathbb Z, \Lambda})=Sy(\mathfrak a_{\mathbb Z, \Lambda})^{\Ker \Lambda}=Sy(\mathfrak a_{\mathbb Z})^{\Ker \Lambda}$.  Again it is immediate from Lemma \ref {2.9} that

\

$(F)$ \ $Y(\mathfrak a_{\mathbb Z,\Lambda})$ is the polynomial algebra generated by the set $\{\hat{p_i},q_j\}_{i \in I, j\in J}$.

\

 Of course
 $$Y(\mathfrak a_\mathbb Z) \subset Y(\mathfrak a_{\mathbb Z, \Lambda}). \eqno{(*)}$$
 
 Again (\ref {2.7}, \ref {2.8}) the weights of $Y(\mathfrak a_{\mathbb Z,\Lambda}$ contain $\Lambda^+$ and generate $\Lambda$ as an additive group.

 One may further observe that the number of generators of $Y(\mathfrak a_{\mathbb Z,\Lambda})$ is just $\rk \Lambda + \ell (\mathfrak a_\mathbb Z) = \ell(\mathfrak a_{\mathbb Z, \Lambda})$, where the last equality obtains from \ref {1.2}$(*)$.  In particular the transcendence degree of Fract $Y(\mathfrak a_{\mathbb Z, \Lambda})$ equals $\ell(\mathfrak a_{\mathbb Z, \Lambda})$.

 On the other hand the transcendence degree of $C(\mathfrak a_{\mathbb Z, \Lambda})$ is again $\ell(\mathfrak a_{\mathbb Z, \Lambda})$.  Hence  $C(\mathfrak a_{\mathbb Z, \Lambda})$ is algebraic over Fract $Y(\mathfrak a_{\mathbb Z, \Lambda})$.  One can then ask if these fields coincide.  
 
%


\begin {lemma} Suppose $\xi \in C(\mathfrak a_{\mathbb Z, \Lambda})$ is an $\mathfrak h$ eigenvector of weight $\lambda \in \Lambda$.  Then $\xi \in \text {Fract} \ Y(\mathfrak a_{\mathbb Z, \Lambda})$.
\end {lemma}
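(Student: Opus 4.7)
The plan is to apply the Chevalley-Dixmier argument (recalled in \ref{1.2}) inside the unique factorisation domain $S(\mathfrak a_{\mathbb Z,\Lambda})$. Write $\xi = b/a$ with $a,b \in S(\mathfrak a_{\mathbb Z,\Lambda})$ coprime. Since $\xi \in C(\mathfrak a_{\mathbb Z,\Lambda})$ is $\mathbf A_{\mathbb Z,\Lambda}$-invariant, that argument forces both $a$ and $b$ to be $\mathfrak a_{\mathbb Z,\Lambda}$-semi-invariants. The identical coprimality-plus-Leibniz manipulation, applied this time to the $\ad \mathfrak h$-action on $S(\mathfrak a_{\mathbb Z,\Lambda})$ (well-defined since $\mathfrak a_{\mathbb Z,\Lambda}$ is an ideal of $\mathfrak a_\mathbb Z$), shows that $a$ and $b$ are also $\mathfrak h$-weight vectors, with weights $\mu_a$ and $\mu_b = \mu_a + \lambda$.

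The decisive step is to recognise $\mathfrak a_{\mathbb Z,\Lambda}$ as the Borho-Chevalley canonical truncation of $\mathfrak a_\mathbb Z$ itself. By \ref{2.9}, $\Lambda$ is the $\mathbb Z$-module generated by the weights of \emph{all} semi-invariants of $S(\mathfrak a_\mathbb Z)$, so the common kernel $\mathfrak a_{\mathbb Z,\Lambda}$ of the associated characters coincides with that canonical truncation. The Borho-Chevalley identity from \ref{1.2} then yields
$$Sy(\mathfrak a_{\mathbb Z,\Lambda}) = Sy(\mathfrak a_\mathbb Z) = Y(\mathfrak a_{\mathbb Z,\Lambda}).$$
Hence the $\mathfrak a_{\mathbb Z,\Lambda}$-semi-invariants $a$ and $b$ are actually $\mathbf A_{\mathbb Z,\Lambda}$-invariants, so $a,b \in Y(\mathfrak a_{\mathbb Z,\Lambda})$, whence $\xi = b/a \in \mathrm{Fract}\, Y(\mathfrak a_{\mathbb Z,\Lambda})$.

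The main conceptual hurdle, as far as I can see, is the second step: one must be confident that $\Lambda$ really exhausts (as a $\mathbb Z$-module) the lattice of semi-invariant weights of $S(\mathfrak a_\mathbb Z)$, so that $\mathfrak a_{\mathbb Z,\Lambda}$ is genuinely its own canonical truncation. This is precisely the content of \ref{2.7} and \ref{2.9}, which were set up for exactly this purpose. The hypothesis $\lambda \in \Lambda$ is then consistent with the argument, as $\mu_a,\mu_b$ must a priori lie in $\Lambda$; it is automatic rather than restrictive, and simply tags the weight of $\xi$ as being $\mu_b - \mu_a$.
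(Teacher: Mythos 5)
The first half of your argument (Chevalley--Dixmier inside $S(\mathfrak a_{\mathbb Z,\Lambda})$, semi-invariance and $\mathfrak h$-weight homogeneity of $a$ and $b$) matches the opening of the paper's proof. But your ``decisive step'' has a genuine gap: you assert that $\Lambda$ is the $\mathbb Z$-module generated by the weights of \emph{all} semi-invariants of $S(\mathfrak a_\mathbb Z)$, so that $\mathfrak a_{\mathbb Z,\Lambda}$ is the canonical truncation of $\mathfrak a_\mathbb Z$ and Borho--Chevalley gives $Sy(\mathfrak a_{\mathbb Z,\Lambda})=Y(\mathfrak a_{\mathbb Z,\Lambda})$. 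That is exactly what is \emph{not} known at this point. By \ref{2.7}, $\Lambda$ is built only from the weights $\Lambda_j$ of the special irreducible semi-invariants $q_j$ whose image under $\psi$ is a non-zero scalar; the paper warns explicitly in \ref{2.10} that $\Lambda$ may fail to exhaust the weights of $Sy(\mathfrak a_\mathbb Z)$, and in \ref{2.11}--\ref{2.12} introduces the possibly strictly larger group $\Gamma$, whose kernel $\mathfrak a_{\mathbb Z,\Gamma}$ (not $\mathfrak a_{\mathbb Z,\Lambda}$) is the canonical truncation of $\mathfrak a_\mathbb Z$. Consequently the weights $\mu_a,\mu_b$ of your $a,b$ a priori only lie in $\Gamma$, with merely their difference forced into $\Lambda$; nothing prevents them from having non-trivial restriction to $\mathfrak a_{\mathbb Z,\Lambda}$, so you cannot conclude $a,b\in Y(\mathfrak a_{\mathbb Z,\Lambda})$. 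There is also a circularity: the present lemma is the main input to Lemma \ref{2.11}, which is where the relation between $\Gamma$ and $\Lambda$ (freeness of $\Gamma/\Lambda$) is finally established, so one cannot assume $\Gamma=\Lambda$ here.

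The paper gets around this by using the Weierstrass section rather than truncation theory: since $\lambda\in\Lambda$, there is a ratio $\zeta=q_1^{-1}q_2$ of elements of $Y(\mathfrak a_{\mathbb Z,\Lambda})$ of weight $-\lambda$, so one may reduce to the case where $\xi=a_1^{-1}a_2$ has weight zero; then Corollary \ref{2.4}(ii) guarantees $\psi(a_1),\psi(a_2)\neq 0$, the surjectivity in Lemma \ref{2.6} produces $p_1,p_2\in Y(\mathfrak a_\mathbb Z)$ with $\psi(p_1)^{-1}\psi(p_2)=\psi(a_1)^{-1}\psi(a_2)$, and the semi-invariant $a_1p_2-a_2p_1$ is killed by $\psi$, hence vanishes, giving $\xi=p_2/p_1\in \mathrm{Fract}\,Y(\mathfrak a_{\mathbb Z,\Lambda})$. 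Note in particular that the hypothesis $\lambda\in\Lambda$ is genuinely used (to build $\zeta$) and is not automatic, contrary to your closing remark.
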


\begin {proof}  To simplify notation we set $\mathfrak d=\mathfrak a_{\mathbb Z, \Lambda}$.  


 We may write $\xi = a_1^{-1}a_2$ with $a_1,a_2 \in S(\mathfrak d)$ and coprime.  Since $\mathfrak d$ is an ideal in $\mathfrak a_\mathbb Z$, the Chevalley-Dixmier argument shows that $a_1,a_2$ are semi-invariants with respect to the adjoint action of $\mathfrak a_\mathbb Z$.


By definition of $\Lambda$ and of $\mathfrak d$, there exist non-zero semi-invariants $q_1,q_2 \in Y(\mathfrak d)$ such that $\zeta:=q_1^{-1}q_2$ has weight $-\lambda$. Since $\zeta \in F_\mathfrak h(\text {Fract} \ Y(\mathfrak d))$, we may replace $\xi$ by its product with $\zeta$ which then has zero weight.

 
 Thus by Corollary \ref {2.4}(ii) $\psi(a_1)$ and $\psi(a_2)$ are both non-zero and of course elements of $\textbf{k}[\eta+V]$.  By surjectivity in Lemma \ref {2.6}, there exist non-zero elements $p_1,p_2 \in Y(\mathfrak a_\mathbb Z)\subset Y(\mathfrak d)$, such that $\psi(p_1)^{-1}\psi(p_2)=\psi(a_1)^{-1}\psi(a_2)$, with both sides of zero $\mathfrak h$ weight.  We conclude that $a_1p_2-a_2p_1$ is a semi-invariant of $S(\mathfrak d)$ mapped to zero under $\psi$, hence is zero by Corollary \ref {2.4}(ii).  The conclusion of the lemma results.

\end {proof}

\subsection{}\label{2.11}

%

For a suitable index set $\mathscr J$, the set $\{\gamma_j \in \mathfrak h^*\}_{j \in \mathscr J}$
of $\mathfrak h$ weights of the irreducible semi-invariants of $S(\mathfrak a_\mathbb Z)$ generate an additive subgroup $\Gamma \subset \mathfrak h^*$ containing $\Lambda$ and all the weights of $Sy(\mathfrak a_\mathbb Z)$.  It is clear that we may take $\mathscr J \supset J$ and that $\gamma_j=\lambda_j$, for all $j \in J$.

\begin {lemma} $\Gamma/\Lambda$ is a free additive group.  Moreover $\Gamma$ is the additive subgroup of $\mathfrak h^*$ freely generated by the $\{\gamma_j\}_{j \in \mathscr J}$.
\end {lemma}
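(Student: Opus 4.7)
I would show both assertions by establishing that $\{\gamma_j\}_{j\in\mathscr J}$ are $\mathbb Z$-linearly independent in $\mathfrak h^*$: linear independence together with the hypothesis that they generate $\Gamma$ makes them a $\mathbb Z$-basis of $\Gamma$, and since $\{\gamma_j\}_{j\in J}=\{\Lambda_j\}_{j\in J}$ is already a $\mathbb Z$-basis of $\Lambda$ by Lemma \ref{2.7}, the quotient $\Gamma/\Lambda$ is then free on the classes $\{\gamma_j+\Lambda\}_{j\in\mathscr J\setminus J}$.

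Arguing by contradiction and following the template of Lemma \ref{2.7}, suppose $\sum_{j\in\mathscr J}m_j\gamma_j=0$ with $m_j\in\mathbb Z$ finitely supported and not all zero. Set $\mathscr J_\pm=\{j\mid \pm m_j>0\}$ and for each $j\in\mathscr J$ fix an irreducible semi-invariant $q_j\in Sy(\mathfrak a_\mathbb Z)$ of weight $\gamma_j$. Put $q_\pm=\prod_{j\in\mathscr J_\pm}q_j^{\pm m_j}$; these are coprime in the UFD $Sy(\mathfrak a_\mathbb Z)$ (as $\mathscr J_+\cap\mathscr J_-=\emptyset$ and the $q_j$ are pairwise non-associate irreducibles) and share the common character $\lambda:=\sum_{j\in\mathscr J_+}m_j\gamma_j$. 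The crux is then to exhibit $c\in\textbf k^\times$ with $\psi(q_+)=c\psi(q_-)$: for then $q_+-cq_-$ is a weight-$\lambda$ element killed by $\psi$, hence zero by Corollary \ref{2.4}(ii), and the consequent equality $q_+=cq_-$ contradicts coprimality unless the original relation was trivial.

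To produce $c$ I would use the observation of \S\ref{2.9} that any weight of $S(\mathfrak a_\mathbb Z)$ lying in $\textbf k\Lambda$ automatically lies in $\Lambda_{\mathbb Q}$; combined with Lemma \ref{2.9} this gives that each $\gamma_j$ either lies in $\Lambda$ (with $q_j$ one of the $q_k$, $k\in J$, or one of the $\hat Q$'s) or lies outside $\textbf k\Lambda$ entirely. Reducing the supposed relation modulo $\textbf k\Lambda$ peels off the ``outside'' contributions and reduces matters to the case $\lambda\in\Lambda$, in which both $q_\pm$ are $\mathfrak a_{\mathbb Z,\Lambda}$-invariant and hence lie in $Sy(\mathfrak a_\mathbb Z)^{\Ker\Lambda}=Y(\mathfrak a_{\mathbb Z,\Lambda})$; by (F) this ring is polynomial on $\{\hat p_i,q_k\}_{i\in I,k\in J}$, on whose generators $\psi$ acts explicitly ($\hat p_i\mapsto x_i$ and $q_k\mapsto$ a non-zero scalar), so writing $q_\pm$ in these generators and matching $\mathfrak h$-weights forces $\psi(q_+)$ and $\psi(q_-)$ to be scalar multiples of the same monomial in the $x_i$, yielding $c$. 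The main obstacle is carrying out the ``peeling off'' step for $\gamma_j\notin\textbf k\Lambda$: such $q_j$ are not in $Y(\mathfrak a_{\mathbb Z,\Lambda})$, so their $\psi$-images are not directly controlled by (F), and a more delicate argument---perhaps an induction on the number of distinct such $\gamma_j$ appearing with non-zero coefficient in the relation, or a transcendence-degree comparison along the lines of \S\ref{2.10}---will be needed to force those coefficients to vanish.
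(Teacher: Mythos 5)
Your reformulation of the lemma as $\mathbb Z$-linear independence of the $\gamma_j$, $j\in\mathscr J$, is correct, and the Lemma \ref{2.7}-style endgame (form $q_\pm$, find $c$ with $\psi(q_+)=c\psi(q_-)$, conclude $q_+=cq_-$ via Corollary \ref{2.4}(ii) and contradict unique factorisation) would work \emph{if} such a scalar $c$ were available. But the point where you stop --- forcing the coefficients of the $\gamma_j\notin\textbf{k}\Lambda$ to vanish --- is not a loose end; it is the whole content of the lemma. Reducing the relation modulo $\textbf{k}\Lambda$ does not ``peel off'' those terms: it merely restates the problem as a relation among their classes, and nothing in your argument excludes, say, $\gamma_1+\gamma_2\in\Lambda$ with both weights outside $\textbf{k}\Lambda$. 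For such $j$ the images $\psi(q_j)$ are non-zero but are genuine non-constant elements of $\textbf{k}[\eta+V]$ not controlled by $(F)$, so there is no a priori proportionality between $\psi(q_+)$ and $\psi(q_-)$ and the Lemma \ref{2.7} mechanism has no purchase; matching $\mathfrak h$-weights alone does not force $\psi$-images to be proportional.

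The paper closes exactly this gap by a mechanism you allude to but do not execute. Given $\sum_{j\in F}m_j\gamma_j\in\Lambda$ with $F\subset\mathscr J\setminus J$ and $m_j$ non-zero integers, one passes to the invariant fraction field: the rational function $\prod_{j\in F}q_j^{m_j}$ lies in $C(\mathfrak a_{\mathbb Z,\Lambda})$ and is an $\mathfrak h$-weight vector of weight in $\Lambda$, so Lemma \ref{2.10} (the statement that such weight vectors lie in $\text{Fract}\,Y(\mathfrak a_{\mathbb Z,\Lambda})$ --- this, rather than a transcendence-degree comparison, is the relevant tool from \ref{2.10}) expresses it as a ratio of products of the irreducibles $\{\hat p_i,q_k\}_{i\in I,k\in J}$ of $(F)$. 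Unique factorisation in $Sy(\mathfrak a_\mathbb Z)$ then gives a contradiction, because the $q_j$, $j\in F$, do not lie in $Y(\mathfrak a_{\mathbb Z,\Lambda})$ and hence are not associates of any of those generators. Without this step (or a genuine substitute for it) your proposal does not prove the lemma.
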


\begin {proof} If $\Gamma/\Lambda$ were not free, there would exist a finite subset $F \in \mathscr J \setminus J$, non-zero integers $m_j:j \in F$ and $\gamma_j:j \in F$ such that
$$\sum_{j \in F}m_j\gamma_j \in \Lambda. \eqno {(*)}$$.

 As $j$ runs over $F$, by definition of $\gamma_j$, there exist pairwise distinct irreducible semi-invariants $q_j \in Sy(\mathfrak a_{\mathbb Z, \Lambda})$ not lying in $Y(\mathfrak a_{\mathbb Z, \Lambda})$ of weight $\gamma_j$. Then $\prod_{j \in F}q_j^{m_j} \in C(\mathfrak a_{\mathbb Z, \Lambda})$ which by $(*)$ is an $\mathfrak h$ weight vector of weight $\lambda \in \Lambda$.  Through Lemma \ref {2.10}, there exist a finite set $F' \subset J$, integers $n_k:k \in F'$ and pairwise distinct irreducible elements $p_k \in Y(\mathfrak a_{\mathbb Z, \Lambda}): k \in F'$ such that
$$\prod_{j \in F}q_j^{m_j} = \prod_{k \in F'}p_k^{n_k}.$$

Yet all the irreducible elements in this expression are pairwise distinct so this contradicts unique factorization. Hence the first assertion.  Moreover $\Gamma/\Lambda$ is freely generated by the $\{\gamma_j\}_{j \in \mathscr J \setminus J}$, whilst by Lemma \ref {2.8}, $\Lambda$ is freely generated by the $\lambda_j=\gamma_j:j \in J$.  Hence the second assertion.



\end {proof}

\subsection{}\label{2.12}

Let $\mathfrak a_{\mathbb Z,\Gamma}$ be the subalgebra of $\mathfrak a_\mathbb Z$ obtained as the common kernel of the set of characters $\{\gamma \in \Gamma\}$. It is the canonical truncation of $\mathfrak a_\mathbb Z$.  In particular there is a subalgebra $\mathfrak h_\Gamma$ of $\mathfrak h$ (not in general unique) such that $\mathfrak a_\mathbb Z=\mathfrak a_{\mathbb Z, \Gamma} \oplus\mathfrak h_\Gamma$ and such that the map $(h,\gamma)\rightarrow h(\gamma)$ defines a non-degenerate pairing $\mathfrak h_\Gamma \times \Gamma \rightarrow \textbf k$. Of course $\mathfrak a_{\mathbb Z,\Gamma}$ is also a subalgebra of $\mathfrak a_{\mathbb Z, \Lambda}$.

\begin {thm} $Sy(\mathfrak a_\mathbb Z)$ is the polynomial algebra generated by the $\{\hat{p}_i, q_j\}_{i \in I,j \in \mathscr J}$.
\end {thm}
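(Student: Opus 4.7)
My plan is to verify polynomiality by establishing (a) algebraic independence of the set $\{\hat p_i, q_j\}_{i \in I, j \in \mathscr J}$ and (b) that these elements span $Sy(\mathfrak a_\mathbb Z)$ as a $\textbf k$-algebra.

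For (a), I would extend the argument of Lemma \ref{2.8}. Decomposing a hypothetical polynomial relation by $\mathfrak h$-weight, all monomials $\hat p^\alpha q^\beta$ appearing in a single weight component share the weight $-\sum_i\alpha_i\mu_i + \sum_{j\in\mathscr J}\beta_j\gamma_j$.  Since $\mu_i\in\Lambda=\bigoplus_{k\in J}\mathbb Z\gamma_k$ while Lemma \ref{2.11} gives $\Gamma=\bigoplus_{j\in\mathscr J}\mathbb Z\gamma_j$, matching weights forces the exponents $\beta_j$ for $j\in\mathscr J\setminus J$ to coincide among the monomials in that component. Factoring out the resulting common monomial in $\{q_j\}_{j\in\mathscr J\setminus J}$ reduces the relation to one among $\{\hat p_i, q_j\}_{i\in I, j\in J}$, where algebraic independence holds by Lemma \ref{2.8}.

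For (b), I use that $Sy(\mathfrak a_\mathbb Z)$ is a UFD with only scalar units (recalled in \ref{2.7}) and is spanned as a $\textbf k$-vector space by its semi-invariants; so it suffices to show every irreducible semi-invariant $s\in Sy(\mathfrak a_\mathbb Z)$ is, up to scalar, one of the listed generators. Let $\gamma$ be its weight. If $\gamma\in\Lambda$, then $s\in Y(\mathfrak a_{\mathbb Z,\Lambda})$, which by fact $(F)$ of \ref{2.10} is the polynomial algebra $\textbf k[\hat p_i, q_k]_{i\in I, k\in J}$, whose irreducibles are scalar multiples of the listed generators. If $\gamma=\gamma_j$ for some $j\in\mathscr J\setminus J$, I claim $s\in \textbf k q_j$. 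Suppose otherwise. By Borho-Chevalley both $s$ and $q_j$ lie in $S(\mathfrak a_{\mathbb Z,\Lambda})$, so $s/q_j$ is a non-constant $\mathfrak h$-weight-zero rational function in $C(\mathfrak a_{\mathbb Z,\Lambda})$. Lemma \ref{2.10} then places $s/q_j$ in $\mathrm{Fract}\,Y(\mathfrak a_{\mathbb Z,\Lambda})$; writing $s/q_j=P_1/P_2$ in reduced form and cross-multiplying yields $sP_2=q_jP_1$ in the UFD $Sy(\mathfrak a_\mathbb Z)$. Irreducibility of $s$ together with $s\notin\textbf k q_j$ force $s\mid P_1$. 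But the irreducible factors of $P_1\in \textbf k[\hat p_i, q_k]_{i\in I, k\in J}$ are scalar multiples of the generators and carry weights in $\Lambda$, contradicting $\gamma_j\notin\Lambda$.

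The main obstacle I anticipate is precisely this uniqueness step in the case $\gamma\in\Gamma\setminus\Lambda$ of (b): ruling out the existence of a second irreducible semi-invariant carrying the same weight as the chosen representative $q_j$. It requires combining the full strength of Lemma \ref{2.10}, fact $(F)$, and unique factorization in $Sy(\mathfrak a_\mathbb Z)$. Once this uniqueness is secured, the spanning assertion follows at once from the UFD structure and the definition of $\mathscr J$, while polynomiality is immediate from (a).
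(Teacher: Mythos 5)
Your overall strategy (independence via weights plus Lemma \ref{2.8}, spanning via factorisation of semi-invariants into irreducibles, the weight-in-$\Lambda$ case absorbed into $Y(\mathfrak a_{\mathbb Z,\Lambda})$ by $(F)$) is the same as the paper's, and part (a) together with the case $\gamma\in\Lambda$ of (b) is fine (note only that in that case you need merely that $s$ \emph{lies in} the polynomial algebra $\textbf{k}[\hat p_i,q_k]_{i\in I,k\in J}$, not that it is a generator; the stronger claim is false in general, e.g.\ an irreducible $Q$ equal to a nonlinear polynomial in the $p_i$ is an irreducible semi-invariant of weight in $\Lambda$ which is not proportional to any generator). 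The genuine gap is in the step you yourself single out as the crux. From $sP_2=q_jP_1$ you conclude that the irreducible factors of $P_1\in\textbf{k}[\hat p_i,q_k]_{i\in I,k\in J}$ are scalar multiples of the generators and have weights in $\Lambda$. As it stands this is unjustified and in general false: writing $s/q_j=P_1/P_2$ ``in reduced form'' in $\mathrm{Fract}\,Y(\mathfrak a_{\mathbb Z,\Lambda})$ only gives $P_1,P_2\in Y(\mathfrak a_{\mathbb Z,\Lambda})$, which are sums of semi-invariants of several weights; their irreducible factors in $S(\mathfrak a_\mathbb Z)$ need not be weight vectors at all, let alone generators, so the asserted contradiction with $\gamma_j\notin\Lambda$ does not follow.

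To repair it you must first arrange that the numerator and denominator are themselves semi-invariants: either take $\mathfrak h$-weight components of $P_1,P_2$ (legitimate because $s/q_j$ has weight zero and $Y(\mathfrak a_{\mathbb Z,\Lambda})$ is $\mathfrak h$-stable), or argue as in the paper's proof of Lemma \ref{2.10}, writing $s/q_j=a_1^{-1}a_2$ with $a_1,a_2\in S(\mathfrak a_{\mathbb Z,\Lambda})$ coprime, so that the Chevalley--Dixmier argument makes $a_1,a_2$ semi-invariants. Then the irreducible factors of $P_1$ are irreducible semi-invariants, and one still needs a weight argument, not membership in the subalgebra, to exclude $s$ among them: the weight of $P_1$ lies in $\Lambda_{\mathbb Q}\cap\Gamma=\Lambda$, each factor's weight is either in $\Lambda$ (Lemma \ref{2.9}) or equal to some $\gamma_k$, $k\in\mathscr J\setminus J$, and the freeness of $\Gamma/\Lambda$ established in Lemma \ref{2.11} forces the second kind to be absent; hence $s\nmid P_1$, the desired contradiction. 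So the missing ingredients are precisely the reduction to weight vectors and the appeal to Lemma \ref{2.11}; with them your argument closes and recovers the unique factorisation (product of the $q_j$, $j\in\mathscr J\setminus J$, times an element of $Y(\mathfrak a_{\mathbb Z,\Lambda})$) on which the paper's own, very terse, proof rests.
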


\begin {proof} Every element of $Sy(\mathfrak a_\mathbb Z)$ can be written as a sum of its semi-invariants and in just one fashion. Every semi-invariant can be uniquely factored into a product of the $q_j: j \in \mathscr J \setminus J$ and an element in $Y(\mathfrak a_{\mathbb Z,\Lambda})$.  Then the assertion follows from $(F)$ of \ref {2.10}.
\end {proof}

\subsection{}\label{2.13}

Recall (\ref {2.8}) that $Y(\mathfrak a_\mathbb Z)$ is polynomial on generators $p_i:i \in I$ and recall the definition of the $\hat{p_i}:i \in I$. Let $-\delta_i$ be the $\mathfrak h$ weight of $\hat{p_i}$.  Then $\delta_i \in \Lambda^+$ and by Lemma \ref {2.8}, there is a unique product $Q_i$ of the $q_j: j \in J$ such that $p_i=\hat{p_i}Q_i, \forall i \in I$.

 Consider the subdivision of the generators of $Sy(\mathfrak a_\mathbb Z)$ into the three sets $\{\hat{p_i}\}_i\in I, \{q_j\}_{j\in J},\{q_j\}_{j\in \mathscr J\setminus J}$. The factorisation of $p_i$ entails exactly one factor in the first set which is moreover $\hat{p_i}$ and possibly several factors in the second set and none in the third set. Notice that we do not need to know a priori this subdivision to test this property.  Indeed we need only compute $Sy(\mathfrak a_\mathbb Z)^{\mathfrak h_\Gamma}$ in terms of the full set of generators  $\{\hat{p}_i, q_j\}_{i \in I,j \in \mathscr J}$.  Then the $p_i:i \in I$ are up to linear combinations the homogeneous generators of the resulting algebra $Y(\mathfrak a_\mathbb Z)$ which in addition must be polynomial.

\subsection{}\label{2.14}

The rather special factorisation property described in \ref {2.13} can in principle be used to show that a contraction is reached unless $\mathfrak a= \mathfrak a_\mathbb Z$, that is to say the adapted pair $(h,\eta)$ satisfies the integrality property.  For it to be useful we need to be able to describe rather explicitly the weights of the generators of $Y(\mathfrak a_\mathbb Z)$ and of $Y(\mathfrak a_{\mathbb Z, \Gamma})$.  This is possible if $\mathfrak a$ is a truncated biparabolic subalgebra of a simple Lie algebra in most cases, though even in the case of a parabolic subalgebra in type $C$ the combinatorics is rather formidable (see Section 8).

We may conclude that the question of integrality of an adapted pair $(h,\eta)$ of a regular Lie algebra $\mathfrak a$ is a rather delicate one made difficult by the fact that it is seemingly very hard to describe all adapted pairs even for a a truncated biparabolic in type $A$.

\section{Equivalence Classes of Adapted Pairs}\label {3}

 Assume that $\mathfrak a$ is regular and admits an adapted pair $(h,\eta)$ throughout this section.

\subsection{}\label{3.4}

Let us first extend slightly the analysis in \cite [Sect. 9]{FJ2}. Let $\mathfrak z$ be the centre of $\mathfrak a$. As before let $\mathfrak h$ be a Cartan subalgebra in the reductive part $\mathfrak r$ of $\mathfrak a$.

Recall the notation of \ref {1.4} and set $\mathfrak a_i^\eta= \mathfrak a_i\cap \mathfrak a^\eta$.

Recall that we assuming $\mathfrak a$ is regular. Let $m \in \mathbb N$ be the largest exponent of the $\mathfrak a$ and set $M:=\{0,1,2,\ldots,m\}$. Then one has $\mathfrak a_{-i}^\eta =0$ unless $i \in M$.  In particular $\mathfrak n: =\sum_{i\neq 0}\mathfrak a_i^\eta$ is a unipotent Lie subalgebra of $\mathfrak a$, that is to say a finite dimensional Lie subalgebra such that $\ad_\mathfrak a x$ is a nilpotent derivation of $\mathfrak a$ for all $x \in \mathfrak n$. Let $\bf{N}$ denote the connected nilpotent algebraic subgroup of $\bf{A}$ with Lie algebra $\mathfrak n$.

\begin {lemma} For all $a \in \mathfrak n$ there exists $n \in \bf{N}$ such that $nh-h=a$.
\end {lemma}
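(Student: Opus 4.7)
My plan is to exploit the grading of $\mathfrak n$ by $\ad h$ and solve the equation $\exp(\ad x)\, h - h = a$ inductively, with $n = \exp x$.

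First I would clarify the structure of $\mathfrak n$. Since $\mathfrak a^\eta \subset \mathfrak a_{\mathbb Z}$ by \ref{2.2} and has only non-positive $\ad h$ eigenvalues with $\mathfrak a_0^\eta = \mathfrak z$ by Lemma \ref{2.2}, the subspace $\mathfrak n = \sum_{i\neq 0} \mathfrak a_i^\eta$ carries the grading $\mathfrak n = \bigoplus_{i=1}^{m} \mathfrak n_i$ where $\mathfrak n_i := \mathfrak a^\eta_{-i}$. Because $\mathfrak a^\eta$ is a Lie subalgebra of $\mathfrak a$ and $\ad h$ respects brackets, one has $[\mathfrak n_i,\mathfrak n_j] \subset \mathfrak a^\eta_{-(i+j)}$, which equals $\mathfrak n_{i+j}$ when $i+j \leq m$ and vanishes otherwise. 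Thus $\mathfrak n$ is a graded nilpotent Lie subalgebra and, since $\mathbf{N}$ is connected and unipotent with Lie algebra $\mathfrak n$, the exponential map restricts to a bijection $\exp:\mathfrak n \iso \mathbf{N}$.

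Next I would unpack $\exp(\ad x)\, h - h$ for $x = \sum_{i=1}^{m} x_{-i} \in \mathfrak n$. Since $[h,x_{-i}] = -i x_{-i}$, the first-order term is
$$[x,h] \;=\; \sum_{i=1}^{m} i\, x_{-i},$$
which already lies in $\mathfrak n$. By closure of $\mathfrak n$ under brackets, every higher iterate $(\ad x)^\ell h$ lies in $\mathfrak n$ too, so the whole series $\exp(\ad x) h - h = \sum_{\ell \geq 1} \tfrac{1}{\ell!} (\ad x)^\ell h$ lies in $\mathfrak n$. Expanding $(\ad x)^\ell h$ as a sum of iterated brackets of homogeneous pieces $x_{-i_1},\ldots,x_{-i_\ell}$ applied finally to $h$, each summand has $\ad h$-weight $-(i_1+\cdots+i_\ell)$; so the weight $-k$ component of $(\ad x)^\ell h$ for $\ell \geq 2$ involves only indices $i_j$ with $1 \leq i_j \leq k-\ell+1 \leq k-1$.

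Consequently, the weight $-k$ component of $\exp(\ad x)\,h - h$ takes the form
$$k\, x_{-k} \;+\; F_k(x_{-1},x_{-2},\ldots,x_{-(k-1)})$$
for a polynomial function $F_k$ in the lower-graded pieces. Given $a = \sum_{k=1}^{m} a_{-k}$ with $a_{-k} \in \mathfrak n_k$, define $x_{-k}$ recursively by
$$x_{-k} \;=\; \tfrac{1}{k}\bigl(a_{-k} - F_k(x_{-1},\ldots,x_{-(k-1)})\bigr),$$
which terminates in finitely many steps since $m < \infty$. Then $n := \exp x \in \mathbf{N}$ satisfies $n\,h - h = a$, as required.

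There is no genuine obstacle here beyond bookkeeping: the argument rests entirely on the two observations that $\ad h$ acts invertibly on each $\mathfrak n_k$ (as multiplication by $k\neq 0$) and that $\mathfrak n$ is both $\ad h$-graded with negative weights and closed under bracket, so that the nonlinear correction at weight $-k$ depends only on strictly earlier data. If anything, the one point requiring care is the verification that $\mathfrak n$ is a subalgebra (and hence nilpotent), for which one really does need $\mathfrak a^\eta$ to be a subalgebra of $\mathfrak a$ together with the $\ad h$-grading.
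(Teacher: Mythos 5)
Your proof is correct and is essentially the paper's argument (Kostant's induction, cited from \cite[3.6]{K1}): both solve for the element of $\mathfrak n$ degree by degree in the $\ad h$-grading, using that $\ad h$ acts on the weight $-k$ piece by the nonzero scalar $-k$ and that the nonlinear terms of $\exp(\ad x)h-h$ at weight $-k$ involve only strictly lower-graded components. Your version merely writes out explicitly the bookkeeping that the paper leaves as "easily checked."
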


\begin {proof}  The proof follows an inductive argument of Kostant given in \cite [3.6]{K1}.  The details are briefly sketched.  Assume for $k \in \mathbb N$ we have found an element $w_k \in \sum_{j=1}^k \mathfrak n_{-j}$ such that $(\exp w_k)h-h=a-\sum_{i=k+1}^m b_{-i}$, for some $b_{-i} \in \mathfrak a_{-i}$.  For $k=0$, we take $w_k=0$ and the sum to be $a$.  To pass to a subsequent step we replace $w_k$ by $w_k+\frac{1}{k+1}b_{-(k+1)}$.  As noted in \cite [3.6]{K1} and as easily checked, this process eliminates $b_{-(k+1)}$ and can at most change the remaining $b_{-i}: i >k+1$.

\end {proof}

\textbf{Remark}.  Actually since $\eta$ is regular $\mathfrak a^\eta$ is commutative \cite [1.11.7]{D}.  This can be used to further simplify the argument.

\subsection{}\label{3.5}

Since $\mathfrak a$ is algebraic and we have assumed $\ad_\mathfrak ah$ to be a semisimple derivation, we may assume that $h \in \mathfrak h$, without loss of generality.

\

 Recall that by \ref {2.2} one has $\mathfrak z=\mathfrak a^\eta_0$. Thus $\mathfrak a^\eta$ is also a unipotent (and commutative) Lie algebra subalgebra of $\mathfrak a$.

 \

Again $\mathfrak a$ splits as a direct product of $\mathfrak z \cap \mathfrak h$ and an ideal. Thus we may assume \textit{and do assume} that $\mathfrak z \cap \mathfrak h =0$. Under this assumption, $\mathfrak z$ belongs to the nilradical $\mathfrak m$ of $\mathfrak a$.  We cannot eliminate $\mathfrak z$ entirely since it may not be a direct summand of $\mathfrak a$ as an $\ad \mathfrak a$ module.


\begin {lemma}  Suppose $h' \in \mathfrak a$ satisfies $(\ad h')\eta=-\eta$ and is $\ad$-semisimple.  Then there exists $n \in \bf{N}$ such that $h'-nh \in \mathfrak z$.
\end {lemma}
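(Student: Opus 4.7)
The plan is to show that $h'-h$ lies in $\mathfrak a^\eta$, decompose it across the $\ad h$-eigenspaces of $\mathfrak a^\eta$, and then absorb the non-central part by applying Lemma~\ref{3.4} to the resulting element of $\mathfrak n$.

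First, the two hypotheses $(\ad h)\eta=-\eta=(\ad h')\eta$ give $(\ad(h'-h))\eta=0$, so $h'-h\in\mathfrak a^\eta$. A one-line check establishes that $\mathfrak a^\eta$ is $\ad h$-stable: if $x\in\mathfrak a^\eta$ then
\[
(\ad[h,x])\eta=(\ad h)(\ad x)\eta-(\ad x)(\ad h)\eta=0-(\ad x)(-\eta)=0.
\]
Since $\ad h$ is semisimple on $\mathfrak a$, this $\ad h$-stability yields the eigenspace decomposition $\mathfrak a^\eta=\bigoplus_i\mathfrak a_i^\eta$.

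Next, the material assembled in \ref{2.2} and at the start of \ref{3.4} tells us that the eigenvalues of $\ad h$ on $\mathfrak a^\eta$ lie in $\{-m,\ldots,-1,0\}$ and that $\mathfrak a_0^\eta=\mathfrak z$. Hence $\mathfrak a^\eta=\mathfrak z\oplus\mathfrak n$, where $\mathfrak n=\sum_{i\neq 0}\mathfrak a_i^\eta$ as in \ref{3.4}, and we may decompose uniquely
\[
h'-h=z+a,\qquad z\in\mathfrak z,\ a\in\mathfrak n.
\]
Applying Lemma~\ref{3.4} to $a\in\mathfrak n$ produces $n\in\mathbf N$ with $nh-h=a$, and then
\[
h'-nh=(h'-h)-(nh-h)=(z+a)-a=z\in\mathfrak z,
\]
as required.

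I do not expect a serious obstacle here: the argument is mostly bookkeeping on top of Lemma~\ref{3.4}. The only mildly nontrivial point is the $\ad h$-stability of $\mathfrak a^\eta$, which is what allows the splitting $h'-h=z+a$. One may notice that the $\ad$-semisimplicity of $h'$ is not actually used in this proof; it is presumably assumed so that the statement applies to $h'$ arising as the semisimple member of another adapted pair, which is the only setting in which one would wish to invoke it.
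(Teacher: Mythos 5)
Your proof is correct and follows essentially the same route as the paper: note $h'-h\in\mathfrak a^\eta$, split it as $z+a$ with $z\in\mathfrak z$ and $a\in\mathfrak n$ (using $\mathfrak a^\eta_0=\mathfrak z$ from \ref{2.2}), and apply Lemma~\ref{3.4} to absorb $a$. The extra details you supply (the $\ad h$-stability of $\mathfrak a^\eta$ and the observation that the $\ad$-semisimplicity of $h'$ is not actually needed) are accurate but only make explicit what the paper leaves implicit.
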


\begin {proof}   Clearly $h'-h \in \mathfrak a^\eta$.  Thus we may write $h'-h=a_0+a$, with $a_0 \in \mathfrak z, a \in \mathfrak n$.  By Lemma \ref {3.4}, there exists $n \in \textbf{N}$ such that $nh-h=a$.  Then $h'-nh=a_0$, as required.


\end {proof}

\textbf{Remark}.  In the case when $\mathfrak a$ is a truncated biparabolic subalgebra of a semisimple Lie algebra $\mathfrak g$, we could do better \cite [9.8]{FJ2}.  First we could assume that $\mathfrak h$ was a Cartan subalgebra for $\mathfrak g$.  Secondly we could show that $\mathfrak a^\eta_0$ is spanned by non-zero root vectors (nevertheless of course commuting with $h$).  Thus the condition $\mathfrak h \cap \mathfrak z=0$ is automatic. Again $h',nh$ are ad-semisimple and commute (since they differ by an element of $\mathfrak z$), hence $h'-nh$ is ad-semsimple as a derivation of $\mathfrak g$.  On the other hand if $a_0$ is non-zero it defines a non-zero nilpotent derivation of $\mathfrak g$.  Thus we were able to conclude that $h'=nh$.

\subsection{}\label{3.6}

We say that adapted pairs $h,\eta, h',\eta'$ are equivalent if there exists $a \in \bf{A}$ such that $ah-h'\in \mathfrak z, a\eta=\eta'$.  Taking account of \cite [Prop. 9.6]{FJ2} in which equivalence is defined by only requiring that $a\eta=\eta'$, we obtain (as in \cite [Prop. 9.8]{FJ2}, where only truncated biparabolics are considered) the following Corollary.

Recall that we are assuming $\mathfrak a$ to be regular.

\begin {cor}   The map $(h,\eta)\rightarrow \eta$ induces a bijection of the  set of equivalence classes of adapted pairs for $\mathfrak a$ onto the irreducible components of $\mathscr N(\mathfrak a)$ of codimension $\ell(\mathfrak a)$.
\end {cor}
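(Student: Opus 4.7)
The plan is to reduce the statement to \cite[Prop. 9.6]{FJ2}, which establishes the analogous bijection for the \emph{coarser} equivalence relation $(h,\eta)\sim_w(h',\eta')$ defined only by the existence of $a\in\bf{A}$ with $a\eta=\eta'$. Since $\mathfrak a$ is assumed regular, that proposition applies and gives a bijection between $\sim_w$-classes of adapted pairs and the irreducible components of $\mathscr N(\mathfrak a)$ of codimension $\ell(\mathfrak a)$ via $(h,\eta)\mapsto\eta$. The equivalence relation used in the Corollary is manifestly a refinement of $\sim_w$, so it suffices to show that the two relations coincide on the set of adapted pairs.

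To do this, take adapted pairs $(h,\eta)$ and $(h',\eta')$ satisfying $a\eta=\eta'$ for some $a\in\bf{A}$, and set $h'':=a^{-1}h'$. Conjugation by $a^{-1}$ preserves ad-semisimplicity and carries the identity $(\ad h')\eta'=-\eta'$ to $(\ad h'')\eta=-\eta$; hence $(h'',\eta)$ is itself an adapted pair with the \emph{same} regular form as $(h,\eta)$. Lemma \ref{3.5} then produces $n\in\bf{N}$ with $h''-nh\in\mathfrak z$. Put $a':=an\in\bf{A}$. Because $\mathfrak n\subset\mathfrak a^\eta$ forces $\bf{N}$ to fix $\eta$, one has $a'\eta=a(n\eta)=a\eta=\eta'$; and $\bf{A}$-stability of the centre gives
$$a'h-h'=a(nh-h'')\in a\mathfrak z=\mathfrak z.$$
Thus $(h,\eta)$ and $(h',\eta')$ are equivalent in the sense of the Corollary as well, and the two relations agree.

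The only real obstacle is already absorbed by Lemma \ref{3.5}, whose proof runs on the Kostant-style successive-approximation scheme of Lemma \ref{3.4}. Once that tool is in hand, the passage from \cite[Prop. 9.6]{FJ2} to the Corollary is formal and mirrors the truncated-biparabolic argument of \cite[Prop. 9.8]{FJ2}; the only difference is that in the biparabolic setting one can in fact eliminate the centre-correction entirely (as observed in the Remark after Lemma \ref{3.5}), so that $h'=nh$, whereas here we must settle for the weaker conclusion $a'h-h'\in\mathfrak z$, which is still exactly what is needed.
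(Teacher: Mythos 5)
Your proposal is correct and follows essentially the same route as the paper: the paper likewise obtains the Corollary by combining \cite[Prop.\ 9.6]{FJ2} (the bijection for the coarser relation defined by $a\eta=\eta'$ alone) with Lemma \ref{3.5} (itself resting on the Kostant-style argument of Lemma \ref{3.4}) to see that the two equivalence relations coincide, exactly as in your conjugation argument with $h''=a^{-1}h'$ and $a'=an$. Your write-up merely makes explicit the details (that $\mathbf N$ fixes $\eta$ and that $\mathfrak z$ is $\mathbf A$-stable) which the paper leaves implicit in its reference to \cite[Prop.\ 9.8]{FJ2}.
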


\textbf{Remark}.  In the biparabolic case we could require $ah-h'=0$ in the definition of equivalence.  Yet the weaker condition $ah-h' \in \mathfrak z$ is still quite satisfactory since then $ah$ and $h'$ induce the same derivations of $\mathfrak a$.

\section{Rationality and Further Consequences}\label{4}

Let $\mathfrak a$ be a regular Lie algebra with an adapted pair $(h,\eta)$.  We first show that the eigenvalues of $\ad_\mathfrak a h$ are rational. Again by Lemma \ref {3.5}, we may conclude that $\ad h$ is determined up to conjugation by $\eta$.  Here we show the converse, that is up to conjugation $\eta$ is determined by $\ad h$.  This will follow from Proposition \ref {4.4}(iv).

Recall \ref {2.1} that we may write $\mathfrak a = \mathfrak r \oplus \mathfrak m$ with $\mathfrak r$ reductive and that $h$ can be assumed to belong to a Cartan subalgebra $\mathfrak h$ of $\mathfrak r$.  Moreover the centre $\mathfrak z$ of $\mathfrak a$ coincides with $\mathfrak a^\eta_0$ and can be assumed to have null intersection with $\mathfrak h$.

\subsection{}\label{4.1}

For all $\alpha \in \mathfrak h^*$, set $\mathfrak a_\alpha  = \{a \in \mathfrak a| [(\ad h)a=\alpha(h)a, \forall h \in \mathfrak h\}, \mathfrak a^*_\alpha  = \{\xi \in \mathfrak a^*| (\ad h)\xi=\alpha(h)\xi, \forall h \in \mathfrak h\}$.  To avoid confusion with the notation introduced in \ref {2.1} we denote the zero root by $\textbf{0}$. Set $\mathfrak m_{\textbf{0}}=\mathfrak a_{\textbf{0}}\cap \mathfrak m$.  Clearly $\mathfrak a_{\textbf{0}}= \mathfrak m_{\textbf{0}} \oplus \mathfrak h$. Set $\Delta := \{\alpha \in \mathfrak h^* \setminus \{\textbf{0}\}| \mathfrak a_\alpha \neq 0 \}$. By construction
$$\mathfrak a =\mathfrak a_{\textbf{0}} \oplus \oplus_{\alpha \in \Delta}\mathfrak a_\alpha.$$

Through the $\mathfrak a$ invariant non-degenerate pairing $\mathfrak a^* \times \mathfrak a \rightarrow \bf{k}$ we obtain
$$\mathfrak a^* =\mathfrak a^*_{\textbf{0}} \oplus \oplus_{\alpha \in \Delta}\mathfrak a^*_{-\alpha}.$$

Set $\Delta_1:=\{\alpha \in \Delta|h(\alpha)=1\}$.

 By definition of an adapted pair, there exist $S \subset \Delta_1$ and non-zero elements $\xi_{-\alpha} \in \mathfrak a^*_{-\alpha}:\alpha \in S$ such that $\eta= \sum_{\alpha \in S} \xi_{-\alpha}$.  Recall the index set $K$ defined in \ref {2.1} and the basis $\{h_i\}_{i\in K}$ of $\mathfrak h$.

 \begin {lemma} $S$ spans $\mathfrak h^*$.  In particular $|S|\geq|K|$.
 \end {lemma}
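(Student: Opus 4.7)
The plan is to argue by contradiction, exploiting the standing reduction $\mathfrak{h}\cap \mathfrak{z}=0$ imposed in \ref{3.5} together with the identification $\mathfrak{z}=\mathfrak{a}^\eta_0$ supplied by the Lemma of \ref{2.2}. The key observation is that the weight decomposition $\eta=\sum_{\alpha\in S}\xi_{-\alpha}$ makes it very easy to produce elements of $\mathfrak{a}^\eta$ inside $\mathfrak{h}$ whenever $S$ fails to span $\mathfrak{h}^*$, and any such element is immediately forced into $\mathfrak{h}\cap \mathfrak{z}$.

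Suppose, for contradiction, that $S$ does not span $\mathfrak{h}^*$. Then its annihilator in $\mathfrak{h}$ is non-zero, so we may pick $h_0\in \mathfrak{h}$ with $h_0\neq 0$ and $\alpha(h_0)=0$ for every $\alpha\in S$. Since $\xi_{-\alpha}\in \mathfrak{a}^*_{-\alpha}$ we have $(\ad h_0)\xi_{-\alpha}=-\alpha(h_0)\xi_{-\alpha}=0$, so summing over $\alpha\in S$ gives $(\ad h_0)\eta=0$, that is $h_0\in \mathfrak{a}^\eta$. Moreover $h_0$ and $h$ both lie in the abelian Cartan $\mathfrak{h}$ and so commute, giving $h_0\in \mathfrak{a}_0$ in the notation of \ref{1.4}. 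Combining, $h_0\in \mathfrak{a}^\eta\cap \mathfrak{a}_0=\mathfrak{a}^\eta_0$, which by the Lemma of \ref{2.2} equals $\mathfrak{z}$. Hence $h_0\in \mathfrak{h}\cap \mathfrak{z}$, which is zero by \ref{3.5}, contradicting $h_0\neq 0$. The final inequality $|S|\geq |K|$ then follows at once from $\dim\mathfrak{h}^*=\dim\mathfrak{h}=|K|$.

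There is no real obstacle once the set-up is in place; the whole content of the argument is the book-keeping needed to invoke the two preliminary reductions that have been arranged precisely to make it work. Without the identification $\mathfrak{z}=\mathfrak{a}^\eta_0$ the first step would only place $h_0$ in $\mathfrak{a}^\eta$, and without the splitting-off of a central direct factor carried out in \ref{3.5} a non-trivial $\mathfrak{h}\cap \mathfrak{z}$ would genuinely permit $S$ to span only a proper subspace of $\mathfrak{h}^*$ complementary to the annihilator of $\mathfrak{h}\cap \mathfrak{z}$.
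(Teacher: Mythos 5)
Your proof is correct and is essentially the paper's own argument: the paper also derives from the failure of spanning a non-zero element $h'\in\mathfrak h\cap\mathfrak a^\eta$ and contradicts the reduction $\mathfrak z\cap\mathfrak h=0$, with your extra steps (using $[h,h_0]=0$ to place $h_0$ in $\mathfrak a^\eta_0=\mathfrak z$ via Lemma \ref{2.2}) merely making explicit what the paper leaves implicit.
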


 \begin {proof} Otherwise there exists $h' \in \mathfrak h$ such that $h' \in \mathfrak a^\eta$.  This contradicts our assumption that $\mathfrak z \cap \mathfrak h=0$.
 \end {proof}

\subsection{}\label{4.2}

Recall the notation in the first part of \ref {2.1}.

\begin {cor} $\Delta \subset \mathbb QS$.  In particular the ad-semisimple element of an adapted pair has rational eigenvalues.
\end {cor}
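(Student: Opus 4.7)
The plan is to exploit the integer-eigenvalue basis of $\mathfrak{h}$ that is built into the notion of an algebraic Lie algebra (see \ref{1.6}) and pair it against the spanning statement of Lemma \ref{4.1}. Concretely, fix the basis $\{h_k\}_{k\in K}$ of $\mathfrak{h}$ on which every $\ad h_k$ has integer eigenvalues on $\mathfrak{a}$; then $\alpha(h_k)\in\mathbb{Z}$ for every $\alpha\in\Delta$ and every $k\in K$. Introduce the $\mathbb{Q}$-subspace
\[
\mathfrak{h}^*_{\mathbb{Q}}:=\{\beta\in\mathfrak{h}^*\mid \beta(h_k)\in\mathbb{Q},\ \forall\,k\in K\},
\]
which via the evaluation map $\beta\mapsto(\beta(h_k))_{k\in K}$ is isomorphic to $\mathbb{Q}^{K}$. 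In particular $\dim_{\mathbb{Q}}\mathfrak{h}^*_{\mathbb{Q}}=|K|=\dim_{\mathbf{k}}\mathfrak{h}^*$, and $\Delta\subset\mathfrak{h}^*_{\mathbb{Q}}$.

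Next I would use Lemma \ref{4.1}, which furnishes $|S|\geq|K|$ and the fact that $S$ spans $\mathfrak{h}^*$ over $\mathbf{k}$; extract a $\mathbf{k}$-basis $S'\subset S$ with $|S'|=|K|$. Since $S'\subset\mathfrak{h}^*_{\mathbb{Q}}$ and $\mathbf{k}$-linear independence forces $\mathbb{Q}$-linear independence, $S'$ is a $\mathbb{Q}$-linearly independent subset of the $|K|$-dimensional $\mathbb{Q}$-space $\mathfrak{h}^*_{\mathbb{Q}}$, hence a $\mathbb{Q}$-basis. Therefore
\[
\Delta\subset\mathfrak{h}^*_{\mathbb{Q}}=\mathbb{Q}S'\subset\mathbb{Q}S,
\]
which is the first assertion.

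For the second assertion, recall that every $\alpha\in S\subset\Delta_1$ satisfies $\alpha(h)=1$ (this is exactly what the normalization $(\ad h)\eta=-\eta$ applied to $\eta=\sum_{\alpha\in S}\xi_{-\alpha}$ gives). Writing an arbitrary $\alpha\in\Delta$ as $\alpha=\sum_{\beta\in S}c_\beta\beta$ with $c_\beta\in\mathbb{Q}$ (almost all zero) and evaluating at $h$ yields $\alpha(h)=\sum_\beta c_\beta\in\mathbb{Q}$. Since the eigenvalues of $\ad h$ on $\mathfrak{a}$ are exactly the elements of $\{0\}\cup\{\alpha(h)\mid\alpha\in\Delta\}$, they are all rational.

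There is no real obstacle in this argument; it is a short combinatorial pairing of algebraicity against the spanning of $\mathfrak{h}^*$ by $S$. The only point worth emphasising is the descent from $\mathbf{k}$-linear independence inside $\mathfrak{h}^*_{\mathbb{Q}}$ to $\mathbb{Q}$-linear independence, which is automatic but drives the whole proof.
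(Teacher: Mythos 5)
Your proof is correct and follows essentially the paper's own route: both arguments extract from $S$ a $\mathbf{k}$-basis of $\mathfrak h^*$ (possible by Lemma \ref{4.1}) and play it against the integrality of the values $\alpha(h_k)$, $k \in K$, to force rational coefficients, then use $h(\beta)=1$ for $\beta \in S$ to get rational eigenvalues. The paper phrases this via the integer matrix $(h_i(\alpha_j))$ with nonzero determinant $d$, which gives the marginally sharper conclusion $c_k \in d^{-1}\mathbb Z$, whereas you phrase it as a dimension count in the $\mathbb Q$-form $\mathfrak h^*_{\mathbb Q}$; the content is the same.
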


\begin {proof}  By Lemma \ref {4.1} we may choose a subset $S_\#$ of $S$ which is a basis for $\mathfrak h^*$.  Then we may write $S_\#=\{\alpha_i\}_{i \in K}$, for some $\alpha_i \in \Delta$.  Moreover the matrix with integer entries $h_i(\alpha_j) : i, j \in K$ has a non-zero determinant whose value is some non-zero integer $d$.  Take $\alpha \in \Delta$.  By definition of $S_\#$ we may write $\alpha =\sum_{k \in K} c_k\alpha_k$, for some $c_k \in \textbf{k}$. Then $\sum_{k \in K}c_k h_j(\alpha_k)= h_j(\alpha) \in \mathbb Z$.  Thus $c_k \in d^{-1}\mathbb Z$, as required.
\end {proof}

\subsection{}\label{4.3}

 Let $h$ be the ad-semisimple element of an adapted pair.  In the notation of \ref {1.4}, we may write $\mathfrak a_\mathbb Q= \oplus_{i \in \mathbb Q}\mathfrak a_i$. Then by Corollary \ref {4.2} one has $\mathfrak a =\mathfrak a_\mathbb Q$.

 For all $c \in \mathbb Q$, set $\mathfrak a_{\geq c} = \oplus_{i\geq c}\mathfrak a_i, \mathfrak a_{> c}= \oplus_{i> c}\mathfrak a_i$, and more generally for any subset $\mathscr I \subset \mathbb Q$ set $\mathfrak a_\mathscr I = \oplus_{i \in \mathscr I}\mathfrak a_i$.  A similar definition is given for $\mathfrak a^*_{\mathscr I}$.  Observe that duality gives a non-degenerate pairing of $\mathfrak a_\mathscr I$ with $\mathfrak a^*_{-\mathscr I}$.

\subsection{}\label{4.4}

A natural question that arises from Lemma \ref {4.1} is whether we can choose $\eta$ in minimal form in the sense that $S$ is a basis for $\mathfrak h^*$.  The following result gives some preliminary information.

\begin {prop}  Set $\mathscr I = ]-1,0[$.

\

(i) $\frac {1}{2}(\dim \mathfrak a + \Index \mathfrak a) = \dim \mathfrak a^*_{\geq 0} +\frac {1}{2}\dim \mathfrak a^*_{\mathscr I}$.

\

(ii) $\frac {1}{2}(\dim \mathfrak a - \Index \mathfrak a) = \dim \mathfrak a^*_{< 0} -\frac {1}{2}\dim \mathfrak a^*_{\mathscr I}$.

\

(iii)  $(\ad \mathfrak a_{\leq 0})\eta =\mathfrak a_{\leq -1}^*$, equivalently $\bf{A}_{\leq 0}\eta$ is dense in $\mathfrak a_{\leq -1}^*$.

\

(iv)  $(\ad \mathfrak a_{0})\eta =\mathfrak a_{ -1}^*$, equivalently $\bf{A}_{0}\eta$ is dense in $\mathfrak a_{ -1}^*$.

\

(v) $\dim \mathfrak a_0= \dim \mathfrak z +\dim \mathfrak a_{-1}^*$.
\end {prop}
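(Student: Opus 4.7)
The plan is to exploit the $\ad h$-weight grading throughout. Since $\eta \in \mathfrak{a}^*_{-1}$, the coadjoint map shifts weights by $-1$: for $x \in \mathfrak{a}_\alpha$ one has $(\ad x)\eta \in \mathfrak{a}^*_{\alpha-1}$, so the weight-$\beta$ component of $(\ad \mathfrak{a})\eta$ is $(\ad \mathfrak{a}_{\beta+1})\eta$, of dimension $\dim \mathfrak{a}_{\beta+1} - \dim \mathfrak{a}^\eta_{\beta+1}$. Two facts recalled in \ref{2.2} are the linchpin: the exponents $m_i$ are non-negative integers, so $V \subset \mathfrak{a}^*_{\geq 0}$ has dimension $\Index \mathfrak{a}$, while the eigenvalues $-m_i$ of $\ad h$ on $\mathfrak{a}^\eta$ are non-positive integers, giving $\mathfrak{a}^\eta \subset \mathfrak{a}_{\leq 0}$ of dimension $\Index \mathfrak{a}$. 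The Weierstrass decomposition $\mathfrak{a}^* = V \oplus (\ad \mathfrak{a})\eta$ of \ref{2.1} can therefore be intersected with any $\ad h$-stable weight stratum and the two summands identified by weight alone.

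First I would prove (iv) and (v) together. Intersecting the decomposition with $\mathfrak{a}^*_{-1}$ gives $\mathfrak{a}^*_{-1} = V_{-1} \oplus (\ad \mathfrak{a}_0)\eta$, and $V_{-1}=0$ since the exponents are non-negative, so $\mathfrak{a}^*_{-1} = (\ad \mathfrak{a}_0)\eta$; that is (iv). Taking dimensions and invoking the lemma of \ref{2.2} giving $\mathfrak{a}^\eta_0 = \mathfrak{z}$ yields $\dim \mathfrak{a}^*_{-1} = \dim \mathfrak{a}_0 - \dim \mathfrak{z}$, which is (v). For (iii), the same argument applied to the whole stratum $\mathfrak{a}^*_{\leq -1}$, using $V_{\leq -1} = 0$, gives $\mathfrak{a}^*_{\leq -1} = (\ad \mathfrak{a}_{\leq 0})\eta$; passing to dimensions and using $\mathfrak{a}^\eta \subset \mathfrak{a}_{\leq 0}$ in full yields the identity $\dim \mathfrak{a}_{\leq 0} - \Index \mathfrak{a} = \dim \mathfrak{a}_{\geq 1}$. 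The density reformulations in (iii) and (iv) follow by the usual orbit-tangent equivalence: $\mathfrak{a}^*_{\leq -1}$ and $\mathfrak{a}^*_{-1}$ are stable under $\mathbf{A}_{\leq 0}$ and $\mathbf{A}_0$ respectively, and equality of $(\ad \mathfrak{a}_{\leq 0})\eta$ with the tangent of $\mathfrak{a}^*_{\leq -1}$ at $\eta$ is exactly density of the orbit.

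For (i) and (ii), intersecting the decomposition with $\mathfrak{a}^*_{\geq 0}$ gives $\dim \mathfrak{a}^*_{\geq 0} = \dim V + \dim (\ad \mathfrak{a}_{\geq 1})\eta = \Index \mathfrak{a} + \dim \mathfrak{a}_{\geq 1}$, where $\mathfrak{a}^\eta_{\geq 1} = 0$ was used. The same reasoning with $\mathscr{I} = \,]{-1},0[$ yields $\dim \mathfrak{a}^*_{\mathscr{I}} = \dim \mathfrak{a}_{]0,1[}$, since both $V$ and $\mathfrak{a}^\eta$ have only integer $\ad h$-weights. Combining the decomposition $\dim \mathfrak{a} = \dim \mathfrak{a}_{\leq 0} + \dim \mathfrak{a}_{\geq 1} + \dim \mathfrak{a}_{]0,1[}$ with the identity of (iii), a direct calculation gives $\tfrac{1}{2}(\dim \mathfrak{a} + \Index \mathfrak{a}) = \Index \mathfrak{a} + \dim \mathfrak{a}_{\geq 1} + \tfrac{1}{2}\dim \mathfrak{a}_{]0,1[}$, which is (i). Part (ii) follows from (i) since adding (i) and (ii) is the trivial $\dim \mathfrak{a} = \dim \mathfrak{a}^*$.

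I do not anticipate a serious obstacle; every step is forced by the weight shift and the non-negativity of the exponents. The only subtle point, and the reason the proposition is not immediate, is that we do not yet know $\ad h$ has integer eigenvalues (that is the paper's central question), so the interval $\mathscr{I} = \,]{-1},0[$ may genuinely carry weight spaces, and the explicit $\mathfrak{a}^*_{\mathscr{I}}$-terms in (i) and (ii) are precisely the correction needed for this possibility.
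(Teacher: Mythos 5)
Your proof is correct, and it uses the same basic toolkit as the paper -- the $\ad h$-stable Weierstrass splitting $\mathfrak a^*=(\ad\mathfrak a)\eta\oplus V$, the non-negativity of the exponents on $V$, the non-positivity of the eigenvalues on $\mathfrak a^\eta$, and the identification $\mathfrak a^\eta_0=\mathfrak z$ -- but the logical route is organised differently, and in a way worth pointing out. The paper proves (i) and (ii) first by a dimension count ($\dim\mathfrak a^*_{\geq 0}=\dim\mathfrak a_{\geq 1}+\Index\mathfrak a=\dim\mathfrak a^*_{\leq -1}+\Index\mathfrak a$), and only then obtains (iii) by computing $\dim(\ad\mathfrak a_{\leq 0})\eta$ and $\dim\mathfrak a^*_{\leq -1}$ separately (each equal to $\tfrac12(\dim\mathfrak a-\Index\mathfrak a-\dim\mathfrak a^*_{\mathscr I})$ via (i) and (ii)) and concluding that the obvious inclusion is an equality; (iv) and (v) then follow from (iii). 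You instead get (iii) and (iv) directly, with no dimension comparison, by intersecting the graded direct sum $\mathfrak a^*=(\ad\mathfrak a)\eta\oplus V$ with the strata $\mathfrak a^*_{\leq -1}$ and $\mathfrak a^*_{-1}$ and noting $V_{\leq -1}=0$; the equalities in (iii), (iv) are then structural rather than forced by a count, and (i), (ii) fall out afterwards from the resulting identity $\dim\mathfrak a_{\leq 0}-\Index\mathfrak a=\dim\mathfrak a_{\geq 1}$ together with $\dim\mathfrak a^*_{\mathscr I}=\dim\mathfrak a_{]0,1[}$. This reversal is harmless (there is no circularity) and arguably cleaner, since it isolates the one genuinely non-trivial input -- that $V$ and $\mathfrak a^\eta$ carry only integer, respectively non-negative and non-positive, eigenvalues -- and makes the $\mathfrak a^*_{\mathscr I}$ correction terms in (i), (ii) appear exactly where the possible non-integer eigenvalues live, which is the point you correctly flag at the end.
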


\begin {proof}  Define $V\subset \mathfrak a^*$ as in \ref {2.2}.  One has  $(\ad \mathfrak a)\eta \oplus V= \mathfrak a^*$ and in addition the eigenvalues of $h$ on $V$ are non-negative integers, whilst the eigenvalues of $h$ on $\mathfrak a^\eta$ are non-positive integers.  Consequently
$$\dim \mathfrak a^*_{\geq 0}=\dim \mathfrak a_{\geq 1} + \dim \mathfrak a^\eta_{\geq 1} + \dim V=\dim \mathfrak a^*_{\leq -1}  + \Index \mathfrak a.$$

Adding $\dim \mathfrak a^*_{\geq 0}$ to both sides gives (i).  (ii) follows from (i).

Again since the eigenvalues of $h$ on $\mathfrak a^\eta$ are non-positive integers we obtain
$$\dim (\ad \mathfrak a_{\leq 0})\eta = \dim \mathfrak a_{\leq 0}- \dim \mathfrak a^\eta= \dim \mathfrak a^*_{\geq 0}- \Index \mathfrak a =\frac{1}{2}(\dim \mathfrak a -\Index \mathfrak a-\dim \mathfrak a^*_{\mathscr I}),$$
where the last identity obtains from (i).

On the other hand
$$\dim \mathfrak a_{\leq -1}^* = \dim \mathfrak a_{< 0}^* - \dim \mathfrak a_\mathscr I=\frac{1}{2}(\dim \mathfrak a -\Index \mathfrak a-\dim \mathfrak a^*_{\mathscr I}),$$
where the last identity obtains from (ii).

We conclude that the inclusion $(\ad \mathfrak a_{\leq 0})\eta \subset \mathfrak a_{\leq -1}^*$ is an equality giving (iii).  Finally (iv) follows from (iii).  Since $\mathfrak a^\eta_0=\mathfrak z$ by Lemma \ref {2.2}, we obtain (v) from (iv).
\end {proof}

\textbf{Remark}.  Recall that the existence of an adapted pair for $\mathfrak a$ is just condition (H1) of \cite [3,1]{JL}.  Moreover since $\mathfrak a$ is assumed regular, condition (H2) of \cite [3.1]{JL} results from \cite [Cor. 2.3]{JS}. Nevertheless \cite [Lemma 9]{JL} is incorrect and must be replaced by (i) above.  Yet when condition (H4) of \cite {JL} also holds then by \cite [3.3,3.5]{JS} the eigenvalues of $\ad h$ on $\mathfrak a$ are all integer. Thus $\mathfrak a^*_{\mathscr I}=0$ in Proposition \ref {4.4} above.  This recovers the conclusion of  \cite [Lemma 9]{JL} when (H4) of \cite {JL} holds. Consequently the main results of \cite {JL} are unaffected by this error in \cite [Lemma 9]{JL}.

\subsection{}\label{4.5}

 Proposition \ref {4.4}(iv) implies that the semisimple element $h$ of the adapted pair $(h,\eta)$ determines $\eta$ as an element in general position in $\mathfrak a^*_{-1}$.

Again to show that $\eta$ can be chosen in minimal form it is enough to find a regular element in $\mathfrak a^*_{-1}$ which has minimal form.

Recall \ref {2.1}.  One can ask if $h$ is regular in the reductive part $\mathfrak r$ of $\mathfrak r$, equivalently that $\mathfrak r^h=\mathfrak h$.

Since $\mathfrak a$ is an algebraic Lie algebra so is $\mathfrak a_0=\mathfrak a^h$. Moreover $\mathfrak r_0:=\mathfrak r \cap \mathfrak a_0$ is the reductive part of $\mathfrak a_0$. To answer the above question it is enough to show that $h$ is regular in $\mathfrak r_0$, equivalently that $\mathfrak r_0^h=\mathfrak h$.

Recall that $\mathfrak a_0^\eta$ coincides with the centre $\mathfrak z$ of $\mathfrak a$ and so in particular is an ideal in $\mathfrak a_0$. Set $\overline{\mathfrak a}_0= \mathfrak a_0/\mathfrak z$.  Clearly $\mathfrak a^*_{-1}$ is an $\overline{\mathfrak a}_0$ module.  By Proposition \ref {4.4}(iv) it has the same dimension as $\overline{\mathfrak a}_0$ and further admits a (unique) dense orbit.  This in itself is not sufficient to imply the required conclusion, for we could take $\overline{\mathfrak a}_0$ to be a non-solvable Frobenius Lie algebra with $\mathfrak a^*_{-1}$ it co-adjoint module.

 Below we give a counterexample to the above question.

   Consider $\mathfrak a$ to be the canonical truncation of the parabolic $\mathfrak p_{\pi'}$ in $\mathfrak {sl}(n)$ defined by two blocks of sizes p,q respectively with p,q being coprime. In this case $Y(\mathfrak a)$ is polynomial on one generator of degree $\frac{1}{2}(p^2+q^2 +pq-1)$ and moreover $\mathfrak a$ admits an adapted pair $(h,\eta)$ unique up to equivalence \cite {J3}. Here we can assume that $h \in \mathfrak h$ and is the unique dominant element with respect to the Weyl group of the Levi factor (defined by $\pi'$). Then $h$ is regular in the Levi factor if and only if $h(\alpha) >0, \forall \alpha \in \pi'$.  In partial answer to the above conjecture we showed \cite [Thm. 4.8]{J3} that $h(\alpha):\alpha \in \pi'$ is large (of the order of $n^2$) for exactly two simple roots (determined by the solution to the Bezout equation equation $sq-rp=1$) whilst $h(\alpha)$ is otherwise small (of the order of $n$). Conjecture 3 of \cite [7.20]{J1.2} further suggested that these small values lie in $\{0,1\}$.

Let us give an example for which $h(\alpha)$ can take the value $0$ for some $\alpha \in \pi'$.  This is provided by taking $p=5,q=8, n=13$ in the above. The invariant in question has degree $64$. With respect to the Bourbaki order of the simple roots, the values of $h(\alpha)$ are found using \cite [Cor. 4.3]{J3}) to be $(1,61,1,1)$ in the small block and $(1,1,60,1,0,1,1)$ in the large block.

 Thus $\mathfrak r_0^h \iso \mathfrak {sl}(2)+\mathfrak h$ and in particular $h$ is not regular in the reductive part of $\mathfrak a$.

\subsection{}\label{4.6}

It can happen that $\mathfrak a$ is the canonical truncation of an algebraic Lie algebra $\hat{\mathfrak a}$.  As a consequence $Sy(\hat{\mathfrak a})=Sy(\mathfrak a)=Y(\mathfrak a)$.  Moreover we can choose a Cartan subalgebra $\hat{\mathfrak h}$ containing a Cartan subalgebra $\mathfrak h$ and indeed $\mathfrak a$ is obtained from $\hat{\mathfrak a}$ by replacing $\hat{\mathfrak h}$ by $\mathfrak h$.  In this, duality induces a non-degenerate pairing
$$\Lambda \times \hat{\mathfrak h}/\mathfrak h \rightarrow \textbf {k}, \eqno {(*)}$$
where $\Lambda$ is the set of weights of $Sy(\hat{\mathfrak a})$.

Typically $\hat{\mathfrak a}$ may be a biparabolic and in this case $Y(\hat{\mathfrak a})$ is reduced to scalars under our standing hypothesis. However we need not assume this.

Define the set $\hat{\Delta}$ of (non-zero) roots of $\hat{\mathfrak a}$ as in \ref {4.1}.  Clearly $\Delta=\hat{\Delta}|_\mathfrak h$.

Assume that $Y(\mathfrak a)$ is polynomial and that $\mathfrak a$ admits an adapted pair $(h,\eta)$.  Then Lemma \ref {4.1} translates to imply that $S|_\mathfrak h $ spans $\mathfrak h^*$.  Then by \cite [9.3]{FJ2} a complement $V$ to $(\ad \mathfrak a)\eta$ in $\mathfrak a^*$ may be chosen in the form of a sum of root subspaces defined by a subset $T \subset \hat{\Delta}$, the only difference here being that root subspaces need not be one-dimensional.  Then as in \cite [Prop. 9.4]{FJ2}, we obtain from $(*)$ and the isomorphism $Sy(\hat{\mathfrak a}) \iso \textbf{k}[\eta +V]$ (giving notably \cite [9.4$(*)$]{FJ2}) the following

\begin {lemma}  The subset $S\cup T$ of $\hat{\Delta}$ spans $\hat{\mathfrak h}^*$.  In particular $\hat{\Delta} \subset \mathbb Q(S\cup T)$.
\end {lemma}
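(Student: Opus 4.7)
The plan is to prove the contrapositive of the spanning assertion: if $\tilde h \in \hat{\mathfrak h}$ vanishes on every element of $S \cup T$, then $\tilde h = 0$. The inclusion $\hat\Delta \subset \mathbb Q(S \cup T)$ will then follow exactly as in Corollary \ref{4.2} --- evaluate any root of $\hat\Delta$ on the integer basis $\{h_k\}_{k \in \hat K}$ of $\hat{\mathfrak h}$, select from $S \cup T$ a basis for $\hat{\mathfrak h}^*$, and solve for the coefficients via Cramer's rule applied to the resulting matrix of integers, whose determinant is a non-zero integer.

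First I would check that such a $\tilde h$ fixes $\eta + V$ pointwise under the coadjoint action of $\hat{\mathfrak h}$. Writing $\eta = \sum_{\alpha \in S}\xi_{-\alpha}$ with $\xi_{-\alpha} \in \mathfrak a^*_{-\alpha}$, the hypothesis $\alpha(\tilde h)=0$ gives $\tilde h \cdot \xi_{-\alpha} = -\alpha(\tilde h)\xi_{-\alpha} = 0$, so $\tilde h$ fixes $\eta$. Likewise the decomposition $V = \bigoplus_{\beta \in T} V_\beta$ with $V_\beta \subset \mathfrak a^*_\beta$, combined with $\beta(\tilde h)=0$ for $\beta \in T$, forces $\tilde h$ to act trivially on every $V_\beta$. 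Hence $\tilde h$ fixes each point of $\eta + V$.

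Next I would exploit the isomorphism $Sy(\hat{\mathfrak a})= Y(\mathfrak a) \iso \textbf{k}[\eta + V]$ recalled in \ref{4.6}. Fix $\lambda \in \Lambda$ and a non-zero semi-invariant $q \in Sy(\hat{\mathfrak a})$ of $\hat{\mathfrak h}$-weight $\lambda$. By the isomorphism $q|_{\eta+V}$ is non-zero, so there is $\xi \in \eta + V$ with $q(\xi) \neq 0$. Since $\tilde h$ fixes $\xi$, the vector field on $\mathfrak a^*$ derived from the coadjoint action of $\tilde h$ vanishes at $\xi$, and therefore so does its derivative applied to $q$. Comparing this with the semi-invariance identity $(\ad \tilde h)q = \lambda(\tilde h)q$, transferred to functions on $\mathfrak a^*$ up to an overall sign, yields $\lambda(\tilde h) q(\xi) = 0$ and hence $\lambda(\tilde h) = 0$. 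Letting $\lambda$ range over $\Lambda$ and invoking the non-degeneracy of the pairing $(*)$ places $\tilde h$ in $\mathfrak h$, and Lemma \ref{4.1} --- asserting that $S|_{\mathfrak h}$ spans $\mathfrak h^*$ --- then forces $\tilde h = 0$. The only bookkeeping needed is the translation between the $\ad$-action on $S(\hat{\mathfrak a})$ and the vector-field action on functions on $\mathfrak a^*$, but the overall sign is irrelevant for the conclusion $\lambda(\tilde h) = 0$; no ingredient beyond $(*)$, the Weierstrass-section isomorphism, and Lemma \ref{4.1} is required.
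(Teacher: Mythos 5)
Your proof is correct and follows essentially the same route as the paper: the spanning assertion is deduced from the non-degenerate pairing $(*)$ of $\Lambda$ with $\hat{\mathfrak h}/\mathfrak h$, the isomorphism $Sy(\hat{\mathfrak a})\iso \textbf{k}[\eta+V]$, and Lemma \ref{4.1} translated to $S|_{\mathfrak h}$, while the rationality statement is obtained by the Cramer's-rule argument of Corollary \ref{4.2}, exactly as the paper indicates. The only difference is that you write out explicitly the step the paper delegates to \cite[Prop. 9.4]{FJ2}, namely that an element of $\hat{\mathfrak h}$ annihilating $S\cup T$ fixes $\eta+V$ pointwise and therefore kills every weight in $\Lambda$.
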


\begin {proof}  It remains to note that the last part follows from the first part through the argument given in the proof of Corollary \ref {4.2}.
\end {proof}

\subsection{}\label{4.6}

Suppose $\mathfrak a$ is a simple Lie algebra $\mathfrak g$ with an adapted pair $(h,\eta)$.  Then $\mathfrak g_\mathbb Z$ is reductive by the argument of Lemma \ref {1.6} and so both $\mathfrak g$ and $\mathfrak g_\mathbb Z$ are unimodular.  This is obviously incompatible with the conclusion of Lemma \ref {2.5} unless $\mathfrak c=0$.  Hence this pair satisfies integrality.   On the other hand the condition $\mathfrak a^\eta_{-i}=0$ unless $i \in \mathbb N$ implies that $(h,\eta)$ is a good pair in the sense of Elashvili-Kac \cite {EK} for the regular nilpotent orbit generated by $\eta$.  Since these authors show that a good pair for the regular nilpotent orbit is equivalent to the pair extracted from an s-triple (\ref {1.1}) this gives nothing new.  However it means that we can ignore the case $\mathfrak a$ simple from now on.

\section{Frobenius Biparabolics}

\subsection{}\label{5.1}

Let $\mathfrak q_{\pi_1,\pi_2}$ be a biparabolic subalgebra of a simple Lie algebra $\mathfrak g_\pi$.  Suppose further that this algebra is Frobenius.  Then it admits a unique up to conjugation adapted pair $(h,\eta)$ and one can ask if this pair has the integrality property.  Here we show that this is true.  Considering that it is false for even a rather small Frobenius Lie (cf. \ref {1.4}), this result should not be considered self-evident.

\subsection{}\label{5.2}

Recall the notion \cite [Sect.2]{J1} of the Kostant cascade $B_\pi$ defined for $\mathfrak g_\pi$ as follows. First the unique highest root $\beta \in \Delta$ belongs to $B_\pi$.  Since $\Delta_\beta:=\{\gamma \in \Delta|(\gamma,\beta)=0\}$ is a root system, the construction is continued by decomposing $\pi_\beta:=\Delta_\beta\cap \pi$ into connected components.  This makes $B_\pi$ a maximal set of strongly orthogonal roots (that is the sum and difference of distinct elements of $B_\pi$ is not a root) with an order relation given in the obvious manner by the construction.  This set is described explicitly in \cite [Table III]{J1}.  Its cardinality is $|\pi|$ if and only if $-1$ belongs to the Weyl group $W$, more explicitly if $\mathfrak g$ has no factors of type $A_n, D_{2n+1}:n\geq 2, E_6$. However even if $-1 \in W$, it is generally false that the $\mathbb Z$ linear span of $B_\pi$ contains the set $\Delta$ of all (non-zero) roots.

To an arbitrary biparabolic $\mathfrak q_{\pi_1,\pi_2}$  we assign the set $B:=-B_{\pi_1}\sqcup B_{\pi_2}$. We noted in \cite [4.3]{J8} that $\mathfrak q_{\pi_1,\pi_2}$ is Frobenius if and only if $B$ forms a $\bf{k}$ basis for $\bf{k}\pi$.  Moreover with respect to the unique up to equivalence adapted pair $(h.\eta)$, we can choose $\eta$ in the form
$$\eta = \sum_{\beta \in B}x_{-\beta}, \eqno{(*)}$$
and hence $h\in \mathfrak h$ is uniquely determined by the condition that $h(\beta)=1, \forall \beta \in B$.

Here we should add that all this presentation of the adapted pair is an immediate consequence of a result of Tauvel and Yu \cite {TY}.

The integrality property of $(h,\eta)$ is immediate if $\mathbb ZB \supset \Delta$.  Though the latter does hold if $\pi$ is of  type $A$, it is generally false.   Thus a little more work is needed and moreover one needs the criterion for $\mathfrak q_{\pi_1,\pi_2}$ to be Frobenius described in \cite [Lemma 4.2]{J8}.

\subsection{}\label{5.3}

The first case to consider is when $\mathfrak q_{\pi_1,\pi_2}$ is a Borel subalgebra (and Frobenius).  This is when $\pi_2 =\pi, \pi_1=\phi$ (and furthermore $\pi$ is not of type $A_n, D_{2n+1}:n\geq 2, E_6$).  A priori we would not expect the integrality property to hold, however it does and the result is rather curious.  Remarkably we even find that $h(\alpha) \in \{1,0,-1\}, \forall \alpha \in \pi$.  Even more curiously $h(\alpha) \in \{1,-1\}, \forall \alpha \in \pi$ exactly when $\pi$ is of type $B_{2n+1}, D_{2n+2}:n\geq 1, E_7,E_8$ which are exactly those cases for which the Borel is Frobenius and the Kostant cascade does not pass through a simple root system of type $C_2$.  In these cases we were able to show \cite {J7} that the truncated Borel admits a Weierstrass section (in fact \textit{not} given by an adapted pair).  Finally those simple roots $\alpha$ on which $h(\alpha)=-1$ are exactly those which are intractable in the sense of \cite [5.1]{J8}.

Although all these facts can be read off from Table I below, it turns out that one can give them an intrinsic proof which results from the following two lemmas.

Assume that $\pi$ is connected.  Call a root $\beta$ long if $(\beta,\beta) \geq (\gamma,\gamma), \forall \gamma \in \Delta$ and short otherwise. Then by convention there are short roots in $\Delta$ if and only if $\Delta$ is not simply-laced, that is in types $B,C,F,G$.  Let $\beta$ denote the unique highest root in $\Delta^+:=\Delta\cap \mathbb N\pi$ and recall the notation of \ref {5.2}.  Set $\pi^\beta=\pi\setminus \pi_\beta, \Gamma_\beta = \Delta^+ \setminus (\Delta^+\cap \Delta_\beta)$.  The following is an unpublished result of Kostant with some details given in \cite [2.2]{J1}.  We give a proof for completeness.

Let $W_\pi$ denote the Weyl defined by $\pi$ (that is to say generated by the $s_\alpha: \alpha \in \pi$) and $w_\pi$ its unique longest element.  Set $i_\pi =-w_\pi$.  It may be viewed as a Dynkin diagram involution which is often trivial.

\begin {lemma}

\

(i) $(\beta,\gamma) = \frac {1}{2}(\beta, \beta), \forall \gamma \in \Gamma_\beta \setminus \{\beta\}$.  In particular $\beta$ is a long root.

\

(ii) Given $\gamma, \delta \in \Gamma_\beta$ such that $\gamma + \delta \in \Delta$, then $\gamma+ \delta=\beta$.

\

(iii) $\prod_{\beta \in B_\pi}s_\beta = w_\pi$.

\

(iv) $|\pi^\beta|\leq 2$.  Moreover $\pi^\beta$ is a single $i_\pi$ orbit.

\end {lemma}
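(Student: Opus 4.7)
The plan is to treat (i) as the foundational computation and derive (ii), (iv), and (iii) from it. For (i), I would argue via root strings. Since $\beta$ is the unique highest root, $\beta+\gamma\notin\Delta$ for every $\gamma\in\Delta^+$, so $\langle\gamma,\beta^\vee\rangle\geq 0$, with strict inequality precisely when $\gamma\in\Gamma_\beta$. To upgrade to equality for $\gamma\in\Gamma_\beta\setminus\{\beta\}$, suppose $\langle\gamma,\beta^\vee\rangle\geq 2$: then $\gamma-2\beta$ lies in the $\beta$-string through $\gamma$, so $2\beta-\gamma\in\Delta^+$. But the simple-root coefficients of $\beta-\gamma$ are all non-negative and some is strictly positive (as $\gamma\neq\beta$), whence $2\beta-\gamma$ strictly exceeds $\beta$ in the dominance order, contradicting maximality of $\beta$. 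Thus $\langle\gamma,\beta^\vee\rangle=1$, equivalently $(\beta,\gamma)=(\beta,\beta)/2$. The length statement then follows because $\langle\beta,\gamma^\vee\rangle=(\beta,\beta)/(\gamma,\gamma)$ is a positive integer, yielding $(\gamma,\gamma)\leq(\beta,\beta)$ for all $\gamma\in\Gamma_\beta$; for arbitrary $\gamma'\in\Delta$, irreducibility of the $W$-action on $\mathbb R\pi$ (by connectedness of $\pi$) produces $w\in W$ with $w\gamma'\notin\Delta_\beta$, and then $\pm w\gamma'\in\Gamma_\beta$ forces $(\gamma',\gamma')\leq(\beta,\beta)$.

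For (ii), $\gamma,\delta\in\Gamma_\beta\subset\Delta^+$ makes $\gamma+\delta\in\Delta^+$ automatically, and (i) gives $(\beta,\gamma+\delta)=(\beta,\beta)$. Cauchy--Schwarz combined with $(\gamma+\delta,\gamma+\delta)\leq(\beta,\beta)$ (since $\beta$ is long) forces equality throughout, so $\gamma+\delta$ is proportional to $\beta$; being a positive root of the same length, it must equal $\beta$. For (iv), pair $\beta=\sum_{\alpha\in\pi}n_\alpha\alpha$ against $\beta$ itself and apply (i) to obtain $(\beta,\beta)=\tfrac{1}{2}(\beta,\beta)\sum_{\alpha\in\pi^\beta}n_\alpha$, hence $\sum_{\alpha\in\pi^\beta}n_\alpha=2$; since each $n_\alpha\geq 1$, this gives $|\pi^\beta|\leq 2$. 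For the orbit claim, $i_\pi$ permutes $\Delta^+$ and preserves the dominance order, so it fixes the maximum $\beta$ and hence stabilises $\pi^\beta$. The case $|\pi^\beta|=1$ is vacuous; direct inspection of the list of highest roots shows $|\pi^\beta|=2$ occurs only in type $A_n$ $(n\geq 2)$, where $\pi^\beta=\{\alpha_1,\alpha_n\}$ and $i_\pi$ is the non-trivial diagram involution swapping them.

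The main step is (iii), which I would prove by induction on $|\pi|$ using the cascade decomposition $B_\pi=\{\beta\}\sqcup\bigsqcup_iB_{\pi_i}$, where $\pi_\beta=\sqcup_i\pi_i$ is the decomposition into connected components. The inductive hypothesis reduces (iii) to the identity $w_\pi=s_\beta\cdot w_{\pi_\beta}$ with $w_{\pi_\beta}=\prod_iw_{\pi_i}$. To verify it, partition $\Delta^+=\{\beta\}\sqcup(\Gamma_\beta^+\setminus\{\beta\})\sqcup\Delta_\beta^+$. Each simple reflection $s_\alpha$ for $\alpha\in\pi_\beta$ stabilises $\Gamma_\beta^+$ (since $\alpha\perp\beta$ and $\alpha\in\Delta_\beta$), so $w_{\pi_\beta}$ permutes $\Gamma_\beta^+$ while carrying $\Delta_\beta^+$ onto $\Delta_\beta^-$. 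Meanwhile $s_\beta$ fixes $\Delta_\beta$ pointwise and, crucially via (i), sends each $\gamma\in\Gamma_\beta^+\setminus\{\beta\}$ to $\gamma-\beta\in\Delta^-$ (and $\beta$ to $-\beta$). Composing, $s_\beta w_{\pi_\beta}$ carries every positive root to a negative one, hence equals $w_\pi$. The anticipated main obstacle is precisely securing this mapping behaviour of $s_\beta$ on $\Gamma_\beta^+$, which is exactly what (i) provides; everything else reduces to bookkeeping with the cascade.
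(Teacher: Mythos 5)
Your proof is correct, and on (i), (iii) and the bound in (iv) it is essentially the paper's argument written out in more detail: the root-string/maximality computation for (i), and for (iii) the same cascade induction --- the paper phrases it by showing $s_{\beta'}\Gamma_\beta=\Gamma_\beta$ for all $\beta'\in B_\pi\setminus\{\beta\}$ and that $s_\beta$ negates $\Gamma_\beta$ while fixing $\Delta_\beta$, whereas you show the simple reflections $s_\alpha$, $\alpha\in\pi_\beta$, preserve $\Gamma_\beta$ and factor $w_\pi=s_\beta w_{\pi_\beta}$; this is the same bookkeeping. The genuine deviations are local. For (ii) the paper deduces it directly from (i): if $\gamma+\delta\in\Delta$ it is a positive root non-orthogonal to $\beta$, hence lies in $\Gamma_\beta$, and $(\beta,\gamma+\delta)=(\beta,\beta)$ is incompatible with (i) unless $\gamma+\delta=\beta$; your Cauchy--Schwarz-plus-longness argument is a correct alternative, just slightly heavier. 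For the orbit statement in (iv) the paper stays intrinsic: writing $\pi^\beta=\{\alpha_1,\alpha_2\}$ with $n_{\alpha_2}=1$, one has $s_\beta\alpha_1=\alpha_1-\beta$, whose coefficient at $\alpha_2$ is $-1$; the remaining cascade reflections lie in $W_{\pi_\beta}$ and so do not disturb that coefficient, whence by (iii) $w_\pi\alpha_1$ still has coefficient $-1$ at $\alpha_2$, forcing $i_\pi\alpha_1=\alpha_2$. You instead invoke the classification ($|\pi^\beta|=2$ only in type $A_n$, $n\geq 2$, where $i_\pi$ is the nontrivial flip), which is true --- the paper records it in a remark --- but it trades a type-free, self-contained argument for an appeal to the tables, and it hides the fact that (iii) is what actually drives the orbit claim in the intrinsic proof.
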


\begin {proof} Take $\gamma \in \Gamma_\beta \setminus \{\beta\}$.  Then $(\beta,\gamma)>0$ by definition.  Thus $\gamma-\beta$ is a root but obviously $\gamma - 2\beta$ cannot be a root.  Hence (i). Clearly (ii) follows from (i).

It is clear that $s_\beta \Gamma_\beta = -\Gamma_\beta, s_\beta|_{ \Delta_\beta} = \Id|_{\Delta_\beta}$. On the other hand it is also clear that $s_{\beta'} \Gamma_\beta = \Gamma_\beta, \forall \beta' \in B_\pi \setminus \{\beta\}$.  We conclude that $\prod_{\beta \in B_\pi}s_\beta$ sends $\Delta^+$ to $\Delta^-$ and so coincides with $w_\pi$. Hence (iii).

Compute $(\beta,\beta)$ by writing $\beta=\sum_{\alpha \in \pi} n_\alpha\alpha$.   Then by (i), since $(\alpha,\beta)\geq 0$, we obtain $\sum_{\alpha \in \pi^\beta} n_\alpha =2$.  Thus $|\pi^\beta|\leq 2$.  Let us write $\pi^\beta =\{\alpha_1,\alpha_2\}$.  Then $s_\beta \alpha_1=\alpha_1-\sum_{\alpha \in \pi}n_\alpha \alpha$, has a coefficient $-1$ of $\alpha_2$.  Then by (iii) so has $w_\pi\alpha_1$.   Yet by the uniqueness of $\beta$ it follows that $\pi^\beta$ is $i_\pi$ stable, hence $i_\pi\alpha_1=\alpha_2$.  Hence (iv).

\end {proof}

\textbf{Remarks}.   The completed Dynkin diagram \cite [Planches I-IX]{B} describes $\pi^\beta$. From this $B_\pi$ may be described as an ordered set \cite [Table III]{J1}. One finds that $|\pi^\beta|=2$ exactly in type $A_n:n \geq 2$.

\textbf{N.B.}  Not all the roots of $B_\pi$ need be long since they are only long relative to the root subsystem for which they are the unique highest root.

\subsection{}\label{5.4}

Assume that $\pi$ is connected.

Clearly $\mathfrak b_\pi$ is Frobenius if and only if $|B_\pi| = |\pi|$. By Lemma \ref {5.3}(iii) this condition is equivalent to $i_\pi$ being the identity.  Moreover this is in turn equivalent to the condition that for all $\beta \in B_\pi$ there is a unique $\alpha \in \pi$ belonging to the indecomposable root system for which $\beta$ is its highest root such that $(\alpha,\beta) >0$.

Fix $h \in \mathfrak h$ such that $h(\beta)=1, \forall \beta \in B_\pi$. It is unique if these three equivalent conditions given in the paragraph above hold.

\begin {lemma}  Let $\beta_*$ denote the unique highest root and assume $\alpha \in \pi$ is the unique element belonging to $\pi^{\beta_*}$.

\

(i)  Suppose that $\alpha$ is a short root.  Then $h(\alpha)=0$.

\

(ii) Suppose $\pi$ is of type $A_1$, then $h(\alpha)=1$,

\

(iii)  If neither (i),(ii) hold then $h(\alpha)=-1$.  In this case $\varpi_\alpha =\beta_*$, whereas if (i) or (ii) holds then $2\varpi_\alpha =\beta_*$.
\end {lemma}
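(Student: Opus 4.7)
The plan is to dispose of case (ii) directly and to handle (i) and (iii) via a uniform formula derived from the strongly orthogonal structure of $B_\pi$. Case (ii) is immediate: $\pi = A_1$ gives $\beta_* = \alpha$, so $h(\alpha) = h(\beta_*) = 1$ and $\varpi_\alpha = \alpha/2$ yields $2\varpi_\alpha = \beta_*$. For (i) and (iii) I use $|\pi| > 1$, which places $\alpha \neq \beta_*$ in $\Gamma_{\beta_*} \setminus \{\beta_*\}$; Lemma \ref{5.3}(i) then gives $(\alpha, \beta_*) = (\beta_*, \beta_*)/2$. Combined with $(\gamma, \beta_*) = 0$ for every $\gamma \in \pi_{\beta_*} = \pi \setminus \{\alpha\}$, this forces $\langle \beta_*, \gamma^\vee\rangle = \delta_{\alpha\gamma}\,r$ with $r := (\beta_*, \beta_*)/(\alpha, \alpha)$, hence $\beta_* = r\,\varpi_\alpha$. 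In case (iii), $\alpha$ is long, $r = 1$, and $\varpi_\alpha = \beta_*$; in case (i), $\alpha$ is short, so $r \in \{2,3\}$ is the squared-length ratio of long to short roots, and since a short simple root can lie in $\pi^{\beta_*}$ only in type $C_n$ (in $B_n, F_4, G_2$ the unique element of $\pi^{\beta_*}$ turns out to be long), $r = 2$ and $2\varpi_\alpha = \beta_*$.

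Next I compute $h(\alpha)$ explicitly. Since $B_\pi$ is strongly orthogonal, dualizing the relations $h(\beta) = 1$ via the invariant form yields $h = \tfrac12 \sum_{\beta \in B_\pi} \beta^\vee$, and hence
\[ h(\alpha) = \sum_{\beta \in B_\pi} \frac{(\beta, \alpha)}{(\beta, \beta)} = \frac{1}{2} + \sum_{\beta \in B_{\pi_{\beta_*}}} \frac{(\beta, \alpha)}{(\beta, \beta)}, \]
the leading $1/2$ coming again from Lemma \ref{5.3}(i). In case (i), with $\pi$ of type $C_n$, the subcascade $B_{\pi_{\beta_*}} = \{2e_2, \ldots, 2e_n\}$ pairs with $\alpha = e_1 - e_2$ only through $2e_2$, contributing $-1/2$, so $h(\alpha) = 0$. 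In case (iii) I verify in each of the types $B_n\,(n\geq 2), D_n\,(n\geq 3), E_6, E_7, E_8, F_4, G_2$ (where $\alpha$ is the distinguished long simple root of $\pi^{\beta_*}$) that the subcascade sum equals $-3/2$, giving $h(\alpha) = -1$. In each type the Dynkin diagram is a tree, so $\alpha$ meets each connected component $\pi_c'$ of $\pi_{\beta_*}$ in a unique simple root $\gamma_c$, and the component contribution factors as $(\alpha, \gamma_c)\sum_{\beta \in B_{\pi_c'}} m_c(\beta)/(\beta, \beta)$ with $m_c(\beta)$ the coefficient of $\gamma_c$ in $\beta$.

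The hard part will be the uniform evaluation of the subcascade sum $-3/2$ in case (iii). A type-free derivation does not seem to be available: because $\pi_{\beta_*}$ may be disconnected and its components span several Dynkin types, the partial sums $\sum_{\beta \in B_{\pi_c'}} m_c(\beta)/(\beta,\beta)$ are not uniform across cases. One is therefore driven either to an inductive identity relating the sum for $(\pi, \alpha)$ to those for the sub-pairs $(\pi_c', \gamma_c)$, or to a direct enumeration using the explicit cascade descriptions, which is essentially the combinatorial content of Table I below.
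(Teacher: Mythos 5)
Your reductions for (ii), for the relation $\beta_*=r\varpi_\alpha$, and for the type-$C$ computation in (i) are fine, but the heart of (iii) is missing: you reduce everything to the identity $\sum_{\beta \in B_{\pi_{\beta_*}}}(\beta,\alpha)/(\beta,\beta)=-\tfrac32$ and then only announce that you would verify it type by type (or find an inductive identity), asserting that "a type-free derivation does not seem to be available." That assertion is wrong, and the unperformed verification is a genuine gap. The paper proves exactly this identity uniformly: by Lemma \ref{5.3}(iii) one has $\prod_{\beta\in B_\pi}s_\beta=w_\pi$, and by Lemma \ref{5.3}(iv) the singleton hypothesis gives $w_\pi\alpha=-\alpha$; applying $s_{\beta_*}$ first and the remaining cascade reflections (which commute and act independently by strong orthogonality) one gets, for $\alpha$ long, $\beta_*-2\alpha=\sum_{\beta\in B_\pi\setminus\{\beta_*\},(\beta,\alpha)\neq0}\frac{(\alpha,\alpha)}{(\beta,\beta)}\beta$; comparing norms of the two sides yields $\sum\frac{(\alpha,\alpha)}{(\beta,\beta)}=3$, and evaluating $h$ on the displayed identity gives $1-2h(\alpha)=3$, i.e.\ $h(\alpha)=-1$ with no case analysis. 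So the step you defer to enumeration is precisely the content of the paper's argument, and without it (or the completed tables) your proof of (iii) is not a proof.

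A secondary point: your formula $h=\tfrac12\sum_{\beta\in B_\pi}\beta^\vee$ only produces one particular solution of the constraints $h(\beta)=1$, $\beta\in B_\pi$. The lemma fixes an arbitrary such $h$, and it is applied in \ref{5.6} precisely when $|B_\pi|<|\pi|$, so $h$ is not unique there; to conclude that the asserted value holds for every admissible $h$ you must show that $h(\alpha)$ is pinned by the constraints, i.e.\ that $2\alpha$ lies in the span of $B_\pi$. In case (i) this is immediate ($\beta_*-2\alpha$ is itself a cascade root, which also gives $h(\alpha)=0$ in one line), but in case (iii) it is exactly the identity above — so your shortcut via a chosen $h$ does not bypass the missing step, it presupposes it.
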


\begin {proof}

 Suppose $\alpha$ is a short root.  Then $\beta_*-2\alpha$ is a root, necessarily belonging to the Kostant cascade. This forces $h(\alpha)=0$.  Hence (i).  (ii) is obvious.

 Consider (iii). By Lemma \ref {5.3}(iii) one has $w_\pi\beta_*=-\beta_*$, whilst by the hypothesis of the lemma and Lemma \ref {5.3}(iv), $w_\pi\alpha=-\alpha$.  Thus if we set $w^{\beta_\star}=\prod_{\beta \in B_\pi\setminus \{\beta_*\}}s_\beta$, then $w^{\beta_*} \alpha=-s_{\beta_*}\alpha=\beta_\star-\alpha$.

 Suppose that $(\beta,\alpha)\neq 0$, for some $\beta \in B_\pi \setminus \{\beta_*\}$.  Since $\alpha$ is assumed long one has $s_\beta\alpha=\alpha - 2\frac{(\beta,\alpha)}{(\beta,\beta)}\beta= \alpha + \frac{(\alpha,\alpha)}{(\beta, \beta)}\beta$, which gives
$$w^{\beta_\star}\alpha = \alpha + \sum_{\beta \in B_\pi \setminus \{\beta_*\}|(\beta,\alpha)\neq 0}   \frac{(\alpha,\alpha)}{(\beta,\beta)} \beta.$$

Combined with our previous formula we obtain
 $$\beta_*-2\alpha=\sum_{\beta \in B_\pi \setminus \{\beta_*\}|(\beta,\alpha)\neq 0}   \frac{(\alpha,\alpha)}{(\beta,\beta)} \beta.\eqno {(*)}$$

  Again since $\alpha$ and $\beta$ are both long the scalar product of the left hand side of $(*)$ with itself equals $3(\alpha,\alpha)$, by Lemma \ref {5.3}(i). Then computing the scalar product of the right hand side of $(*)$ with itself gives
 $$3(\alpha,\alpha)=\sum_{\beta \in B_\pi \setminus \{\beta_*\}|(\beta,\alpha)\neq 0}\frac{(\alpha,\alpha)^2}{(\beta,\beta)}.\eqno {(**)}$$

 Computing $h$ on both sides of $(*)$ and using $(**)$ gives $h(\alpha)=-1$.

 Finally by Lemma \ref {5.3}(i), $\varpi_\alpha = \beta$ if and only if $\alpha$ is a long simple root different to $\beta$.  Otherwise $2\varpi_\alpha=\beta$.  Hence (iii).

 \end {proof}

 \textbf{Remarks}.  Suppose $|B_\pi| =|\pi|$.  Then this result determines $h(\alpha)$ for all $\alpha \in \pi$ and to have values in $\{-1,0,1\}$.

 One may easily check from (iii) that $2\varpi_\alpha \in \mathbb NB_\pi$ and $\varpi_\alpha \in \mathbb NB_\pi$ if and only if $h(\alpha)=-1$.  In the latter case we called  $\alpha$ intractable \cite [5.1]{J7}.

\subsection{}\label{5.5}

 In Table I below we give the values of $h(\alpha):\alpha \in \pi$ for an adapted pair $h \in \mathfrak h, \eta \in \mathfrak b^*_{reg}$, when the Borel subalgebra $\mathfrak b$ of a simple Lie algebra $\mathfrak g_\pi$ is Frobenius, computed via Lemma \ref {5.4} and \cite [Table III]{J1}.

Here and below the Bourbaki convention \cite [Planches I-IX]{B} is used to label the simple roots.

\bigskip
\begin {center}

\begin{tabular}{|l|l|l|}
\hline
Type&$h(\alpha_i)$&$h(\alpha_i)$\\

\hline
$B_{2n-1}:n \geq 2$& $(-1)^{i-1}$&\\
$B_{2n}:n \geq 2$&$(-1)^{i-1}:i \leq 2n-1$&$0:i=2n$\\
$C_n:n \geq 2$& $0:i<n$&$1:i=n$\\
$D_{2n}:n \geq 2$& $(-1)^{i-1}:i \leq 2n-2$&$ 1:i=2n-1,2n$\\
 \hline
 $E_7$&$-1:i=1,4,6$ &$1:i=2,3,5,7$\\

 $E_8$&$-1:i=1,4,6,8$ &$1:i=2,3,5,7$\\
$F_4$&$(-1)^i:i=1,2$&$0:i=3,4$\\
$G_2$& $-1:i=2$&$1:i=1$\\
\hline

\end{tabular}

\end {center}

\bigskip

\begin {center} Table I

\end {center}

\bigskip

\subsection{}\label{5.6}

Suppose $|B_\pi|< |\pi|$.   Then $h(\pi)$ is not determined. However the only difficulty arises if at some point in the Kostant cascade one reaches a system of type $A_n:n \geq 2$.  (This occurs exactly in types $A_n, D_{2n+1}:n \geq 2, E_6$.)   Nevertheless we may still apply Lemmas \ref {5.3}, \ref {5.4} to obtain the following table.

\bigskip

\begin {center}

\begin{tabular}{|l|l|l|l|}


\hline
Type&$h(\alpha_i)$&$h(\alpha_i)$&$h(\alpha_i)+h(i_\pi(\alpha_i))$\\

\hline

$D_{2n+1}:n \geq 2$& $(-1)^{i-1}:i \leq 2n-1$&&$ 0:i=2n$\\
 \hline
 $E_6$&$-1:i=4$ &  $1: i=2$ &$0:i=1,3$\\

\hline

\end{tabular}

\end {center}

\bigskip

\begin {center} Table II

\end {center}

\subsection{}\label{5.7}

For the case of an arbitrary Frobenius biparabolic, we must recall \cite [Lemma 4.2]{J7} and some background theory. We shall do this very briefly, more details may be found in \cite [3.4]{J8} and references therein.

 We remark that in general the elements of $B_\pi$ are linearly independent and invariant under $i_\pi$.   On the other hand $|B_\pi|=|\pi/<i_\pi>|$.

 \

Thus
$$\textbf{k}B_\pi= \bigoplus_{\alpha \in \pi}\textbf{k}(\alpha+i_\pi(\alpha)). \eqno{(*)}$$

 For $j=1,2$, set $i_j=i_{\pi_j}$, which is a Dynkin diagram involution of $\pi_j$. (This shorthand notation brings us into line with the notation of \cite [4.5]{J2}. However it is not appropriate for Sections \ref {7}, \ref {8} and will be confined to this section and the introduction to Section \ref {6}.) They may be extended to involutions of an overset $\tilde{\pi}$, for which it may be necessary to adjoin ``fictitious roots" (see \cite [4.5]{J2}) however the precise details will not be needed here.  Set $\pi_\cap =\pi_1\cap \pi_2, \pi_{\cup} = \pi \setminus \pi_\cap$.

\begin {lemma} $\mathfrak q_{\pi_1,\pi_2}$ is Frobenius if and only if no $<i_1i_2>$ orbit lies entirely in $\pi_\cap$ and if every $<i_1i_2>$ orbit lies in $\pi$ and meets $\pi_\cup$ at exactly one point.
\end {lemma}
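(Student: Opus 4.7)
The plan is to deduce the orbit criterion from the reformulation already noted: $\mathfrak{q}_{\pi_1,\pi_2}$ is Frobenius if and only if $B = -B_{\pi_1} \sqcup B_{\pi_2}$ is a $\textbf{k}$-basis of $\textbf{k}\pi$. Combining with formula $(*)$, this is equivalent to asking that the two subspaces
$$V_j := \textbf{k}B_{\pi_j} = \bigoplus_{\alpha \in \pi_j/\langle i_j \rangle} \textbf{k}(\alpha + i_j(\alpha)), \qquad j = 1, 2,$$
satisfy $V_1 \oplus V_2 = \textbf{k}\pi$ when viewed inside $\textbf{k}\tilde{\pi}$.

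I would then decompose $\tilde{\pi}$ into $\langle i_1, i_2 \rangle$-orbits and analyse the contributions to $V_1 + V_2$ orbit by orbit. Each such orbit is a cyclic sequence whose successive edges alternate in label $i_1, i_2, i_1, i_2, \ldots$; the generators of $V_1$ arising from the orbit are exactly the sums $\alpha + i_1(\alpha)$ across $i_1$-edges, and similarly for $V_2$. The key computation is that on a closed cycle of even length $2n$ with all vertices in $\pi_\cap$, the $n$ vectors from $V_1$ and the $n$ from $V_2$ both sum to $\sum_{\alpha \in \mathcal{O}} \alpha$, producing exactly one relation and thus spanning only a $(2n{-}1)$-dimensional subspace of the $2n$-dimensional $\textbf{k}\mathcal{O}$, so the basis property fails. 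If the orbit visits a fictitious root in $\tilde{\pi}\setminus\pi$, a parallel computation shows the projection to $\textbf{k}\pi$ incurs a similar rank deficit; if it meets $\pi_\cup$ at more than one point, the cycle breaks at several places, giving too few independent vectors to cover $\textbf{k}(\mathcal{O}\cap\pi)$. Finally, when the orbit lies in $\pi$ and meets $\pi_\cup$ at a single vertex, this unique break removes precisely the dependency above and leaves $|\mathcal{O}|$ linearly independent vectors spanning $\textbf{k}\mathcal{O}$, matching exactly what a basis contribution demands.

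Summing across all $\langle i_1, i_2 \rangle$-orbits then yields $V_1 \oplus V_2 = \textbf{k}\pi$ if and only if every orbit lies in $\pi$ and meets $\pi_\cup$ at exactly one point, which is the stated criterion. The main obstacle is the precise orbit-wise linear algebra, in particular the bookkeeping of the fictitious roots introduced to make $i_1, i_2$ into fixed-point-free involutions of $\tilde{\pi}$; one must separately verify that an orbit containing such a root, or one breaking at several $\pi_\cup$-vertices, really does produce the claimed excess or deficit. This involves some case analysis on the orbit length and parity and on which of $i_1, i_2$ fails to be naturally defined at each break point, after which the equivalence drops out by dimension count.
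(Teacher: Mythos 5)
Your overall strategy is sound, and it is worth noting that the paper itself gives no internal argument here: its proof is the single line ``See [Lemma 4.2]'' of the almost-Frobenius paper, after having set up in \ref{5.2} and \ref{5.7}$(*)$ exactly the two ingredients you use, namely the Tauvel--Yu criterion that $\mathfrak q_{\pi_1,\pi_2}$ is Frobenius if and only if $B=-B_{\pi_1}\sqcup B_{\pi_2}$ is a basis of $\textbf{k}\pi$, and the rewriting of $\textbf{k}B_{\pi_j}$ as the span of the vectors $\alpha+i_j(\alpha)$. So your componentwise rank count on the graph whose edges are the $i_1$- and $i_2$-pairs is not a different route but a reconstruction of the proof of the cited lemma. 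One genuine simplification you could make explicit: since $B_{\pi_1},B_{\pi_2}$ and hence $V_1,V_2$ live entirely in $\textbf{k}\pi$, the fictitious roots of $\tilde\pi$ never enter the linear algebra at all; they matter only when translating the graph-theoretic description back into the $\langle i_1i_2\rangle$-orbit language, and the Remark following the lemma (a point of $\pi_\cup$ is deemed fixed by the involution not defined on it) lets you bypass them completely.

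Two concrete repairs are needed in your case analysis. First, not every component is a closed cycle: a vertex of $\pi_\cap$ carries one $i_1$- and one $i_2$-edge (possibly a loop at a fixed point), while a vertex of $\pi_\cup$ carries only one, so the components are either cycles with all vertices in $\pi_\cap$, or chains whose two ends are each either a $\pi_\cup$-vertex or a fixed point of $i_1$ or $i_2$. In particular an orbit lying entirely in $\pi_\cap$ need not be a cycle: a chain capped at both ends by fixed points also violates the first clause, and there the failure is by \emph{excess} rather than deficit --- for instance $\alpha_1,\alpha_2\in\pi_\cap$ with $i_1$ fixing both and $i_2$ swapping them yields the three vectors $\alpha_1,\alpha_2,\alpha_1+\alpha_2$ in a two-dimensional space, so $V_1\cap V_2\neq 0$ although spanning holds. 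Your sketch treats only the cycle case, and your ``break point'' discussion concerns $\pi_\cup$-vertices, not these interior fixed points; this case must be added (it is also what makes the good case work: the unique $\pi_\cup$-end together with the loop at the fixed-point end gives exactly $|\mathcal O|$ independent vectors). Second, the lemma is stated for $\langle i_1i_2\rangle$-orbits while your linear algebra runs over $\langle i_1,i_2\rangle$-components, and these differ: a cycle of length $2n$ splits into two $\langle i_1i_2\rangle$-orbits, both inside $\pi_\cap$. A short translating remark (each component lies in $\pi_\cap$, lies in $\pi$, or meets $\pi_\cup$ exactly once if and only if each of its $\langle i_1i_2\rangle$-orbits does) closes that gap.
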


\begin {proof} See \cite [Lemma 4.2]{J7}.
\end {proof}

\textbf{Remark}.  The meaning of the first condition is clear without having to know how to extend $i_1,i_2$.  The second statement means that the orbit ``starts" at fixed point of $i_1$ in $\pi_1$ (or of $i_2$ in $\pi_2$) and then these two involutions are sequentially applied until a point in $\pi_\cup$ is reached.  The latter is then deemed to be a fixed point of the involution whose action on the given point in $\pi_\cup$ is not defined.

\subsection{}\label{5.8}

Suppose $\pi$ is indecomposable of type $A_n$.  Then the Kostant cascade takes a particularly simple form.  Indeed the $i^{th}$ element $\beta_i$ is just the sum of simple roots $\alpha_i+\alpha_{i+1}+\ldots +\alpha_{n+1-i}$. Moreover $\alpha:=\alpha_i, \alpha':=\alpha_{n+1-i}$ form the two elements of $\pi$ having a positive scalar product on $\beta_i$.  Thus the condition $h(\beta_i)=1:i=1,2,\ldots, [\frac{n+1}{2}]$ immediately implies that

\begin {lemma}

$$
\begin{array}{ccc}
 h(\alpha)=h(\alpha')=1& :& \alpha=\alpha', \\
 h(\alpha)+h(\alpha')=1 & :&  (\alpha,\alpha')<0, \\
 h(\alpha)=h(\alpha')=0& :&  (\alpha,\alpha')=0. \\
\end{array}
$$
\end {lemma}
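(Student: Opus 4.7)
The plan is to dispatch each of the three bullets by a direct calculation using only the explicit description $\beta_i = \alpha_i + \alpha_{i+1} + \cdots + \alpha_{n+1-i}$ of the $i$th cascade element together with the standing hypothesis $h(\beta_i) = 1$ for all admissible $i$; no further machinery is needed.

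In the first case, $\alpha = \alpha'$ is the assertion $\alpha_i = \alpha_{n+1-i}$, which forces $i = n+1-i$, hence $n = 2i-1$. For this value of $i$ the sum defining $\beta_i$ collapses to a single summand $\beta_i = \alpha_i = \alpha$, and applying $h$ gives $h(\alpha) = h(\beta_i) = 1$ at once. In the second case, $(\alpha, \alpha') < 0$ says that the roots $\alpha_i$ and $\alpha_{n+1-i}$ are adjacent in the type $A_n$ Dynkin diagram, and hence $n+1-i = i+1$. Then $\beta_i$ is a two-term sum $\beta_i = \alpha_i + \alpha_{i+1} = \alpha + \alpha'$, and evaluating $h$ on it yields $h(\alpha) + h(\alpha') = 1$.

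For the third case, $(\alpha, \alpha') = 0$ forces $n+1-i \geq i+2$, so that $\beta_{i+1} = \alpha_{i+1} + \cdots + \alpha_{n-i}$ is still a genuine cascade element of the subsystem orthogonal to $\beta_i$. The telescoping identity $\beta_i = \alpha + \beta_{i+1} + \alpha'$ is then valid, and applying $h$ to both sides and subtracting $h(\beta_{i+1}) = 1$ from $h(\beta_i) = 1$ produces the required relation $h(\alpha) + h(\alpha') = 0$, which is the natural partner of the sum-equals-one identity of the previous case.

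No real obstacle is anticipated: the whole argument takes place entirely inside the explicit combinatorics of the type $A$ cascade, and the only verification required in each case is that $\beta_i$ (and in the last case also $\beta_{i+1}$) has the promised form as a sum of consecutive simple roots, which is transparent from the indexing. The mild point to watch is that in case 3 one must ensure $\beta_{i+1}$ is still defined; this is exactly what is guaranteed by the inequality $n+1-i \geq i+2$ characterising that case.
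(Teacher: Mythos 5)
Your three-case telescoping is exactly the argument the paper has in mind: the paper gives no proof at all, asserting that the conditions $h(\beta_i)=1$ ``immediately imply'' the lemma, and the identities you use -- $\beta_i=\alpha$ when $\alpha=\alpha'$, $\beta_i=\alpha+\alpha'$ when they are adjacent, and $\beta_i=\alpha+\beta_{i+1}+\alpha'$ when they are orthogonal, together with the check that $\beta_{i+1}$ is again a cascade element so that $h(\beta_{i+1})=1$ is available -- are precisely the intended content. The one point you should make explicit is the third clause: what you prove is $h(\alpha)+h(\alpha')=0$, whereas the lemma as printed asserts $h(\alpha)=h(\alpha')=0$. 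The printed form does not follow from the stated hypotheses (in $A_3$, putting $h(\alpha_1)=c$, $h(\alpha_2)=1$, $h(\alpha_3)=-c$ satisfies $h(\beta_1)=h(\beta_2)=1$ for every $c$), and it is evidently a misprint for the sum relation: the sequel uses only the sum form, both in the propagation of values along $\langle i_{\pi_1}i_{\pi_2}\rangle$ orbits in 5.9--5.10 and in Table II, whose relevant column records $h(\alpha_i)+h(i_\pi(\alpha_i))=0$; moreover the literal form would force $h(\pi)\subset\{0,\pm1\}$ in the type $A$ situations, contradicting the examples of 5.11 and the bound in Theorem 5.10 where values $\pm2$ genuinely occur. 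So your proof is correct and establishes the lemma in its intended form; just state outright that the third line is to be read as $h(\alpha)+h(\alpha')=0$ rather than silently substituting it.
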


\subsection{}\label{5.9}

In the above Lemma one may also view $\alpha$ and $\alpha'$ as the end-points of an arc defined by the involution $i_\pi$ sending $\alpha$ to $\alpha'$. Thus the first line means that $h(\alpha) =1$ if $\alpha$ is a fixed point.  Of course for an arbitrary biparabolic one must replace $i_\pi$ by $i_{\pi_1}$ or $i_{\pi_2}$ as appropriate.  Again if $\pi_1$, or $\pi_2$ is not of type $A$, then one must use Tables I,II to determine the value of $h(\alpha)$.  If $\alpha$ is not a fixed point, then the conclusion of the first two sentences of this paragraph apply since the corresponding root \textit{sub}system is still of type $A$.

\subsection{}\label{5.10}

Continue to assume that $\mathfrak q_{\pi_1,\pi_2}$ is Frobenius. The above facts give rise to a very simple algorithm for computing $h(\alpha):\alpha \in \pi$.  Recall Lemma \ref {5.7} and start at a fixed point $\alpha$  of $\pi_1$ or of $\pi_2$.  Then $h(\alpha)$ is given by the Table I, II or by Lemma \ref {5.8}.  Then the value of $h$ on the subsequent elements in $<i_1i_2>\alpha$ are determined by the conclusion of Lemma \ref {5.8} and the comments in \ref {5.9}.  Moreover it is easy to give a bound on $h(\alpha)$ through Lemma \ref {5.8}.   However this bound is really only meaningful if one can actually describe the orbit, which in general is essentially impossible. Instead we just state a weaker result.

\begin {thm} Suppose $\mathfrak q_{\pi_1,\pi_2}$ is Frobenius. Let $m_j$ be the number of components of type $A_{2m}:m >0$ in $\pi_j:j=1,2$ and set $m:=1+ \text {max} \{m_1,m_2\}$. Then the semisimple element of an adapted pair $(h,\eta)$ (defined by the condition $h \in \mathfrak h, h(\beta)=1, \forall \beta \in B=-B_{\pi_1}\sqcup B_{\pi_2}$) satisfies $h(\alpha) \in \{0,\pm 1, \pm 2, \ldots \pm m\}$.  In particular the unique (up to conjugation) adapted pair has the integrality property.
\end {thm}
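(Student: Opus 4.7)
The plan is to compute $h(\alpha)$ for each $\alpha \in \pi$ by traversing the unique $\langle i_1, i_2 \rangle$-orbit through $\alpha$.  By Lemma \ref{5.7}, the Frobenius hypothesis guarantees that each orbit begins at a genuine fixed point $\alpha_0$ of $i_j$ in $\pi_j$ for some $j\in\{1,2\}$, proceeds through $\pi_\cap$ by alternately applying $i_1$ and $i_2$, and terminates at the unique element lying in $\pi_\cup$.  Writing the orbit as $\alpha_0, \alpha_1, \ldots, \alpha_n$ with $\alpha_{k+1} = i_{j_k}(\alpha_k)$ and the side indices $j_k$ alternating from $j_0 = 3-j$, I would first determine $h(\alpha_0)$ from the Kostant cascade of the component $C$ of $\pi_j$ containing $\alpha_0$: if $C$ is of type $A$ then necessarily $C\cong A_{2\ell+1}$ with $\alpha_0$ its middle root, and Lemma \ref{5.8} gives $h(\alpha_0)=\pm 1$ (with sign $+1$ for $j=2$ and $-1$ for $j=1$, reflecting $B_{\pi_2}$ versus $-B_{\pi_1}$); otherwise Tables I and II, together with Lemma \ref{5.4}, give $h(\alpha_0)\in\{0,\pm 1\}$.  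In every case the initial value is an integer of modulus at most one.

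Next I would iterate along the orbit.  At each step $\alpha_k\mapsto\alpha_{k+1}=i_{j_k}(\alpha_k)$ both roots lie in a common component of $\pi_{j_k}$, and the cascade structure of that component (via Lemma \ref{5.8} in type $A$, or the analogous analysis underlying Tables I, II in the other types) produces a linear identity
\[
h(\alpha_k)+h(\alpha_{k+1})=\epsilon_k\,\delta_k,
\]
with $\epsilon_k=+1$ for a $\pi_2$-step, $\epsilon_k=-1$ for a $\pi_1$-step, and $\delta_k\in\{0,1\}$; crucially $\delta_k=1$ occurs exactly when $\{\alpha_k,\alpha_{k+1}\}$ is the innermost adjacent pair of a component of type $A_{2m}$ with $m\ge 1$, while all other cases (outer pairs in type $A$, swapped pairs in the cascade of $D_{2\ell+1}$ or $E_6$, and pairs in components on which $i_j$ acts as $\Id$) yield $\delta_k=0$.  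Induction on $k$ via the integer recursion $h(\alpha_{k+1})=\epsilon_k\delta_k - h(\alpha_k)$ shows that $h(\alpha_k)\in\mathbb{Z}$ for every $k$ and every orbit, which already proves the integrality clause.

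For the quantitative bound I would unwind the recursion to obtain
\[
h(\alpha_k)=(-1)^k h(\alpha_0)+\sum_{i=1}^{k}(-1)^{k-i}\epsilon_i\,\delta_i,
\]
and check that the alternation of $j_k$ arranges the signs $(-1)^{k-i}\epsilon_i$ so that all non-zero summands contribute with a common sign, giving $|h(\alpha_k)|\le 1+\#\{i:\delta_i=1\}$.  Each index $i$ with $\delta_i=1$ singles out a distinct $A_{2m}$ component of $\pi_{j_i}$ whose innermost pair is consumed by this particular orbit, and since orbits are disjoint the number of such shifts on $\pi_j$-steps along any one orbit is at most $m_j$.  The main obstacle, and the only delicate point, is the final refinement: to upgrade the straightforward consequence $|h(\alpha)|\le 1+m_1+m_2$ to the sharper $1+\max(m_1,m_2)$ claimed in the theorem requires a careful bookkeeping of the alternation between $\pi_1$- and $\pi_2$-steps, exploiting the fact that a $\pi_1$-shift and a $\pi_2$-shift cannot both occur at adjacent positions of the orbit.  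This is precisely the point the author alludes to when he remarks that the tighter bound deducible from Lemma \ref{5.8} requires an explicit description of the orbit.  Once this combinatorial estimate is in hand, the stated inclusion $h(\alpha)\in\{0,\pm 1,\ldots,\pm m\}$ and the integrality property follow at once.
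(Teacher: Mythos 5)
Your route is the same one the paper itself follows (the paper gives only the sketch in \ref{5.7}--\ref{5.10}: seed the value at the fixed point of $i_1$ or $i_2$ via Tables I, II and Lemma \ref{5.8}, then propagate the two-term cascade relations along the $\langle i_1i_2\rangle$ chain), and your induction does give the integrality clause correctly. The genuine gap is the quantitative statement: you prove only $|h(\alpha)|\le 1+m_1+m_2$ and defer the passage to $1+\max\{m_1,m_2\}$ to the unproven assertion that a $\pi_1$-shift and a $\pi_2$-shift ``cannot both occur at adjacent positions of the orbit''. That assertion is not justified anywhere in your argument, and it could not close the gap anyway: within your bookkeeping, where you have arranged that \emph{all} non-zero increments enter with a common sign, every shift step raises the running bound by one, so the only way to reach $1+\max\{m_1,m_2\}$ would be to show that a single chain crosses at most $\max\{m_1,m_2\}$ innermost pairs altogether --- and that is false. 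For the parabolic of $\mathfrak{sl}_7$ with Levi blocks $(3,4)$ there is a single chain $\alpha_5,\alpha_2,\alpha_1,\alpha_6,\alpha_4,\alpha_3$, and it crosses both the innermost pair $\{\alpha_1,\alpha_2\}$ of the $A_2$ component of $\pi_1$ and the innermost pair $\{\alpha_3,\alpha_4\}$ of $\pi_2=A_6$, at non-adjacent steps, so your proposed combinatorial fact is of no help.

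What actually produces the sharper bound is cancellation, not scarcity: the $\pm1$ increments coming from $\pi_1$-shifts and from $\pi_2$-shifts must push the running value in \emph{opposite} directions, so that any partial sum is bounded by the difference of the two shift counts, hence by $\max\{m_1,m_2\}$ rather than by $m_1+m_2$. This is visible in the paper's own data in \ref{5.11}: for blocks $(3,4)$ one shift of each kind occurs along the single chain above, yet $h(\pi)=\{\pm1,\pm2\}$, i.e.\ the two shifts compensate instead of accumulating. Your step ``the alternation of $j_k$ arranges the signs so that all non-zero summands contribute with a common sign'' is exactly what forecloses this mechanism: with the signs $\epsilon_k=\pm1$ attached as you attach them (reflecting $-B_{\pi_1}$ versus $B_{\pi_2}$), the unwound sum genuinely accumulates to $1+m_1+m_2$ on such a chain. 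So either the sign bookkeeping along the chain has to be redone so that the two kinds of shifts enter with opposite effect on $|h|$, or the bound must be obtained by a different argument; as written, the proposal establishes integrality and the weaker estimate $|h(\alpha)|\le 1+m_1+m_2$, but not the theorem's bound $h(\alpha)\in\{0,\pm1,\dots,\pm(1+\max\{m_1,m_2\})\}$.
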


 \subsection{}\label{5.11}

 It is not hard to find Frobenius biparabolic subalgebras $\mathfrak q$ of $\mathfrak g_\pi$ such in the above $h(\pi)\nsubseteq \{0,\pm 1\}$.  The simplest cases are when $\mathfrak q$ is a parabolic subalgebra $p_{\pi'}$ with $\pi'$ defining a Levi factor with two blocks of coprime sizes $p,q$.  If $p=1$, then $h(\alpha)\in \{0,\pm 1\}$.  However if $p=2$ and $q=4m-1: m\in \mathbb N^+$, then $h(\pi)=\{\pm 1,2\}$. If $p=3,q=4$, then $h(\pi)= \{\pm 1, \pm 2\}$. In order for $3$ to belong to $h(\pi)$ we would need both connected components of $\pi'$ to have even cardinality.  This translates to both $p,q$ being odd and so $p+q$ being even.  However in this case it is rather easy to see that $h(\pi)=\{0,\pm 1\}$.  This exhausts all biparabolics for which $Sy(\mathfrak q)$ is polynomial on one generator \cite [2.2]{J3}.

  If $\pi$ is of exceptional type it is not hard to classify all Frobenius biparabolics satifying our standing hypotheses.  In only one case $h(\pi) \nsubseteq \{0,\pm 1\}$.  This is when $\pi$ is of type $E_7$ with  $\pi_1: = \pi \setminus \{\alpha_5\}, \pi_2 =\pi \setminus \{\alpha_7\}$, in which case $h(\pi) \in \{0,\pm 1,2\}$.

 A biparabolic subalgebra is said to be almost-Frobenius \cite [4.2]{J8} if there are no $<i_1i_2>$ orbits lying entirely in $\pi_\cap$.  This is just a slight extension of a Frobenius biparabolic.  Both families seem to be quite unclassifiable.

 The description of all Frobenius biparabolics in general type is part of the description of almost-Frobenius biparabolics \cite [Sect. 8]{J8} in type $A$.

 \section {Truncated Biparabolics - Generalities}\label{6}

 In this section we suppose that $\hat{\mathfrak a}$ is a biparabolic subalgebra $\mathfrak q_{\pi_1,\pi_2}$ of a simple Lie algebra $\mathfrak g_\pi$, satisfying our standing hypotheses, with $\mathfrak a$ the resulting truncated biparabolic.

  As already noted in \ref {5.7}, in \cite [4.5]{J2} we described a set of $<i_1i_2>$ orbits in an overset $\tilde{\pi}$ expected to be in bijection with a set of generating weight vectors in $Sy(\mathfrak q_{\pi_1,\pi_2})$.   (In the parabolic case that is when $\pi_2=\pi$ one may take $\tilde{\pi}=\pi$.) This bijection is well-defined when the two bounds in \cite [Thm. 6.7]{J4} coincide.  Moreover the resulting algebra is polynomial and the weights of the generators are given by orbit sums. In particular this holds if $\pi$ is of type $A$ or of type $C$.   In this case $Y(\mathfrak a)$ is polynomial.

  For the moment we just assume that $Y(\mathfrak a)$ is polynomial and that $\mathfrak a$ admits an adapted pair $(h,\eta)$.  In this we can assume that $\ad h$ is a semisimple endomorphism of $\mathfrak g$ and hence belongs to a Cartan subalgebra $\mathfrak h$ of $\mathfrak g$.

\subsection{}\label{6.1}

 Risking a slight confusion (with \ref {4.1}, \ref {4.6}) we denote by $\Delta$ the set of roots of $\mathfrak g_\pi$ with $\pi$ being a choice of simple roots.  In this $\pi_1,\pi_2$ are subsets of $\pi$ satisfying $\pi_1\cup \pi_2=\pi$.  Under this hypothesis, Lemma \ref {4.6} translates to give
$$ S\cup T\subset \Delta \subset \mathbb Q (S\cup T). \eqno {(*)}$$

 For every root $\alpha \in \Delta$, let $\alpha^\vee$ denote the corresponding coroot in $\mathfrak h$.  Then $s_\alpha: \lambda \mapsto \lambda - \alpha^\vee(\lambda)\alpha$ is the corresponding reflection in $\Aut \mathfrak h^*$ and the $s_\alpha:\alpha \in \pi$ generate $W_\pi$.  Given a subset $\pi' \subset \pi$, let $W_{\pi'}$ denote the subgroup of $W_\pi$ generated by the $s_\alpha: \alpha \in \pi'$.

Now define $\mathfrak g_\mathbb Z$ with respect to $h$ as in \ref {1.4} replacing $\mathfrak a$ by $\mathfrak g$. Again similarly define $(\mathfrak q_{\pi_1,\pi_2})_\mathbb Z$.  The proof of Lemma \ref {1.6} shows that $\mathfrak g_\mathbb Z$ is a reductive Lie algebra having $\mathfrak h$ as a Cartan subalgebra. Set $\Delta^\mathbb Z :=\{\alpha \in \Delta|h(\alpha) \in \mathbb Z\}$.  It is the set of roots of $\mathfrak g_\mathbb Z$.  Let $\pi^\mathbb Z$ a choice of simple roots for $\Delta^\mathbb Z$. Obviously $\mathbb Z \pi^\mathbb Z \cap \Delta =\Delta^\mathbb Z$. Since $\Delta^\mathbb Z$ is a set of roots of a semisimple algebra
 $$\Delta^\mathbb Z\subset \mathbb N\pi^\mathbb Z \sqcup -\mathbb N\pi^\mathbb Z. \eqno {(**)}$$

 Since $h$ takes integer values on $S$ and on $T$, we have $S\cup T \subset \mathbb Z \pi^\mathbb Z$.  Then by $(*)$ one obtains $|\pi^\mathbb Z|=|\pi|$.  Consequently $\mathfrak g_\mathbb Z$ is semisimple with the same rank as $\mathfrak g$.

More generally let $\mathfrak g'$ be a semisimple subalgebra of $\mathfrak g$ with the same rank as $\mathfrak g$. Then a Cartan subalgebra of $\mathfrak g'$ serves as a Cartan subalgebra of $\mathfrak g$ and so can be taken to be our chosen Cartan subalgebra $\mathfrak h$.  Then we may write $\mathfrak g'$ as $\mathfrak g_{\pi'}$ where $\pi'$ is a choice of simple roots for $\mathfrak g'$ lying in $\mathfrak h^*$, where in addition the rank condition gives $|\pi'|=|\pi|$.  Such a pair $(\pi,\pi')$ is called regular.

The description of all possible regular pairs $\pi, \pi'$ obtains from classical work of Dynkin \cite {Dy}, by an inductive procedure using ``enhanced" Dynkin diagrams.  The latter are described in \cite [Planches I, IX] {B} where they are called completed Dynkin graphs.

The Dynkin theory gives a little too much since not all choices of $\pi'$ are described as some $\pi^\mathbb Z$. Again the Dynkin theory only lists the possible choices of $\pi^\mathbb Z$ up to conjugation by $W_\pi$. Since $\mathfrak q_{\pi_1,\pi_2}$ is not $W_\pi$ invariant, this is not nearly enough.  On the other hand if we set $\pi_\cap =\pi_1\cap \pi_2$, then the subgroup $W_{\pi_\cap}$ (or simply, $W_\cap$) of $W_\pi$ generated by the simple reflections defined by the elements $\pi_\cap$ does leave $\mathfrak q_{\pi_1,\pi_2}$ invariant.  This will be used in \ref {7.2}.

To summarize what we need we make the following definition.

Call $(\pi,\pi^\mathbb Z)$ a regular integral pair for a simple Lie algebra $\mathfrak g$ with Cartan subalgebra $\mathfrak h$ and root system $\Delta$, if $\pi\subset \Delta$ is a choice of simple roots, if there exists $h \in \mathfrak h$ such that $\pi^\mathbb Z$ is a choice of simple roots for $\Delta^\mathbb Z:=\{\gamma \in \Delta|h(\gamma) \in \mathbb Z\}$ and if $|\pi^\mathbb Z|=|\pi|$.

Given $\gamma \in \Delta$, define the order $o(\gamma)$ of $\gamma$ (relative to $\pi$) to be the sum of the coefficients of $\gamma$ written a sum of elements of $\pi$.

One may recall that $\Delta^+:=\Delta \cap \mathbb N \pi$ is exactly the set of elements of $\Delta$ of positive order relative to $\pi$.

\begin {lemma}  Let $(\pi,\pi^\mathbb Z)$ be a regular integral pair.  For a fixed choice of $\pi^\mathbb Z$ one may choose $\pi\in \Delta$ such that $\pi^\mathbb Z \subset \Delta \cap \mathbb N \pi$.  Then $\pi^\mathbb Z$ is exactly the set of elements $\Delta^\mathbb Z$ of positive order which cannot be written as a sum of elements of  $\Delta^\mathbb Z$ of positive order.
\end {lemma}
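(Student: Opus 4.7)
The plan is to split the lemma into its two assertions and treat them in turn. The hypothesis $|\pi^{\mathbb Z}|=|\pi|$ (part of the definition of a regular integral pair) together with the fact that $\pi^{\mathbb Z}\subset \Delta$ is linearly independent in $\mathfrak h^*$ means that $\pi^{\mathbb Z}$ is a basis of $\mathfrak h^*$. Pick $\lambda \in \mathfrak h$ such that $\alpha(\lambda)>0$ for every $\alpha \in \pi^{\mathbb Z}$; this can be done simultaneously for all $\alpha \in \pi^{\mathbb Z}$ because these elements are linearly independent. Perturbing $\lambda$ slightly within this open cone we may further assume $\gamma(\lambda)\neq 0$ for every $\gamma \in \Delta$, so that $\lambda$ is regular for $\Delta$. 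Then take $\pi$ to be the unique choice of simple roots of $\Delta$ for which $\Delta^+:=\Delta\cap\mathbb N\pi=\{\gamma\in\Delta \mid \gamma(\lambda)>0\}$. By construction $\pi^{\mathbb Z}\subset \Delta^+=\Delta\cap\mathbb N\pi$, which gives the first assertion.

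For the second assertion, first identify the positive elements of $\Delta^{\mathbb Z}$ in the two possible senses. By the decomposition $(**)$ recalled just above the lemma, namely $\Delta^{\mathbb Z}\subset \mathbb N\pi^{\mathbb Z}\sqcup -\mathbb N\pi^{\mathbb Z}$, every $\gamma \in \Delta^{\mathbb Z}$ lies in $\pm \mathbb N\pi^{\mathbb Z}$. Since $\pi^{\mathbb Z}\subset \mathbb N\pi$ by the previous paragraph, elements of $\mathbb N\pi^{\mathbb Z}\cap \Delta$ have non-negative order with respect to $\pi$, while elements of $-\mathbb N\pi^{\mathbb Z}\cap\Delta$ have non-positive order. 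A non-zero $\gamma\in\Delta^{\mathbb Z}$ therefore has strictly positive order relative to $\pi$ if and only if $\gamma\in \mathbb N\pi^{\mathbb Z}$, that is, $\gamma$ is a positive root of the root system $\Delta^{\mathbb Z}$ with respect to the simple roots $\pi^{\mathbb Z}$.

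Now apply the standard characterisation of simple roots inside a root system: a positive root of $\Delta^{\mathbb Z}$ is simple (i.e.\ lies in $\pi^{\mathbb Z}$) if and only if it cannot be written as a sum of two positive roots of $\Delta^{\mathbb Z}$, equivalently, as a sum of elements of $(\Delta^{\mathbb Z})^+$. Translating back via the identification of the preceding paragraph, this reads: $\gamma\in\pi^{\mathbb Z}$ if and only if $\gamma\in\Delta^{\mathbb Z}$ has positive order relative to $\pi$ and cannot be written as a sum of elements of $\Delta^{\mathbb Z}$ of positive order. This is the required intrinsic description of $\pi^{\mathbb Z}$.

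There is no serious obstacle: the only thing to be careful about is to ensure that the chosen $\pi$ places $\pi^{\mathbb Z}$ inside $\Delta^+$, so that the two notions of positivity (positive order with respect to $\pi$, and being a positive root of the sub-root system $\Delta^{\mathbb Z}$) coincide on $\Delta^{\mathbb Z}$. Once that coincidence is established, the characterisation of simple roots as indecomposable positive roots of $\Delta^{\mathbb Z}$, applied to the root system $\Delta^{\mathbb Z}$ with simple roots $\pi^{\mathbb Z}$, finishes the argument.
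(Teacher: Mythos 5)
Your proof is correct and takes essentially the same route as the paper: you construct $\pi$ from a regular element of $\mathfrak h$ chosen positive on the basis $\pi^{\mathbb Z}$, exactly as the paper does with its element $h_*$, and the second assertion is the standard fact that a base consists of the indecomposable elements of its positive system. The only cosmetic difference is that the paper re-runs the usual linear-independence argument (non-positive pairwise products) for the extremal elements of $\Delta^+\cap\Delta^{\mathbb Z}$, whereas you first note that positivity of order relative to $\pi$ coincides on $\Delta^{\mathbb Z}$ with positivity relative to $\pi^{\mathbb Z}$ and then quote the indecomposability characterisation directly; the content is the same.
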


\begin {proof}  The proof is standard, but we repeat the details for completeness.

Since the elements of $\pi^\mathbb Z$ are linearly independent there exists for any choice of positive rational numbers $c_\alpha: \alpha \in \pi^\mathbb Z$ an element $h_* \in \mathfrak h$ such that $h_*(\alpha)=c_\alpha, \forall \alpha \in \pi^\mathbb Z$. Moreover $h_*$ is unique, since $\mathbb Q \pi^\mathbb Z \supset \Delta$.  Choosing the $c_\alpha$ in general position, $h_*$ becomes regular.  Then $\Delta^+:=\{\alpha \in \Delta| h_*(\alpha) >0\}$ is a choice of positive roots containing $\pi^\mathbb Z$.

Choosing $\pi^\mathbb Z$ as prescribed by the second part of the lemma gives $\Delta^\mathbb Z \cap \Delta^+ \subset \mathbb N \pi^\mathbb Z$ and $(\alpha,\beta) \leq 0$, for all $\alpha, \beta \in \pi^\mathbb Z$.  A standard computation using that $(\gamma,\gamma)>0$ for all $\gamma \in \mathbb Z \pi$ non-zero, implies that the elements of $\pi^\mathbb Z$ are linearly independent, as required.
\end {proof}

\textbf{Remark 1}.  One calls the elements of $\pi^\mathbb Z$ given as prescribed by the second part of the lemma, the extremal elements of $\Delta^+\cap \Delta^\mathbb Z$.

\

\textbf{Remark 2}.  From the last remark before the lemma and the last part of the lemma, $(**)$ of \ref {6.1} results.


\subsection{}\label{6.2}

Given a regular integral pair $(\pi,\pi^\mathbb Z)$ we shall always assume from now on that $\pi^\mathbb Z \subset \Delta^+ \subset \mathbb N \pi$.  Then $\pi^\mathbb Z$ is uniquely determined by the pair $(\Delta^\mathbb Z,\Delta^+)$ as the extremal elements of $\Delta^\mathbb Z$ in $\Delta^+$.

Since the possible choices for the set of simple roots in $\Delta$ are all conjugate under the Weyl group we may assume that the choice of $\pi$ described in Lemma \ref {6.1} coincides with that used to define the biparabolic subalgebra $\mathfrak q_{\pi_1,\pi_2}$ of $\mathfrak g$.

Observe that by \ref {6.1}$(**)$ and the choice of $\Delta^+$, one has
 $$\mathbb Z \pi^\mathbb Z \cap \Delta \cap \mathbb N \pi = \Delta^\mathbb Z \cap \Delta^+ = \mathbb N \pi^\mathbb Z \cap \Delta \cap \mathbb N \pi. \eqno {(*)}$$

 (By contrast $\mathbb Z \pi^\mathbb Z \cap \mathbb N \pi \neq  \mathbb N \pi^\mathbb Z  \cap \mathbb N \pi$, in general.  For example if we take $\pi=\{\alpha_1,\alpha_2\}$ of type $C_2$ with $\alpha_2$ long and $h=\frac{1}{4}\alpha_1^\vee$, then $\pi^\mathbb Z=\{2\alpha_1+\alpha_2,\alpha_2\}$ and only the left hand side contains $2\alpha_1$.)

 Set $\pi^\mathbb Z_i=\pi^\mathbb Z \cap \mathbb N\pi_i:i=1,2$ and $\pi^\mathbb Z_\cap =\pi^\mathbb Z_1 \cap \pi^\mathbb Z_2$.

\begin {lemma} $(\mathfrak q_{\pi_1,\pi_2})_\mathbb Z$ is the biparabolic subalgebra $\mathfrak q_{\pi^\mathbb Z_1,\pi^\mathbb Z_2}$ of $\mathfrak g_\mathbb Z=\mathfrak g_{\pi^\mathbb Z}$.
\end {lemma}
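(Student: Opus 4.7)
The plan is to identify each side as $\mathfrak h$ together with a sum of root spaces, and then show that the two resulting root sets coincide. By Lemma \ref{1.6} (and since $h \in \mathfrak h$), $(\mathfrak q_{\pi_1,\pi_2})_\mathbb Z$ is the sum of $\mathfrak h$ and the root spaces $\mathfrak g_\gamma$ over those roots $\gamma$ of $\mathfrak q_{\pi_1,\pi_2}$ with $h(\gamma) \in \mathbb Z$. Using the root description of a biparabolic recalled in \ref{1.3}, this set is
$$R \;:=\; \bigl(\mathbb N \pi_2 \;\cup\; -\mathbb N \pi_1\bigr) \cap \Delta^\mathbb Z.$$
On the other hand, $\mathfrak q_{\pi^\mathbb Z_1,\pi^\mathbb Z_2}$ is by definition $\mathfrak p_{\pi^\mathbb Z_1} \cap \mathfrak p_{\pi^\mathbb Z_2}^-$ inside $\mathfrak g_\mathbb Z = \mathfrak g_{\pi^\mathbb Z}$ with respect to the positive system $\Delta^\mathbb Z \cap \mathbb N\pi^\mathbb Z$, so it is the sum of $\mathfrak h$ and the root spaces attached to
$$R' \;:=\; \bigl(\mathbb N \pi^\mathbb Z_2 \;\cup\; -\mathbb N \pi^\mathbb Z_1\bigr) \cap \Delta,$$
and membership in $\Delta^\mathbb Z$ is automatic because $\pi^\mathbb Z \subset \Delta^\mathbb Z$. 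Thus it suffices to show $R = R'$, and by $\gamma \leftrightarrow -\gamma$ symmetry to show
$$\mathbb N \pi_i \cap \Delta^\mathbb Z \;=\; \mathbb N \pi^\mathbb Z_i \cap \Delta \qquad (i=1,2).$$

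The inclusion $\supseteq$ is immediate from $\pi^\mathbb Z_i \subset \mathbb N\pi_i$ (by definition) and $\pi^\mathbb Z_i \subset \pi^\mathbb Z \subset \Delta^\mathbb Z$. For the inclusion $\subseteq$, fix $i=2$ and take $\gamma \in \mathbb N\pi_2 \cap \Delta^\mathbb Z$. Since $\gamma$ lies in $\Delta^\mathbb Z \cap \Delta^+$, the second equality of \ref{6.2}$(*)$ gives an expansion $\gamma = \sum_{\alpha \in \pi^\mathbb Z} n_\alpha \alpha$ with $n_\alpha \in \mathbb N$. By the convention of \ref{6.2}, each $\alpha \in \pi^\mathbb Z$ is itself a non-negative integer combination $\alpha = \sum_{\beta \in \pi} c^\alpha_\beta \beta$. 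Substituting and comparing with a second expression $\gamma = \sum_{\beta \in \pi_2} m_\beta \beta$ (available because $\gamma \in \mathbb N\pi_2$), one has for every $\beta \in \pi \setminus \pi_2$ the identity $\sum_{\alpha} n_\alpha c^\alpha_\beta = 0$. All terms being non-negative, this forces $n_\alpha c^\alpha_\beta = 0$ for every such $\alpha$ and $\beta$. Hence for each $\alpha$ with $n_\alpha > 0$ the $\pi$-support of $\alpha$ is contained in $\pi_2$, i.e.\ $\alpha \in \pi^\mathbb Z \cap \mathbb N\pi_2 = \pi^\mathbb Z_2$. Therefore $\gamma \in \mathbb N\pi^\mathbb Z_2$, as required. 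The case $i=1$ follows by applying the same argument to $-\gamma$ after interchanging the roles of the two parabolics (equivalently, by appealing to the Chevalley involution $\kappa$).

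There is no real obstacle. The only substantive input is the extremality statement \ref{6.2}$(*)$, which yields the non-negative expansion of $\gamma$ over $\pi^\mathbb Z$; after that, the argument is a one-line positivity comparison in $\mathbb N\pi$. The only subtlety is to be sure to invoke $\pi^\mathbb Z \subset \mathbb N\pi$ (the normalisation fixed at the start of \ref{6.2}) so that each integer simple root has non-negative coefficients in $\pi$, which is what makes the positivity argument go through.
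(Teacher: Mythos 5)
Your proof is correct and follows essentially the same route as the paper: both identify each side by its root set, invoke \ref{6.2}$(*)$ to pass between $\Delta^\mathbb Z\cap\Delta^+$ and $\mathbb N\pi^\mathbb Z$, and then establish $\mathbb N\pi^\mathbb Z\cap\mathbb N\pi_i=\mathbb N\pi_i^\mathbb Z$ by a positivity argument (the paper pairs against the fundamental weights $\varpi_\beta$, $\beta\in\pi\setminus\pi_i$, which is the same non-negativity-of-coefficients observation you make directly). Your appeal to the Chevalley involution for $i=1$ is unnecessary — the identical argument applies verbatim with $\pi_1$ in place of $\pi_2$ — but this is harmless.
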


\begin {proof}  A biparabolic subalgebra containing a fixed Cartan subalgebra $\mathfrak h$ of $\mathfrak g$ is uniquely determined by its set of roots, which in the case of $\mathfrak q_{\pi_1,\pi_2}$ is just $(-\mathbb N \pi_1 \cup \mathbb N \pi_2)\cap \Delta$.  It remains to note that $\mathbb Z \pi^\mathbb Z \cap (-\mathbb N \pi_1 \cup \mathbb N \pi_2) \cap \Delta= (-(\mathbb N \pi^\mathbb Z \cap \mathbb N \pi_1) \cup (\mathbb N \pi^\mathbb Z \cap \mathbb N \pi_2))\cap \Delta  = (-\mathbb N \pi^\mathbb Z_1 \cup \mathbb N \pi^\mathbb Z_2)\cap \Delta^\mathbb Z$.  Here the first step follows from $(*)$ and for the last step we claim that $\mathbb N\pi^\mathbb Z \cap \mathbb N\pi_i= \mathbb N(\pi^\mathbb Z \cap \mathbb N \pi_i)=\mathbb N\pi_i^\mathbb Z$, for $i=1,2$. Now since $\pi^\mathbb Z \subset \mathbb N \pi$, it follows that  $(\mathbb N \pi^\mathbb Z \cap \mathbb N \pi_i)$ is just the orthogonal in $\mathbb N \pi^\mathbb Z$ of the set of fundamental weights $\varpi_\beta: \beta \in \pi \setminus \pi_i$.  Yet $(\varpi_\beta,\gamma) \geq 0$ for every positive root $\gamma$ and so if $\sum_{\alpha \in \pi^\mathbb Z} n_\alpha \alpha \in \mathbb N \pi_i$ for some $n_\alpha \in \mathbb N$, then
$\alpha \in \mathbb N \pi_i$ whenever $n_\alpha \neq 0$.  Hence the claim.

\end {proof}

\subsection{}\label{6.3}

To simplify notation we write $\hat{\mathfrak a}=\mathfrak q_{\pi_1,\pi_2}$ with $\mathfrak a$ its canonical truncation in the remainder of this section.

It is not immediate that $\hat{\mathfrak a}_\mathbb Z=\mathfrak q_{\pi^\mathbb Z_1,\pi^\mathbb Z_2}$ satisfies our standing hypotheses.  First of all $\pi^\mathbb Z$ is not in general connected.  Again $\pi_1^\mathbb Z,\pi_2^\mathbb Z$ could be rather small. Nevertheless we have the

\begin {lemma}

\

(i)  The canonical truncation of $\hat{\mathfrak a}_\mathbb Z$ is contained in $\mathfrak a_\mathbb Z$.

\

(ii) $Y(\hat{\mathfrak a}_\mathbb Z)$ is reduced to scalars.

\

(iii) $\pi^\mathbb Z_1 \cup \pi^\mathbb Z_2= \pi^\mathbb Z$.

\begin {proof}  By definition, $\mathfrak a$ is the canonical truncation of $\hat{\mathfrak a}$, which is a biparabolic subalgebra of $\mathfrak g$ and as such satisfies our standing hypothesis. Thus $Y(\mathfrak a)^\mathfrak h$ which equals $Sy(\hat{\mathfrak a})^\mathfrak h = Y(\hat{\mathfrak a})$ is reduced to scalars. Since $\mathfrak h \subset \mathfrak a_\mathbb Z$, the projection $\mathscr P:Y(\mathfrak a)\rightarrow Y(\mathfrak a_\mathbb Z)$ defined in \ref {2.6} is an $\mathfrak h$ module map.  Then by Lemma \ref {2.6} we conclude that $Y(\mathfrak a_\mathbb Z)$ is isomorphic to $Y(\mathfrak a)$ as an $\mathfrak h$ module.  In particular $Y(\hat{\mathfrak a}_\mathbb Z)=Y(\mathfrak a_\mathbb Z)^\mathfrak h$ is reduced to scalars. Hence (ii).

Since $\mathfrak a_\mathbb Z$ obtains from $\hat{\mathfrak a}_\mathbb Z$ on replacing $\mathfrak h$ by the common kernel of the weights of $Y(\mathfrak a_\mathbb Z)$, (i) also results.

The orthogonal in $\mathfrak h$ of the left hand side (iii) belongs to $Y(\hat{\mathfrak a}_\mathbb Z)$, so is zero by (ii).  Hence (iii).
\end {proof}
\end {lemma}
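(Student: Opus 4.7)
The plan is to prove the three assertions in the order (ii), (i), (iii), with (ii) carrying most of the weight and (iii) following by a short center argument. The key technical input is that the projection $\mathscr P \colon S(\mathfrak a) \to S(\mathfrak a_\mathbb Z)$ of Lemma \ref{2.6} is $\mathfrak h$-equivariant: the ideal $\mathfrak c S(\mathfrak a)$ that it kills is a sum of $\mathfrak h$-weight spaces, since $\mathfrak c$ is by definition a union of $\ad h$-eigenspaces and hence of $\mathfrak h$-weight spaces. Combined with Lemma \ref{2.6}'s assertion that $\mathscr P$ restricts to an algebra isomorphism $Y(\mathfrak a) \iso Y(\mathfrak a_\mathbb Z)$, this upgrades to an $\mathfrak h$-module isomorphism, and taking $\mathfrak h$-invariants yields $Y(\mathfrak a)^\mathfrak h \cong Y(\mathfrak a_\mathbb Z)^\mathfrak h$.

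For (ii) I would apply the Borho--Chevalley observation twice. Since $\hat{\mathfrak a}$ is a biparabolic of the semisimple $\mathfrak g$, its characters vanish on $[\hat{\mathfrak a},\hat{\mathfrak a}]$ and are determined by their restriction to $\mathfrak h$; a semi-invariant of zero $\mathfrak h$-weight therefore has trivial character and is $\hat{\mathfrak a}$-invariant, giving $Sy(\hat{\mathfrak a})^\mathfrak h = Y(\hat{\mathfrak a})$. Together with $Sy(\hat{\mathfrak a}) = Y(\mathfrak a)$ from \ref{1.2}, this gives $Y(\mathfrak a)^\mathfrak h = Y(\hat{\mathfrak a})$, which is reduced to scalars by the standing hypothesis of \ref{1.3}. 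The same argument, applied to the biparabolic $\hat{\mathfrak a}_\mathbb Z = \mathfrak q_{\pi^\mathbb Z_1, \pi^\mathbb Z_2}$ of the semisimple $\mathfrak g_{\pi^\mathbb Z}$ (semisimple because $|\pi^\mathbb Z| = |\pi|$, cf. Lemma \ref{6.2}), identifies $Y(\mathfrak a_\mathbb Z)^\mathfrak h$ with $Y(\hat{\mathfrak a}_\mathbb Z)$, whence (ii).

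Part (i) follows by tracking characters: $\mathfrak a_\mathbb Z$ is obtained from $\hat{\mathfrak a}_\mathbb Z$ by restricting $\mathfrak h$ to the joint kernel of the set $\Lambda$ of characters appearing in $Sy(\hat{\mathfrak a}) = Y(\mathfrak a)$, and through the $\mathfrak h$-module isomorphism $\mathscr P$, these same weights appear on $Y(\mathfrak a_\mathbb Z) \subseteq Sy(\hat{\mathfrak a}_\mathbb Z)$. Thus the canonical truncation of $\hat{\mathfrak a}_\mathbb Z$, being the common kernel of a set of weights containing $\Lambda$, is contained in the common kernel of $\Lambda$ alone in $\hat{\mathfrak a}_\mathbb Z$, which is precisely $\mathfrak a \cap \hat{\mathfrak a}_\mathbb Z = \mathfrak a_\mathbb Z$. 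For (iii), any $h' \in \mathfrak h$ orthogonal to $\pi^\mathbb Z_1 \cup \pi^\mathbb Z_2$ commutes with every root vector of $\hat{\mathfrak a}_\mathbb Z$ (whose roots lie in $\mathbb N \pi^\mathbb Z_2 \sqcup -\mathbb N \pi^\mathbb Z_1$) and is therefore central; viewed as a linear function on $\hat{\mathfrak a}_\mathbb Z^*$, $h'$ lies in $Y(\hat{\mathfrak a}_\mathbb Z) = \textbf{k}$ by (ii), forcing $h' = 0$. Hence $\pi^\mathbb Z_1 \cup \pi^\mathbb Z_2$ spans $\mathfrak h^*$, and since it is contained in the linearly independent set $\pi^\mathbb Z$ of cardinality $|\pi| = \dim \mathfrak h^*$, equality (iii) follows.

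The main obstacle I anticipate is the careful verification of the identification $Y(\hat{\mathfrak a}_\mathbb Z) = Y(\mathfrak a_\mathbb Z)^\mathfrak h$ underpinning (ii); one must juggle two Cartan subalgebras (the Cartan $\mathfrak h$ of $\mathfrak g$ versus the Cartan $\mathfrak h \cap \mathfrak a$ of the truncation $\mathfrak a$) and keep track of which semi-invariants live in $S(\mathfrak a_\mathbb Z)$ as opposed to the larger $S(\hat{\mathfrak a}_\mathbb Z)$, though the biparabolic Borho--Chevalley formalism absorbs the bulk of this bookkeeping.
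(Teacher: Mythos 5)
Your proposal follows the paper's own route step for step: you make the projection $\mathscr P$ of Lemma \ref{2.6} $\mathfrak h$-equivariant (your weight-space argument is a harmless variant of the paper's remark that $\mathscr P$ is an $\mathfrak a_\mathbb Z$-module map and $\mathfrak h\subset \mathfrak a_\mathbb Z$), transport the scalarity of $Y(\mathfrak a)^{\mathfrak h}=Y(\hat{\mathfrak a})$ to $Y(\mathfrak a_\mathbb Z)^{\mathfrak h}$, identify the latter with $Y(\hat{\mathfrak a}_\mathbb Z)$ to get (ii), and then obtain (i) by comparing kernels of characters and (iii) by the centre argument, exactly as in the text. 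The one place where your justification is looser than it should be is the identification $Y(\hat{\mathfrak a}_\mathbb Z)=Y(\mathfrak a_\mathbb Z)^{\mathfrak h}$: applying ``the same argument'' (Borho--Chevalley) to $\hat{\mathfrak a}_\mathbb Z$ identifies $Y(\hat{\mathfrak a}_\mathbb Z)$ with the zero-weight part of $Y$ of the \emph{canonical truncation of} $\hat{\mathfrak a}_\mathbb Z$, and it is not known that this truncation equals $\mathfrak a_\mathbb Z$ — only the containment asserted in (i) holds, and in general $Sy(\hat{\mathfrak a}_\mathbb Z)$ may carry strictly more characters than those coming from $Y(\mathfrak a_\mathbb Z)$ (this is precisely the point of $\Gamma\supseteq\Lambda$ in \ref{2.11}), so you cannot treat $\mathfrak a_\mathbb Z$ as the truncation. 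The repair stays inside your own argument: the inclusion $Y(\mathfrak a_\mathbb Z)^{\mathfrak h}\subseteq Y(\hat{\mathfrak a}_\mathbb Z)$ is immediate from $\hat{\mathfrak a}_\mathbb Z=\mathfrak a_\mathbb Z+\mathfrak h$, and since your proof of (i) nowhere uses (ii), prove (i) first; then $Y(\hat{\mathfrak a}_\mathbb Z)\subseteq Sy(\hat{\mathfrak a}_\mathbb Z)=Y\bigl((\hat{\mathfrak a}_\mathbb Z)_{\mathrm{trunc}}\bigr)\subseteq S(\mathfrak a_\mathbb Z)$, so every invariant of $\hat{\mathfrak a}_\mathbb Z$ lies in $Y(\mathfrak a_\mathbb Z)^{\mathfrak h}$ and (ii) follows. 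With that reordering your argument is complete and coincides with the paper's proof; parts (i) and (iii) as you state them are fine as written.
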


\subsection{}\label{6.4}


Let $(\pi,\pi^\mathbb Z)$ be a regular integral pair and recall that $\pi^\mathbb Z \subset \mathbb N \pi$ and that $|\pi^\mathbb Z|=|\pi|$.  If $\pi$ is of type $A$, a special case of the Dynkin theory implies that $\pi^\mathbb Z =\pi$ and so an adapted pair for a truncated parabolic subalgebra of $\mathfrak g_\pi$ has the integrality property.  This is just the result noted in \cite [9.10]{FJ2}.

We show below that this result extends to the case when both $\pi_1$ and $\pi_2$ are of type $A$, but $\pi$ need not be.

Surprisingly the proof is not so straightforward and we are seemingly forced to consider separately the case when $\pi$ (which we assume connected) is, or is not, simply-laced.  In this we recall the distinguished simple root $\alpha_*$ defined outside type $A$ described for example in \cite [5.5]{J8} and whose definition will be recalled in \ref {6.5} and \ref {6.6} below.

\subsection{}\label{6.5}

Assume that (the Dynkin diagram of) $\pi$ is connected and not simply-laced, that is $\pi$ is of type $B,C,F$, or $G$. Then $\alpha_*$ is defined to be the unique long root with a short root nearest neighbour.  This short root is also unique and will be denoted by $\alpha_*^s$.

\begin {lemma} Assume $\pi$ of type $B,C,F$, or $G$ and $\pi_1,\pi_2$ are both of type $A$.  Then an adapted pair $(h,\eta)$ for the canonical truncation of the biparabolic subalgebra $\mathfrak q_{\pi_1,\pi_2}$ of $\mathfrak g_\pi$ has the integrality property.
\end {lemma}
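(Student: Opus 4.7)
The plan is to show $\pi^\mathbb Z=\pi$, which, since every root of $\hat{\mathfrak a}$ lies in $-\mathbb N\pi_1\cup\mathbb N\pi_2$ with each $\pi_i$ of type $A$, is equivalent to the integrality of $(h,\eta)$ on $\mathfrak a$. Because $\hat{\mathfrak a}$ contains the full Cartan $\mathfrak h$ of $\mathfrak g$, Lemma \ref{4.6} (applied with $\hat{\mathfrak h}=\mathfrak h$) gives that $S\cup T$ spans $\mathfrak h^*=\mathbb Q\pi$. Coupled with Lemma \ref{6.1} this yields $|\pi^\mathbb Z|=|\pi|$, and Lemma \ref{6.3}(iii) identifies $\pi^\mathbb Z=\pi_1^\mathbb Z\cup\pi_2^\mathbb Z$ with $\pi_i^\mathbb Z:=\pi^\mathbb Z\cap\mathbb N\pi_i$.

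Each $\pi_i^\mathbb Z$ is a set of positive roots of the type-$A$ root system $\Delta\cap\mathbb Z\pi_i$ with pairwise non-positive inner products (as a subset of the simple roots $\pi^\mathbb Z$ of $\Delta^\mathbb Z$), hence a set of simple roots of a regular subsystem. Dynkin's classification says that in type $A$ the only full-rank regular subsystem is the ambient one, so if one establishes $\mathbb Q\pi_i^\mathbb Z=\mathbb Q\pi_i$ for both $i$, then $\pi_i^\mathbb Z=\pi_i$ and therefore $\pi^\mathbb Z=\pi_1\cup\pi_2=\pi$ as required. Toward the full-rank assertion I would decompose $S\cup T=(S\cup T)^+\sqcup(S\cup T)^-$ along the biparabolic dichotomy $\hat{\Delta}=(\mathbb N\pi_2\cap\Delta)\sqcup(-\mathbb N\pi_1\cap\Delta)$: the two halves lie in $\mathbb Q\pi_2^\mathbb Z$ and $\mathbb Q\pi_1^\mathbb Z$ respectively, together span $\mathbb Q\pi$ by Lemma \ref{4.6}, and the fundamental-weight argument in the proof of Lemma \ref{6.2} supplies $\mathbb Q\pi_1^\mathbb Z\cap\mathbb Q\pi_2^\mathbb Z\subset\mathbb Q\pi_\cap$.

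The main obstacle is to promote these two facts to the individual equalities $\mathbb Q\pi_i^\mathbb Z=\mathbb Q\pi_i$, since a pure dimension count leaves open the possibility that $|\pi_i^\mathbb Z|<|\pi_i|$ is compensated by $|\pi_1^\mathbb Z\cap\pi_2^\mathbb Z|<|\pi_\cap|$. This is precisely where the non-simply-laced hypothesis is genuinely used. The case analysis preceding the lemma forces $\alpha_*$ and $\alpha_*^s$ to sit in distinct sets, say $\alpha_*\in\pi_1\setminus\pi_2$ and $\alpha_*^s\in\pi_2\setminus\pi_1$, so the unique non-simple edge of the Dynkin diagram of $\pi$ straddles $\pi_1$ and $\pi_2$. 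I would then run a case analysis over $B_n,C_n,F_4,G_2$, enumerating the regular full-rank subdiagrams available from the extended Dynkin diagram as recalled in \ref{6.1}, and verifying that any candidate $\pi^\mathbb Z\neq\pi$ must contain a positive root of $\mathfrak g$ whose $\pi$-expansion involves both $\alpha_*$ and $\alpha_*^s$ (typically the highest root $\beta$ in each of these types), which cannot lie in $\mathbb N\pi_1\cup\mathbb N\pi_2$ and therefore contradicts $\pi^\mathbb Z=\pi_1^\mathbb Z\cup\pi_2^\mathbb Z$.
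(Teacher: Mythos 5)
Your reduction is sound as far as it goes: $\pi^\mathbb Z=\pi$ does give integrality, Lemma \ref{4.6} together with Lemma \ref{6.1} gives $|\pi^\mathbb Z|=|\pi|$, and Lemma \ref{6.3}(iii) shows that a contradiction would follow if one knew that every full-rank choice $\pi^\mathbb Z\neq\pi$ must contain an element whose support involves both $\alpha_*$ and $\alpha_*^s$ (such a root lies in neither $\mathbb N\pi_1$ nor $\mathbb N\pi_2$). But that last statement is precisely where all the content of the lemma sits, and you only announce the case analysis over $B_n,C_n,F_4,G_2$ rather than carrying it out. In types $B$ and $C$ it can indeed be extracted from \ref{7.1.2} and \ref{7.1.3} (the unavoidable extra extremal root $\varepsilon_i+\varepsilon_{i_r}$, resp.\ $2\varepsilon_{i_r}$, is supported on both $\alpha_{n-1}$ and $\alpha_n$), but in $F_4$ the eigenvalues need not be half-integral (Lemma \ref{6.8} only gives denominators dividing a mark, up to $4$), and each of the full-rank subsystems $B_4$, $C_3\times A_1$, $A_2\times A_2$, $A_3\times A_1$ and their Borel--de Siebenthal iterates requires a separate verification of where the added roots sit relative to $\alpha_2,\alpha_3$; none of this is done. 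So as written the argument has a genuine gap at its crux. A secondary point: your route invokes Lemma \ref{4.6}, hence the polynomiality/regularity assumptions of Section \ref{6}, which are not among the hypotheses of the lemma itself.

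The paper's proof sidesteps all of this with the structural observation you almost make but do not exploit: since neither $\pi_1$ nor $\pi_2$, being of type $A$, can contain both $\alpha_*$ and $\alpha_*^s$, these two simple roots lie one in each, so the unique multiple bond of the Dynkin diagram is invisible to $\mathfrak q_{\pi_1,\pi_2}$. Deleting that bond produces a simple root system $\pi'$ of type $A$ (a product of two type-$A$ components), and $\mathfrak q_{\pi_1,\pi_2}$ identifies with a biparabolic subalgebra of $\mathfrak g_{\pi'}$; integrality is then immediate from the type-$A$ result \cite[9.10]{FJ2}, with no classification of regular integral pairs in the non-simply-laced ambient algebra needed. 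In other words, the dichotomy on $\alpha_*,\alpha_*^s$ should be used to change the ambient Lie algebra, not merely to constrain $\pi^\mathbb Z$.
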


\begin {proof}  By our standing hypothesis $\alpha_*,\alpha_*^s$ must belong to $\pi_1\cup \pi_2$.  However the hypothesis of the lemma implies that they cannot both belong to $\pi_1$ or to $\pi_2$.  Thus just one (say $\alpha_*$) belongs to $\pi_2$ and just the other belongs to $\pi_1$.  This has the following consequence.  Consider the simple root system $\pi'$ obtained from $\pi$ by deleting the lines joining $\alpha_*$ and $\alpha_*^s$.  Then $\mathfrak q_{\pi_1,\pi_2}$ can be identified with a biparabolic subalgebra of $\mathfrak g_{\pi'}$.  Since $\pi'$ is of type $A$, the assertion follows by the corresponding result in type $A$.
\end {proof}

\textbf{Remark}.  One may construct an adapted pair for the canonical truncation of $\mathfrak q_{\pi_1,\pi_2}$ by considering it as a subalgebra of $\mathfrak g_{\pi'}$.  Since we have shown (\cite {J5} that an adapted pair always exists for truncated biparabolics in type $A$, it follows that an adapted pair always exist for truncated biparabolics satisfying the hypothesis of the lemma.

\subsection{}\label{6.6}

Now suppose $\pi$ is connected, simply-laced but not of type $A$, that is $\pi$ is of types $D,E$.  Then $\alpha_*$ is the unique simple root with three nearest neighbours in the Dynkin diagram of $\pi$.

\begin {lemma} Assume $\pi$ of type $A,D$, or $E$ and $\pi_1,\pi_2$ are both of type $A$.  Then an adapted pair $(h,\eta)$ for the canonical truncation of the biparabolic subalgebra $\mathfrak q_{\pi_1,\pi_2}$ of $\mathfrak g_\pi$ has the integrality property.
\end {lemma}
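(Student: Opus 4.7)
My plan mirrors the strategy of Lemma \ref{6.5}: I aim to realise $\mathfrak{q}_{\pi_1,\pi_2}$ as a biparabolic inside $\mathfrak{g}_{\pi'}$ for some $\pi'$ obtained by edge deletion from the Dynkin diagram of $\pi$, chosen so that $\pi'$ is of type $A$ (possibly disconnected), and then appeal to the type-$A$ integrality result \cite[9.10]{FJ2}. Since the case $\pi$ of type $A$ is exactly that result, I assume $\pi$ of type $D$ or $E$; then the Dynkin diagram of $\pi$ carries a unique trivalent vertex $\alpha_*$. Deleting an edge $\{\alpha,\beta\}$ of the diagram preserves $\mathfrak{q}_{\pi_1,\pi_2}$ as an abstract Lie algebra precisely when neither $\alpha+\beta$ nor $-(\alpha+\beta)$ is a root of $\mathfrak{q}_{\pi_1,\pi_2}$, equivalently when $\{\alpha,\beta\}$ is a \emph{crossing} edge having one endpoint in $\pi_1\setminus\pi_2$ and the other in $\pi_2\setminus\pi_1$. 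A single crossing edge at $\alpha_*$ reduces its trivalence, and since $\alpha_*$ is the unique source of non-type-$A$ behaviour, the resulting diagram $\pi'$ is then a disjoint union of type-$A$ chains.

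In the case $\alpha_*\in\pi_1\triangle\pi_2$, say $\alpha_*\in\pi_1\setminus\pi_2$, the hypothesis that $\pi_1$ is of type $A$ containing $\alpha_*$ forces $\alpha_*$ to have at most two neighbours within $\pi_1$, so at least one of its three neighbours $\beta$ must lie in $\pi\setminus\pi_1=\pi_2\setminus\pi_1$. The edge $\{\alpha_*,\beta\}$ is then crossing, and its deletion produces the required $\pi'$, concluding this case exactly as in Lemma \ref{6.5}.

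The delicate case is $\alpha_*\in\pi_1\cap\pi_2$, where every edge at $\alpha_*$ has both endpoints in $\pi_1$ or both in $\pi_2$ and so cannot be directly deleted. Here the type-$A$ hypothesis on each of $\pi_1,\pi_2$, combined with $\pi_1\cup\pi_2=\pi$, still forces the three neighbours of $\alpha_*$ to split so that at least one lies in $\pi_1\setminus\pi_2$, at least one in $\pi_2\setminus\pi_1$, and at most one in $\pi_1\cap\pi_2$. My plan here is to perform a finite case analysis on the three legs of the Dynkin diagram of $\pi$ at $\alpha_*$, whose lengths are tightly constrained in types $D_n$ and $E_n$; in each configuration I would either find a crossing edge further along a leg (where single-edge deletion again applies, after which a separate argument localises the reduction near $\alpha_*$) or contract an edge lying entirely in $\pi_1\cap\pi_2$ so as to reduce to a biparabolic of strictly smaller rank in which $\alpha_*$ no longer sits in the intersection, bringing us back to the previous case. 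The main obstacle I anticipate is handling the residual configurations in which no such local reduction is available: there I expect to combine Proposition \ref{4.4}(iv), which pins down $\eta$ up to $\mathbf{A}_0$-conjugation in $\mathfrak{a}^*_{-1}$, with Lemma \ref{5.8} applied inside each type-$A$ Levi component $\mathfrak{g}_{\pi_i}$, to verify $h(\alpha)\in\mathbb{Z}$ for every $\alpha\in\pi$ by a direct computation on the Kostant cascade of each $\pi_i$.
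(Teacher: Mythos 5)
Your reduction works only in the easy half of the problem, and the half you yourself flag as ``delicate'' is exactly where the proposal has no proof. When $\alpha_*\in\pi_1\setminus\pi_2$ (or $\pi_2\setminus\pi_1$) your argument is fine and is the direct analogue of Lemma \ref{6.5}: one of the three edges at $\alpha_*$ is crossing, no root of $\mathfrak q_{\pi_1,\pi_2}$ has support containing both of its endpoints, and deleting it turns the diagram into type $A$. But the case $\alpha_*\in\pi_1\cap\pi_2$ genuinely occurs and can occur with \emph{no} crossing edge anywhere in the diagram: take $\pi$ of type $D_4$ with $\alpha_*=\alpha_2$, $\pi_1=\{\alpha_1,\alpha_2,\alpha_3\}$, $\pi_2=\{\alpha_1,\alpha_2,\alpha_4\}$ (both of type $A_3$, $\pi_1\cup\pi_2=\pi$, $\pi_1\cap\pi_2\varsubsetneq\pi$). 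Here every edge has an endpoint in $\pi_1\cap\pi_2$, so your first move is unavailable. The two fallbacks you offer do not close this: ``contracting an edge lying entirely in $\pi_1\cap\pi_2$'' is not an operation on root systems that preserves the biparabolic, its canonical truncation, or the adapted pair, and no mechanism is proposed for transporting $(h,\eta)$ through it; deleting a crossing edge ``further along a leg'' (when one exists) leaves $\alpha_*$ trivalent in its component, so the resulting diagram is still of type $D$ or $E$ and the promised ``separate argument localising the reduction near $\alpha_*$'' is missing; and the final appeal to Lemma \ref{5.8} is illegitimate, because that lemma concerns the Frobenius situation where $\eta$ is the explicit cascade element $\sum_{\beta\in B}x_{-\beta}$, whereas for a general truncated biparabolic the adapted pair is not classified and $\eta$ is not known in any such normal form -- avoiding exactly this is the point of the machinery in Section \ref{6}.

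For comparison, the paper's proof takes a different route that handles the $\alpha_*\in\pi_1\cap\pi_2$ case uniformly: it works with the regular integral pair $(\pi,\pi^{\mathbb Z})$ and the identity $|\pi^{\mathbb Z}|=|\pi|$ from \ref{6.1}. By \ref{6.3}(iii) each $\gamma\in\pi^{\mathbb Z}$ lies in $\mathbb N\pi_1$ or $\mathbb N\pi_2$, hence (both being of type $A$) has coefficients in $\{0,1\}$ and connected support; extremality forces the end-points of that support to lie in $\pi':=\{\alpha\in\pi\mid h(\alpha)\notin\mathbb Z\}$, and there are exactly two of them. Mapping each such $\gamma$ to its smaller end-point gives an injection whose counting forces $\pi'=\emptyset$, except for the single $D/E$ escape route in which three elements of $\pi^{\mathbb Z}$ pairwise join the three branches at $\alpha_*$; but then two of those pairs lie in $\pi_1$ or in $\pi_2$, forcing it to be of type $D$ or $E$, contradicting the hypothesis. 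If you want to salvage your approach you would need a genuine argument in the $\alpha_*\in\pi_1\cap\pi_2$ case that does not presuppose knowledge of $\eta$; as it stands the proposal has a real gap there.
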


\begin {proof}   Take $\gamma \in \pi^\mathbb Z$. By the hypothesis and \ref {6.2}(iii), $\gamma$ written as a sum of elements of $\pi$ must have coefficients in $\{0,1\}$. Those $\alpha \in \pi$ occurring with coefficient $1$ are said to form the support of $\gamma$. Then $\gamma$ is determined by its support which must be connected subset of $\pi$.  Set $\pi':=\{\alpha \in \pi|h(\alpha) \notin \mathbb Z\}$.  Obviously $\pi\setminus \pi' \subset \pi^\mathbb Z$.  In particular we can assume that $\pi'$ is non-empty.  Since $\pi^\mathbb Z$ consists of the extremal elements of $\Delta^+\cap \Delta^\mathbb Z$, the support of $\gamma \in \pi^\mathbb Z$ is minimal under the condition that $h(\gamma) \in \mathbb Z$.

Assume further that $\gamma \notin \pi$. Call an element of the support of $\gamma$ an end-point if it has just one nearest neighbour in the support of $\gamma$.  Minimality of support implies that an end-point lies in $\pi'$. It then follows that $\gamma$ must have at least two end-points.  However it cannot have three (or more) since otherwise by \ref {6.3}(iii), $\pi_1$ or $\pi_2$ could not be of type $A$.

With respect to the total ordering of $\pi$ given by the numbering in Bourbaki \cite [Planches I-IX]{B}, let $\varphi(\gamma) \in \pi'$ be the smaller of the two end-points of $\gamma$.

Suppose $\pi$ is of type $A$ and that $\pi'$ is not empty. Then $\varphi$ is an injection of $\pi^\mathbb Z \setminus (\pi \setminus \pi')$ onto a proper subset of $\pi'$, omitting in particular the largest of all end-points. This implies that $|\pi^\mathbb Z \setminus (\pi \setminus \pi')| < |\pi'|$, giving the contradiction $|\pi^\mathbb Z| <|\pi|$. We conclude that $\pi'$ is empty, reproducing what we already know, namely that an adapted pair has the integrality condition in type $A$.  (We have included this case for completeness and for illustration.)

Suppose $\pi$ is of type $D$ or of type $E$. Then there is just one way to avoid a similar contradiction.  Namely that there are three elements of $\pi^\mathbb Z \setminus (\pi \setminus \pi')$ whose end-points are the three different possible pairs of the set  $\{\alpha^1,\alpha^2,\alpha^3\}$ whose members lie on the three different branches of the Dynkin diagram of $\pi$ emanating from $\alpha_*$ and distinct from $\alpha_*$.   Since $|\pi^\mathbb Z|=|\pi|$, this forces the members of at least two of these pairs to lie in $\pi_1$ (or $\pi_2$).  However in this case $\pi_1$ (or $\pi_2$) must be of type $D$ or of type $E$ in contradiction to the hypothesis.
\end {proof}

\subsection{}\label{6.7}

Let us recall that the first step of Dynkin's construction of a regular pair $(\pi,\pi')$ is to adjoin the negative $-\beta$ of the highest root and delete one root, say $\alpha$ from $\pi$, that is to say we take $\pi'=(\pi\setminus \{\alpha\})\cup \{-\beta\}$.  Since we would further like that $\pi' \subset \mathbb N\pi$, we replace $\pi'$ by $-w_{\pi\setminus \{\alpha\}}\pi'=\{w_{\pi\setminus \alpha}\beta\}\cup(\pi\setminus \{\alpha\})$.
In the above we call $\alpha$ the deleted root and $w_{\pi\setminus \alpha}\beta$ the added root.  Observe that if the coefficient of $\alpha$ is a positive integer and that if it equals $1$, then $\pi'=\pi$.  Indeed the first statement follows from linear independence and the second that if we define $h \in \mathfrak h$ by the condition that $h(\alpha)=1$ for all $\alpha \in \pi'$, then the second condition implies that $h$ takes integer values on $\Delta$.  This in turn forces $\pi'$ to be conjugate to $\pi$, but since we have constructed the latter to lie in $\mathbb N\pi$, we must have the asserted equality.

Then this construction (of Dynkin) is repeated to the connected components of $\pi'$ not of type $A$.

\subsection{}\label{6.8}

Assume $\pi$ connected.  Let $m_\pi$ denote the largest coefficient that occurs in the highest root (expressed as a sum of elements of $\pi$.  If $\pi'$ is a subset of $\pi$, then clearly $m_{\pi'} \leq m_\pi$.  One may check that  $M_\pi:=\{1,2,\ldots, m_\pi\}$ is just the set of coefficients in the highest root.

Define an order relation $>$ on the set of positive roots by $\beta\geq\gamma$, if $\varpi_\alpha(\beta) \geq  \varpi_\alpha(\gamma)$, for all $\alpha \in \pi$.

\begin {lemma} Let $\mathfrak a$ be the canonical truncation of a biparabolic subalgebra $\mathfrak q_{\pi_1,\pi_2}$ of $\mathfrak g_\pi$.  Assume that $Y(\mathfrak a)$ is polynomial (so then $\mathfrak a$ is regular). Let $(h,\eta)$ be an adapted pair for $\mathfrak a$.  Then the eigenvalues of $\ad h$ on $\mathfrak a$ lie in $\{\frac{1}{m}\mathbb Z\}$ for some $m \in M_\pi$.
\end {lemma}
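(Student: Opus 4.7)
The strategy is to control the common denominator of the values $h(\alpha)$, $\alpha\in\pi$, by combining rationality (already available) with the structure of the root system $\pi^{\mathbb Z}$ of $\Delta^{\mathbb Z}$ introduced in Section \ref{6.1}, and then to invoke Dynkin's classification of maximal-rank regular semisimple subalgebras. Since $\mathfrak a$ contains root vectors for every $\alpha\in\pm\pi$ (using $\pi_1\cup\pi_2=\pi$), Corollary \ref{4.2} forces $h(\alpha)\in\mathbb Q$ for every $\alpha\in\Delta$. Let $d\geq 1$ be the least common denominator of $\{h(\alpha):\alpha\in\pi\}$, so that the eigenvalues of $\ad h$ on $\mathfrak a$ lie in $\tfrac{1}{d}\mathbb Z$; the task reduces to showing $d\in M_\pi$.

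Apply Section \ref{6.1}: choose simple roots $\pi^{\mathbb Z}$ of $\Delta^{\mathbb Z}$ with $\pi^{\mathbb Z}\subset\mathbb N\pi$, and note that $|\pi^{\mathbb Z}|=|\pi|$ via Lemma \ref{4.6} (the spanning set $S\cup T$ of $\mathfrak h^*$ consisting of $h$-integer roots gives the required rank). Writing each $\gamma\in\pi^{\mathbb Z}$ as $\gamma=\sum_{\alpha\in\pi}c_{\gamma,\alpha}\alpha$ with $c_{\gamma,\alpha}\in\{0,1,\ldots,m_\pi\}$, the matrix $M=(c_{\gamma,\alpha})$ is invertible over $\mathbb Q$. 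Since $h$ is integer-valued on $\pi^{\mathbb Z}$, it represents a class $[h]$ in the finite abelian group $(\mathbb Z\pi^{\mathbb Z})^*/(\mathbb Z\pi)^*\cong\mathbb Z\pi/\mathbb Z\pi^{\mathbb Z}$, and $d$ equals the order of $[h]$; in particular $d$ divides the exponent of $\mathbb Z\pi/\mathbb Z\pi^{\mathbb Z}$. By the discussion in \ref{6.7}, $\mathfrak g_{\mathbb Z}$ is a maximal-rank regular semisimple subalgebra of $\mathfrak g_\pi$, reached from $\pi$ by an iterated Borel--de Siebenthal procedure. A single BdS step that deletes a simple root $\alpha$ with coefficient $n_\alpha$ in the highest root contributes a cyclic factor of order $n_\alpha\in M_\pi$ to the quotient; for iterated procedures the quotient decomposes as a sum of cyclic groups whose orders are the coefficients encountered at successive steps.

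The decisive remaining step -- and the main obstacle -- is showing that the exponent of $\mathbb Z\pi/\mathbb Z\pi^{\mathbb Z}$ lies in $M_\pi$, not merely that its order (the index $|\det M|$) does. Here the biparabolic structure enters through Lemma \ref{6.3}(iii), giving the decomposition $\pi^{\mathbb Z}=\pi_1^{\mathbb Z}\cup\pi_2^{\mathbb Z}$ with $\pi_j^{\mathbb Z}\subset\mathbb N\pi_j$: this forces each added root of $\pi^{\mathbb Z}$ to lie inside a single Levi $\mathfrak g_{\pi_j}$, so an iterated BdS has its successive steps constrained to respect the Levi decomposition. The hope is that this compatibility forces the exponent of $\mathbb Z\pi/\mathbb Z\pi^{\mathbb Z}$ to be realised by one coefficient $n_\alpha$ of the highest root of $\mathfrak g_\pi$ (so is in $M_\pi$), even when the index $|\det M|$ is a genuine product. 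Making this rigorous will likely require a type-by-type inspection of the extended Dynkin diagrams together with the explicit list of admissible $\pi^{\mathbb Z}\subset\mathbb N\pi$ in each case -- for example, the instructive $F_4$ situation where an $A_1^4$-subsystem has index $|\det M|=8\notin M_{F_4}$ but the quotient is $(\mathbb Z/2)^3$ of exponent $2\in M_{F_4}$ must be shown to be typical rather than exceptional. This combinatorial step is the genuinely nontrivial content.
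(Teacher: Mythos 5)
Your reduction is sound as far as it goes: rationality of the eigenvalues (Corollary 4.2), the full-rank property $|\pi^{\mathbb Z}|=|\pi|$ obtained from Lemma 4.6 and 6.1, and the observation that the common denominator $d$ of the values $h(\alpha):\alpha\in\pi$ divides the exponent of the finite group $\mathbb Z\pi/\mathbb Z\pi^{\mathbb Z}$ are all correct; and since $M_\pi=\{1,\ldots,m_\pi\}$ consists of all integers up to $m_\pi$, the lemma is indeed equivalent to the bound $d\leq m_\pi$. But your argument stops exactly where the lemma begins: the assertion that this exponent (or at least $d$ itself) lies in $M_\pi$ is offered only as a ``hope'' to be settled by a type-by-type inspection which is not carried out. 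As you yourself observe, the index $|\det M|$ can fall outside $M_\pi$, so the passage from order to exponent is not a routine verification but the entire content of the statement. This is a genuine gap.

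For comparison, the paper closes this gap without ever invoking the biparabolic constraint $\pi^{\mathbb Z}=\pi_1^{\mathbb Z}\cup\pi_2^{\mathbb Z}$ of Lemma 6.3(iii), which you propose as the decisive extra leverage; the hypotheses on $\mathfrak q$ enter only through the full-rank property. The proof is an induction along the Dynkin construction of the regular pair $(\pi,\pi^{\mathbb Z})$ recalled in 6.7. At the first step a simple root $\alpha$ of coefficient $m\in M_\pi$ in the highest root $\beta$ is deleted and the root $w_{\pi\setminus\{\alpha\}}\beta$ is added, and integrality of $h$ on $\pi'$ already forces $h(\pi)\subset\frac{1}{m}\mathbb Z$. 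The denominator is then controlled through all subsequent steps by three explicit combinatorial facts: no positive root $\gamma$ satisfies $\gamma\geq 2\beta'$ for an added root $\beta'$, so an added root is never deleted later; if the component containing the added root is not of type $A$ it is classical with largest coefficient $2$; and the least common multiple of $m$ with any element of $M_{\pi'''}$, for $\pi'''$ another component, again lies in $M_\pi$ (obvious in classical type, checked in the exceptional cases). These are precisely the statements your lattice-exponent formulation would need, namely that the exponent is realised by a single coefficient of the highest root of $\mathfrak g_\pi$; the compatibility of the added roots with the Levi decomposition plays no role in the paper's argument and would not by itself deliver them.
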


\begin {proof}

The proof is by induction on the number of steps in the Dynkin construction.  In this we can assume that the assertion holds for root systems of smaller cardinality than $|\Delta|$.

Consider the first step of the Dynkin construction giving the regular pair $\pi,\pi'$. Let $m$ be the coefficient of the deleted root $\alpha$ in the added root $\beta$.  Then $m \in M_\pi$ and as explained above we can assume $m>1$.  It is clear that if $h \in \mathfrak h$ is integer on $\pi'$, then $h(\pi)\subset \{\frac{1}{m}\mathbb Z\}$.

 For subsequent steps in the Dynkin construction, assume first that the connected component $\pi''$ of $\pi'$ containing the added root $\beta$ of type $A$.  Then we need not modify this component in a subsequent step.

 Assume next that $\pi''$ of $\pi'$ is not of type $A$.  Since $m \geq 2$, it easily follows that it generates a root subsystem of smaller cardinality than $|\Delta|$.

 We claim that there is no positive root $\gamma$ such that $\gamma \geq 2\beta$.  This is obvious for $\pi$ classical, since all the coefficients of a positive root are at most $2$.  For $\pi$ exceptional it can be easily checked.  We conclude from the remarks preceding the lemma that an added root never becomes a deleted root in a subsequent step.  Moreover the coefficient of the deleted root $\alpha$ is trivially a multiple $m$ in any of $\Delta':=\Delta \cap \mathbb Z\pi'$.  Finally we claim that $\pi''$ is always classical and $m=2$.  This is immediate if $\pi$ itself is classical.   For $\pi$ exceptional this can be easily checked. Thus $m_{\pi''}=2$.

Finally suppose that $\pi'''$ is a connected component of $\pi$ different than $\pi''$. Then it must have strictly smaller cardinality so we may assume that the conclusion of the Lemma applies to $\pi'''$.  We claim that the least common multiple of $m$ and an element of $M_{\pi'''}$ lies in $M_\pi$. This is obvious if $\pi$ is classical.   For $\pi$ exceptional this can be easily checked, though a priori it is surprising.

The conclusion of the lemma results.

\end {proof}

\textbf{Remark}. If $\pi$ is of classical type and the truncated biparabolic $\mathfrak a$ is regular, then the eigenvalues of the semisimple element of an adapted pair are all half-integer.  This is all we shall need of the Dynkin theory in the next two sections.

 \section {Truncated Biparabolics - The Classical Case}\label{7}

 Let us emphasize again that we now abandon the shorthand notation $i_j$ for $i_{\pi_j}$ in this and the next section introduction in \ref {5.7} for comparison with the notation of \cite [4.5] {J2}.

 \subsection{}\label{7.1}

 Let $(\pi,\pi^\mathbb Z)$ be a regular integral pair with $\pi$ connected and of classical type but not of type $A$. Set $n=|\pi|$.

 By Lemma \ref {6.8}, one has $h(\alpha) \in \frac{1}{2}\mathbb Z$, for all $\alpha \in \pi$.

  Label $\pi$ as in Bourbaki \cite [Planches II-IV]{B}. (Thus $\pi$ inherits a total order from $\mathbb N^+$.)

  Let $\pi^{\frac{1}{2}}:=\{\alpha_{i_1},\alpha_{i_2},\ldots,\alpha_{i_r}:1\leq i_1<i_2< \ldots <i_r\leq n\}$ be the set of roots on which the value of $h$ is half-integer (and not integer).

  Having fixed $\pi^{\frac{1}{2}}$, then $\pi^\mathbb Z$ is uniquely determined as the set of extremal elements of $\mathbb N\pi^\mathbb Z\cap \Delta$ which we describe below.  However not for all choices of $\pi^{\frac{1}{2}}$ will we have $|\pi^\mathbb Z|=|\pi|$ and the choices for which this fails will also be described.

  We describe below just all cases when $\pi^{\frac{1}{2}}$ is non-empty. Of course otherwise  $\pi^\mathbb Z =\pi$.

   \subsubsection{}\label{7.1.1}

  Recall the above labelling of $\pi^{\frac{1}{2}}$.  A large part of $\pi^\mathbb Z$ obtains rather quickly.   In this, type $D_n$ is rather exceptional so we assume till \ref {7.1.5} that $i_r <n$ in type $D_n$.  Set $\gamma_j:= \alpha_{i_{j}}+\alpha_{i_{j}+1}+\ldots +\alpha_{i_{j+1}}:j=1,2,\ldots,r-1$ and $\Gamma:=\{\gamma_j\}_{j=1}^{r-1}, \hat{\Gamma}=\Gamma\cup (\pi \setminus \pi^{\frac{1}{2}})$.  It is immediate that the elements of $\hat{\Gamma}$ lie in $\Delta^+\cap \Delta^\mathbb Z$ and are extremal.   Thus $\hat{\Gamma} \subset \pi^\mathbb Z$.  Moreover $\hat{\Gamma}$ is a simple root system whose cardinality is $|\pi|-1$.  It is connected if $r=1$, or if $r=2$ and $i_1,i_2$ are consecutive integers, and has exactly two connected components otherwise.

   Given a further extremal root $\gamma \in \Delta^+\cap \Delta^\mathbb Z$,  then $\pi^\mathbb Z=\hat{\Gamma} \cup\{\gamma\}$.

  Let us examine the possible candidates for $\gamma$ written as an element of $\mathbb N\pi$.

  One checks that there are no further extremal roots with all coefficients being $\leq 1$.  Thus in the Bourbaki notation we can write $\gamma =\varepsilon_i+\varepsilon_j$ with $i\leq j$ there being equality only in type $C$.

   \subsubsection{}\label{7.1.2}

  Suppose that $\pi$ is of type $B_n: n \geq 3$.

  \

  Then $\gamma:=\varepsilon_i+\varepsilon_j:i<j$ lies in $\Delta^+\cap \Delta^\mathbb Z$ and is extremal, if and only if $j=i_r$ and either $i=j-1\notin \pi^{\frac{1}{2}}$  or $i=i_{r-2}, j-1 =i_{r-1}$.  Here the first condition presupposes that $i_r >1$ and the second that $r \geq 3$.

   We conclude that there are exactly two choices of $\pi^{\frac{1}{2}}$ which do not admit a regular integral pair.   Either $r=1$ and $i_1=1$, or $r=2$ and $i_1,i_2$ are consecutive integers.

   \subsubsection{}\label{7.1.3}

   Suppose that $\pi$ is of type $C_n: n \geq 2$.

   \

    Suppose $\gamma=\varepsilon_i+\varepsilon_j:i\leq j$ lies in $\Delta^+\cap \Delta^\mathbb Z$ and is extremal.  If $i<j$, then $\alpha_i \in \pi^{\frac{1}{2}}$, say $i=i_t$ and is the unique integer $t:i\leq t <j$ with this property.  Again  $\alpha_j \in \pi^{\frac{1}{2}}$  and is the unique integer $t:j\leq t <n$ with this property. This in turn implies that $\alpha_n \notin \pi^{\frac{1}{2}}$ and similarly $i=j$.  Thus $t=r$.  We conclude that $\pi^{\frac{1}{2}}$ admits a regular integral pair if and only if $i_r<n$.  Moreover the additional root $\gamma$ is $2\varepsilon_{i_r}$.

     \subsubsection{}\label{7.1.4}

    Suppose that $\pi$ is of type $D_n: n \geq 4$ and that $i_r <n$.

   \

    Suppose $\gamma=\varepsilon_i+\varepsilon_j:i<j$ lies in $\Delta^+\cap \Delta^\mathbb Z$ and is extremal. Then  $\alpha_j \in \pi^{\frac{1}{2}}$  and is the unique integer $t:j\leq t <n-1$ with this property.

    Just as in type $C_n$ we conclude that $\alpha_{n-1} \notin \pi^{\frac{1}{2}}$. The case $\alpha_{n-1} \notin \pi^{\frac{1}{2}}$ is similar.

    We conclude that neither $\alpha_{n-1}$ nor $\alpha_n$ can belong to $\pi^{\frac{1}{2}}$.

    Thus $j=i_r$.

     Then just as in type $B_n$, we deduce that $\gamma:=\varepsilon_i+\varepsilon_j:i<j$ lies in $\Delta^+\cap \Delta^\mathbb Z$ and is extremal, if and only if $j=i_r$ and either $i=j-1\notin \pi^{\frac{1}{2}}$  or $i=i_{r-2}$.  Here the first condition presupposes that $i_r >1$ and the second that $r \geq 3$.

   We conclude that there are two further exclusions on $\pi^{\frac{1}{2}}$ not admitting a regular integral pair.   Either $r=1$ and $i_1=1$, or $r=2$ and $i_1,i_2$ are consecutive integers.

    \subsubsection{}\label{7.1.5}

    Suppose that $\pi$ is of type $D_n: n \geq 4$.

    Suppose $i_{r-1}<n-1$.  Then by interchanging the roots $\alpha_{n-1},\alpha_n$ we are reduced to the case considered in \ref {7.1.4} in which case $i_r<n-1$.

   We are reduced to the case when $\alpha_{n-1}, \alpha_n$ both belong to $\pi^{\frac{1}{2}}$, that is $i_{r-1}=n-1, i_r=n$.  Then in the definition of $\Gamma$ we should take $\gamma_{r-1}=\alpha_{i_{r-2}}+\alpha_{i_{r-2}+1}+\dots +\alpha_{n-2}+\alpha_n$.  As before we obtain $|\hat{\Gamma}|=|\pi|-1$.  To this we may adjoin a further extremal root $\gamma \in \Delta^+\cap \Delta^\mathbb Z$ exactly when $r\geq 3$, namely $\gamma:=\alpha_{i_{r-2}}+\alpha_{i_{r-2}+1} +\ldots +\alpha_{n}=\varepsilon_{i_{r-2}}+\varepsilon_n$.

 \subsection{}\label{7.2}

 We now have the utterly daunting task of showing that all possible solutions for $\pi^\mathbb Z \neq \pi$ given in \ref {7.1} are incompatible with the conclusions in Sections \ref {2}, \ref {6}.  Nevertheless there is one important simplification that can be introduced.

 Let $\pi_{1,u}:u \in U$ be the set of connected components of $\pi_1$ and $\pi_{2,v}:v \in V$ the set of connected components of $\pi_2$.  Then if non-empty $\pi_{1,u}\cap \pi_{2,v}$ is a connected component $\pi_\cap=\pi_1\cap \pi_2$ and all such connected components are so obtained. We denote it by $\pi_{\cap(u,v)}$ and call it a double component.

 Let $W_{\pi_{\cap(u,v)}}$ denote the subgroup of the Weyl group generated by the reflections $s_\alpha:\alpha \in \pi_{\cap(u,v)}$.  Since $\mathfrak q_{\pi_1,\pi_2}$ is stable under $W_{\pi_{\cap(u,v)}}$, it follows that a translate of the adapted pair $(h,\eta)$ (which defined $\pi^{\frac{1}{2}}$) under $W_{\pi_{\cap(u,v)}}$ is still an adapted pair for $q_{\pi_1,\pi_2}$.

 Following \ref {6.2}, set $\pi^\mathbb Z_{\cap(u,v)}=\mathbb N\pi_{\cap(u,v)}\cap \pi^\mathbb Z$.

 \begin {lemma} Up to translation by $W_{\pi_{\cap(u,v)}}$ one has $|\pi^{\frac{1}{2}}\cap \pi_{\cap(u,v)}|\leq 1$, for every connected component $\pi_{\cap(u,v)}$ of $\pi_\cap$.
 \end {lemma}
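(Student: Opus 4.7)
The plan is to localise the problem to a single double component $\pi' := \pi_{\cap(u,v)}$ and then settle the claim by a case analysis on the classical type of $\pi'$.

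The first step is to verify that translations by the Weyl groups of distinct double components are independent. Suppose $\alpha \in \pi_{\cap(u,v)}$ and $\beta \in \pi_{\cap(u',v')}$ with $(u,v)\neq(u',v')$ were adjacent in $\pi$. If $u=u'$ but $v\neq v'$, then $\alpha,\beta$ both lie in $\pi_2$ and are adjacent in $\pi$, hence in the same connected component of $\pi_2$, contradicting $v\neq v'$. If instead $u\neq u'$, the same argument with $\pi_1$ gives a contradiction. So distinct double components are never adjacent in $\pi$, and for $\alpha\in\pi_{\cap(u,v)}$ the reflection $s_\alpha$ fixes every simple root lying in any other double component. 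Since the Weyl groups $W_{\pi_{\cap(u,v)}}$ also commute with one another, we may translate the adapted pair independently in each component. It thus suffices to fix one $\pi'$ and produce $w\in W_{\pi'}$ with $|\pi^{\frac12}_{wh}\cap \pi'|\le 1$.

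For the second step, since $\pi'$ is a connected subset of the classical Dynkin diagram of $\pi$, it is itself of type $A_k$, $B_k$, $C_k$, or $D_k$. By Lemma \ref{6.8} the values of $h$ on simple roots lie in $\tfrac12\mathbb Z$, so in the standard Euclidean realisation we may write $h|_{\pi'}$ as a tuple $(a_1,\ldots,a_k)\in\tfrac12\mathbb Z^k$ (with the obvious quotient in type $A$). Call $a_i$ \emph{even} if $a_i\in\mathbb Z$ and \emph{odd} otherwise; then $W_{\pi'}$ acts by (signed) permutations of the tuple. In type $A_k$ the Weyl group $S_{k+1}$ permutes the coordinates: sorting evens before odds leaves exactly one transition in the sequence $h(\alpha_i)=a_i-a_{i+1}$, whence at most one half-integer value. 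In types $B_k$ and $C_k$, sorting the odd entries to the front leaves one transition among $\alpha_1,\ldots,\alpha_{k-1}$, while the exceptional root $\alpha_k$ takes value $a_k$ (type $B$) or $2a_k$ (type $C$), which is an integer once the even entries lie at the end; and if every entry is odd the count drops to one (type $B$) or zero (type $C$).

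Type $D_k$ is the main obstacle. Here $W_{\pi'}$ contains only even-parity sign changes, and besides $\alpha_i=e_i-e_{i+1}$ there is the second exceptional root $\alpha_k$ with $h(\alpha_k)=a_{k-1}+a_k$. The identity
\[
h(\alpha_{k-1})-h(\alpha_k) \;=\; -2a_k \;\in\; \mathbb Z
\]
shows that $h(\alpha_{k-1})$ and $h(\alpha_k)$ always share their residue modulo $\mathbb Z$, so both are integer iff $a_{k-1}$ and $a_k$ have the same parity. If at least two entries are even we sort the odd entries to the front, forcing $a_{k-1},a_k$ both even; if exactly one entry is even we place it at position $1$, so that $a_{k-1},a_k$ are both odd; if no entry is even we do nothing. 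In each sub-case the only potential half-integer simple root value is the single transition at the head of the tuple, proving $|\pi^{\frac12}_{wh}\cap \pi'|\le 1$ and concluding the lemma.
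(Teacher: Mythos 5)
Your overall route is viable and genuinely different from the paper's: instead of tracking how $\pi^\mathbb Z_{\cap(u,v)}$ transforms under the explicit products $s_{i_{j+1}-1}s_{i_{j+1}-2}\cdots s_{i_j}$ and inferring the change in $\pi^{\frac{1}{2}}$ (an induction in which either $k$ drops or $j$ rises), you act directly on $h$ and sort its Euclidean coordinates, which is cleaner bookkeeping. Your preliminary step (distinct double components are non-adjacent, hence orthogonal, so the translations in the various $W_{\pi_{\cap(u,v)}}$ commute and do not disturb one another's counts) is correct and is needed to make the statement simultaneous over all components.

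There is, however, a genuine gap at the step ``by Lemma \ref{6.8} we may write $h|_{\pi'}$ as a tuple in $\tfrac{1}{2}\mathbb Z^k$''. Lemma \ref{6.8} only gives $h(\alpha)\in\tfrac{1}{2}\mathbb Z$ for $\alpha\in\pi$, and this does \emph{not} force half-integer coordinates when the component contains the long root (type $C$) or the fork (type $D$): in type $C$, $h(\alpha_n)=2a_n\in\tfrac{1}{2}\mathbb Z$ only gives $a_n\in\tfrac{1}{4}\mathbb Z$, and in type $D$, $h(\alpha_{n-1})+h(\alpha_n)=2a_{n-1}$ likewise only gives $a_{n-1}\in\tfrac{1}{4}\mathbb Z$ (for instance $a_{n-1}=\tfrac{1}{4}$, $a_n=-\tfrac{1}{4}$ yields $h(\alpha_{n-1})=\tfrac{1}{2}$, $h(\alpha_n)=0$). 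In that quarter-integer case all coordinates have residues in $\{\tfrac{1}{4},\tfrac{3}{4}\}$ modulo $\mathbb Z$, so your even/odd dichotomy no longer controls integrality of differences and sums (two ``odd'' entries may differ by a half-integer), sign changes swap the two residues, and your type-$D$ identity $h(\alpha_{k-1})-h(\alpha_k)=-2a_k\in\mathbb Z$ fails. The quarter case is indeed impossible for a genuine adapted pair, but this requires an argument: either cite \ref{7.1.3}--\ref{7.1.5} (which, using $|\pi^\mathbb Z|=|\pi|$ from \ref{6.1}, show $\alpha_n\notin\pi^{\frac{1}{2}}$ in type $C$ and that in type $D$ the roots $\alpha_{n-1},\alpha_n$ are either both in or both outside $\pi^{\frac{1}{2}}$), or argue directly that if all coordinates had residues $\pm\tfrac{1}{4}$ then every root of $\Delta^\mathbb Z$ would be annihilated by the linear form sending $\varepsilon_i$ to $\pm 1$ according to its residue, forcing $|\pi^\mathbb Z|<|\pi|$, a contradiction. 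With that supplement (and relabelling ``even/odd'' as the two residue classes in the type $A$ components, where a common shift is harmless), your sorting argument does close the proof.
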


 \begin {proof}   Since the action of $W_{\pi_{\cap,(u,v)}}$ also alters $\pi$ the proof will not be entirely direct.  What we do is to apply a suitable element $w$ of this group to $\pi^\mathbb Z$ and from its image $w\pi^\mathbb Z$ infer the change in $\pi^{\frac{1}{2}}$ defined with respect to the image of $\pi$ under $w$ which one can recover $w\pi^\mathbb Z$. The example in the remark following the proof of the lemma shows how this process works.

 Let $j$ is the smallest integer $1\leq j \leq r$ such that $\alpha_{i_j}\in \pi_{\cap(u,v)}$ and set $k:=|\pi_{\cap (u,v)}\cap [1,i_j-1]|$.  If $j=r$ or if $\alpha_{i_{j+1}}\notin \pi_{\cap(u,v)}$, there is nothing to prove.  Otherwise we apply the product of reflections $s_{i_{j+1}-1}s_{i_{j+1}-2}\ldots s_{i_j}$ to $\pi_\cap^\mathbb Z$.
 Only $\pi^\mathbb Z_{\cap(u,v)}$ is changed and we claim that from the change in its form we may infer how $\pi^{\frac{1}{2}}$ should be altered.  In detail:  if $k>0$, then $k$ is reduced by $1$ with $j$ being unchanged, whereas if $k=0$, then $j$ is increased by $1$. Hence the assertion of the lemma (up to the above claim).

 To prove our claim consider the roots in $\pi^\mathbb Z_{\cap(u,v)}$ which are changed by applying the above element of $W$.  Adopt the hypothesis of \ref {7.1.1}. The easiest case is when the additional extremal root $\gamma$ described in \ref {7.1.1} does not lie in $\pi^\mathbb Z_{\cap(u,v)}$ or if $j<r-2$ which means that
  $\gamma$ does not change.  Then the only roots which are changed are the pair $(s[i_j,i_{j+1}],i_{j+1}-1)$ and $i_j-1$ if it lies in $\pi^\mathbb Z_{\cap(u,v)}$.  The former become the pair $(i_{j+1},i_{j+1}-2)$ and the latter $s[i_j-1,i_{j+1}-1]$. This means that $i_{j+1}$ has been replaced by $i_{j+1}-1$ in $\pi^{\frac{1}{2}}$, that $i_j$ has been deleted from $\pi^{\frac{1}{2}}$ and replaced by $i_j-1$ if the latter belongs to $\pi^\mathbb Z_{\cap(u,v)}$ (which is the case when $k$ is reduced by $1$).

 This conclusion remains the same when the additional extremal root $\gamma$ lies in $\pi^\mathbb Z_{\cap(u,v)}$ and is changed as checked in detail below.

  Suppose $j=r-1$. In types $B,D$ one has $\gamma=\varepsilon_{i_r-1}+\varepsilon_{i_r}$, which becomes $\varepsilon_{i_r-2}+\varepsilon_{i_r-1}$ corresponding again to $i_{j+1}$ being replaced by $i_{j+1}-1$ in $\pi^{\frac{1}{2}}$. In type $C$ one has $\gamma=2\varepsilon_{i_r}$, which becomes $2\varepsilon_{i_r-1}$ leading to a similar conclusion.

  Suppose $j=r-2$ and $i_{r-1}=i_{r-2}+1$.  In types $B,D$ one has $\gamma=\varepsilon_{i_{r-2}}+\varepsilon_{i_r}$, which becomes $\varepsilon_{i_r-1}+\varepsilon_{i_r}$ corresponding again to $i_{j+1}=i_{r-1}$ being replaced by $i_{j+1}-1$ in $\pi^{\frac{1}{2}}$ and $i_j$ deleted from $\pi^{\frac{1}{2}}$. However since $i_{j+1}-1=i_{r-1}-1=i_{r-2}=i_j$ in this case the overall effect is to just delete $i_{j+1}$ from $\pi^{\frac{1}{2}}$ (and to replace it by $i_j-1$ if the latter belongs to $\pi^\mathbb Z_{\cap(u,v)}$).  In type $C$ there is nothing new to consider.

  Finally as in \ref {7.1.5} it remains to consider the case of type $D_n$ when $\alpha_{n-1},\alpha_n \in \pi^{\frac{1}{2}}$.  Again one checks that applying the above element of $W$ to $\pi^\mathbb Z_{\cap(u,v)}$ replaces $i_{j+1}$ by $i_{j+1}-1$ in $\pi^{\frac{1}{2}}$, deletes $i_j$ from $\pi^{\frac{1}{2}}$ and replaces it by $i_j-1$ if the latter belongs to $\pi^\mathbb Z_{\cap(u,v)}$.

 \end {proof}

  \textbf{Example}.  Let us give an example in type $C_6$.  Suppose that $\pi^{\frac{1}{2}}=\{3,5\}, \pi_1=[2,6],\pi_2=\pi$. In this we may simply write $\pi_{u,v}^\mathbb Z$ as $\pi_1^\mathbb Z$.

  In the notation of the Lemma one has $k=1,1=j<r=2$. One checks that $\pi_1^\mathbb Z = \{2,s[3,5],6;4,\beta_5\}$, where the semi-colon separates the two connected components of $\pi_1^\mathbb Z$.  The recipe of the Lemma is to apply $s_4s_3$ to this expression which replaces it by $\{s[2,4],5,6;3,\beta_4\}$.  Of course $\{\pi,\pi_1,\pi_2\}$ is also altered but as the biparabolic is not we can assume these bases to be re-chosen so that their images under $s_4s_3$ is again the set $\{\pi,\pi_1,\pi_2\}$.  Then we infer from the expression for the new $\pi_1^\mathbb Z$, that $\pi^{\frac{1}{2}}=\{2,4\}$. In this $k=0,1=j<r=2$ in accordance with the claim in the lemma.   The recipe of the Lemma is to apply $s_3s_2$ to $\{s[2,4],5,6;3,\beta_4\}$ which replaces it by $\{4,5,6;2,\beta_3\}$, from which we infer that $\pi^{\frac{1}{2}}=\{3\}$.  In this $k=0,1=j=r'=1$ and we are done.

 \subsection{}\label{7.3}

  Before tackling the task outlined in the first part of \ref {7.2}, we start with some easy general considerations which are independent of type.

  Recall again that we are assuming that $\mathfrak q$ admits and adapted pair $(h,\eta)$ which then defines $\pi^\mathbb Z$.

 First of all $\pi^\mathbb Z$ must satisfy \ref {6.3}(iii).  This excludes some choices of the pair $\pi_1,\pi_2$.  However no choices are excluded in the parabolic case, that is when $\pi_2=\pi$.

 \subsection{}\label{7.4}

 Fix $\pi_1, \pi_2 \subset \pi$.  Recall the notation of \ref {1.2}.   One may calculate $r\ell:=r\ell(\mathfrak q_{\pi_1,\pi_2})$, $r\ell^\mathbb Z:= r\ell(\mathfrak q_{\pi^\mathbb Z_1,\pi^\mathbb Z_2})$ using \cite [5.9,7.16,7.17]{J2}.  It is immediate from Lemma \ref {2.6} that we must have $r\ell^\mathbb Z \geq r\ell$.

 Suppose that
 $$r\ell^\mathbb Z = r\ell. \eqno{(*)}$$

Let $\mathfrak a$ denote the canonical truncation of $\mathfrak q_{\pi_1,\pi_2}$.  Then by construction $S(\mathfrak a)$ admits no proper semi-invariants.  Then by \cite [Thm. 1.11(i)]{DDV} we conclude that $\mathfrak a$ is unimodular.  On the other hand our hypothesis combined with Theorem \ref {2.12} implies that $S(\mathfrak a_\mathbb Z)$ has no proper semi-invariants and hence by \cite [Thm. 1.11(i)]{DDV} we conclude that $\mathfrak a_\mathbb Z$ is also unimodular.  On the other hand this is obviously incompatible with the conclusion of Lemma \ref {2.5} unless $\mathfrak c=0$, that is $\mathfrak a_\mathbb Z = \mathfrak a$.   (These considerations which apply to any regular Lie algebra $\mathfrak a$ were our original motivation for proving Theorem \ref {2.12}.)

 One can easily find examples when $(*)$ holds (indeed infinitely many examples exist if that gives one any joy).  A particularly simple situation in which this occurs is when already $r\ell$ takes its maximal value for a biparabolic subalgebra of $\mathfrak g_\pi$, namely $\dim \mathfrak h$.  For example one can take $\pi=\pi_2$ of type $B_n$ and $\pi_1$ of type $B_{n-1}$. For $n=3$ an adapted pair for the corresponding truncated parabolic was described in \cite [8.16]{J1.2}.  P. Lamprou \cite {L} reported that she had generalized this construction for all $n$.

 \subsection{}\label{7.5}

 Let us explain in general terms how the question of integrability of an adapted pair should be settled in the case of the canonical truncation of a biparabolic subalgebra $\mathfrak q_{\pi_1,\pi_2}$.  We write the latter simply as $\mathfrak q$ and its canonical truncation as $\mathfrak q_\Lambda$ .  It is assumed that $\mathfrak q_\Lambda$ is regular and to admit an adapted pair $(h,\eta)$.

 There are two sets of data which are used.  The first is the set $\{\alpha \in \pi|h(\alpha)\notin \mathbb Z\}$. For $\mathfrak g$ classical, this is just $\pi^{\frac{1}{2}}$ which can be nearly any subset of $\pi$ (the possible excluded cases are given in \ref {7.1}).  From this we may calculate $\pi^\mathbb Z$, which can assumed to lie in $\Delta \cap \mathbb N\pi$ and then is uniquely determined (Lemma \ref {6.1}).

 The second set of data is just the pair $(\pi_1,\pi_2)$ of subsets of $\pi$.  Using \ref {6.2} we may then calculate the pair $(\pi_1^\mathbb Z, \pi_2^\mathbb Z)$.

 By Lemma \ref {2.6} and Theorem \ref {2.12}, one has $Sy(\mathfrak q_\mathbb Z)[q_j^{-1}:j \in \mathscr J] = \mathscr P (Sy(\mathfrak q))[q_j^{\pm}:j \in \mathscr J]$.  Dropping absolute precision of language we shall refer to the $\{q_j\}_{j \in \mathscr J}$ as the additional generators of $Sy(\mathfrak q_\mathbb Z)$ over $Sy(\mathfrak q)$.
By Lemma \ref {2.11} their weights $\{\gamma_j\}_{j \in \mathscr J}$ freely generate an additive subgroup $\Gamma$ of $\mathfrak h^*$.

Let $\mathfrak q_\Lambda$ denote the canonical truncation of $\mathfrak q$.
Thus in keeping with our notation in \ref {2.12} we should denote the canonical truncation of $(\mathfrak q_\Lambda)_\mathbb Z$ by $((\mathfrak q_\Lambda)_\mathbb Z)_\Gamma$. However the latter is also the canonical truncation of $\mathfrak q_\mathbb Z$ which we denote by $\mathfrak q_{\mathbb Z,\Lambda^\mathbb Z}$. Set $\mathfrak h_\Lambda=\mathfrak h \cap \mathfrak q_\Lambda$ (resp. $\mathfrak h^\mathbb Z=\mathfrak h \cap \mathfrak q_{\mathbb Z,\Lambda^\mathbb Z}$). As in \ref {2.12} we define $\mathfrak h_\Gamma$ to be a complement to $\mathfrak h_{\Lambda^\mathbb Z}$ in $\mathfrak h_\Lambda$. Recall \ref {2.12} that  $\mathfrak h_\Gamma$ is non-degenerately paired to $\Gamma$.  A choice of $\mathfrak h_\Gamma$ may be calculated from the datum $(\pi_1,\pi_2,\pi_1^\mathbb Z, \pi_2^\mathbb Z)$, through the formula in \cite [5.9(i)]{J2}.  In the simplest case $\mathfrak h_\Lambda$ (resp. $\mathfrak h_{\Lambda^\mathbb Z}$) is the span of the coroots corresponding to the roots in $\pi_\cap$ (resp. $\pi_\cap^\mathbb Z$). In general the formulae are more complicated.

 For most truncated biparabolic subalgebras \cite [Thm. 6.7]{J4} asserts that their invariants (for co-adjoint action) generate a polynomial algebra and moreover we can describe the weights and degrees of each generator.  We apply this result to $\mathfrak q_{\mathbb Z,\Gamma}$ whose invariant algebra we already know to be polynomial by Theorem \ref {2.12}.  In many cases we are thus able to calculate the weights and degrees of the generators.  Then it remains to compute the generators of $Y(\mathfrak q_{\mathbb Z,\Gamma})^{\mathfrak h_\Gamma}$ and to obtain a contradiction using \ref {2.14}.

 \subsection{}\label{7.6}

 Before going further let us summarize what the above stated contradiction should involve.  First retaining the notation and hypotheses of \ref {7.5}, it follows from Theorem \ref {2.12}, Lemma \ref {2.6}, \ref {2.13} and \ref {7.4} that

 \

 (i)  $Sy(\mathfrak q_\mathbb Z)=Y(\mathfrak q_{\mathbb Z,\Lambda^\mathbb Z})$ is polynomial on strictly more generators than the polynomial algebra $Sy(\mathfrak q)$.

 \

 (ii)  The additive subgroup $\Gamma$ of $\mathfrak h^*$ freely generated by weights of the additional generators of $Sy(\mathfrak q_\mathbb Z)$ over $Sy(\mathfrak q)$ is non-degenerately paired to a complement $\mathfrak h_\Gamma$ to $\mathfrak h_{\Lambda^\mathbb Z}$ in $\mathfrak h_\Lambda$.  Moreover $Y(\mathfrak q_{\mathbb Z,\Lambda^\mathbb Z})^{\mathfrak h_\Gamma}=Y(\mathfrak q_\Lambda)$ and so is polynomial.

 \

 (iii) The homogeneous generators $p_i:i \in I$ of $Y(\mathfrak q_{\mathbb Z,\Lambda^\mathbb Z})^{\mathfrak h_\Gamma}$ (resp. $\hat{p}_i:i\in I, q_j:j \in J$) of  $Y(\mathfrak q_{\mathbb Z,\Lambda^\mathbb Z}$) can be chosen so that $p_i$ is a product of $\hat{p}_i$ times a product of the $q_j:j \in J$.

 \

 (iv) The set of weights of the $p_i:i \in I$ coincide with the set of weights of the generators of $Sy(\mathfrak q)$.

 \

 We call (iii) the factorisation property.  For parabolic subalgebras in type $C$ studied in the next section we shall use (i)-(iii).  However (iv) is not needed.

 \subsection{}\label{7.7}

 It is worthwhile to give some examples to show how the method described above works.  Take $\pi:=\{\alpha_i:i=1,2,\ldots,6\}$ of type $C_6$ using the Bourbaki labelling \cite [Planche III]{B} - in particular $\alpha_6$ is the unique long simple root.  Take $\pi_1= \pi \setminus \{\alpha_3, \alpha_6\}, \pi_2=\pi$, so then $\mathfrak q =\mathfrak q_{\pi_1,\pi_2}$ is a parabolic subalgebra.  By \cite [Thm. 6.7]{J4} its canonical truncation $\mathfrak q_\Lambda$ is regular.  (This is a general fact for truncated biparabolic subalgebras of $\mathfrak g_\pi$ with $\pi$ of type $A$ or type $C$.)

 It is not easy to show that $\mathfrak q_\Lambda$ admits an adapted pair $(h,\eta)$ and even more difficult to find all of them.  Here we just assume that it does and try to show that such a pair must satisfy integrality.
 Define $\pi^{\frac{1}{2}}$ as in \ref {7.1}.  As noted above this can be any subset of $\pi \setminus \{\alpha_6\}$.  Here we just consider the case when $\pi^{\frac{1}{2}}=\{\alpha_1,\alpha_3,\alpha_5\}$, which is one of the most delicate cases.

 From the recipe in \ref {7.1} we obtain $\pi^\mathbb Z= \{\alpha_1+\alpha_2+\alpha_3,\alpha_2,\alpha_3+\alpha_4+\alpha_5,\alpha_4,2\alpha_5+\alpha_6,\alpha_6\}$.  Then from \ref {6.2} we obtain $\pi_1^\mathbb Z=\{\alpha_2,\alpha_4\}, \pi^\mathbb Z_2 =\pi^\mathbb Z$.  Since $\pi^\mathbb Z$ is of type $C$, the truncation of $\mathfrak q_\mathbb Z$ is regular and so \cite [Thm. 6.7]{J4} determines the weights and degrees of the generators of $Sy(\mathfrak q_\mathbb Z)$.  They are parametrised by the $<i_{\pi_1^\mathbb Z}i_{\pi_2^\mathbb Z}>$ orbits in $\pi^\mathbb Z$.  In this case all the orbits are singletons and we denote by $p_i$ the semi-invariant generator  defined by $\{\alpha_i\}:i=1,2,\ldots, 6$.

 The weight $\varpi_{p_i}$ of $p_i$ is given as the sum of weights of generators of $Sy(\mathfrak b_{\pi^\mathbb Z_2})$ and of $Sy(\mathfrak b^-_{\pi^\mathbb Z_1})$ as determined \cite [3.5]{J4} by the $<i_{\pi_1^\mathbb Z}i_{\pi_2^\mathbb Z}>$ orbit which defines $p_i$.  The latter are in turn sums of elements in the Kostant cascades $B_{\pi_2^\mathbb Z},B_{\pi_1^\mathbb Z}$.

 Set $\beta_i=2\alpha_i+\ldots+2\alpha_5+\alpha_6$.  The set $\{\beta_i:i=1,2,\ldots,6\}$ is just the Kostant cascade $B_\pi$ for $\pi$.  It turns out that it is also the Kostant cascade $B_{\pi^\mathbb Z}$ for $\pi^\mathbb Z$, a general property in type $C$.  There is however a subtle difference - the contribution of the Kostant cascade to the weights of $Sy(\mathfrak b_\pi)$ differs from that of $Sy(\mathfrak b_{\pi^\mathbb Z_2})$ though of course it follows the same general rule, namely that one takes partial sums of the element of the Kostant cascade according to the structure of the Dynkin diagram which of course differs for $\pi$ and $\pi^\mathbb Z$. The precise rule (for type $C$) is given by line $2$ of \cite [Table I]{J1}.

 Similarly one must calculate the weights of $Sy(\mathfrak b^-_{\pi^\mathbb Z_1})$, which in this case is rather trivial since $\pi^\mathbb Z_1$ is of type $A_1\times A_1$.

 Finally in the present case $\mathfrak h_\Gamma$ as defined in \ref {2.12} can be chosen to be the linear span of $\alpha^\vee_1,\alpha^\vee_5$.

 In the table below we list the resulting generators and their weights.  The last column describes their values on the pair $(\alpha^\vee_1,\alpha^\vee_5)$.

\bigskip
\begin {center}

\begin{tabular}{|l|l|l|}
\hline
Generator $p_i$&Weight $\varpi_{p_i}$&$(\alpha^\vee_1(\varpi_{p_i}),\alpha^\vee_5(\varpi_{p_i}))$\\

\hline
$p_1$& $\beta_1$&$(2,0)$\\
$p_2$& $\beta_1+\beta_4-\alpha_4$&$(2,1)$\\
$p_3$& $\beta_1+\beta_4+\beta_5$&$(2,2)$\\
$p_4$& $\beta_2-\alpha_2$&$(-1,0)$\\
$p_5$& $\beta_2+\beta_3$&$(-2,0)$\\
$p_6$& $\beta_2+\beta_3+\beta_6$&$(-2,-2)$\\

\hline

\end{tabular}

\end {center}

\bigskip

\begin {center} Table III

\end {center}

\bigskip

 From this table we may now calculate the generators of $Sy(\mathfrak q_\mathbb Z)^{\mathfrak h_\Gamma}$.  In this case the algebra is polynomial on generators $p_1p_5,p_1p_4^2,p_3p_6,p_2^2p_5p_6$.

 Obviously these do not satisfy the factorisation property of \ref {7.6}(iii).  The easiest way to see this is to consider their product which is $p^2_1p^2_5p_4^2p_3p^2_6p_2^2$, whilst the factorisation property implies that at least four generators should appear with exponent exactly one.  Finally we remark that in this example condition (iv) of \ref {7.6} \textit{is} satisfied!

 This excludes the case $\pi^{\frac{1}{2}}=\{\alpha_1,\alpha_3,\alpha_5\}$.  To complete the calculation for this particular example all other cases must be similarly excluded!

 \

 A second example is provided in type $C_5$ by taking $\pi_2=\{\alpha_4,\alpha_5\}, \pi_1=\pi\setminus \{\alpha_5\}$.  It this case $\mathfrak q_{\pi_1,\pi_2}$ is properly a biparabolic.  Here we shall take $\pi^{\frac{1}{2}}=\{\alpha_1,\alpha_4,\}$.

 One checks from \ref {7.1} that $\pi^\mathbb Z= \{\alpha_1+\alpha_2+\alpha_3+\alpha_4,\alpha_2,\alpha_3,\alpha_5,2\alpha_4+\alpha_5\}$.  Then from \ref {6.2} we obtain $\pi_1^\mathbb Z=\{\alpha_2,\alpha_3,\alpha_1+\alpha_2+\alpha_3+\alpha_4\}, \pi^\mathbb Z_2 = \{\alpha_5,2\alpha_4+\alpha_5\}$.  In this we may note that \ref {6.2}(iii) is satisfied.

 There are three $<i_{\pi_1^\mathbb Z}i_{\pi_2^\mathbb Z}>$  orbits which are singletons and a further pair of orbits lying in a single $<i_{\pi_1^\mathbb Z},i_{\pi_2^\mathbb Z}>$ orbit. The latter gives rise to two semi-invariants generators of the same weight (but differing degrees).

In this case $\mathfrak h_\Gamma$ as defined in \ref {7.6} is spanned by $\alpha^\vee_4$.

Using the same conventions and notations as before we tabulate the generators of $Sy(\mathfrak q_\mathbb Z)$, their weights and values on $\alpha^\vee_4$.

\bigskip
\begin {center}

\begin{tabular}{|l|l|l|}
\hline
Generator $p_i$&Weight $\varpi_{p_i}$&$\alpha^\vee_4(\varpi_{p_i})$\\

\hline
$p_1$& $2\alpha_4+\alpha_5$&$2$\\
$p_2$& $\alpha_5$&$-2$\\
$p_3$& $-(\alpha_2+\alpha_3)$&$1$\\
$p_3'$& $-(\alpha_2+\alpha_3)$&$1$\\
$p_4$& $-(\alpha_1+\alpha_2+\alpha_3+\alpha_4)$&$-1$\\

\hline

\end{tabular}

\end {center}

\bigskip

\begin {center} Table IV

\end {center}

\bigskip

From this table we may now calculate the generators of $Sy(\mathfrak q_\mathbb Z)^{\mathfrak h_\Gamma}$.  In this case the algebra is not even polynomial.


  By condition (ii) of \ref {7.6} this excludes the case $\pi^{\frac{1}{2}}=\{\alpha_1,\alpha_4\}$.  To complete the calculation for this particular example all other cases must be similarly excluded.

  \subsection{}\label{7.8}

  It is worthwhile to give an example which we can verify actually admits an adapted pair. Thus take $\pi=\{\alpha_1,\alpha_2,\alpha_3\}$ which is of the type $C_3$.  Set $\pi_1=\{\alpha_1,\alpha_2\}$ which is of type $A_2$ and $\pi_2=\pi$.  Thus $\mathfrak q:=\mathfrak q_{\pi_1,\pi_2}$ is a parabolic subalgebra of $\mathfrak g_\pi$.  The canonical truncation $\mathfrak q_\Lambda$ of $\mathfrak q$ has index $2$ and its Cartan subalgebra $\mathfrak h_\Lambda$  is spanned by $\alpha^\vee_1,\alpha^\vee_2$. We look for a presentation of an adapted pair $(h,\eta)$ with $\eta$ given by a subset $S$ of the roots of $\mathfrak q^*$ as in \ref {4.1}.  As a guess we take $-(\alpha_1+\alpha_2+\alpha_3) \in S$.  One checks that it is then enough to add one of the roots $-(2\alpha_2+\alpha_3),-\alpha_2,\alpha_1$ to $S$.  If we add the first of these then (because of the factor of two) we have a chance of finding an adapted pair not satisfying integrality. However the equations $h(\alpha_1+\alpha_2+\alpha_3)=1,h(2\alpha_2+\alpha_3)=1$, have no solution for $h \in \mathfrak h_\Lambda$!  On the other hand if we include instead $-\alpha_2$, then the corresponding equations yield the unique solution $h=3\alpha^\vee_1+2\alpha^\vee_2$ and obviously $\ad h$ has only integer eigenvalues.

  Even in this baby example it is not at all obvious if this procedure constructs all adapted pairs (up to equivalence) so this does not prove integrality.  However we can try out the technique developed above.  There are three possible choices for $\pi^{\frac{1}{2}}$ and this gives three possible choices for $\pi^\mathbb Z$.  However one may check that the latter are all conjugated under the action of the Weyl group of the Levi factor of $\mathfrak q$, hence are equivalent and we need only the consider the case $\pi^{\frac{1}{2}}=\alpha_1$.  Table V below is computed by the same method used in \ref {7.7}.  It shows that the factorisation property is not satisfied. (By contrast condition (iv) of \ref {7.6} is satisfied.)  We conclude that any adapted pair for $\mathfrak q_\Lambda$ must satisfy the integrality property.
  \bigskip
\begin {center}

\begin{tabular}{|l|l|l|}
\hline
Generator $p_i$&Weight $\varpi_{p_i}$&$\alpha^\vee_1(\varpi_{p_i})$\\

\hline
$p_1$& $\beta_1$&$2$\\
$p_2$& $\beta_2-\alpha_2$&$-1$\\
$p_3$& $\beta_2+\beta_3$&$-2$\\

\hline

\end{tabular}

\end {center}

\bigskip

\begin {center} Table V

The generators of $Sy(\mathfrak q_\mathbb Z)^{\mathfrak h_\Gamma}$ are $p_1p_2^2,p_1p_3$.

\end {center}

\bigskip

 \section {Integrality for Truncated Parabolics in Type $C$}\label{8}

\textbf{Notation.}  Recall that the Weyl group $W$ is generated by the simple reflections $s_\alpha: \alpha \in \pi$.  Given $\alpha=\alpha_j$, we shall write $s_{\alpha_j}$ simply as $s_j$.

\subsection{}\label{8.1}

   The author tried out a host of possible arguments to prove the integrality of an adapted pair the best of which was the algorithm given in \ref {7.5}, \ref {7.6}.  The fact that this only just goes through in even quite simple examples \ref {7.7}-\ref {7.8} seems to indicate that the question is a really delicate one.

  In this section we assume that $\pi$ is of type $C$.  This has the advantage that $\pi^\mathbb Z$ has only components of type $A$ and of type $C$.  Consequently via \cite [Thm.6.7]{J4} we can conclude that $Sy(\mathfrak q_\mathbb Z)$ is polynomial with the generators having known weights (and degrees).

  In addition to the above we assume that $\mathfrak q=\mathfrak q_{\pi_1,\pi_2}$ is a parabolic subalgebra of $\mathfrak g_\pi$, that is $\pi_2=\pi$.  Considering that the weights of generators are given through orbit sums \cite [4.6]{J3.5}, \cite [Thm. 6.7]{J4}, this has the technical advantage that these orbits are reduced to at most two elements in the parabolic case for type $C$, whilst in the biparabolic case they are meanders and essentially indescribable.   Further advantages is that it is simpler to compute $\mathfrak h_\Gamma$ (cf \ref {7.5}, \ref {8.2.3}) and the overset $\tilde{\pi}$ introduced in \cite [4.5]{J2} can just be taken to be $\pi$ itself.

  We may conclude that in the above setting there is a bijection $\mathscr P$ (resp. $\mathscr P_\mathbb Z$) from the set of $<i_{\pi_1}i_{\pi_2}>$ (resp. $<i_{\pi_1}^\mathbb Z i_{\pi_2}^\mathbb Z >$) orbits in $\pi$ (resp. $\pi^\mathbb Z$) to a set of generators of the polynomial algebra $Sy(\mathfrak q)$ (resp. $Sy(\mathfrak q_\mathbb Z)$). Moreover the weights of the generators are given by $<i_{\pi_1}i_{\pi_2}>$ (resp. $<i_{\pi_1}^\mathbb Z i_{\pi_2}^\mathbb Z >$) orbit sums with at most two terms.

  Our aim is to prove the following theorem in this section.

  \begin {thm}  Let $\mathfrak q$ be a parabolic subalgebra in type $C$.  An adapted pair for $\mathfrak q$ (if it exists) satisfies integrality.
 \end {thm}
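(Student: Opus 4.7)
The approach is by contradiction, following the blueprint of \ref{7.5}--\ref{7.6}. Let $(h,\eta)$ be an adapted pair for the canonical truncation $\mathfrak q_\Lambda$ of $\mathfrak q=\mathfrak q_{\pi_1,\pi}$. By Lemma \ref{6.8} applied in type $C$, the eigenvalues of $\ad h$ lie in $\frac{1}{2}\mathbb Z$, so integrality is equivalent to $\pi^{\frac{1}{2}}:=\{\alpha\in\pi \mid h(\alpha)\notin\mathbb Z\}$ being empty. Assume toward contradiction that $\pi^{\frac{1}{2}}\neq\emptyset$. Using Lemma \ref{7.2}, I first replace $(h,\eta)$ by a $W_{\pi_\cap}$-translate so that each connected component of $\pi_\cap=\pi_1$ contains at most one element of $\pi^{\frac{1}{2}}$.

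Writing $\pi^{\frac{1}{2}}=\{\alpha_{i_1},\ldots,\alpha_{i_r}\}$ with $i_1<\cdots<i_r$, \ref{7.1.3} shows that $i_r<n$ is forced and that $\pi^\mathbb Z$ decomposes explicitly as $(\pi\setminus\pi^{\frac{1}{2}})$, together with the ``bridge'' roots $\gamma_j=\alpha_{i_j}+\alpha_{i_j+1}+\cdots+\alpha_{i_{j+1}}$ for $1\le j<r$, and the single long extremal root $\gamma_r=2\varepsilon_{i_r}$. Lemma \ref{6.2} then gives $\pi_1^\mathbb Z=\pi^\mathbb Z\cap\mathbb N\pi_1$, while $\pi_2^\mathbb Z=\pi^\mathbb Z$, so $\mathfrak q_\mathbb Z=\mathfrak q_{\pi_1^\mathbb Z,\pi^\mathbb Z}$ is again a parabolic subalgebra of $\mathfrak g_{\pi^\mathbb Z}$, whose simple components are of types $A$ and $C$ only. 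Consequently \cite[Thm.~6.7]{J4} applies and $Sy(\mathfrak q_\mathbb Z)$ is polynomial on generators $p_{\mathcal O}$ indexed by the $\langle i_{\pi_1^\mathbb Z} i_{\pi^\mathbb Z}\rangle$-orbits $\mathcal O$ in $\pi^\mathbb Z$, each of cardinality at most $2$, with weights given by orbit sums of known expressions in the Kostant cascade $B_{\pi^\mathbb Z}$.

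The next step is to compute $\mathfrak h_\Gamma$ via the formula in \cite[5.9]{J2} and to tabulate the weights of the $p_{\mathcal O}$ and their pairings with $\mathfrak h_\Gamma$, in the style of Tables III--V of \ref{7.7}--\ref{7.8}. Here the distinguishing feature of type $C$ enters: the long extremal generator $p_{\{\gamma_r\}}$ attached to $\gamma_r=2\varepsilon_{i_r}$ has weight whose pairing with the corresponding coroot $\alpha_{i_r}^\vee\in\mathfrak h_\Gamma$ equals $\pm 2$, whereas the other generators arising from short-root orbits contribute only $\pm 1$ on $\alpha_{i_r}^\vee$. This asymmetry forces the $\mathfrak h_\Gamma$-invariants to contain monomials in which either $p_{\{\gamma_r\}}$ appears squared, or partner short-root generators must appear squared; this violates the factorisation property \ref{7.6}(iii), which permits each $p_i\in Y(\mathfrak q_\Lambda)$ to involve at most one copy of each $q_j$-factor beyond the unique $\hat p_i$. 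In some configurations the algebra $Sy(\mathfrak q_\mathbb Z)^{\mathfrak h_\Gamma}$ may even fail to be polynomial, violating \ref{7.6}(ii) directly (as already witnessed in the second example of \ref{7.7} following Table IV).

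The main obstacle is to execute this case analysis uniformly across all admissible $(\pi^{\frac{1}{2}},\pi_1)$. The involution $i_{\pi_1^\mathbb Z}$, hence the orbit structure of $\langle i_{\pi_1^\mathbb Z} i_{\pi^\mathbb Z}\rangle$, depends sensitively on the distribution of $\pi^{\frac{1}{2}}$ across the components of $\pi_1$ and on whether the bridge roots $\gamma_j$ fall into $\pi_1^\mathbb Z$. The proof must therefore proceed case by case through all such configurations, each handled by explicit tabulation of generator weights and identification of at least one of the violations \ref{7.6}(ii) or \ref{7.6}(iii). This bookkeeping, which is precisely the ``rather refined analysis'' announced in the introduction, is what fills the rather long Section \ref{8}, and constitutes the real technical work behind a proof that is otherwise, at the conceptual level, a direct application of the framework built up in Sections \ref{2}, \ref{6} and \ref{7}.
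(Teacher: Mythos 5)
Your outline reproduces the general framework of Sections \ref{2}, \ref{6} and \ref{7} correctly (pass to $\pi^{\frac{1}{2}}$, compute $\pi^\mathbb Z$ and $\pi_1^\mathbb Z$, invoke polynomiality of $Sy(\mathfrak q_\mathbb Z)$ via \cite[Thm.~6.7]{J4}, choose $\mathfrak h_\Gamma$, and aim at a violation of \ref{7.6}(ii) or (iii)), but it stops exactly where the proof of Theorem \ref{8.1} actually begins. The entire content of \ref{8.2}--\ref{8.12} is deferred in your last paragraph as ``bookkeeping'' to be done ``case by case through all configurations,'' and none of it is supplied. In fact the paper does \emph{not} proceed by enumerating configurations: it gives a uniform argument whose essential ingredients are (a) the determination of $\mathfrak h_\Gamma$ as the span of $\alpha^\vee:\alpha\in\overline\pi^{\frac{1}{2}}\cap\pi_1$ (\ref{8.2.3}, \ref{8.4}); (b) the orbit count $r\ell(\mathfrak q_\mathbb Z)-r\ell(\mathfrak q)=(s^\mathbb Z-s)+|\overline\pi^{\frac{1}{2}}\cap\pi_1|$ together with the parity analysis of \ref{8.6.2}, which via \ref{7.6}(i),(ii) forces the exclusion of the case $m,n\in2\mathbb N$ and produces at least one component $\pi_{1,u}$, $u\in\overline U$, with $\alpha_n\notin\pi_{1,u}$ (\ref{8.7}); (c) the explicit pairing matrix $\alpha_{i_u}^\vee(\varpi_p)$ of \ref{8.8}--\ref{8.11}; and (d) the construction in \ref{8.12} of specific $\mathfrak h_\Gamma$-invariant monomials ($q^2p\hat p$, and then $p_1p_2\hat p$ with $p_1,p_2$ distinct from the distinguished additional generator) which are genuinely new generators of $Sy(\mathfrak q_\mathbb Z)^{\mathfrak h_\Gamma}$ and contradict the factorisation property \ref{7.6}(iii). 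None of (b)--(d) appears in your proposal, and they are precisely what makes the theorem true; without them you have only restated the strategy of \ref{7.5}--\ref{7.6}, which, as the examples in \ref{7.7}--\ref{7.8} show, settles individual cases but not the theorem.

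Moreover, the one concrete mechanism you do propose is not correct. You claim the contradiction comes from the long extremal root $\gamma_r=2\varepsilon_{i_r}$ pairing to $\pm2$ against $\alpha_{i_r}^\vee\in\mathfrak h_\Gamma$ while ``short-root orbits'' contribute $\pm1$. But $\alpha_{i_r}^\vee$ need not lie in $\mathfrak h_\Gamma$ at all: when $[i_r,n]\subset\pi_1$ one has $\beta_{i_r}\in\pi_1^\mathbb Z$, whence $\alpha_{i_r}^\vee\in\mathfrak h_{\pi_1^\mathbb Z}$ and $i_r$ is removed from $\overline\pi^{\frac{1}{2}}$ (\ref{8.4}); indeed Corollary \ref{8.10} shows that components with $\alpha_n\in\pi_{1,u}$ contribute nothing. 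In the paper the values $\pm1$ arise from fixed points of the involution $i_{\pi_1^\mathbb Z}$ in the type-$A$ pieces of $\pi_1^\mathbb Z$ (Lemma \ref{8.3.2}(ii), Corollary \ref{8.11}), and their existence is guaranteed only by the parity exclusion coming from \ref{8.6.2} and \ref{8.7}; the values $\pm2$ come from two-element orbits and from the singleton orbits outside $\pi_1$, not from a long-versus-short dichotomy. So the gap is twofold: the decisive combinatorial core is missing, and the sketched replacement for it would not go through as stated.
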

   The proof is concluded in subsection \ref {8.12}.   In some sense the key to the proof is an identification of the $\dim \mathfrak h_\Gamma$ the additional generators of $Sy(\mathfrak q_\mathbb Z)$ over $Sy(\mathfrak q)$. These are generators which must map to non-zero scalars under the restriction map $\psi$ defined in \ref {2.4}.  A priori it is completely unclear how to distinguish them from those generators of $Sy(\mathfrak q_\mathbb Z)$ which map to linear elements under $\psi$.  Of course were we to knew the ad-semisimple element of the adapted pair $(h,\eta)$ then this distinction would become easy.  Namely a generator $q$ in the first set has eigenvalue $\deg q$ by Lemma \ref {2.7} whilst a generator $\hat{p}$ in the second set has a non-positive eigenvalue by Remark \ref {2.8}.

 \subsection{}\label{8.2}

 Assume that $\pi$ is of type $C_n$, let $\pi_1$ be a proper subset of $\pi$ and set $\pi_2=\pi$.  Set $\mathfrak q=\mathfrak q_{\pi_1,\pi_2}$ which is a parabolic subalgebra of $\mathfrak g_\pi$ in standard form, that is containing the Borel subalgebra with roots in $\mathbb N\pi$.

 Set $I=\{1,2,\ldots,n\}$. Given $i\leq j \in I$, let $[i,j]$ denote the subset $\{\alpha_i,\alpha_{i+1},\ldots,\alpha_j\}$ of $\pi$ and let $s[i,j]$ denote the sum $\alpha_i+\alpha_{i+1}+\ldots+\alpha_j$ which is a positive root.   It may also be convenient to denote the simple root $\alpha_i: i \in I$ by $i$.  Set $\beta_i:=2\alpha_i+2\alpha_{i+1}+\ldots + 2\alpha_{n-1}+\alpha_n:i \in I$. These are all long roots (in type $C$). Again $\beta_i$ is the $i^{th}$ element of the Kostant cascade $B_\pi$, itself a maximal set of strongly orthogonal roots \cite [Sect. 2, Table I]{J1}.

 Define $\pi^{\frac{1}{2}}$ as in \ref {7.1} and recall (\ref {7.1.3}) that in type $C$ it may be any subset of $[1,n-1]$. We write $\pi^{\frac{1}{2}}= \{i_1,i_2,\ldots,i_r\}$, with $i_j \in [1,n-1]$ and strictly increasing.

  As in \ref {7.2}, let $\pi_{1,u}:u \in U$ denote the set of connected components of $\pi_1$. By Lemma \ref {7.2} we can and do assume that

 $$|\pi^{\frac{1}{2}}\cap \pi_{1,u}|\leq 1, \forall u \in U. \eqno{(*)}$$

  \subsubsection{}\label{8.2.1}

   Through \ref {7.1.1} and \ref {7.1.3} we may compute the subset $\pi^\mathbb Z \subset \mathbb N\pi$ of simple roots for $\mathfrak g_\mathbb Z$ lying in $\mathbb N \pi$.  One finds that it has two connected components.  They are denoted by $\pi^{\mathbb Z,\ell}, \pi^{\mathbb Z,r}$ and are described below.

 Set $o_r=1,e_r=0$
 (resp. $o_r=0,e_r=1$) if $r$ is odd (resp. even). Then
 $$\pi^{\mathbb Z,\ell}:=\{[1,i_1-1],s[i_1,i_2],[i_2+1,i_3-1],\ldots,e_r[i_r+1,n],o_r \beta_{i_r} \},$$
  and
  $$\pi^{\mathbb Z,r}:=\{[i_1+1,i_2-1],s[i_2,i_3],[i_3+1,i_4-1],\ldots,o_r[i_r+1,n],e_r\beta_{i_r}\}.$$  Thus $\pi^{\mathbb Z,\ell},\pi^{\mathbb Z,r}$ both contain exactly one long root, which is either $\alpha_n$ or $\beta_{i_r}$, and so both are of type $C$ (possibly including type $C_1$) and hence admit a total order $<$ through the numbering of roots in the Dynkin diagram given as in Bourbaki \cite [Planche III]{B}. This numbering makes the long root the largest root for this ordering.   Again the $i_{\pi^\mathbb Z}$ orbits in $\pi^\mathbb Z$ are all reduced to singletons.

 One may compute the form of the Kostant cascade for each of the above two connected subsets of $\pi^\mathbb Z$, using the general form given (for type $C$) in \cite [Table I]{J1}.

 For this we set $i_0=0,i_{r+1}=n$ and write $I$ as the union of the two disjoint subsets
 $$I^\ell:=\bigsqcup_{j=0}^{[r/2]}[i_{2j}+1,i_{2j+1}], \quad I^r:=\bigsqcup_{j=0}^{[(r-1)/2]}[i_{2j+1}+1,i_{2(j+1)}].\eqno {(*)}$$

 One checks that
 $$B_{\pi^{\mathbb Z,\ell}} =\{\beta_t\}_{t\in I^\ell}, \quad B_{\pi^{\mathbb Z,r}} =\{\beta_t\}_{t\in I^r}. \eqno {(**)}$$

 In particular if $r$ is even (resp. odd), then $B_{\pi^{\mathbb Z,\ell}}$ terminates in $\beta_n$ (resp. $\beta_{i_r}$) and $B_{\pi^{\mathbb Z,r}}$ terminates in $\beta_{i_r}$ (resp. $\beta_n$), compatible with our previous description of the connected components of $\pi^\mathbb Z$.

 As in our examples (\ref {7.7}, \ref {7.8}) we find that $B_\pi$ is their disjoint union.

  \subsubsection{}\label{8.2.2}

 Recall (\ref {6.2}) that $\pi^\mathbb Z_1=\pi^\mathbb Z \cap \mathbb N\pi_1$.  One checks that $$\pi^\mathbb Z_1=(\pi_1\cap (\pi\setminus \pi^{\frac{1}{2}}))\cup \{s[i_t,i_{t+1}] \ | \ [i_t,i_{t+1}] \subset \pi_1\}_{t=1}^r\cup \{\beta_{i_r} \ | \ [i_r,n] \subset \pi_1\}. $$

 In view of \ref {8.2}$(*)$, this expression simplifies to give

 $$\pi^\mathbb Z_1=(\pi_1\cap (\pi\setminus \pi^{\frac{1}{2}})) \cup \{\beta_{i_r} \ | \ [i_r,n] \subset \pi_1\}. \eqno {(***)} $$

  Recall that we denote by $\{\pi_{1,u}\}_{u \in U}$ the set of connected components of $\pi_1$. The linearity of the Dynkin diagram (in type $C$) induces a linear order $<$ on $U$.  Since an interval $[i_t,i_{t+1}]$ can belong to at most one connected component of $\pi_1$ it follows that if we  replace $\pi_1$ by $\pi_{1,u}$ in the right hand side of $(***)$, then the left hand side defines a subset $\pi_{1,u}^\mathbb Z$ of $\pi_1^\mathbb Z$ which in turn is a disjoint union of the $\pi_{1,u}^\mathbb Z: u \in U$.  Finally for all $u \in U$, set $$\pi_{1,u}^{\mathbb Z, \ell} =  \pi^{\mathbb Z, \ell} \cap \pi_{1,u}^\mathbb Z,\quad \pi_{1,u}^{\mathbb Z,r} =  \pi^{\mathbb Z, r} \cap \pi_{1,u}^\mathbb Z.$$

 One checks that these subsets of $\pi^\mathbb Z_1$ are connected whilst by the preceding paragraph their union is $\pi_1^\mathbb Z$.  On the other hand roots in different subsets are clearly orthogonal and so the non-empty sets $\pi_{1,u}^{\mathbb Z, \ell},\pi_{1,u}^{\mathbb Z,r}: u \in U$ form the set of connected components of $\pi_1^\mathbb Z$.

   \subsubsection{}\label{8.2.3}

Define $\mathfrak h_\Gamma$ as in \ref {7.5}.

\begin {lemma} $\mathfrak h_\Gamma$ can be chosen to be any complement to $\mathfrak h_{\pi_1^\mathbb Z}$ in $\mathfrak h_{\pi_1}$
\end {lemma}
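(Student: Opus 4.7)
The plan is to reduce the lemma to two identifications together with an inclusion, namely $\mathfrak h_\Lambda = \mathfrak h_{\pi_1}$, $\mathfrak h_{\Lambda^\mathbb Z} = \mathfrak h_{\pi_1^\mathbb Z}$, and $\mathfrak h_{\pi_1^\mathbb Z} \subset \mathfrak h_{\pi_1}$ (where $\mathfrak h_{\pi'}$ denotes the $\textbf{k}$-span of the coroots $\alpha^\vee$ for $\alpha \in \pi'$). Once these are in place, the conclusion reduces to the definition of $\mathfrak h_\Gamma$ from \ref{7.5} (following \ref{2.12}) as a complement to $\mathfrak h_{\Lambda^\mathbb Z}$ inside $\mathfrak h_\Lambda$; any such linear complement works because $\Gamma$ automatically vanishes on $\mathfrak h_{\Lambda^\mathbb Z}$ and descends to a non-degenerate pairing on $\mathfrak h_\Lambda / \mathfrak h_{\Lambda^\mathbb Z}$ of dimension $|\pi_1| - |\pi_1^\mathbb Z|$.

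For the first identification, I would use the hypothesis $\pi_2 = \pi$, so that $\mathfrak q = \mathfrak q_{\pi_1,\pi}$ is a parabolic. Every character of $\mathfrak q$ restricts on $\mathfrak h$ to a $\textbf{k}$-linear combination of the fundamental weights $\varpi_\alpha$ for $\alpha \in \pi \setminus \pi_1$, and thus vanishes on $\mathfrak h_{\pi_1}$. Conversely, the weights of the generators of $Sy(\mathfrak q)$ are given by $\langle i_{\pi_1}i_{\pi_2}\rangle$-orbit sums (\ref{8.1}, \cite[Thm. 6.7]{J4}) and span a $\mathbb Q$-lattice of full rank $|\pi \setminus \pi_1|$ inside the character lattice, forcing $\bigcap_{\lambda \in \Lambda}\ker \lambda \cap \mathfrak h = \mathfrak h_{\pi_1}$.

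For the second identification, since $\pi_2 = \pi$ forces $\pi_2^\mathbb Z = \pi^\mathbb Z$ via \ref{6.2}, the algebra $\mathfrak q_\mathbb Z = \mathfrak q_{\pi_1^\mathbb Z, \pi^\mathbb Z}$ is itself parabolic in $\mathfrak g_\mathbb Z$, with $Sy(\mathfrak q_\mathbb Z)$ polynomial by \ref{8.1}. The identical argument applied in $\mathfrak g_\mathbb Z$ yields $\mathfrak h_{\Lambda^\mathbb Z} = \mathfrak h_{\pi_1^\mathbb Z}$. For the inclusion $\mathfrak h_{\pi_1^\mathbb Z} \subset \mathfrak h_{\pi_1}$, I would invoke the explicit description \ref{8.2.2}$(***)$: every element of $\pi_1^\mathbb Z$ is either a simple root in $\pi_1$ or the long root $\beta_{i_r}$, and in the latter case the condition $[i_r,n] \subset \pi_1$ places $\beta_{i_r}$ in $\mathbb N \pi_1$. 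Since for $\gamma = \sum_i c_i \alpha_i \in \mathbb N \pi_1$ one has $\gamma^\vee = \sum_i c_i (\alpha_i,\alpha_i)/(\gamma,\gamma)\,\alpha_i^\vee \in \mathfrak h_{\pi_1}$, the inclusion follows.

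The only step requiring substance is the rank computation in the first identification; everything else is bookkeeping with the explicit data of \ref{8.2.1}--\ref{8.2.2}. The analogous statement for biparabolics would be harder because the common kernel of the characters is no longer so transparently $\mathfrak h_{\pi_\cap}$ (cf.\ \cite[5.9(i)]{J2}), but the parabolic hypothesis sidesteps that issue cleanly.
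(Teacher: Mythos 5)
Your reduction is the same as the paper's: the lemma amounts to the two identifications $\mathfrak h_\Lambda=\mathfrak h_{\pi_1}$ and $\mathfrak h_{\Lambda^\mathbb Z}=\mathfrak h_{\pi_1^\mathbb Z}$, after which the definition of $\mathfrak h_\Gamma$ in \ref{7.5} gives the statement. Where you differ is in how these identifications are obtained. The paper observes that $i_{\pi_2}$ (resp.\ $i_{\pi_2^\mathbb Z}$) is trivial because $\pi_2=\pi$ (resp.\ both components of $\pi^\mathbb Z$) is of type $C$, so the relevant orbits are $i_{\pi_1}$- (resp.\ $i_{\pi_1^\mathbb Z}$-) orbits and are all $\langle i_{\pi_1},i_{\pi_2}\rangle$-orbits, and then simply quotes \cite[Prop. 5.9(i)]{J3.5} to identify the Cartan part of each truncation. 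You instead argue directly: the easy inclusion from the fact that semi-invariant weights are characters of the parabolic, hence vanish on the coroots of the Levi, and the reverse inclusion from a full-rank claim for the lattice of generator weights. That is a legitimate and in fact more self-contained route.

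The one real issue is that the full-rank claim — which you yourself single out as ``the only step requiring substance'' — is asserted rather than proved, in both the untruncated and the $\mathbb Z$-case. It does hold, and it can be closed in one line using the same structural facts you already invoke: since $i_{\pi_2}$ (resp.\ $i_{\pi_2^\mathbb Z}$) is trivial, every $\alpha\in\pi\setminus\pi_1$ (resp.\ $\alpha\in\pi^\mathbb Z\setminus\pi_1^\mathbb Z$) is a singleton orbit, and by \ref{8.3}$(*)$ the corresponding generator has weight $2\varpi_\alpha$ (resp.\ $2\varpi_\alpha^{\pi_2^\mathbb Z}$); these weights are linearly independent and already span the orthogonal of $\mathfrak h_{\pi_1}$ (resp.\ of $\mathfrak h_{\pi_1^\mathbb Z}$, using $|\pi^\mathbb Z|=|\pi|$ so that the $\varpi_\alpha^{\pi^\mathbb Z}$ form a basis of $\mathfrak h^*$), which gives the rank $|\pi\setminus\pi_1|$ (resp.\ $|\pi^\mathbb Z\setminus\pi_1^\mathbb Z|$) you need. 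Note also that your appeal to \cite[Thm. 6.7]{J4} presupposes the triviality of the $\pi_2$-side involutions anyway, so nothing extra is being used. With that line supplied your argument is complete; alternatively you could simply cite \cite[Prop. 5.9(i)]{J3.5} at this point, which is exactly what the paper does. Your third point, $\mathfrak h_{\pi_1^\mathbb Z}\subset\mathfrak h_{\pi_1}$ via \ref{8.2.2}$(***)$, is correct but not strictly needed for the statement once the two identifications are in place; it is harmless bookkeeping.
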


\begin {proof}   Since $\pi_2$ is of type $C$, the involution $i_{\pi_2}$ is trivial.  Thus the $<i_{\pi_1}i_{\pi_2}>$ orbits (which determine the generators of $Sy(\mathfrak q)$, their weights and degrees) are just orbits under the involution $i_{\pi_1}$ and are reduced to one or to two elements.  Moreover they are all $<i_{\pi_1},i_{\pi_2}>$ orbits, so by \cite [Prop. 5.9(i)]{J3.5} the Cartan subalgebra of the truncation of $\mathfrak q$ is just $\mathfrak h_{\pi_1}$.

Since $i_{\pi_2^\mathbb Z}$ is trivial, the $<i_{\pi_1^\mathbb Z}i_{\pi_2^\mathbb Z}>$ orbits are just orbits under the involution $i_{\pi_1^\mathbb Z}$ and are reduced to one or to two elements.  Moreover they are all $<i_{\pi_1^\mathbb Z},i_{\pi_2^\mathbb Z}>$ orbits, so by \cite [Prop. 5.9(i)]{J3.5} the Cartan subalgebra of the truncation of $\mathfrak q_\mathbb Z$ is just $\mathfrak h_{\pi_1^\mathbb Z}$.

Hence the assertion.

\end {proof}

 \subsection{}\label{8.3}

 Recall the discussion preceding Theorem \ref {8.1}.

 A generator of $Sy(\mathfrak q)$ (resp. $Sy(\mathfrak g_\mathbb Z)$) is given by an $<i_{\pi_1}i_{\pi_2}>$  orbit $\Gamma$ in $\pi$ (resp. an $<i_{\pi^\mathbb Z_1}i_{\pi^\mathbb Z_2}>$ orbit $\Gamma^\mathbb Z$ in $\pi^\mathbb Z$). We denote it by $p_\Gamma$ (resp. $p_{\Gamma^\mathbb Z}$) and its weight by $\delta_\Gamma$ (resp. $\delta_{\Gamma^\mathbb Z}$).   There is a description of these generators as the image of special elements in the Hopf dual of $U(\mathfrak q)$ under a linear isomorphism of the latter onto $S(\mathfrak q)$ (see \cite [Prop. 7.5, Cor. 7.6]{J2}). However only their weights will be needed here.

 Our present goal is to calculate the (integers) $\alpha^\vee (\delta_{\Gamma^\mathbb Z})$, for all $\alpha \in \pi_1\cap \pi^{\frac{1}{2}}$ and for all $<i_{\pi_1^\mathbb Z}i_{\pi_2^\mathbb Z}>$ orbits $\Gamma^\mathbb Z$ in $\pi^\mathbb Z$. In Tables III-V, these are just the entries in the last column.  As in the examples we shall show that the criteria of \ref {7.6} are not satisfied.  Up to some mild combinatorics this is all perfectly straightforward though a little tedious.  Regrettably we did not find any shortcuts.

  For all $i\in \{1,2\}$, take $\alpha \in \pi_i$ (resp. $\alpha \in \pi_i^\mathbb Z$) and let $\varpi_{\alpha}^{\pi_i}$ (resp. $\varpi_{\alpha}^{\pi_i^\mathbb Z}$) denote the fundamental weight in the linear span of the elements of $\pi_i$ (resp. $\pi_i^\mathbb Z$).  Of course since we are taking $\pi_2=\pi$ one has $\varpi_\alpha^{\pi_2}=\varpi_\alpha$, for all $\alpha \in \pi$.  In case $\alpha \notin \pi_i$ (resp. $\alpha \notin \pi_i^\mathbb Z$) we set $\varpi_{\alpha}^{\pi_i}=0$ (resp. $\varpi_{\alpha}^{\pi_i^\mathbb Z}=0$).

  Let $\Gamma$ be an $<i_{\pi_1}i_{\pi_2}>$ orbit. Since all connected components are of type $A$ or of type $C$ it follows from \cite [Thm. 6.7]{J4} that  $\delta_\Gamma$ is given by \cite [4.6]{J3.5}.  However it is more convenient to effect the transformation on this expression carried out in \cite [Lemma 3.4]{J4}.  In addition there is a simplification here since either $\Gamma$ is contained in $\pi_1\subset \pi_2=\pi$, or $\Gamma$ is a singleton orbit contained in $\pi_2\setminus \pi_1$ and in the latter case we are setting $\varpi_\alpha^{\pi_1}=0$ - see above.  A similar remark applies if $\Gamma^\mathbb Z$ is an $<i_{\pi^\mathbb Z_1}i_{\pi^\mathbb Z_2}>$  orbit. With these hints one checks that

  $$\delta_\Gamma= \sum_{\alpha \in \Gamma} 2(\varpi_\alpha-\varpi_\alpha^{\pi_1}), \quad \delta_{\Gamma^\mathbb Z}= \sum_{\alpha \in \Gamma^\mathbb Z} 2(\varpi_\alpha^{\pi_2^\mathbb Z}-\varpi_\alpha^{\pi_1^\mathbb Z}). \eqno{(*)}$$

   One may remark that by $(*)$ the weight of a semi-invariant of $S(\mathfrak q)$ (resp. $S(\mathfrak q_\mathbb Z)$) vanishes on $\alpha^\vee:\alpha \in \pi_\cap$ (resp. $\alpha^\vee:\alpha \in \pi^\mathbb Z_\cap$) as it should.

  \subsubsection{}\label{8.3.1}

  Recall the definition of $I$ given in \ref {8.2}. Take $J \subset I$.  Call $j \in I \setminus J$ a left (resp. right) neighbour of $J$ if $j+1 \in J$ (resp. $j-1 \in J$) and a neighbour if either (or both) hold.

   Recall the decomposition of $I$ as a disjoint union $I=I^\ell\sqcup I^r$ into two parts given in \ref {8.2.1}$(*)$.  Given $i \in I$, let $I(i)$ denote the part of I containing $i$.

  Take $\alpha \in \pi_2^\mathbb Z \setminus \{\beta_{i_r}\}$.  We may write $\alpha=s[i,j]$ with $i\leq j$ and we set $I(\alpha)=I(i)$.
  Slightly surprisingly $2\varpi_\alpha^{\pi_2^\mathbb Z}$ is the subsum of elements in the Kostant cascade indexed by $I(i)$ terminating in $\beta_i$. More explicitly
  $$2\varpi_{s[i,j]}^{\pi_2^\mathbb Z}=\sum_{t \in I(i), t \leq i}\beta_t , \eqno {(*)}$$
  that is to say compared to $2\varpi_i=\sum_{t=1}^i\beta_t$, we omit those $\beta_t: t \in I\setminus I(i)$.

  Recall that we set $i_{r+1}=n$.  We may include the case $\alpha = \beta_r$ in the above by viewing $\beta_r$ as $s[i_r,i_{r+1}]$, noting that $(*)$ still holds.  The possible confusion that this can cause does not arise because $s[i_r,n] \notin \pi_2^\mathbb Z$.

  We shall need to calculate $\alpha_k^\vee(2\varpi_{s[i,j]}^{\pi_2^\mathbb Z})$ for all $\alpha_k \in \pi^{\frac{1}{2}}$ and for all $s[i,j] \in \pi_2^\mathbb Z$.

   \begin {lemma} Suppose $\alpha_k \in \pi^{\frac{1}{2}}$. Then
   $$\alpha^\vee_k(2\varpi_{s[i,j]}^{\pi_2^\mathbb Z})=\left\{\begin{array}{ll}-2& : \text {If} \ k \ \text{is a left neighbour to} \  I(i), \\
   2&:k=i,\\
0& :otherwise.\\
\end{array}\right.$$
   \end {lemma}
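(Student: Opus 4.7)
The proof is a direct computation from the explicit formula
$$2\varpi_{s[i,j]}^{\pi_2^\mathbb Z}=\sum_{t\in I(i),\,t\le i}\beta_t$$
of \ref{8.3.1}. Since $k\in\pi^{\frac{1}{2}}\subset[1,n-1]$ the simple root $\alpha_k$ is short, while $\beta_t=2\alpha_t+2\alpha_{t+1}+\cdots+2\alpha_{n-1}+\alpha_n$ is long; a one-line check with the Cartan integers of $C_n$, using the asymmetric entry $\alpha_{n-1}^\vee(\alpha_n)=-2$, gives the telescoping identity
$$\alpha_k^\vee(\beta_t)=2\delta_{k,t}-2\delta_{k,t-1}.$$
Substituting into the sum above yields the master formula
$$\alpha_k^\vee\!\bigl(2\varpi_{s[i,j]}^{\pi_2^\mathbb Z}\bigr)=2\,\chi\{k\in I(i),\,k\le i\}-2\,\chi\{k+1\in I(i),\,k+1\le i\},$$
where $\chi$ denotes the indicator.

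The three branches of the lemma are then read off by combining this with the alternating structure of $I=I^\ell\sqcup I^r$ from \ref{8.2.1}$(*)$. Because $k=i_s\in\pi^{\frac{1}{2}}$ is by construction the right endpoint of one of the blocks in that partition, $k+1$ is the left endpoint of the next block, which lies in the opposite part of $I$. Hence $k$ and $k+1$ always lie in opposite parts of $I$, and exactly one of them belongs to $I(i)$. If $k+1\in I(i)$ (that is, $k$ is a left neighbour of $I(i)$) the first indicator vanishes and the master formula returns $-2$ as soon as $k+1\le i$. If instead $k\in I(i)$, then $k+1\notin I(i)$ and the master formula returns $+2$ as soon as $k\le i$; combined with the constraint $s[i,j]\in\pi_2^\mathbb Z$ this is realised in the lemma's $k=i$ branch through the crossing root $s[i,j]=s[i_s,i_{s+1}]$. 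In all remaining configurations both indicators vanish and the value is $0$.

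The principal obstacle is combinatorial rather than algebraic. One must first classify the admissible roots $s[i,j]\in\pi_2^\mathbb Z$ into the singletons $\alpha_m$ with $m\notin\pi^{\frac{1}{2}}$ and the crossings $s[i_t,i_{t+1}]$ described in \ref{8.2.1}, and then track the position of $i$ within its block of $I(i)$ in each case to decide which of the two conditions $\{k\le i\}$ and $\{k+1\le i\}$ actually holds. Once the alternating block structure of $I^\ell$ and $I^r$ is visualised this reduces to a short finite check, and all the algebraic content is supplied by the telescoping identity $\alpha_k^\vee(\beta_t)=2\delta_{k,t}-2\delta_{k,t-1}$.
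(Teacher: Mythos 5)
Your telescoping identity $\alpha^\vee_k(\beta_t)=2\delta_{t,k}-2\delta_{t,k+1}$ is precisely the paper's entire proof, and the master formula you deduce from \ref{8.3.1}$(*)$ --- that $\alpha^\vee_k(2\varpi^{\pi_2^{\mathbb Z}}_{s[i,j]})$ equals $2$ if $k\in I(i)$ and $k\le i$, equals $-2$ if $k+1\in I(i)$ and $k+1\le i$, and equals $0$ otherwise, at most one case occurring since $k$ and $k+1$ lie in opposite parts of $I$ --- is correct. The gap is in your last paragraph, where you claim this reduces to the printed trichotomy. Your assertion that the configuration $k\in I(i)$, $k\le i$ is realised only through the crossing root, i.e.\ only when $k=i$, is false: take type $C_6$ with $\pi^{\frac{1}{2}}=\{\alpha_1,\alpha_3,\alpha_5\}$ as in \ref{7.7}, $k=1$ and $s[i,j]=\alpha_4$. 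Then $I(4)=I^\ell=\{1,4,5\}$ contains $1$, one has $2\varpi^{\pi_2^{\mathbb Z}}_{\alpha_4}=\beta_1+\beta_4$ and $\alpha^\vee_1(\beta_1+\beta_4)=2$ although $k\neq i$; this value $2$ is visible in the paper's own Table III, where $\varpi_{p_2}=\beta_1+\beta_4-\alpha_4$ and $\alpha^\vee_1(\varpi_{p_2})=2$. Symmetrically, in the $-2$ branch you insert the needed extra condition $k+1\le i$ but never reconcile it with the condition actually printed in the lemma: being a left neighbour of $I(i)$ does not imply $k+1\le i$ (same example, $k=5$, $s[i,j]=\alpha_2$: here $k+1=6\in I^r=I(2)$, yet $\alpha^\vee_5(2\varpi^{\pi_2^{\mathbb Z}}_{\alpha_2})=\alpha^\vee_5(\beta_2)=0$, not $-2$).

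In other words the master formula does not ``read off'' to the statement as printed; the printed branches are themselves loose, and the values the paper actually uses later (Table III, Lemma \ref{8.9}) agree with your master formula rather than with the literal wording, while the paper's proof hides the discrepancy behind ``from which the assertion follows''. So the algebraic content of your proposal coincides with the paper's argument and is fine, but your final combinatorial reduction is not a proof of the displayed trichotomy: it asserts, without justification, implications ($k\in I(i)$ and $k\le i$ forces $k=i$; left neighbourhood forces $k+1\le i$) that a short finite check refutes. A sound write-up should either state and prove the master formula itself as the conclusion, or restrict to the situations in which the lemma is subsequently applied and verify the extra inequalities $k\le i$ and $k+1\le i$ there.
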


   \begin {proof}  For all $k,\ell \in \{1,2,\ldots,n\}$, one has
   $$\alpha^\vee_k(\beta_\ell)=\left\{\begin{array}{ll}2& :\ell=k,\\
-2& :\ell=k+1,\\
0&:otherwise,\\
\end{array}\right.$$
from which the assertion follows.

   \end {proof}

\textbf{Remark}.  Since $\pi_2^\mathbb Z \cap \pi^{\frac{1}{2}}=\phi$, the situation $k=i$ in the above exactly arises if $k=i=i_t,j=i_{t+1}$ for some $t \in \{1,2,\ldots,r\}$.

\subsubsection{}\label{8.3.2}

 For all $\alpha \in \pi_1^\mathbb Z$, set

  $$\varpi_{s(\alpha)}^{\pi_1^\mathbb Z}=\left\{\begin{array}{ll}\varpi_{\alpha}^{\pi_1^\mathbb Z}& :\alpha=i_{\pi_1^\mathbb Z}(\alpha),\\
\varpi_{\alpha}^{\pi_1^\mathbb Z}+\varpi_{i_{\pi_1^\mathbb Z}(\alpha)}^{\pi_1^\mathbb Z}& :\alpha \neq i_{\pi_1^\mathbb Z}(\alpha).\\
\end{array}\right.$$

By \ref {8.3}$(*)$ such an expression occurs in the description of $\delta_{\Gamma^\mathbb Z}$.

 Recall (\ref {8.2.2}) and let $\pi_{1,u}^{\mathbb Z,v}:v \in \{\ell,r\}$ be a connected component of $\pi_{1,u}^\mathbb Z$ of which there are at most two.

  Both are of type $A$ if $\alpha_n \notin \pi_{1,u}$ and both are of type $C$ (possibly of type $C_1$) otherwise, that is if $\alpha_n \in \pi_{1,u}$, equivalently if $\pi_{1,u}$ is of type $C$.
  For type $C$ the $i_{\pi_1^\mathbb Z}$ orbits are singletons.

  Recall again that $\pi_2^\mathbb Z \cap \pi^{\frac{1}{2}}=\phi$.

 \begin {lemma}

 Take $\alpha_k \in \pi^{\frac{1}{2}}, \alpha \in \pi_{1,u}^{\mathbb Z,v} $.

 \

 (i)  If $\alpha_k$ is not a neighbour of $\pi_{1,u}^{\mathbb Z,v}$, then $\alpha^\vee_k(\varpi_{s(\alpha)}^{\pi_1^\mathbb Z})=0$.

\

 (ii) Suppose $\alpha_n \notin \pi_{1,u}$.  If $\alpha_k$ is a neighbour of $\pi_{1,u}^{\mathbb Z,v}$, then

$$\alpha^\vee_k(2\varpi_{s(\alpha)}^{\pi_1^\mathbb Z})=\left\{\begin{array}{ll}-1& :\alpha=i_{\pi_1^\mathbb Z}(\alpha), \\
-2& :\alpha \neq i_{\pi_1^\mathbb Z}(\alpha).\\
\end{array}\right.$$

\

(iii)  Suppose $\alpha_n \in \pi_{1,u}$.
If $\alpha_k$ is a left neighbour of $\pi_{1,u}^{\mathbb Z,v}$, then

$$\alpha^\vee_k(2\varpi_{\alpha}^{\pi_1^\mathbb Z})=-2.$$

\

\end {lemma}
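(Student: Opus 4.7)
All three assertions reduce to explicit computations in the irreducible components of $\pi_1^\mathbb Z$. By \ref{8.2.2} every component $\pi_{1,u}^{\mathbb Z,v}$ is either of type $A$ (when $\alpha_n \notin \pi_{1,u}$, so $\beta_{i_r}$ does not enter the component) or of type $C$ (when $\alpha_n \in \pi_{1,u}$), including possibly the degenerate component $\{\beta_{i_r}\}$ of type $C_1$. For each component I would label its simple roots $\gamma_1, \gamma_2, \ldots, \gamma_m$ in Bourbaki order, so that $\gamma_m$ is long whenever the subsystem is of type $C$.

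For (i), the hypothesis that $\alpha_k$ is neither a left nor a right neighbour forces $(\alpha_k,\gamma) = 0$ for every $\gamma \in \pi_{1,u}^{\mathbb Z,v}$: for a simple root of $\mathfrak g$ in the component this is read off the Dynkin diagram, while $(\alpha_k,\beta_{i_r}) \neq 0$ only for $k \in \{i_r-1,i_r\}$. Hence $\alpha_k^\vee$ annihilates the $\mathbb Q$-span of the component, and in particular $\varpi_{s(\alpha)}^{\pi_1^\mathbb Z}$.

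For (ii), the subsystem is of type $A_m$ and the diagram involution sends $\gamma_j$ to $\gamma_{m+1-j}$. The inverse Cartan matrix of $A_m$ gives
\begin{equation*}
\varpi_{\gamma_j}^{A_m} = \tfrac{1}{m+1}\Bigl[(m+1-j)\sum_{i=1}^{j} i\,\gamma_i + j\sum_{i=j+1}^{m}(m+1-i)\,\gamma_i\Bigr],
\end{equation*}
so that the coefficients of $\gamma_1$ and $\gamma_m$ are $(m+1-j)/(m+1)$ and $j/(m+1)$ respectively. A left (resp.\ right) neighbour $\alpha_k$ satisfies $\alpha_k^\vee(\gamma_1) = -1$ (resp.\ $\alpha_k^\vee(\gamma_m) = -1$) and kills the other $\gamma_i$. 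In the fixed-point case $\alpha = \gamma_{(m+1)/2}$ the relevant coefficient is $1/2$, and doubling yields $-1$; in the non-fixed case the orbit sum $\varpi_\alpha^{A_m}+\varpi_{i(\alpha)}^{A_m}$ contributes $(m+1-j)/(m+1)+j/(m+1)=1$ to the relevant coefficient, and doubling yields $-2$.

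For (iii), the component is of type $C_m$ and by the proof of Lemma \ref{8.2.3} the involution $i_{\pi_1^\mathbb Z}$ is trivial on it, so $\varpi_{s(\alpha)}^{\pi_1^\mathbb Z} = \varpi_\alpha^{\pi_1^\mathbb Z}$. When $m\geq 2$ the root $\gamma_1$ is a short simple root of $\mathfrak g$, and the standard $C_m$ fundamental-weight formulas show that the coefficient of $\gamma_1$ in $\varpi_{\gamma_j}^{C_m}$ equals $1$ for every $j \in \{1,\ldots,m\}$; combined with $\alpha_k^\vee(\gamma_1) = -1$ this gives the desired value $-2$ after doubling. The main technical obstacle is the degenerate case $m=1$, i.e.\ $\pi_{1,u}^{\mathbb Z,v} = \{\beta_{i_r}\}$, because then the unique simple root of the subsystem is long rather than short: here a left neighbour must be $\alpha_{i_r-1}$, one has $\varpi_{\beta_{i_r}}^{C_1} = \tfrac{1}{2}\beta_{i_r}$, and a direct computation of the long-short Cartan integer gives $\alpha_{i_r-1}^\vee(\beta_{i_r}) = 2(\alpha_{i_r-1},\beta_{i_r})/(\alpha_{i_r-1},\alpha_{i_r-1}) = -2$; the factor of $2$ now comes from this Cartan integer instead of from doubling the coefficient, so $\alpha_k^\vee(2\varpi_\alpha^{\pi_1^\mathbb Z}) = -2$ persists.
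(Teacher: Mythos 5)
Your proposal is correct, and it follows the same skeleton as the paper's argument (case split according to whether the component $\pi_{1,u}^{\mathbb Z,v}$ is of type $A$ or of type $C$, orthogonality for non-neighbours, then a direct evaluation at the end simple root adjacent to $\alpha_k$), but the computational device differs. The paper evaluates $2\varpi_{s(\alpha)}^{\pi_1^\mathbb Z}$ by writing it as a partial sum of the Kostant cascade of the component (the same device as in \ref{8.3.1}), so that $\alpha_k^\vee$ is nonzero only on the highest root of the component; you instead read off the coefficient of the extreme simple root from the inverse Cartan matrix, getting $(m+1-j)/(m+1)$, resp.\ $j/(m+1)$, in type $A_m$ (whence $1/2$ at the fixed point and $1$ on a two-element orbit) and the constant coefficient $1$ in type $C_m$, plus the Cartan integer $-2$ in the degenerate $C_1$ case $\{\beta_{i_r}\}$ (and likewise $\{\alpha_n\}$, which you did not mention but which works identically). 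Both routes give the same numbers; the cascade description is the one that recurs throughout Section \ref{8} and keeps the bookkeeping uniform with \ref{8.3.1}, while your version is more elementary and makes the origin of the $-1$ versus $-2$ dichotomy in (ii) transparent. Two small points: the triviality of $i_{\pi_1^\mathbb Z}$ on type-$C$ components is stated just before the lemma (it is $-w_0=\operatorname{Id}$ in type $C$), not in the proof of Lemma \ref{8.2.3}; and in (i) the borderline values $k\in\{i_r-1,i_r\}$ near $\beta_{i_r}$ should simply be declared neighbours of the component containing $\beta_{i_r}$ — the remaining unconsidered configurations with $\alpha_n\in\pi_{1,u}$ are, as the paper's remark notes, irrelevant by Lemma \ref{8.4}.
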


\begin {proof}  If $\alpha_n \notin \pi_{1,n}$, then $\pi_{1,u}^{\mathbb Z,v}$ is of type $A$ and moreover $\pi_{1,u}^{\mathbb Z,v} \subset \pi_{1,u}$. One checks that $\varpi_{s(\alpha)}^{\pi_1^\mathbb Z}$ (resp. $2\varpi_{s(\alpha)}^{\pi_1^\mathbb Z}$) is just the sum of the elements in the Kostant cascade for this component starting from its unique highest root and ending with the sum of the simple roots lying between $\alpha$ and $i_{\pi_1^\mathbb Z}(\alpha)$ if $\alpha \neq i_{\pi_1^\mathbb Z}(\alpha)$ (resp. $\alpha = i_{\pi_1^\mathbb Z}(\alpha)$).  From this the assertion is easily verified.

If $\alpha_n \in \pi_{1,n}$, then $\pi_{1,u}^{\mathbb Z,v}$ is of type $C$ and contains either $\alpha_n$ or $\beta_{i_r}$. In the first case $\pi_{1,u}^{\mathbb Z,v}=[i_r+1,n]$ and the hypothesis of (ii) holds exactly when $k=i_r$. Then $2\varpi_{\alpha_t}^{\pi_1^\mathbb Z}:t \in [i_r+1,n]$ is the sum of the elements in the Kostant cascade for this component starting from the unique highest root $\beta_{i_r+1}$ and ending in $\beta_t$. Since the value of $\alpha^\vee_{i_r}$ equals $-2$ on the first of these and is zero on the remainder, the assertion follows.
In the second case $\pi_{1,u}^{\mathbb Z,v}=[s,i_r-1]\cup \{\beta_{i_r}\}$, where $s$ is the unique smallest element of $\pi_{1,u}$.   Then $2\varpi_{\alpha_t}^{\pi_1^\mathbb Z}:t \in [s,i_r-1]$ is the sum of the elements in the Kostant cascade for this component starting from the unique highest root $\beta_{i_s}$ and ending in $\beta_t$, whilst $2\varpi_{\beta_{i_r}}^{\pi_1^\mathbb Z}$ is the sum of the elements in the Kostant cascade for this component starting from the unique highest root $\beta_{i_s}$ and ending in $\beta_{i_r}$.  In this case the hypothesis of (ii) exactly holds when $k=i_{r-1},s=i_{r-1}+1$, from which (iii) obtains.

\end {proof}

\textbf{Remark}.  When $\alpha_n \in \pi_{1,u}$ and $\pi_{1,u}^{\mathbb Z,v}=[s,i_r-1]\cup \{\beta_{i_r}\}$ some possible cases have not been considered.  However by Lemma \ref {8.4} these will not be needed.

\subsection{}\label{8.4}

Observe that $\beta_{i_r} \in \pi_1^\mathbb Z$ if and only if there is a connected component of $\pi_1$ containing $[i_r,n]$.  Moreover
$$\text{If} \ \beta_{i_r} \in \pi_1^\mathbb Z, \ \text {then} \ \alpha_{i_r}^\vee=\beta_{i_r}^\vee, \mod \mathfrak h_{\pi_1^\mathbb Z}. \eqno {(*)}$$

Since it is always true that $\beta_{i_r} \in \pi_2^\mathbb Z$ we obtain $\beta_{i_r}^\vee \in \mathfrak h_{\pi_1^\mathbb Z}$ and hence by $(*)$ that $\alpha_{i_r}^\vee \in \mathfrak h_{\pi_1^\mathbb Z}$ in this case.

Set
$$\overline{\pi}^{\frac{1}{2}}=\left\{\begin{array}{ll}\pi^{\frac{1}{2}}\setminus \{i_r\}& :\beta_{i_r} \in \pi_1^\mathbb Z ,\\
\pi^{\frac{1}{2}}& :otherwise.\\
\end{array}\right.$$


 \begin {lemma}  One may take $\mathfrak h_\Gamma$ to be the linear span of the $\alpha^\vee: \alpha \in \overline{\pi}^{\frac{1}{2}}\cap \pi_1$.
 \end {lemma}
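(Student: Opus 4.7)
The plan is to invoke Lemma \ref{8.2.3}, which reduces the statement to showing that the linear span $V$ of $\{\alpha^\vee : \alpha \in \overline{\pi}^{\frac{1}{2}}\cap \pi_1\}$ is a direct complement to $\mathfrak h_{\pi_1^\mathbb Z}$ in $\mathfrak h_{\pi_1}$. As a preliminary observation, the simplified formula $(***)$ of \ref{8.2.2}
$$\pi^\mathbb Z_1=(\pi_1\cap (\pi\setminus \pi^{\frac{1}{2}})) \cup \{\beta_{i_r} \mid [i_r,n] \subset \pi_1\}$$
shows that every element of $\pi_1^\mathbb Z$ lies in $\mathbb N \pi_1$, so $\mathfrak h_{\pi_1^\mathbb Z} \subset \mathfrak h_{\pi_1}$; this makes the notion of complement inside $\mathfrak h_{\pi_1}$ meaningful.

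I would then split into the two cases that appear in the very definition of $\overline{\pi}^{\frac{1}{2}}$. In the easier case $\beta_{i_r} \notin \pi_1^\mathbb Z$, one has $\overline{\pi}^{\frac{1}{2}} = \pi^{\frac{1}{2}}$ and $\pi_1^\mathbb Z = \pi_1 \setminus \pi^{\frac{1}{2}}$. The two subsets $\overline{\pi}^{\frac{1}{2}} \cap \pi_1 = \pi^{\frac{1}{2}} \cap \pi_1$ and $\pi_1^\mathbb Z = \pi_1 \setminus \pi^{\frac{1}{2}}$ then partition $\pi_1$, and linear independence of the simple coroots of $\pi_1$ yields $\mathfrak h_{\pi_1} = V \oplus \mathfrak h_{\pi_1^\mathbb Z}$ at once.

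In the remaining case $\beta_{i_r} \in \pi_1^\mathbb Z$ (equivalently $[i_r,n]\subset\pi_1$), I would use the type $C$ identity $\beta_{i_r}^\vee = \alpha_{i_r}^\vee + \alpha_{i_r+1}^\vee + \cdots + \alpha_n^\vee$, which is exactly the content of \ref{8.4}$(*)$. Every simple root $\alpha_{i_r+1},\ldots,\alpha_n$ belongs to $\pi_1$ (since $[i_r,n]\subset\pi_1$) and lies outside $\pi^{\frac{1}{2}}$ (because $i_r$ is the largest element of $\pi^{\frac{1}{2}}\subset[1,n-1]$ and $\alpha_n$ is long), so each of their coroots lies in $\mathfrak h_{\pi_1^\mathbb Z}$. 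Combining with $\beta_{i_r}^\vee \in \mathfrak h_{\pi_1^\mathbb Z}$ gives $\alpha_{i_r}^\vee \in \mathfrak h_{\pi_1^\mathbb Z}$, and the reverse containment is immediate, hence
$$\mathfrak h_{\pi_1^\mathbb Z}=\text{span of }\{\alpha^\vee : \alpha \in (\pi_1 \setminus \pi^{\frac{1}{2}})\cup\{\alpha_{i_r}\}\}.$$
The complement of $(\pi_1 \setminus \pi^{\frac{1}{2}})\cup\{\alpha_{i_r}\}$ in $\pi_1$ is precisely $(\pi^{\frac{1}{2}} \cap \pi_1)\setminus\{\alpha_{i_r}\} = \overline{\pi}^{\frac{1}{2}} \cap \pi_1$, so again linear independence of the simple coroots of $\pi_1$ delivers $\mathfrak h_{\pi_1} = V \oplus \mathfrak h_{\pi_1^\mathbb Z}$.

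There is no genuine obstacle here: the proof is a bookkeeping exercise, and the only substantive input beyond Lemma \ref{8.2.3} and formula \ref{8.2.2}$(***)$ is the type $C$ expansion $\beta_{i_r}^\vee = \sum_{j=i_r}^{n}\alpha_j^\vee$ underlying \ref{8.4}$(*)$. The main subtlety to be careful about is that the $\{\alpha_j^\vee\}_{j>i_r}$ which appear in that expansion all lie inside $\pi_1\setminus\pi^{\frac{1}{2}}$, which in turn relies on the maximality of $i_r$ in $\pi^{\frac{1}{2}}$ and on $\alpha_n$ being long.
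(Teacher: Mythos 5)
Your proof is correct and follows essentially the same route as the paper: reduce via Lemma \ref{8.2.3} to exhibiting a complement to $\mathfrak h_{\pi_1^\mathbb Z}$ in $\mathfrak h_{\pi_1}$, use \ref{8.2.2}$(***)$ to describe $\pi_1^\mathbb Z$, and handle the case $\beta_{i_r}\in\pi_1^\mathbb Z$ through the congruence $\alpha_{i_r}^\vee\equiv\beta_{i_r}^\vee \bmod \mathfrak h_{\pi_1^\mathbb Z}$, which is exactly the paper's observation \ref{8.4}$(*)$ (you merely make its type $C$ coroot expansion $\beta_{i_r}^\vee=\sum_{j=i_r}^{n}\alpha_j^\vee$ explicit).
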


 \begin {proof}  By \ref {8.2.2}$(***)$.
$$\pi_1^\mathbb Z= \pi_1 \setminus (\pi^{\frac{1}{2}}\cap \pi_1) \cup (\pi_1^\mathbb Z \cap\{\beta_{i_r}\}).\eqno {(**)}$$

 Then the assertion follows from the above remarks combined with  \ref {8.2.3}.
 \end {proof}

 \subsection{}\label{8.5}

 Recall the notation of \ref {1.2}.  Let $s$ (resp. $s^\mathbb Z$) denote the number of $i_{\pi_1}$ (resp. $i_{\pi_1^\mathbb Z}$) orbits in $\pi_1$ (resp. $\pi_1^\mathbb Z$).

 \begin {lemma}  $r\ell(\mathfrak q_\mathbb Z)-r\ell(\mathfrak q)=(s^\mathbb Z -s) +|\overline{\pi}^{\frac{1}{2}}\cap \pi_1|$.
 \end {lemma}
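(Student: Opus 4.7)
The plan is to reduce both sides to explicit orbit counts and then match them via a short bookkeeping argument.

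First I would identify $r\ell(\mathfrak q)$ and $r\ell(\mathfrak q_\mathbb Z)$ with the numbers of generators of the polynomial algebras $Sy(\mathfrak q)$ and $Sy(\mathfrak q_\mathbb Z)$. By \ref{8.1} these are in bijection with the $<i_{\pi_1}i_{\pi_2}>$-orbits in $\pi$, respectively the $<i_{\pi^\mathbb Z_1}i_{\pi^\mathbb Z_2}>$-orbits in $\pi^\mathbb Z$. Since $\pi_2=\pi$ is of type $C$, and since by \ref{8.2.1} both connected components of $\pi^\mathbb Z=\pi^\mathbb Z_2$ are of type $C$ as well, the involutions $i_{\pi_2}$ and $i_{\pi^\mathbb Z_2}$ are trivial (in type $C$ one has $-1\in W$). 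Consequently the orbits in question are simply $i_{\pi_1}$-orbits (resp.\ $i_{\pi^\mathbb Z_1}$-orbits), and separating those contained in the Levi factor from the singletons outside yields
$$r\ell(\mathfrak q) = s + (|\pi|-|\pi_1|), \qquad r\ell(\mathfrak q_\mathbb Z) = s^\mathbb Z + (|\pi^\mathbb Z|-|\pi^\mathbb Z_1|).$$

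Since $|\pi^\mathbb Z|=|\pi|$ by Lemma \ref{6.1}, subtracting gives
$$r\ell(\mathfrak q_\mathbb Z) - r\ell(\mathfrak q) = (s^\mathbb Z-s) + (|\pi_1|-|\pi^\mathbb Z_1|),$$
so it remains to establish $|\pi_1|-|\pi^\mathbb Z_1|=|\overline{\pi}^{\frac{1}{2}}\cap\pi_1|$.

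For this last identity I would use the disjoint-union description from \ref{8.2.2}$(***)$, recalled in \ref{8.4}$(**)$:
$$\pi^\mathbb Z_1=\bigl(\pi_1\setminus(\pi^{\frac{1}{2}}\cap\pi_1)\bigr)\sqcup\bigl(\pi^\mathbb Z_1\cap\{\beta_{i_r}\}\bigr),$$
and proceed by cases on whether $\beta_{i_r}\in\pi^\mathbb Z_1$. If $\beta_{i_r}\notin\pi^\mathbb Z_1$, then $\overline{\pi}^{\frac{1}{2}}=\pi^{\frac{1}{2}}$ by definition, and both quantities reduce to $|\pi^{\frac{1}{2}}\cap\pi_1|$. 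If instead $\beta_{i_r}\in\pi^\mathbb Z_1$, then by the observation in \ref{8.2.2} the inclusion $[i_r,n]\subset\pi_1$ holds, forcing $i_r\in\pi^{\frac{1}{2}}\cap\pi_1$; since $\beta_{i_r}$ is not a simple root it raises $|\pi^\mathbb Z_1|$ by one without contributing to $|\pi_1|$, and simultaneously the definition of $\overline{\pi}^{\frac{1}{2}}$ removes $i_r$, producing exactly the cancelling $-1$ on the right-hand side.

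The main technical work (the orbit count for generators of $Sy(\mathfrak q)$ and $Sy(\mathfrak q_\mathbb Z)$, the triviality of $i_{\pi_2}$ and $i_{\pi^\mathbb Z_2}$, and the explicit form of $\pi^\mathbb Z_1$) has been assembled in the preceding subsections, so no real obstacle is expected. The only delicate point is the bookkeeping correction forced by the long root $\beta_{i_r}\in\pi^\mathbb Z_1$, which is precisely what the definition of $\overline{\pi}^{\frac{1}{2}}$ was engineered to absorb.
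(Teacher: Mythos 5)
Your proof is correct and follows essentially the same route as the paper: both identify the generators with $i_{\pi_1}$- (resp.\ $i_{\pi_1^\mathbb Z}$-) orbits extended by singletons outside $\pi_1$ (resp.\ $\pi_1^\mathbb Z$), use $|\pi^\mathbb Z|=|\pi|$ to convert the singleton count into $|\pi_1|-|\pi_1^\mathbb Z|$, and then read off $|\pi_1|-|\pi_1^\mathbb Z|=|\overline{\pi}^{\frac{1}{2}}\cap\pi_1|$ from \ref{8.2.2}$(***)$ together with the definition of $\overline{\pi}^{\frac{1}{2}}$. The only difference is that you spell out the two cases according to whether $\beta_{i_r}\in\pi_1^\mathbb Z$, which the paper leaves implicit.
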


 \begin {proof}  Recall that $i_{\pi_2},i_{\pi_2}^\mathbb Z$ are both trivial. Thus by the last paragraph preceding Theorem \ref {8.1} the left hand side above is just the number of  $i_{\pi_1^\mathbb Z}$ orbits in $\pi^\mathbb Z$ minus the number of $i_{\pi_1}$ orbits in $\pi$.  Recall further that $i_{\pi_1^\mathbb Z}$ (resp. $i_{\pi_1}$) is extended by the identity on $\pi^\mathbb Z\setminus \pi_1^\mathbb Z$ (resp.  $\pi\setminus \pi_1$) where a priori it is not defined.  Thus the first term in the right hand side of must be supplemented by the term $|\pi^\mathbb Z\setminus \pi_1^\mathbb Z|-|\pi\setminus \pi_1|$.  Since $|\pi^\mathbb Z|=|\pi|$ by definition of a regular integral pair, this expression equals $|\pi_1| - |\pi_1^\mathbb Z|$ which by \ref {8.2}$(***)$ is just $|\overline{\pi}^{\frac{1}{2}}\cap \pi_1|$, as required.
 \end {proof}

 \subsection{}\label{8.6}

 We now compute the first term $(s^\mathbb Z -s)$ occurring in the right hand side above.  Here we compute the contribution $(s_u^\mathbb Z -s_u)$ from each connected component $\pi_{1,u}:u \in U$.

Suppose that $\pi_{1,u}\cap \pi^{\frac{1}{2}} = \phi$, for some $u \in U$.  In this case one checks that $\pi_{1,u}^\mathbb Z= \pi_{1,u}$.  Thus obviously $(s_u^\mathbb Z -s_u)=0$.
Again the $<i_{\pi_1^\mathbb Z}i_{\pi_2^\mathbb Z}>$ and $<i_{\pi_1}i_{\pi_2}>$ orbits in $\pi_{1,u}$ coincide.

 Thus the number of generators of $Sy(\mathfrak q)$ and of $Sy(\mathfrak q_\mathbb Z)$ originating from $\pi_{1,u}$ coincide.  We remark that the weights do not unless $\alpha_1 \in \pi_{1,u}$.  However this will not be used.

Suppose that $\pi_{1,u}\cap \pi^{\frac{1}{2}} \neq \phi$.  Then by our assumption in \ref {8.2}$(*)$ one has $$|\pi_{1,u}\cap \pi^{\frac{1}{2}}|=1.\eqno {(*)}$$

It follows from the description of $\pi^\mathbb Z$ given in \ref {8.2} that $|\pi_{1,u}|-|\pi^\mathbb Z_{1,u}|\leq 1$.  Moreover one checks that $\pi^\mathbb Z_{1,u}$ admits at most two connected components.

In the above there are two cases to consider.  This will be done in the next two subsections.  Set $\overline{U}:=\{u \in U| \ |\pi_{1,u}\cap \pi^{\frac{1}{2}}|=1\}$.

\subsubsection{}\label{8.6.1}

 Suppose $\alpha_n \in \pi_{1,u}$. Then $\pi_{1,u}$ is of type $C$. The hypothesis of \ref {8.5} implies that $\alpha_{i_r} \in \pi_{1,u}$.  Thus $\beta_{i_r} \in \pi^\mathbb Z_{1,u}$.

 The fact that the ``extra" root $\beta_{i_r}$ belongs to $\pi^\mathbb Z_{1,u}$ has the consequence that $|\pi_{1,u}^\mathbb Z|= |\pi_{1,u}|$.  In this case the $i_{\pi_1}$ (resp. $i_{\pi^\mathbb Z_1}$) orbits on $\pi_1$ (resp. $\pi_1^\mathbb Z$) are trivial. Thus again $(s_u^\mathbb Z -s_u)=0$.

 Thus the number of generators of $Sy(\mathfrak q)$ and of $Sy(\mathfrak q_\mathbb Z)$ originating from $\pi_{1,u}$ coincide. We remark that their weights do not but again this will not be used.

\subsubsection{}\label{8.6.2}

 Suppose $\alpha_n \notin \pi_{1,u}$. Then $\pi_{1,u}$ is of type $A$ and $\pi_{1,u}^\mathbb Z$ has at most two connected components and both are of type $A$, whilst $|\pi_{1,u}|-|\pi^\mathbb Z_{1,u}|= 1$.

 By our assumption \ref {8.5}$(*)$,  $\pi^{\frac{1}{2}}\cap \pi_{1,u}$ is a singleton $\alpha_{t}$.  Moreover
 $$\pi_{1,u}^\mathbb Z= \pi_{1,u}\setminus \{\alpha_t\}. \eqno{(*)}$$

  Set $k =|\pi_{1,u}|$. Then $\pi_{1,u}$ is of type $A_k$. Again by $(*)$ it follows that $\pi_{1,u}^\mathbb Z$ is of type $A_m\times A_n$ with $m+n=k-1$.

  \begin {lemma}  For all $u \in \overline{U}$ with $\pi_{1,u}$ of type $A$ one has
  $$s_u^\mathbb Z-s_u=\left\{\begin{array}{ll}-1& :m,n \in 2\mathbb N ,\\
0& :otherwise.\\
\end{array}\right.$$

  \end {lemma}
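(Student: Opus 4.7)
The plan is direct orbit-counting, using the explicit form of the Dynkin diagram involution on type $A$.

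First I would recall that for a simple system $\sigma$ of type $A_k$, the involution $i_\sigma=-w_\sigma$ reverses the linear ordering of the simple roots, so its orbits on $\sigma$ consist of $\lfloor k/2\rfloor$ symmetric pairs of endpoints together with the central fixed point when $k$ is odd; hence $|\sigma/\langle i_\sigma\rangle|=\lceil k/2\rceil$. Since $-w_0$ on a reducible root system is the product of the longest Weyl elements of its irreducible components, it acts component-wise and preserves each component setwise. Applied to $\pi_{1,u}$ of type $A_k$ this yields $s_u=\lceil k/2\rceil$, and applied to $\pi_{1,u}^\mathbb Z=\pi_{1,u}\setminus\{\alpha_t\}$ from \ref{8.6.2}$(*)$, which is of type $A_m\times A_n$ with $m+n=k-1$, it yields $s_u^\mathbb Z=\lceil m/2\rceil+\lceil n/2\rceil$.

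It then suffices to run through the parity subcases. If $m$ and $n$ are both even, then $k=m+n+1$ is odd, so $s_u=(k+1)/2=m/2+n/2+1$ while $s_u^\mathbb Z=m/2+n/2$, yielding $s_u^\mathbb Z-s_u=-1$. If $m$ and $n$ are both odd, then $k$ is again odd and $s_u=(k+1)/2=(m+1)/2+(n+1)/2=s_u^\mathbb Z$. If $m,n$ have opposite parities, then $k$ is even and both $s_u=k/2$ and $s_u^\mathbb Z=\lceil m/2\rceil+\lceil n/2\rceil$ reduce to $(m+n+1)/2$. Collecting the three cases gives the claimed formula.

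The only structural point worth a sentence is that $i_{\pi_1^\mathbb Z}$ does not swap the two components of $\pi_{1,u}^\mathbb Z$; this is ensured by the component-wise action of $-w_0$ and is reinforced by the observation, recorded in \ref{8.2.2}, that the two factors lie respectively in the distinct connected subsets $\pi^{\mathbb Z,\ell}$ and $\pi^{\mathbb Z,r}$ of $\pi^\mathbb Z$. Beyond this the argument is pure bookkeeping and presents no real obstacle.
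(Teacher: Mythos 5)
Your proof is correct and follows essentially the same route as the paper: both count $i$-orbits on a type $A_t$ system as $\lceil t/2\rceil=[\tfrac{t+1}{2}]$, apply this to $\pi_{1,u}$ of type $A_k$ and to $\pi_{1,u}^{\mathbb Z}$ of type $A_m\times A_n$ with $m+n=k-1$, and compare by parity. The only difference is cosmetic — you spell out the three parity subcases and the (correct, though routine) remark that the involution acts component-wise — which the paper compresses into one sentence.
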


  \begin {proof}

   Recall that if $\pi'$ is of type $A_t$, then $i_{\pi'}$ has $\frac {[t+1}{2}]$ orbits on $\pi'$. Thus $\pi_{1,u}$ admits $[\frac{k+1}{2}]$ orbits for the action of $i_{\pi_1}$ and $\pi_{1,u}^\mathbb Z$ admits $[\frac{m+1}{2}]+[\frac{n+1}{2}]$ orbits for the action of $i_{\pi_1^\mathbb Z}$. These two expressions are equal if $m$ and $n$ are not both even.  Otherwise the former exceeds the latter by $1$. Hence the assertion of the lemma.

 \end {proof}

 \subsection{}\label{8.7}

 We conclude from \ref {8.5} and {8.6} above that
 $$r\ell(\mathfrak q_\mathbb Z)-r\ell(\mathfrak q)\leq \dim \mathfrak h_\Gamma,\eqno {(*)}$$
 with equality if and only if the case $m,n \in 2\mathbb N$ is excluded in Lemma \ref {8.6.2}, for all $u \in \overline{U}$ with $\pi_{1,u}$ of type $A$.

 On the other hand the $\rk \Gamma \leq r\ell(\mathfrak q_\mathbb Z)-r\ell(\mathfrak q)$ so \ref {7.6}(ii) enforces $m,n \in 2\mathbb N$ to be excluded in Lemma \ref {8.6.2} and that the $\rk \Gamma=r\ell(\mathfrak q_\mathbb Z)-r\ell(\mathfrak q)$.  (We shall not use this latter fact.)

   Again by \ref {7.6} (i) the overall number of generators must strictly rise, that is we require
  $$\dim \mathfrak h_\Gamma =r\ell(\mathfrak q_\mathbb Z) -r\ell(\mathfrak q) >0. \eqno{(**)}$$

\subsection{}\label{8.8}

Recall that $\overline{U}:=\{u \in U| \ |\pi_{1,u}\cap \pi^{\frac{1}{2}}|=1\}$.

Given $u \in U$ we may write $\pi_{1,u}$ as $[\ell_u,k_u]$, for some $1\leq \ell_u\leq k_u \leq n$. If $u \in \overline{U}$ then in addition $\ell_u\leq i_u\leq k_u$. Moreover by the penultimate observation in \ref {8.7} we may assume that $i_u-\ell_u, k_u-i_u$ are not both even.  This forces $k_u-\ell_u \geq 1$.

If $u \in U$ is the last element of $U$, we set $u^+:=n+1$.  Otherwise we let $u^+$ denote its subsequent element of $U$.   Then $k_u < \ell_{u^+}$ unless $u$ is the largest element of $U$ and $\alpha_n \in \pi_{1,u}$.

Recall \ref {8.1} and let $\mathscr {IP}_{\mathbb Z,u}$ denote the image of $\mathscr P_\mathbb Z$ restricted to the set of
 $<i_{\pi_2^\mathbb Z}i_{\pi_1^\mathbb Z}>$ orbits in $\pi_{1,u}=\pi_{1,u}^\mathbb Z$.  These orbits have at most two elements. Both lie in the same subset (either $I^\ell$ or $I^r$) of $I$ and given $p \in \mathscr I \mathscr P_{\mathbb Z,u}$ we denote this subset by $I(p)$ and by $|p|$ denote the number of elements in that orbit.  If $k$ is the smaller element in the orbit we denote the corresponding element of $\mathscr I \mathscr P_{\mathbb Z,u}$ by $p_u^k$.

Suppose $u \in \overline{U}$.

Let $\mathscr I \mathscr P_{\mathbb Z,u}$ denote the set of generators $Sy(\mathfrak q_\mathbb Z)$ defined by the $<i_{\pi_2^\mathbb Z}i_{\pi_1^\mathbb Z}>$ orbits lying in $\pi_{1,u}^\mathbb Z$, together with the singleton orbits $\{s[i_u,i_{u+1}],\{\alpha_k\}_{k=k_u+1}^{\ell_{u^+}-1}\}$.

The generator defined by the singleton orbit $\{s[i_u,i_{u+1}]\}$ will be denoted simply as $p_u^{i_u}$. The remaining elements of $\mathscr I\mathscr P_{u,\mathbb Z}$ are in bijection with the $i_{\pi_1^\mathbb Z}$ orbits in $\{\ell_u, \ell_u+1, \ldots, \ell_{u+1}-1\}\setminus\{i_u\}$.  Both lie in the same subset  of $I$ and given $p \in \mathscr I \mathscr P_{\mathbb Z,u}$ we denote this subset by $I(p)$ and by $|p|$ denote the number of elements in that orbit which is at most two.  If $k$ is the smaller element in the orbit we denote the corresponding element of $\mathscr I \mathscr P_{\mathbb Z,u}$ by $p_u^k$.

Given $p \in \mathscr I \mathscr P_{\mathbb Z,u}: u \in U$, let $\varpi_p$ denote its weight.

In order to apply \ref {7.6} we must compute the matrix $\alpha_{i_u}^\vee(\varpi_p): u \in \overline{U}, p \in \mathscr {IP}_{\mathbb Z,u'}, u' \in U$.

Recall \ref {8.3}$(*)$.  We may write $\varpi_p=\varpi^+_p - \varpi^-_p$, where the first (resp. second) term comes from the fundamental weights corresponding to $\pi^\mathbb Z_2$ (resp. $\pi^\mathbb Z_1$) as given by decomposition in the right hand side \ref {8.3}$(*)$.

\begin {lemma} Take $u \in \overline{U}, u' \in U$ distinct.  Then for all $p \in \mathscr {IP}_{\mathbb Z,u'}$ one has

\

(i)  \quad  $\alpha^\vee_{i_u}(\varpi^+_p)=\left\{\begin{array}{ll}-2|p|& : \text{If} \ i_u \  \text{is a left neighbour of} \ I(p),\\
0& :\text{otherwise}.\\
\end{array}\right.$

\

(ii) \quad  $\alpha^\vee_{i_u}(\varpi^-_p)=0$.
\end {lemma}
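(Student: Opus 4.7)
The plan is to expand $\varpi_p = \varpi_p^+ - \varpi_p^-$ as in the last paragraph of \ref{8.8}, and then to apply Lemma \ref{8.3.1} or Lemma \ref{8.3.2} termwise to the orbit $\Gamma^\mathbb Z$ defining $p$ in order to prove (i) and (ii) respectively. The essential geometric input will be that, since $u,u' \in U$ index distinct connected components of $\pi_1$, each simple root $\alpha_{i_u \pm 1}$ adjacent to $\alpha_{i_u}$ lies either in $\pi_{1,u}$ itself or outside $\pi_1$; two adjacent simple roots both in $\pi_1$ would by definition share a connected component. In particular $\alpha_{i_u}$ is not a simple-root neighbour of any component $\pi_{1,u'}^{\mathbb Z,v}$ of $\pi_1^\mathbb Z$.

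I would dispose of (ii) first as the shorter half. The orbits enumerated in $\mathscr {IP}_{\mathbb Z,u'}$ split into those lying in some $\pi_{1,u'}^{\mathbb Z,v}$ and the ``gap'' singleton orbits $\{s[i_{u'},i_{u'+1}]\}$ and $\{\alpha_k\}$ with $k_{u'} < k < \ell_{(u')^+}$, whose unique elements lie outside $\pi_1^\mathbb Z$. For the latter one has $\varpi_\alpha^{\pi_1^\mathbb Z} = 0$ by convention, so $\varpi_p^- = 0$ trivially. For the former, the geometric input combined with Lemma \ref{8.3.2}(i) yields $\alpha_{i_u}^\vee(\varpi_{s(\alpha)}^{\pi_1^\mathbb Z}) = 0$ and hence $\alpha_{i_u}^\vee(\varpi_p^-) = 0$, proving (ii).

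For (i) I would apply Lemma \ref{8.3.1} to each $\alpha = s[i,j]$ in the orbit, treating $\beta_{i_r}$ as $s[i_r,n]$ per the convention of \ref{8.3.1}. The main technical step, which I expect to be the principal obstacle, is to rule out the exceptional clause $k = i$ of that lemma, which would contribute $+2$ rather than $0$ or $-2$. I would dispatch this in the four flavours of orbit element separately: a simple root $\alpha_j \in \pi_{1,u'} \setminus \pi^{\frac{1}{2}}$ has $j \neq i_u$ since $i_u \in \pi^{\frac{1}{2}}$; the root $\beta_{i_r}$, when it lies in $\pi_{1,u'}^\mathbb Z$, forces $\alpha_{i_r} \in \pi_{1,u'}$, so $i_u = i_r$ would place $\alpha_{i_u}$ in both $\pi_{1,u}$ and $\pi_{1,u'}$, contradicting $u \neq u'$; the singleton $\{s[i_{u'},i_{u'+1}]\}$ has starting index $i_{u'} \neq i_u$; and the gap singletons $\{\alpha_k\}$ satisfy $k \notin \pi_1 \ni \alpha_{i_u}$.

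With the $k=i$ clause ruled out, each orbit element contributes $-2$ or $0$ to $\alpha_{i_u}^\vee(\varpi_p^+)$ depending only on whether $i_u$ is a left neighbour of $I(i)$. Since every element of a single $i_{\pi_1^\mathbb Z}$-orbit lies in one common connected component $\pi^{\mathbb Z,v}$ of $\pi^\mathbb Z$, and the starting indices of elements of $\pi^{\mathbb Z,v}$ all lie in $I^v$, the subset $I(i)$ is independent of the choice of orbit element and equals $I(p)$. Summing the $|p|$ identical contributions yields $-2|p|$ or $0$, as asserted in (i). The book-keeping around $\beta_{i_r}$ when it happens to belong to $\pi_1^\mathbb Z$, which intertwines with the exclusion $i_u = i_r$ and with the conventions of \ref{8.4}, is the only place in the argument where genuine care is needed.
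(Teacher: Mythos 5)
Your proposal is correct and takes essentially the same route as the paper: expand $\varpi_p^{\pm}$ via \ref{8.3}$(*)$, apply Lemma \ref{8.3.1} for (i) and Lemma \ref{8.3.2}(i) for (ii), the key point for (ii) being that $\alpha_{i_u}$ cannot be a neighbour of any connected component of $\pi_{1,u'}^{\mathbb Z}$ because $u\neq u'$ and adjacent simple roots of $\pi_1$ lie in a common component. Your explicit case-by-case exclusion of the $k=i$ clause of Lemma \ref{8.3.1} merely spells out what the paper leaves implicit.
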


\begin {proof} By \ref {8.3}$(*)$ one has $\varpi_p^+=\sum_{\alpha \in \mathscr P^{-1}(p)}2\varpi_\alpha^{\pi_2^\mathbb Z}$.
Then (i) follows from Lemma \ref {8.3.1} and the definition of $I(p)$.

 By \ref {8.3}$(*)$ one has $\varpi_p^-=\sum_{\alpha \in \mathscr P^{-1}(p)\cap \pi_1^\mathbb Z}2\varpi_\alpha^{\pi_1^\mathbb Z}$.  This equals zero if $\mathscr P^{-1}(p)\cap \pi_1^\mathbb Z=\phi$.  Otherwise $\mathscr P^{-1}(p)\subset \pi_1^\mathbb Z$ and for any $\alpha \in \mathscr P^{-1}(p)$ this expression equals $2\varpi_{s(\alpha)}$ by the definition (\ref {8.3.2}) of the latter.

 By Lemma \ref {8.3.2}(i) it suffices to show that $i_u$ is not a neighbour to either one of the connected components of $\pi_{1,u}^\mathbb Z$.  If it were, then we would have $i_u \in \pi_{1,u'}$ contradicting that $u,u'$ are distinct.
\end {proof}

\subsection{}\label{8.9}

We complement the above result by the following

\begin {lemma}  For all $u \in U, p_u^k \in \mathscr I \mathscr P_{\mathbb Z,u}: k \in \{\ell_u, \ell_u+1, \ldots, \ell_{u^+}-1\}$ one has

\

(i)  \quad  $\alpha^\vee_{i_u}(\varpi^+_{p_u^k})=\left\{\begin{array}{ll}0& : k <i_u ,\\
2&:k=i_u,\\
-2|p_u^k|& :k>i_u.\\
\end{array}\right.$

\

(ii)  \quad  If $\alpha_n \in \pi_{1,u}$, then
$\alpha^\vee_{i_u}(\varpi^-_{p_u^k})=\left\{\begin{array}{ll}0& : k <i_u ,\\
2&:k=i_u,\\
-2|p_u^k|& :k>i_u.\\
\end{array}\right.$

\

(iii) \quad  If $\alpha_n \notin \pi_{1,u}$, then
$\alpha^\vee_{i_u}(\varpi^-_{p_u^k})=\left\{\begin{array}{ll}0& : k = i_u, \text{or if} \  k>k_u,\\-1& : k \neq i_u, \alpha_k = i_{\pi^\mathbb Z_1}(\alpha_k), k\leq k_u ,\\

-2& : k \neq i_u, \alpha_k \neq i_{\pi^\mathbb Z_1}(\alpha_k), k\leq k_u .\\
\end{array}\right.$
\end {lemma}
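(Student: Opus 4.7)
The proof is a direct calculation based on the decomposition $\varpi_{p_u^k} = \varpi_{p_u^k}^+ - \varpi_{p_u^k}^-$ introduced just before the statement, combined with the explicit formulas of Lemmas \ref{8.3.1} and \ref{8.3.2}. My plan is to compute the $+$-part for (i) via Lemma \ref{8.3.1}, to deduce (ii) from (i) by observing that the hypothesis makes $\alpha_{i_u}^\vee$ vanish on every semi-invariant weight of $S(\mathfrak q_\mathbb Z)$, and to compute the $-$-part for (iii) via Lemma \ref{8.3.2}(ii).

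For part (i), I would write $\varpi_{p_u^k}^+ = \sum_{\alpha \in \mathscr P^{-1}(p_u^k)} 2\varpi_\alpha^{\pi_2^\mathbb Z}$ and expand each $\alpha \in \pi_2^\mathbb Z$ as $s[i,j]$ with $i \leq j$, treating $\beta_{i_r}$, when present, as the formal sum $s[i_r, i_{r+1}]$ with $I(\beta_{i_r}) = I(i_r)$ as was already done in \ref{8.3.1}. Lemma \ref{8.3.1} then evaluates each summand at $\alpha_{i_u}^\vee$ as $-2$, $2$, or $0$ according as $i_u$ is a left neighbour of $I(i)$, equal to $i$, or neither. The combinatorial input is that for $p_u^k \in \mathscr I\mathscr P_{\mathbb Z, u}$ the relevant minimal support index $i$ of each orbit representative lies in the same $\pi^{\frac 12}$-block as $k$: the block-alternating structure of $I^\ell, I^r$ in \ref{8.2.1}$(*)$, combined with the assumption $|\pi^{\frac 12}\cap \pi_{1,u}| = 1$ of \ref{8.2}$(*)$, forces $I(i) = I(i_u)$ when $k < i_u$, $i = i_u$ when $k = i_u$ (where $p_u^{i_u}$ corresponds to the singleton $\{s[i_u, i_{u+1}]\}$, or to $\{\beta_{i_u}\}$ in the case $i_u = i_r$), and $I(i) = I(i_u+1)$ when $k > i_u$. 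Summing over the $|p_u^k|$ orbit elements yields the three values claimed in (i).

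For part (ii), the hypothesis $\alpha_n \in \pi_{1,u}$ gives $i_u = i_r$ and $\beta_{i_u} \in \pi_1^\mathbb Z$, and observation \ref{8.4}$(*)$ then places $\alpha_{i_u}^\vee$ inside $\mathfrak h_{\pi_1^\mathbb Z}$. By Lemma \ref{8.2.3}, $\mathfrak h_{\pi_1^\mathbb Z}$ is the Cartan subalgebra of the canonical truncation of $\mathfrak q_\mathbb Z$, so every weight of a semi-invariant of $S(\mathfrak q_\mathbb Z)$ vanishes on $\alpha_{i_u}^\vee$. Hence $\alpha_{i_u}^\vee(\varpi_{p_u^k}) = 0$, equivalently $\alpha_{i_u}^\vee(\varpi_{p_u^k}^-) = \alpha_{i_u}^\vee(\varpi_{p_u^k}^+)$, and (ii) will be inherited from (i) without further calculation.

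For part (iii), the hypothesis $\alpha_n \notin \pi_{1,u}$ makes $\pi_{1,u}^\mathbb Z$ split into two type-$A$ components flanking the deleted root $\alpha_{i_u}$, with $\alpha_{i_u}$ acting as a neighbour of each. When $k = i_u$ or $k > k_u$ the orbit $\mathscr P^{-1}(p_u^k)$ is disjoint from $\pi_1^\mathbb Z$, so $\varpi_{p_u^k}^- = 0$ and the value is $0$. Otherwise $\varpi_{p_u^k}^- = 2\varpi_{s(\alpha_k)}^{\pi_1^\mathbb Z}$, and Lemma \ref{8.3.2}(ii) yields $-1$ or $-2$ according as the $i_{\pi_1^\mathbb Z}$-orbit of $\alpha_k$ is a singleton or a pair. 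The hardest part will be the purely combinatorial bookkeeping underlying (i): matching orbit representatives in the various components of $\pi_1^\mathbb Z$ against the globally defined partition $I = I^\ell \sqcup I^r$ and verifying that $i_u$ sits correctly as a left neighbour on the appropriate side. Sections \ref{8.2.1} and \ref{8.2.2} supply the explicit descriptions needed to make this rigorous, but the case analysis is routine and adds no genuine difficulty once the framework of Lemmas \ref{8.3.1}, \ref{8.3.2} is in place.
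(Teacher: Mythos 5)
Your proposal is correct, and for parts (i) and (iii) it is essentially the paper's own argument: (i) comes from Lemma \ref{8.3.1} applied to the orbit elements, the only point being that for $k>i_u$ the index $i_u$ is a left neighbour of $I(p_u^k)$ while for $k<i_u$ the orbit stays inside the block $I(i_u)$ so each term vanishes, and (iii) comes from Lemma \ref{8.3.2}(ii) together with the observation that $\varpi^-_{p_u^k}=0$ when $k=i_u$ or $k>k_u$ because the corresponding orbit does not meet $\pi_{1,u}^{\mathbb Z}$. For (ii), however, you take a genuinely different route. The paper obtains (ii) by the same direct cascade computation as (i), evaluating $\alpha_{i_u}^\vee$ on the partial cascade sums giving $2\varpi_\alpha^{\pi_1^\mathbb Z}$ for the type $C$ components of $\pi_{1,u}^\mathbb Z$ (the computation carried out in the proof of Lemma \ref{8.3.2} in the case $\alpha_n\in\pi_{1,u}$); that is why the right-hand sides of (i) and (ii) coincide and why Corollary \ref{8.10} then drops out. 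You instead note that $\alpha_n\in\pi_{1,u}$ with $u\in\overline U$ forces $i_u=i_r$ and $[i_r,n]\subset\pi_1$, hence $\beta_{i_r}\in\pi_1^\mathbb Z$ and, by \ref{8.4}$(*)$, $\alpha_{i_u}^\vee\in\mathfrak h_{\pi_1^\mathbb Z}$; since the weight of every semi-invariant of $S(\mathfrak q_\mathbb Z)$ vanishes on the coroots of $\pi_\cap^{\mathbb Z}$ (the remark following \ref{8.3}$(*)$), you conclude $\alpha_{i_u}^\vee(\varpi_{p_u^k})=0$ and read (ii) off from (i). This is legitimate and not circular, since your inputs are independent of \ref{8.9} and \ref{8.10}, and it spares you the type $C$ cascade computation for $\pi_1^\mathbb Z$; the price is that (ii) becomes a formal consequence of (i) rather than an independent verification, and Corollary \ref{8.10}, which the paper deduces from (i) and (ii), is in your scheme the statement proved first. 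One small caveat, which applies equally to the paper's proof: for the singleton generators with $k>k_u$ your justification that $i_u$ is a left neighbour of $I(p_u^k)$ rests on $|\pi^{\frac{1}{2}}\cap\pi_{1,u}|=1$, which only controls the range $k\le k_u$; that no element of $\pi^{\frac{1}{2}}\setminus\pi_1$ intervenes in $(k_u,\ell_{u^+})$ is left implicit in both treatments, so this is not a gap of yours relative to the paper.
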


\begin {proof} (i) and (ii) follow from Lemma \ref {8.3.1} noting that if $i_u < k$, then it is necessarily a left neighbour to $I(p_u^k)$.  (iii) follows from Lemma \ref {8.3.2}(ii) since $i_u$ is a neighbour to the two connected components of $\pi_{1,u}^\mathbb Z$ (one of which is empty if $i_u$ is on the boundary of $\pi_{1,n}$).  In this note that $\varpi_{p_u^k}^-=0$ if $k=i_u$ or if $k >k_u$, because the corresponding orbit is not in $\pi_{1,u}^\mathbb Z$.
\end {proof}

\textbf{Remark}.  When $k=i_u$ one has $|p_u^k|=1$.  This also holds under the hypothesis of (ii) but we have kept $|p_u^k|$ for comparison with (i).

\subsection{}\label{8.10}

\begin {cor}    Suppose $\alpha_n \in \pi_{1,u}$. Then $\alpha_{i_u}^\vee(\varpi_p)=0$, for all $p \in \mathscr {IP}_{\mathbb Z,u}$.
\end {cor}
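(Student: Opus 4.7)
The plan is to read off the corollary directly from Lemma \ref{8.9} by observing that parts (i) and (ii), under the hypothesis $\alpha_n \in \pi_{1,u}$, produce identical three-case tables. Since every $p \in \mathscr{IP}_{\mathbb Z,u}$ is of the form $p_u^k$ for $k$ ranging over $\{\ell_u,\ldots,\ell_{u^+}-1\}$, and since the weight decomposes as $\varpi_{p_u^k}=\varpi_{p_u^k}^+-\varpi_{p_u^k}^-$, it suffices to verify
\[
\alpha_{i_u}^\vee(\varpi_{p_u^k}^+) \;=\; \alpha_{i_u}^\vee(\varpi_{p_u^k}^-)
\]
for each admissible $k$.

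First I would invoke Lemma \ref{8.9}(i), which (unconditionally) evaluates the left side as $0$, $2$, or $-2|p_u^k|$ according as $k<i_u$, $k=i_u$, or $k>i_u$. Then I would invoke Lemma \ref{8.9}(ii), which, precisely under the standing hypothesis $\alpha_n\in\pi_{1,u}$ of the corollary, evaluates the right side by exactly the same three-case formula. Subtracting, the value of $\alpha_{i_u}^\vee$ on $\varpi_{p_u^k}$ is zero in every one of the three regimes, which is the desired conclusion.

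The only point requiring a brief sanity check is that the range of $k$ indexing $\mathscr{IP}_{\mathbb Z,u}$ is indeed fully covered by Lemma \ref{8.9}, including the singleton-orbit generator corresponding to $k=i_u$ and the singleton generators $\{\alpha_k\}_{k>k_u}$ lying outside $\pi_{1,u}^{\mathbb Z}$. Since $\alpha_n\in\pi_{1,u}$ forces $u$ to be the largest element of $U$ and $k_u=n$, the index range collapses to $\{\ell_u,\ldots,n\}$, on which the two formulas of Lemma \ref{8.9}(i)-(ii) apply uniformly. There is no substantive obstacle: the corollary is essentially a bookkeeping observation once the matching tables of Lemma \ref{8.9} have been established, and this is precisely the payoff for having separated the $\alpha_n\in\pi_{1,u}$ and $\alpha_n\notin\pi_{1,u}$ cases in that lemma.
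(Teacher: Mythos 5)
Your proposal is correct and is exactly the intended argument: the paper states the corollary without a separate proof precisely because, under the hypothesis $\alpha_n\in\pi_{1,u}$, parts (i) and (ii) of Lemma \ref{8.9} give identical case tables, so $\alpha_{i_u}^\vee(\varpi_{p_u^k})=\alpha_{i_u}^\vee(\varpi_{p_u^k}^+)-\alpha_{i_u}^\vee(\varpi_{p_u^k}^-)=0$ in every regime (just as Corollary \ref{8.11} is obtained by comparing (i) and (iii)). Your additional bookkeeping remark that $\alpha_n\in\pi_{1,u}$ forces $k_u=n$ and $u$ maximal, so the index range is fully covered, is accurate and harmless.
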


\textbf{Remark.} This is not too surprising since as noted in \ref {8.6} under the hypothesis, the components $\pi_{1,n}$ and of $\pi_{1,n}^\mathbb Z$ provide the same number of generators.

\subsection{}\label{8.11}  Comparison of (i) and (iii) of Lemma \ref {8.9} gives

\begin {cor}  Suppose $\alpha_n \notin \pi_{1,u}$.  Then for all $ k \in \{\ell_u, \ell_u+1, \ldots, \ell_{u^+}-1\}$ one has
$\alpha^\vee_{i_u}(\varpi_{p_u^k})=\left\{\begin{array}{ll}1& : k < i_u, \alpha_k = i_{\pi^\mathbb Z_1}(\alpha_k) ,\\

2& : k < i_u, \alpha_k \neq i_{\pi^\mathbb Z_1}(\alpha_k),\\ 2&:k=i_u,\\-1&:k_u \geq k>i_u,\alpha_k = i_{\pi^\mathbb Z_1}(\alpha_k),\\ -2&:k_u \geq k>i_u,\alpha_k \neq i_{\pi^\mathbb Z_1}(\alpha_k),\\-2&:k>k_u.\\

\end{array}\right.$
\end {cor}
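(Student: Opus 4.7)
The plan is to read off the corollary by pure subtraction, using the decomposition $\varpi_{p_u^k} = \varpi^+_{p_u^k} - \varpi^-_{p_u^k}$ recorded in \ref{8.8} together with Lemma \ref{8.9}(i) and Lemma \ref{8.9}(iii), which is legitimate under the standing hypothesis $\alpha_n \notin \pi_{1,u}$. So the table would be obtained line by line as the difference $\alpha^\vee_{i_u}(\varpi^+_{p_u^k}) - \alpha^\vee_{i_u}(\varpi^-_{p_u^k})$ of the two tabulations in Lemma \ref{8.9}.

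First I would nail down the only non-trivial piece of bookkeeping, namely the orbit size $|p_u^k|$ appearing as a factor in Lemma \ref{8.9}(i). From the enumeration of $\mathscr{I}\mathscr{P}_{\mathbb Z,u}$ in \ref{8.8}, one sees that $p_u^{i_u}$ corresponds to the singleton orbit $\{s[i_u,i_{u+1}]\}$ and that for $k > k_u$ the orbit is again a singleton $\{\alpha_k\}$, so $|p_u^k| = 1$ in both these ranges; for $i_u < k \leq k_u$ the orbit $p_u^k$ lies entirely in $\pi_{1,u}^\mathbb Z$, whence $|p_u^k|$ equals $1$ or $2$ according to whether $\alpha_k$ is fixed by $i_{\pi_1^\mathbb Z}$ or not.

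With these sizes pinned down the verification is one subtraction per case: for $k < i_u$ fixed, $0 - (-1) = 1$; for $k < i_u$ non-fixed, $0 - (-2) = 2$; for $k = i_u$, $2 - 0 = 2$; for $i_u < k \leq k_u$ fixed, $-2 - (-1) = -1$; for $i_u < k \leq k_u$ non-fixed, $-4 - (-2) = -2$, using $|p_u^k| = 2$; and for $k > k_u$, $-2 - 0 = -2$. Each value matches the corresponding line of the table.

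There is really no obstacle here: all the substantive work has already been done in Lemma \ref{8.9}. The only point where a moment's attention is needed is the non-fixed case with $i_u < k \leq k_u$, where the factor $|p_u^k|=2$ in Lemma \ref{8.9}(i) doubles the $\varpi^+$ contribution and this must be correctly balanced against the $\varpi^-$ contribution of $-2$ to produce the final value $-2$ and not $-3$.
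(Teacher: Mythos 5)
Your proposal is correct and is exactly the paper's argument: Corollary \ref{8.11} is obtained by comparing (subtracting) the tabulations of Lemma \ref{8.9}(i) and (iii) via the decomposition $\varpi_{p_u^k}=\varpi^+_{p_u^k}-\varpi^-_{p_u^k}$, and your case-by-case arithmetic, including the bookkeeping of $|p_u^k|$ in the non-fixed case $i_u<k\leq k_u$, matches the table.
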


\subsection{}\label{8.12}  Recall that we are assuming that $\pi^{\frac{1}{2}}$ is defined by an adapted pair. We now apply the considerations of \ref {7.6} to prove integrality of that pair.

Take $u \in \overline{U}$ and assume for the moment that $\alpha_n \notin \pi_{1,u}$.

  Recall \ref {8.7}, which excludes having both $m,n \in 2\mathbb N$, implies that either $\alpha_k = i_{\pi^\mathbb Z_1}(\alpha_k)$ for some $k \in I$ satisfying $\ell_u\leq k <i_u$ (corresponding to $m \notin 2\mathbb Z$) or $\alpha_k = i_{\pi^\mathbb Z_1}(\alpha_k)$ for some $k \in I$ satisfying $i_u\leq k <i_u \leq k_u$ (corresponding to $n \notin 2\mathbb Z$) in Corollary \ref {8.11}.

 Again the set $\{k_u+1,\ldots,\ell_{u^+}-1\}$ is not empty.  Thus by Corollary \ref {8.11} we conclude that $\alpha^\vee_{i_u}(\varpi_p):p \in \mathscr I \mathscr P_{\mathbb Z,u}$ takes \textit{all} the values in $\{2,-2\}\cup \{1 \ \text{and/or} \ -1\}$.  Let $q \in \mathscr I \mathscr P_{\mathbb Z,u}$ be an element in which this value is $\pm 1$ and $p \in \mathscr I \mathscr P_{\mathbb Z,u}$ an element in which this value is $\mp 2$.   Then $q^2p \in \textbf{k}[p':p' \in \mathscr I \mathscr P_{\mathbb Z,u}]^{\alpha^\vee_{i_u}}$.

 By Lemma \ref {8.8} for all $u' \in \overline{U}$, the expression  $\alpha_{i_{u'}}^\vee(q^2p)$ is a non-negative integer multiple of $-2$ if $u' <u$ and is zero otherwise.  On the other hand by Lemma \ref {8.9} one has, for all $u', u'' \in \overline {U}$, that
 $$\alpha^\vee_{i_u''}(\varpi_{p_{u'}^{i_{u'}}})=\left\{\begin{array}{ll}-2& : \ \text{If} \ u'' \ \text{is a left neighbour of} \ I(p_{ u'}^{i_{u'}}) ,\\2&:u''=u',\\
0& :\text{otherwise}.\\
\end{array}\right.\eqno{(*)}$$

As a consequence there exists a unique up to scalars monomial $\hat{p}$ in the $p_{u'}^{i_{u'}}:u' \in \overline{U}, u'<u$ such that $q^2p \hat{p} \in Sy(\mathfrak q_\mathbb Z)^{\mathfrak h_\Gamma}$.  Moreover it is clear that $q^2p \hat{p}$ cannot be expressed in terms of the remaining generators of $Sy(\mathfrak q_\mathbb Z)^{\mathfrak h_\Gamma}$.

Finally suppose that $\alpha_n \notin \pi_{1,u}$.  Then as noted in \ref {8.6} the number of generators provided by the components $\pi_{1,u}$ and by $\pi_{1,u}^\mathbb Z$ is the same.

\begin {lemma} Assume that the adapted pair satisfies integrality.  Then for all $u \in \overline{U}$ such that $\alpha_n \notin \pi_{1,u}$, there exists exactly one element $q \in \mathscr {IP}_{\mathbb Z,u}$ such that $\alpha_{i_u}^\vee(q)\in \{\pm 1\}$.  Moreover these form the additional generators of $Sy(\mathfrak q_\mathbb Z)$ as a polynomial algebra over $Sy(\mathfrak q)$.
\end {lemma}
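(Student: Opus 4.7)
The plan is to combine the rank count for the additional generators of $Sy(\mathfrak q_\mathbb Z)$ over $Sy(\mathfrak q)$ (Sections \ref{8.5}--\ref{8.7}) with the parity values tabulated in Lemma \ref{8.8}, Corollary \ref{8.10} and Corollary \ref{8.11}, using the factorisation property \ref{7.6}(iii) and the non-degenerate pairing $\mathfrak h_\Gamma\times\Gamma\to\textbf{k}$ from \ref{2.12} as the rigidity tools.

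First I would fix cardinalities. By Lemma \ref{8.4}, $\mathfrak h_\Gamma$ may be chosen to be the span of the coroots $\alpha^\vee$ for $\alpha\in\overline{\pi}^{\frac{1}{2}}\cap\pi_1$; reading \ref{8.6.1} backwards, $\alpha_{i_u}$ belongs to $\overline{\pi}^{\frac{1}{2}}\cap\pi_1$ exactly when $\alpha_n\notin\pi_{1,u}$, so $\mathfrak h_\Gamma$ admits the basis $\{\alpha_{i_u}^\vee:u\in\overline U,\,\alpha_n\notin\pi_{1,u}\}$. Lemma \ref{8.5} together with the exclusion of the ``$m,n$ both even'' case forced in \ref{8.7} then gives $|J|=\dim\mathfrak h_\Gamma=|\{u\in\overline U:\alpha_n\notin\pi_{1,u}\}|$, where $|J|$ counts the additional generators $q_j$.

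Next I would assemble the table of values $\alpha_{i_u}^\vee(\varpi_p)$ as $u$ ranges over this index set and $p$ over $\bigsqcup_{u'}\mathscr I\mathscr P_{\mathbb Z,u'}$. Lemma \ref{8.8} forces the off-block-diagonal entries ($u'\neq u$) to lie in $2\mathbb Z$; Corollary \ref{8.10} makes such entries vanish when $\alpha_n\in\pi_{1,u'}$; Corollary \ref{8.11} restricts the block-diagonal entries ($u'=u$) to $\{\pm1,\pm2\}$, the $\pm1$ values arising exactly for the orbits $p_u^k$ corresponding to a fixed point $\alpha_k=i_{\pi_1^\mathbb Z}(\alpha_k)$ with $k\neq i_u$ and $k\leq k_u$. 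At least one such $k$ exists for each relevant $u$, precisely because the exclusion in \ref{8.7} forbids both $m$ and $n$ to be even.

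The core argument is then a parity comparison via the factorisation \ref{7.6}(iii). Each $p_i\in Y(\mathfrak q)\subset Y(\mathfrak q_\mathbb Z)$ (identified via $\mathscr P$) decomposes in $Sy(\mathfrak q_\mathbb Z)$ as $p_i=\hat p_i\prod_{j\in J}q_j^{a_{ij}}$, so the weights satisfy $\delta_{p_i}=\delta_{\hat p_i}+\sum_j a_{ij}\gamma_j$. Since $\delta_{p_i}$ vanishes on $\mathfrak h_\Gamma$, applying $\alpha_{i_u}^\vee$ yields $\delta_{\hat p_i}(\alpha_{i_u}^\vee)=-\sum_j a_{ij}\gamma_j(\alpha_{i_u}^\vee)$. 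Were every $\gamma_j(\alpha_{i_u}^\vee)$ even, every $\delta_{\hat p_i}(\alpha_{i_u}^\vee)$ would be even, contradicting the existence of an odd-valued entry guaranteed by Corollary \ref{8.11}. Hence some $q_j\in\mathscr I\mathscr P_{\mathbb Z,u}$ must satisfy $\alpha_{i_u}^\vee(q_j)\in\{\pm1\}$. The rank match $|J|=|\{u\in\overline U:\alpha_n\notin\pi_{1,u}\}|$, combined with the disjointness of the $\mathscr I\mathscr P_{\mathbb Z,u}$ as $u$ varies and the non-degeneracy of the $\mathfrak h_\Gamma\times\Gamma$-pairing, upgrades ``at least one'' to ``exactly one'' and simultaneously identifies these $q$'s as exactly the additional generators.

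The main obstacle I anticipate is the separation, inside a single $\mathscr I\mathscr P_{\mathbb Z,u}$, of the odd-valued orbits between the $\hat p_i$ role and the $q_j$ role: the parity inequality only guarantees an odd value on the $q_j$ side, and ruling out further odd-valued orbits requires exploiting uniqueness of factorisation in $Sy(\mathfrak q_\mathbb Z)$ and the fact that $\psi$ sends each $\hat p_i$ to a non-zero linear element of $\mathfrak a^\eta$ while sending each $q_j$ to a non-zero scalar. Any extra odd-valued orbit either exceeds the count $|J|$ of additional generators (excluded by Lemma \ref{8.5}) or would have to absorb a second $\hat p_i$ factor against the factorisation structure, and this loop closes precisely when there is a unique odd-valued orbit per $u$.
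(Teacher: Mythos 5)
Your parity argument does deliver the second clause: since the weight of every generator of $Sy(\mathfrak q)$ vanishes on $\alpha^\vee_{i_u}$ and the off-block entries are even (this is Lemma \ref{8.8}; note that Corollary \ref{8.10} concerns the diagonal block of the component containing $\alpha_n$, not the entries you attribute to it), the identity $\delta_{\hat p_i}(\alpha^\vee_{i_u})=-\sum_j a_{ij}\gamma_j(\alpha^\vee_{i_u})$ together with the odd entry guaranteed by Corollary \ref{8.11} forces, for each relevant $u$, some additional generator lying in $\mathscr I\mathscr P_{\mathbb Z,u}$ to pair oddly with $\alpha^\vee_{i_u}$, and the count $\dim\mathfrak h_\Gamma=|\{u\in\overline U:\alpha_n\notin\pi_{1,u}\}|$ then gives exactly one per $u$ and exhausts them. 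That is a genuinely different, and quite clean, route to the ``moreover'' part from the paper's, which instead manufactures an explicit invariant monomial.

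However the first clause — that there is \emph{exactly one} element of $\mathscr I\mathscr P_{\mathbb Z,u}$ with $\alpha^\vee_{i_u}$-value in $\{\pm1\}$ — is not proved, and you have flagged it yourself without closing it. Parity cannot close it: \ref{8.7} only excludes $m,n$ both even in Lemma \ref{8.6.2}, so both factors of $\pi^{\mathbb Z}_{1,u}\cong A_m\times A_n$ may have odd rank, in which case Corollary \ref{8.11} produces two odd-valued orbit generators (one of value $+1$, one of value $-1$); your weight identity is perfectly consistent with the second of these being some $\hat p_i$ once a single $\gamma_j$ pairs oddly with $\alpha^\vee_{i_u}$, and such an element adds nothing to the count of additional generators, so neither horn of your concluding dichotomy (``exceeds the count'' or ``absorbs a second $\hat p_i$ factor'') actually bites — no specific invariant is exhibited whose factorisation would be violated. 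The paper's proof supplies precisely this missing step: for an \emph{arbitrary} odd-valued $q\in\mathscr I\mathscr P_{\mathbb Z,u}$ one chooses $p$ of value $\mp2$ and, using \ref{8.12}$(*)$, builds the $\mathfrak h_\Gamma$-invariant monomial $q^2p\hat{p}$, which cannot be expressed through the remaining generators of $Sy(\mathfrak q_\mathbb Z)^{\mathfrak h_\Gamma}=Y(\mathfrak q_\Lambda)$ and so must be, up to scalar, one of its polynomial generators; the factorisation property \ref{7.6}(iii) (a unique $\hat p$-type factor, occurring to the first power) then forces the squared factor $q$ to be an additional generator, after which the count rules out a second odd-valued element. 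Without this construction (or an equivalent device) the ``exactly one'' assertion of the lemma remains unjustified.
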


\begin {proof}  This follows from the above observation, the factorization property (\ref {7.6}(iii)) and the remarks in the last part of \ref {8.7} concerning the number of additional generators $s$ as expressed by \ref {8.7}$(**)$.
\end {proof}

 It is now quite easy to complete the proof of Theorem \ref {8.1}.   By \ref {8.7}$(*),(**)$ there exists $u \in \overline {U}$ such that $\alpha_n \notin \pi_{1,u}$.  Then by \ref {8.12}$(*)$, there exists for this choice of $u$ elements $p_1,p_2 \in \mathscr {IP}_{\mathbb Z,u}$ \textit{distinct} from the element in the conclusion of Lemma \ref {8.12} such that $\alpha_{i_u}^\vee(\varpi_{p_1p_2})=0$.  Then as in \ref {8.12} there exists a unique up to scalars monomial $\hat{p}$ in the $p_{u'}^{i_{u'}}:u' \in \overline{U}, u'<u$ such that $p_1p_2 \hat{p} \in Sy(\mathfrak q_\mathbb Z)^{\mathfrak h_\Gamma}$.  Moreover t is clear that $p_1p_2 \hat{p}$ cannot be expressed in terms of the remaining generators of $Sy(\mathfrak q_\mathbb Z)^{\mathfrak h_\Gamma}$.  This clearly contradicts the factorisation property (\ref {7.6}(iii)) and concludes the proof of integrality and hence of Theorem \ref {8.1}.

\section{Index of Notation.}

Symbols occurring frequently are given below in the paragraph where they are first defined.

\

1.1  $\mathfrak a,\textbf{A},  S(\mathfrak a), Y(\mathfrak a), \mathscr N(\mathfrak a),\mathfrak a^*, \mathfrak a^\xi,\ell(\mathfrak a), \mathfrak a^\xi,, \mathfrak a^*_{reg},I(\mathfrak a)$.

1.2  $F(\mathfrak a), C(\mathfrak a), Sy(\mathfrak a), \Lambda, \mathfrak a_\Lambda, r\ell(\mathfrak a)$.

1.3  $\Delta, \pi, \mathfrak g_\pi, x_\alpha, \kappa, s_\alpha, \varpi_\alpha, \mathfrak b_{\pi},\mathfrak p_{\pi_1},\mathfrak q_{\pi_1,\pi_2}$.

1.4  $\mathfrak a_i, \mathfrak a_\mathbb Z$.

1.6  $\mathfrak r,\mathfrak m, \mathfrak h, \varphi$.

2.2  $m_i, d_i, \mathfrak z$.

2.4  $\psi$.

2.5  $\mathfrak c$.

2.6  $\mathscr P$.

2.7  $q_j,J$.

2.8  $p_i.\hat{p_i}$.

2.11 $\mathscr J, \Gamma$.

4.1  $\mathfrak a_\alpha, S$.

4.3  $\mathfrak a_\mathbb Q, \mathfrak a_\mathscr J$.

5.2  $B_\pi$.

5.3  $W_\pi, w_\pi, i_\pi$.

6.1  $\alpha^\vee, \mathfrak g_\mathbb Z, \Delta^\mathbb Z, \pi^\mathbb Z, \pi_\cap, W_\cap, \Delta^+$.

6.2  $\pi_1^\mathbb,\pi_2^\mathbb Z$.

6.8  $M_\pi$.

7.1  $\pi^{\frac{1}{2}}$.

7.2  $\pi_{1,u},\pi_{2,v},\pi_{\cap(u,v)}$.

7.5  $\mathfrak h_\Lambda, \mathfrak h_\Gamma$.

8.1  $\tilde{\pi}$.

8.2  $[i,j],s[i,j], \beta_i$.

8.2.1  $\pi^{\mathbb Z, \ell}, \pi^{\mathbb Z,r}, I^\ell, I^r$.

8.3  $\delta_\Gamma, \delta_{\Gamma^\mathbb Z}$.

8.3.1  $I(i)$.

8.6  $\overline {\pi}^{\frac{1}{2}}, \overline{U}$.

8.8  $\mathscr P_\mathbb Z, \mathscr I \mathscr P_{\mathbb Z,u}, |p|, \varpi_p, \varpi_p^+, \varpi_p^-$.

\end{document}